\documentclass[12pt,a4paper]{article}
\pdfoutput=1

\usepackage{amssymb,amsfonts,amsmath,graphicx,mathtools}
\usepackage[alphabetic,y2k,lite]{amsrefs}
\usepackage{fullpage}
\usepackage{MnSymbol}
\usepackage[mathletters]{ucs}
\usepackage[utf8x]{inputenc} 
\usepackage{tikz-cd}
\usepackage{amsthm}

\newtheorem*{theorem*}{Theorem}
\newtheorem{theorem}{Theorem}[section]
\newtheorem{lemma}[theorem]{Lemma}
\newtheorem{proposition}[theorem]{Proposition}
\newtheorem{corollary}[theorem]{Corollary}

\theoremstyle{definition}
\newtheorem{definition}[theorem]{Definition}

\newtheorem{example}[theorem]{Example}

\theoremstyle{remark}
\newtheorem{remark}[theorem]{Remark}

\numberwithin{equation}{section}

\newcommand{\comment}[1]{}

\DeclareMathOperator{\htr}{hTr} 
\DeclareMathOperator{\im}{Im}
\DeclareMathOperator{\Obj}{Obj}

\DeclareMathOperator{\Vect}{Vec}
\DeclareMathOperator{\Proj}{Proj}
\newcommand{\RRmod}[1]{{#1}\text{-mod}}
\newcommand{\Rmod}{\RRmod{R}}
\DeclareMathOperator{\smod}{smod}
\DeclareMathOperator{\Col}{Col}
\DeclareMathOperator{\Span}{span}

\DeclareMathOperator{\Ann}{Ann}
\DeclareMathOperator{\Hom}{Hom}
\DeclareMathOperator{\End}{End}
\DeclareMathOperator{\Res}{Res}
\DeclareMathOperator{\Colim}{colim}
\DeclareMathOperator{\Ima}{Im}

\DeclareMathOperator{\coker}{coker}

\newcommand{\HomT}{\wdtld{\Hom}}

\DeclareMathOperator{\ptr}{ptr}

\DeclareMathOperator{\id}{id}
\newcommand{\ov}{\overline}
\newcommand{\wdtld}{\widetilde}
\newcommand{\trv}{{\mathrel{\cap\!\negthickspace\vert}\;}}
\newcommand{\trvs}{{\trv, \cT}}
\DeclareMathOperator{\Int}{Int}
\newcommand{\op}{{\text{op}}}
\newcommand{\hatbox}{{\hat{\times}}}

\newcommand{\inv}{{-1}} 
\newcommand{\one}{\mathbf{1}}

\newcommand{\lact}{\triangleright}
\newcommand{\ract}{\triangleleft}

\newcommand{\eval}[1]{\langle #1 \rangle}


\newcommand{\rcls}[1]{{#1}^{⊕}}
\newcommand{\tcls}[1]{\eval{#1}}

\DeclareMathOperator{\HH}{HH}

\DeclareMathOperator{\ev}{ev}
\DeclareMathOperator{\coev}{coev}

\newcommand{\cA}{{\mathcal{A}}}
\newcommand{\cB}{{\mathcal{B}}}
\newcommand{\cC}{{\mathcal{C}}}

\newcommand{\cI}{{\mathcal{I}}}

\newcommand{\cK}{{\mathcal{K}}}

\newcommand{\cM}{{\mathcal{M}}}
\newcommand{\cN}{{\mathcal{N}}}

\newcommand{\cS}{{\mathcal{S}}}
\newcommand{\cT}{{\mathcal{T}}}
\newcommand{\cU}{{\mathcal{U}}}

\newcommand{\cZ}{{\mathcal{Z}}}

\newcommand{\kk}{{\mathbf{k}}}

\newcommand{\DD}{{\mathbb{D}}}
\newcommand{\VV}{{\mathbf{V}}}
\newcommand{\WW}{{\mathbf{W}}}
\newcommand{\XX}{{\mathbf{X}}}
\newcommand{\YY}{{\mathbf{Y}}}
\newcommand{\bH}{{\mathbf{H}}}

\newcommand{\RR}{{\mathbb{R}}}
\newcommand{\RRO}[1]{{\mathbb{R}_{\geq 0}^{#1}}}

\newcommand{\ZZ}{{\mathbb{Z}}}
\newcommand{\CC}{{\mathbb{C}}}

\newcommand{\ttt}{{\mathfrak{t}}}

\newcommand{\hZ}{{\hat{Z}}}

\newcommand{\hZJ}[1]{{\hZ_{#1}}}
\newcommand{\hZI}{\hZJ{\cI}}
\newcommand{\hZX}{\hZJ{\cS}}
\newcommand{\hZA}{\hZJ{\cA}}
\newcommand{\hZXp}{\hZJ{\cT}}
\newcommand{\hZAJ}[1]{{\hZ_{#1}^{\mathrm{t}}}}
\newcommand{\hZAI}{{\hZAJ{\cI}}}

\DeclareMathOperator{\BV}{BV}
\newcommand{\BVJ}[1]{{\BV_{#1}}}

\newcommand{\BVX}{\BVJ{\cS}}

\DeclareMathOperator{\Graph}{Graph}
\DeclareMathOperator{\VGraph}{VGraph}
\DeclareMathOperator{\VG}{VG}
\DeclareMathOperator{\SN}{Sk}
\DeclareMathOperator{\Null}{Null}
\DeclareMathOperator{\PrimNull}{PrimNull}
\DeclareMathOperator{\StrNet}{SN}
\DeclareMathOperator{\strnet}{sn}

\newcommand{\edgecl}[1]{{[#1]_\chi}}


\newcommand{\GraphJ}[1]{\Graph_{#1}}
\newcommand{\VGraphJ}[1]{\VGraph_{#1}}
\newcommand{\VGJ}[1]{\VG_{#1}}
\newcommand{\NullJ}[1]{\Null_{#1}}
\newcommand{\PrimNullJ}[1]{\PrimNull_{#1}}
\newcommand{\SNJ}[1]{\SN_{#1}}

\newcommand{\NullI}{\NullJ{\cI}}

\newcommand{\SNI}{\SNJ{\cI}}

\newcommand{\GraphX}{\GraphJ{\cS}}
\newcommand{\VGraphX}{\VGraphJ{\cS}}
\newcommand{\VGX}{\VGJ{\cS}}
\newcommand{\NullX}{\NullJ{\cS}}
\newcommand{\PrimNullX}{\PrimNullJ{\cS}}
\newcommand{\SNX}{\SNJ{\cS}}
\newcommand{\SNA}{\SNJ{\cA}}

\newcommand{\GraphXp}{\GraphJ{\cT}}
\newcommand{\VGraphXp}{\VGraphJ{\cT}}

\newcommand{\NullXp}{\NullJ{\cT}}

\newcommand{\SNXp}{\SNJ{\cT}}

\newcommand{\GraphJt}[1]{{\Graph_{#1}^\trv}}
\newcommand{\VGraphJt}[1]{{\VGraph_{#1}^\trv}}

\newcommand{\NullJt}[1]{{\Null_{#1}^\trv}}
\newcommand{\PrimNullJt}[1]{\PrimNull_{#1}^\trv}

\newcommand{\GraphXt}{\GraphJt{\cS}}
\newcommand{\VGraphXt}{\VGraphJt{\cS}}

\newcommand{\NullXt}{\NullJt{\cS}}
\newcommand{\PrimNullXt}{\PrimNullJt{\cS}}

\newcommand{\GraphJs}[1]{{\Graph_{#1}^{\trvs}}}
\newcommand{\VGraphJs}[1]{{\VGraph_{#1}^{\trvs}}}
\newcommand{\VGJs}[1]{{\VG_\cS^{\trvs}}}
\newcommand{\NullJs}[1]{{\Null_{#1}^{\trvs}}}
\newcommand{\PrimNullJs}[1]{{\PrimNull_{#1}^{\trvs}}}

\newcommand{\GraphXs}{\GraphJs{\cS}}
\newcommand{\VGraphXs}{\VGraphJs{\cS}}
\newcommand{\VGXs}{\VGJs{\cS}}
\newcommand{\NullXs}{\NullJs{\cS}}
\newcommand{\PrimNullXs}{\PrimNullJs{\cS}}

\newcommand{\GraphAJ}[1]{\Graph_{\mathrm{t}#1}}
\newcommand{\VGraphAJ}[1]{\VGraph_{\mathrm{t}#1}}
\newcommand{\PrimNullAJ}[1]{{\PrimNull_{\mathrm{t}#1}}}
\newcommand{\NullAJ}[1]{\Null_{\mathrm{t}#1}}
\newcommand{\SNAJ}[1]{\SN_{\mathrm{t}#1}}

\newcommand{\VGraphAI}{\VGraphAJ{\cI}}

\newcommand{\NullAI}{\NullAJ{\cI}}
\newcommand{\SNAI}{\SNAJ{\cI}}

\newcommand{\GraphAX}{\GraphAJ{\cS}}
\newcommand{\VGraphAX}{\VGraphAJ{\cS}}
\newcommand{\PrimNullAX}{\PrimNullAJ{\cS}}
\newcommand{\NullAX}{\NullAJ{\cS}}
\newcommand{\SNAX}{\SNAJ{\cS}}


\usepackage[pdfencoding=auto]{hyperref}
    \makeatletter
    \def\HyPsd@expand@utfvii{}
    \makeatother
\hypersetup{
    colorlinks,
    linkcolor={red!50!black},
    citecolor={blue!50!black},
    urlcolor={blue!80!black},
    breaklinks=true
}

\begin{document}

\thispagestyle{empty}
\phantom{.}
\vspace{.7cm}
\begin{center}

{\Large \textbf{Excision for Spaces of Admissible Skeins}}

\vspace{3em}

{\large Ingo Runkel, Christoph Schweigert, Ying Hong Tham}

\vspace{2em}

Fachbereich Mathematik,\\
Universität Hamburg,\\
Bundesstraße 55,\\
20146 Hamburg, Germany

\vspace{2em}

{\small \sf ingo.runkel},
{\small \sf christoph.schweigert},
{\small \sf ying.hong.tham@uni-hamburg.de}
\end{center}

\section*{Abstract}
The skein module for a $d$-dimensional manifold is a vector space spanned by embedded framed graphs decorated by a category $\cA$ with suitable extra structure depending on the dimension $d$, modulo local relations which hold inside $d$-balls. For a full subcategory $\cS$ of $\cA$, an $\cS$-admissible skein module is defined analogously, except that local relations for a given ball may only be applied if outside the ball at least one edge is coloured in $\cS$.

In this paper we prove that admissible skein modules in any dimension
satisfy excision, namely that 
the skein module of a glued manifold is expressed as a coend over boundary values on the boundary components glued together. We furthermore relate skein modules for different choices of $\cS$, apply our result to cylinder categories, and recover the relation to modified traces.

\newpage

\setcounter{tocdepth}{2}
\tableofcontents

\section{Introduction}

The focus of this paper is a generalisation of skein modules, introduced in dimensions 2 and 3 as \emph{non-unital skein modules} \cite{Reutter:2020}, and as \emph{admissible skein modules} \cite{Costantino:2023}.
One motivation comes from the study of modified traces on pivotal (rigid) tensor categories as defined in \cite{GeerKujawa:2013,GeerPatureauMirand:2013},
which overcome some deficiencies
of the categorical trace based on the pivotal structure in the non-semisimple setting:
the evaluation of a skein in a ball can be computed from the categorical trace,
while the admissible skein module of the ball
is dual to the space of modified traces on an appropriate tensor ideal \cite{Reutter:2020,Costantino:2023}.
Another motivation to study admissible skein modules is to extend the relation between skein modules and state spaces of topological field theories to so-called renormalised quantum invariants which arise from non-semisimple categories as introduced in
\cite{Costantino:2014,Blanchet:2014ova}.
For example,
in \cite{Muller:2023} it was shown that a variant of admissible string nets
for a pivotal finite tensor category reproduce Lyubashenko's modular functor
for the corresponding Drinfeld centre.

\medskip

Let us give a brief description of admissible skein modules for $d$-dimensional oriented manifolds $M$.
Fix a commutative ring $R$.
Let $\cA$ be an essentially small $R$-linear category with suitable extra structure
depending on $d$,
i.e.\ pivotal monoidal for $d = 2$, ribbon for $d = 3$,
and symmetric ribbon for $d ≥ 4$.
At this point, $\cA$ is not required to be abelian or even additive.
For a full subcategory $\cS$ of $\cA$,
an $\cS$-admissible graph is an $\cA$-coloured 
framed graph in $M$
with at least one edge labelled in $\Obj \cS$,
and an $\cS$-admissible skein is a linear combination of $\cS$-admissible graphs
modulo $\cS$-null relations, which are the usual null relations inside balls in $M$
with the extra condition that at least part of an $\cS$-labelled edge lies outside the corresponding ball.
Skeins are allowed to meet the boundary of $M$ transversely, leaving a boundary value
$\VV$ consisting of a finite set of points labelled with elements of $\Obj \cA$.
We denote by $\SNX(M;\VV)$ the $\cS$-admissible skein module of $M$
with boundary value $\VV$;
note that if $\VV$ has at least one label in $\cS$
and $M$ is connected, then the $\cS$-admissibility conditions
become inconsequential and $\SNX(M;\VV) = \SNJ{}(M;\VV)$,
the usual skein module.
We stress that even though it is not included in the notation, $\SNX(M;\VV)$ also depends on the ambient category $\cA$.

The main theorem of this paper is that admissible skein modules
satisfy an excision property.
This is known to hold in the setting of usual $\cA$-coloured skein modules,
see~\cite{Cooke:2019,AlexanderKirillov:2020hkv},
motivated by the unfinished notes \cite{Walker:2006}, 
and see also \cite[Thm.\,5.1]{CostantinoLe:2022}
for (uncoloured) Kauffman skein modules.

One of the theme of this paper is to relate
$\cS$-admissibility conditions for different choices of $\cS$.
In order to to so, we introduce another full subcategory
$\cS ⊆ \cT ⊆ \cA$.
Even if $d \ge 2$ such that $\cA$ is in particular monoidal, we do not require $\cS$ and $\cT$ to be monoidal, too. In particular, they do not need to satisfy
the additional ``ideal'' condition
-- closure under retracts and tensor products with objects in $\cA$ --
as in \cite{Costantino:2023}. Instead we find that the
ideal condition is a natural consequence of topology, that is,
for $\cS ⊆ \cI ⊆ \cA$ where $\cI$ is the ideal closure of $\cS$,
we have $\SNX \simeq \SNI$ (see Proposition~\ref{p:ideal-closure}). 

Let $M$ be a $d$-dimensional oriented manifold with corners,
and let $L$ be a codimension-1 submanifold of $M$.
Cutting $M$ along $L$, we obtain a $d$-manifold $M'$ with new boundary parts $N,N' ⊆ ∂M'$,
and a natural gluing map $π: M' \to M$ identifying $N,N'$ back to $L$.
A skein in $M'$ with the same boundary value $\VV$ at $N$ and $N'$
can be glued up to become a skein in $M$.
This gives maps $π_* : \SNX(M';\XX,\VV,\VV) \to \SNX(M;\XX)$
that are dinatural in $\VV$. We will assume $\VV$ to be $\cT$-admissible and consider it as an object in the cylinder category $\hZXp(N)$, i.e.\ the category whose morphism spaces are skeins in $N \times [0,1]$, see Section~\ref{s:cylinder-cat} for details.
The dinatural maps $π_*$ factor through the coend, and our main theorem states that this induces an isomorphism:

\medskip

\noindent
\textbf{Theorem~\ref{t:excision}.} \emph{Let $\cS ⊆ \cT ⊆ \cA$ be full subcategories.
The maps $π_* : \SNX(M';\XX,\VV,\VV) \to \SNX(M;\XX)$
induced by the gluing $π: M' \to M$ descend to an isomorphism
\[
\hat{\pi}_* : \int^{\hZXp(N)} \SNX(M';\XX,-,-) ~\xrightarrow{~\sim~}~ \SNX(M;\XX) \ .
\]}

\medskip

In the above theorem, the skein modules providing the dinatural transformation satisfy the $\cS$-admissibility condition, while the cylinder category is built from $\cT$-admissible skeins. 
In fact, both $\SNX$ and $\hZXp$ only depend on the completion of $\cS$ and $\cT$ with respect to direct sums and retracts, as well as tensor products and duals for $d\ge 2$.

The freedom to vary $\cT$ in $\cS ⊆ \cT ⊆ \cA$ independently of the $\cS$-admissibility condition turns out to be useful in studying the properties of admissible skein modules. In more detail, the general properties we establish are:
\begin{itemize}
    \item Admissible skein modules over different subcategories can be combined,
for example, for connected
$M = M₁ ∪_N M₂$, and $\cS ⊆ \cT ⊆ \cA$,
we have (see Proposition~\ref{p:different-skein})
\[
\SNX(M) \simeq \int^{\VV ∈ \hZXp(N)} \SNX(M₁;\VV) ⊗_R \SNXp(M₂;\VV) \ .
\]

\item $\cS$-admissible cylinder categories also satisfy excision,
generalising the known result for usual cylinder categories as shown in
\cite{Cooke:2019,AlexanderKirillov:2020hkv}
(see Proposition~\ref{p:excision-cylinder-cat}).

\item Admissible skein modules do not change if one passes from $\cS$ to the smallest full subcategory $\cI$ with $\cS ⊆ \cI ⊆ \cA$, which is closed with respect to retracts, direct sums, tensor products, and duals (the last two conditions apply only for $d \ge 2$), see Proposition~\ref{p:ideal-closure}. Namely, for $M$ a $d$-manifold and $\VV$ any boundary value we have
\[
    \SNX(M;\VV) \,\cong\, \SNI(M;\VV) \ .
\]

\item
We consider a variant of admissible skein modules,
which we call \emph{totally-$\cS$-admissible skein modules},
where every edge is labelled in $\cS$.
When $\cS = \cI$ is an ideal, these are isomorphic to the original
$\cS$-admissible skein modules (see Proposition~\ref{p:totally-I}).

\end{itemize}

As applications of the above results, we study admissible skein modules in more specific settings:
\begin{itemize}
    \item Let $\cA$ be an $R$-linear pivotal category 
    which is moreover abelian and has enough projectives, and let $\cI ⊆ \cA$ be the ideal of projective objects. Consider a surface $\Sigma$ with boundary value $(V_1,\dots,V_n) \in \cA^{\times n}$. Treating the objects as variables, the corresponding skein module can be turned into  a functor $\cA^{\times n} \to \Rmod$. We show that this functor is right-exact in each argument
    (see Proposition~\ref{p:right-exact}). This allows us to make contact to the string net spaces in \cite{Muller:2023}.

\item
We recover the relation between the admissible skein modules 
$\SNI(\DD²)$ and $\SNI(S^2)$ for an ideal $\cI$ and modified traces on $\cI$
as in \cite{Reutter:2020,Costantino:2023}. While the proofs are similar in spirit,
our proof works more directly with the skein modules,
which we think more clearly demonstrates the conceptual connection between
the topology and the partial trace property of modified traces
(see Proposition~\ref{p:trace-skein-D2-S2}).

\item
Working over a field $R = \kk$,
we present some results on the dimensions of the skein modules.
In particular, when $\cA$ is ribbon and
an ideal $\cI$ has infinitely many indecomposable objects,
then for surfaces $Σ$ with non-trivial first homology,
$\SNI(Σ)$ is infinite dimensional (see Section~\ref{s:dimension}).
We also give an example of an ideal $\cI$ in a finite pivotal tensor category such that $\SNI(\DD²)$ and $\SNI(S^2)$ are infinite dimensional.
\end{itemize}

The paper is structured as follows.
In Section~\ref{s:defn}, we give the basic definitions and constructions
concerning admissible skein modules and cylinder categories.
In Section~\ref{s:excision}, we prove the main theorem.
In Section~\ref{s:properties}, we prove several general properties of
admissible skein modules, applying to any manifold and dimension $d$
and any $\cA$ and $\cS$.
In Section~\ref{s:applications}, we give several applications of the main theorem as discussed above.

\medskip

\noindent
\textbf{Note added:} While we were finalising the present paper, the preprint \cite{Brown:2024} appeared which has 
some overlap with our results. In particular, their Theorem 2.21 includes a proof of excision as in our Theorem~\ref{t:excision} in the case $d=3$ with $\cS = \cT = \cI$ and $\cI$ in addition satisfying an ideal condition. In \cite{Brown:2024} the authors work with what we call totally-$\cI$-admissible skein modules, but due to Proposition~\ref{p:totally-I} below, this does not make a difference.

\subsubsection*{Acknowledgements:}
We thank
Jennifer Brown,
Benjamin Ha\"ioun,
Aaron Hofer,
Alexander Mang,
Vincentas Mulevičius,
David Reutter,
    and
Paul Wedrich
for discussions.
YHT is supported by the Deutsche Forschungsgemeinschaft (DFG, German Research Foundation) under Germany's Excellence Strategy - EXC 2121 ``Quantum Universe" - 390833306.
IR and CS are supported in part by the Collaborative Research Centre CRC 1624 ``Higher structures, moduli spaces and integrability'' - 506632645, and the Excellence Cluster EXC 2121 ``Quantum Universe" - 390833306.

\subsubsection*{Conventions:}

Manifolds are oriented, smooth (i.e.\ $C^{∞}$) with corners,
and have finitely many connected components, but they need not be compact;
see Appendix~\ref{s:mfld-corner}.
All categories will be $R$-linear, where $R$ is a fixed choice of commutative ring.
Categories are typically not required to be additive;
they will come with extra structure depending on the dimension $d$
as listed in Table~\ref{t:A-structure-vs-d} below.

There is quite a lot of notation introduced in this paper and for reference we provide a glossary in  Appendix~\ref{s:notation}.
In Appendix~\ref{s:mfld-corner} we briefly review some basic definitions for manifolds with corners.

\section{Definitions}
\label{s:defn}

In this section we give the definition of admissible skein modules 
for a $d$-dimensional manifold following \cite{Costantino:2023}, though our setting will differ in some aspects. 
This construction was also announced in \cite{Reutter:2020} under the name of non-unital skein theory.
We then state some basic properties of admissible skein modules, review cylinder categories, and explain how skein modules define functors out of cylinder categories.

We already summarised the setting in the introduction, and the reader familiar with skein modules may find it convenient to have a quick look at Definition~\ref{d:S-skein} (admissible skein modules), as well as at \eqref{e:VGraphX}, \eqref{eq:PrimNullS} and \eqref{eq:NullS} for the notation used there, and then jump ahead to Section~\ref{s:cylinder-cat}, referring back to the definitions and results in Sections~\ref{s:adm-skein-defn} and \ref{s:basic-prop} as needed.

\subsection{Admissible Skeins}
\label{s:adm-skein-defn}

\subsubsection{Topological setting}\label{s:topol-setup}

Let us first discuss the topological aspects of the construction
of admissible skein modules.
We use manifolds with corners, largely based on \cite{Hajek:2014} (which in turn is based on \cite{Lee-book,Joyce:2012}).
\label{pg:manifold}
The corners do not make any difference on the skein modules themselves
(see Lemma \ref{l:ignore-corners}),
but they help with the technicalities of gluing manifolds along boundaries.
We give an exposition on manifolds with corners in
Appendix~\ref{s:mfld-corner}.
In short, manifolds with corners have coordinate charts
that are locally modelled on open subsets of $\RRO{d}$, with $\RRO{} = [0,∞)$.
Our manifolds with corners will be assumed to have
finitely many connected components, but they need not be compact.

\label{pg:boundary-piece}
The \emph{open boundary of $M$} consist of points that are not in
the interior $\Int(M)$ of $M$ nor its corners,
and an \emph{open boundary piece of $M$} is a connected component of it.
A \emph{boundary piece of $M$} is an appropriate closure
of an open boundary piece of $M$,
and the \emph{boundary $∂M$ of $M$} is the disjoint union of
all boundary pieces of $M$.
Figure~\ref{f:teardrop} gives an example of a manifold with corners
of dimension 2 and shows the natural surjection $∂M \to M \backslash \Int(M) \subseteq M$, which also illustrates that it need not be injective.
For a boundary piece $N ⊆ ∂M$,
when this map restricted to $N$ is a diffeomorphism,
we say $N$ is an \emph{embedded boundary piece}.

Each boundary piece $N ⊆ ∂M$ is oriented;
we say $N$ is \emph{outgoing} if, at a point $b ∈ \Int(N)$,
the orientation given by concatenating an
outward-pointing vector at $b$ with the orientation on $N$
is equal to the orientation on $M$,
and we say $N$ is incoming otherwise.

\begin{figure}
\centering
\includegraphics[width=4cm]{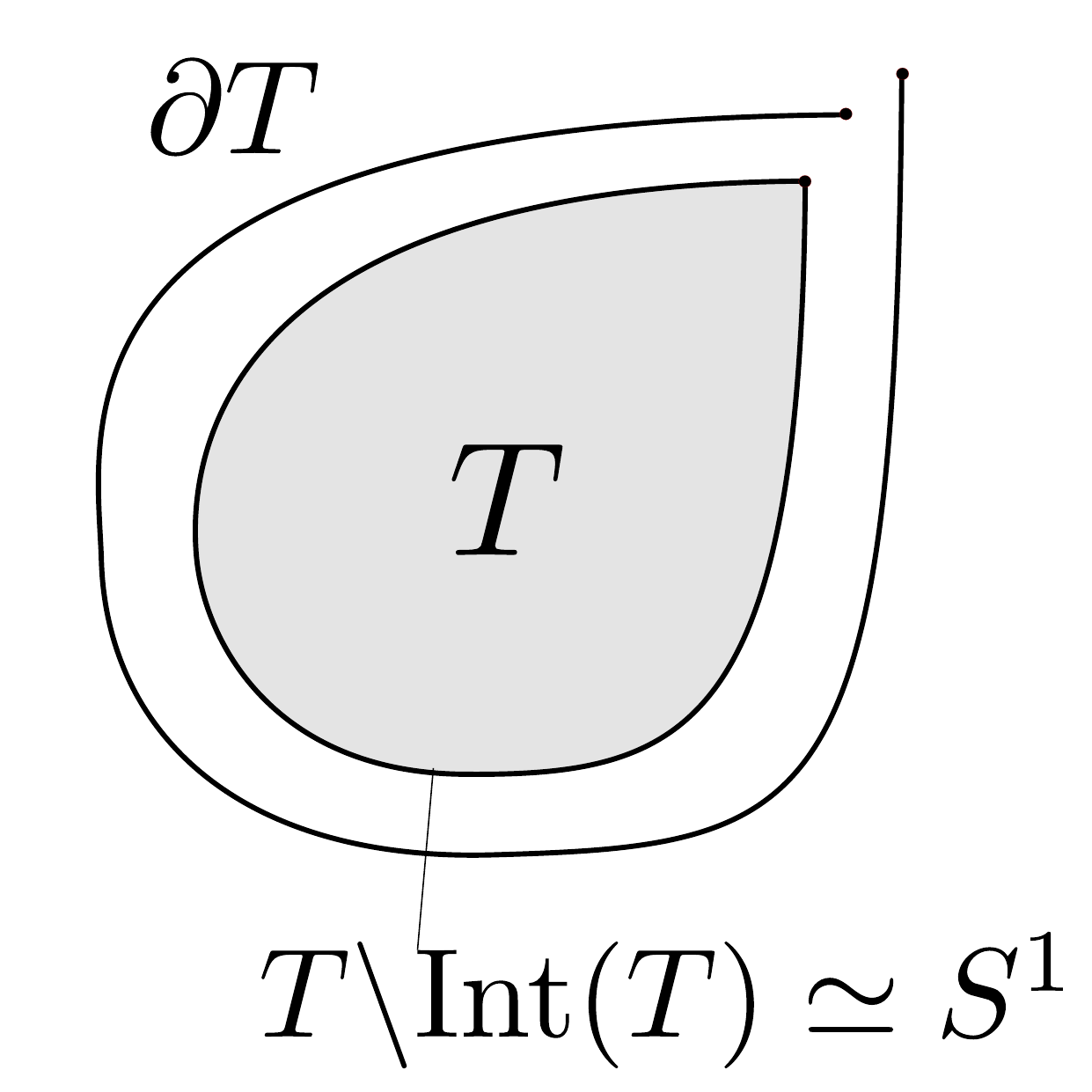}
\caption{Teardrop $T$.
The boundary $∂T$ is a closed interval, depicted as an unclosed loop
wrapped around $T$, naturally mapping down onto $T \backslash \Int(T)$;
the two endpoints of $∂T$ are sent to the corner of $T$.
}
\label{f:teardrop}
\end{figure}

A \emph{collar neighbourhood} of an embedded boundary piece $N$
is a orientation-preserving embedding
$[0,1] \times N ↪ M$ or $[-1,0] \times N ↪ M$;
then $N$ is an \emph{incoming} or \emph{outgoing boundary piece},
respectively.
For a codimension-1 embedded submanifold $L ⊆ M$,
a \emph{tubular neighbourhood} is an orientation-preserving embedding
$\wdtld{ι}: [-1,1] \times L ↪ M$.
There are extra technical conditions on collar and tubular neighbourhoods which we state in Appendix~\ref{s:mfld-corner}.
Cutting $M$ along $L$, we obtain a new manifold $M'$
with $L_-, L_+ ⊆ ∂M$ and a projection $π: M' \to M$,
such that $π|_{M' \backslash L_-,L_+}$ is a diffeomorphism onto $M \backslash L$,
and $π|_{L_-}$ and $π|_{L_+}$ are diffeomorphisms onto $L$;
the tubular neighbourhood $\wdtld{ι}$ restricts and lifts to
collar neighbourhoods $ξ_-: [-1,0] \times L_- \to M'$,
$ξ_+: [0,1] \times L_+ \to M'$ of $L_-,L_+$,
see Figure~\ref{f:Mp-to-M}.

\begin{figure}
\centering
\includegraphics[width=12cm]{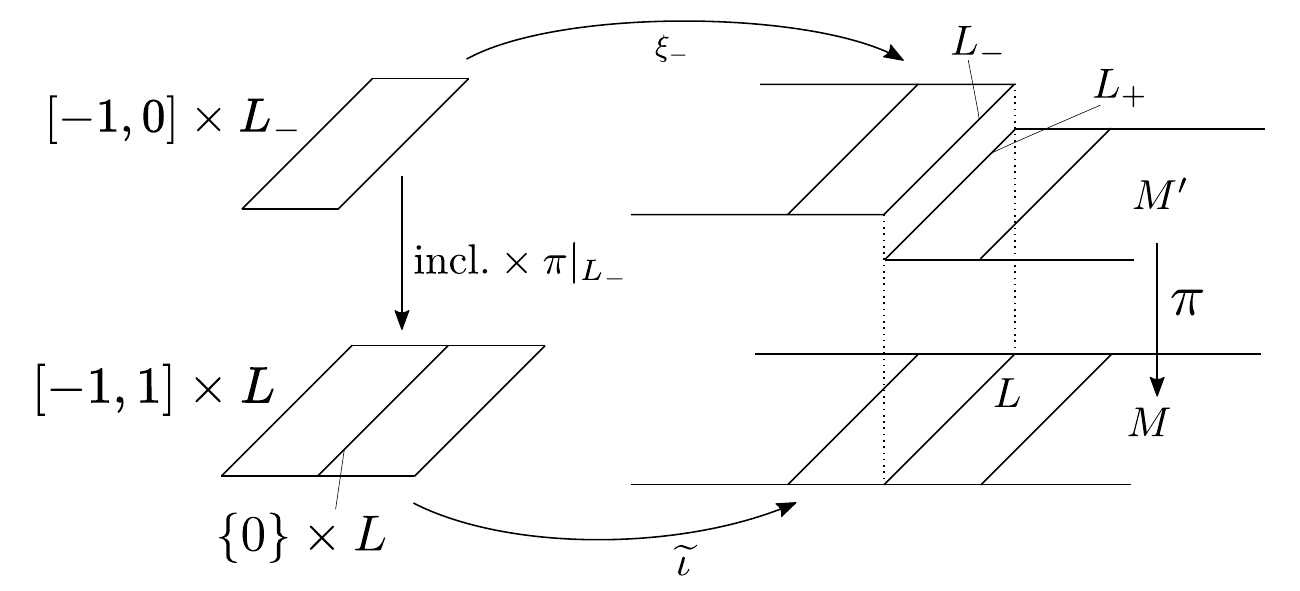}
\caption{Cutting $M$ along a codimension-1 submanifold $L$.
There is a corresponding map $\xi_+ : [0,1] \times L_+ \to M'$ not shown in the diagram.
}
\label{f:Mp-to-M}
\end{figure}

\label{pg:Cinfty-isotopy}
We will consider ambient isotopies of $d$-manifolds $M$,
that is, a smooth map 
$φ^{\bullet} : [0,1] \times M \to M$, $(t,x) \mapsto φ^t(x)$,
such that $φ^t$ is a diffeomorphism for each time $t$;
unless otherwise specified, we will assume that an ambient isotopy begin
with the identity, i.e.\ that $φ⁰ = \id_M$
(ambient isotopies are also called diffeotopies in \cite{Hirsch:2012}).
We say $φ^{\bullet}$ is \emph{$C^{∞}$-constant on $V ⊆ M$} 
if for all $t$, it leaves $V$ pointwise fixed,
and the $k$-jet of $φ^t$ at points in $V$
is the same as that of $\id_M$ for all $t$
(essentially $φ^t|_V$ is the identity map up to all orders of derivatives;
see \cite{Golubitsky:2012} for notes on $k$-jets).
In particular, we say $φ^{\bullet}$ is \emph{$C^{∞}$-constant at the boundary}
if it is $C^{∞}$-constant for $V = ∂M$.

\label{pg:uncoloured-graph}
An \emph{(uncoloured-)graph $\ov{Γ}$ in $M$} refers to
a finite smoothly embedded oriented graph in $M$,
possibly meeting the boundary $∂M$ (where the intersection is required to be
transversal and away from the corners),
together with a continuous framing of the normal bundle to each edge
such that the tangent vector plus the framing gives the orientation of $M$;
in addition for $d ≥ 2$, each vertex $v$ comes with a
coframed oriented 2-plane $Ξ_v$ in its tangent space $T_v M$,
such that the edges meeting $v$ are tangent to the 2-plane $Ξ_v$,
and framings are compatible, and the edges around a vertex
are partitioned into \emph{incoming} and \emph{outgoing} edges;
see Figure~\ref{f:vertex} for details.
In the case $d = 1$, we require that the graph covers the entire manifold
(see also Remark \ref{r:ncat-strat} below).

\begin{figure}
\centering
\includegraphics[width=8cm]{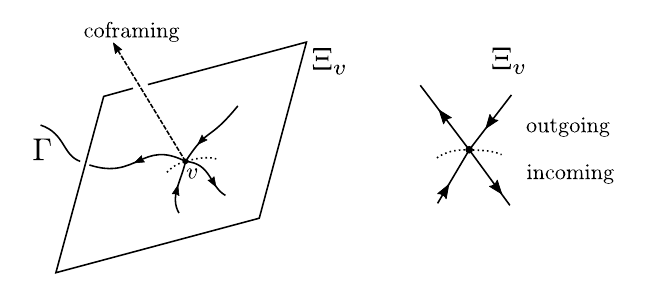}
\caption{Data around vertex for $d ≥ 2$.
The orientation on the 2-plane $Ξ_v$ plus the coframing (framing of $T_vM / Ξ_v$)
is required to give the ambient orientation of $M$.
For each edge $e$ of $Γ$, with framing $(u₂,...,u_d)$ at $v$,
the first vector $u₂$ lies in $Ξ_v$ (more precisely, there should be
a vector in $Ξ_v$ that projects onto $u₂$ under the quotient map
to the normal bundle of $e$),
and the quotient map from the normal bundle of $e$ to $T_vM / Ξ_v$
must send the rest of the framing, $(u₃,...,u_d)$,
to the coframing of $Ξ_v$.
Furthermore, the tangent vectors of the edges at $v$ must be distinct.
They are partitioned into two contiguous ordered subsets of
\emph{incoming} and \emph{outgoing} edges,
here depicted by the dashed lines;
note that the edge orientations are independent from the designation of
being an incoming/outgoing edge.
}
\label{f:vertex}
\end{figure}

\label{pg:config-marked-points}
For a $(d-1)$-manifold $N$,
we define a \emph{configuration of marked points on $N$}
as a finite set of points $B ⊆ N \backslash ∂N$,
a $(d-1)$-frame of $T_b N$ for each $b ∈ B$,
with \emph{infinitesimal transverse data}
in the form of the data of all $k$-jets, $k ≥ 0$, of some oriented smooth curve $γ_b$
(same curve for all $k$) through $(0,b) ∈ \RR \times N$
that is transverse to the 0-section $\{0\} \times N$.
The infinitesimal transverse data is considered the same under to reparametrisation
of the curves $γ_b$, so that only the image of $γ_b$ matters.
For $d = 1$, we impose the condition $B = N$.
If the smooth curve $γ_b$ is oriented positively in the $\RR$ direction,
then we say the point $b$ is \emph{positively oriented};
otherwise, we say it is \emph{negatively oriented} otherwise.

For the boundary $∂M = \bigcup N_i$ of a $d$-manifold,
a configuration of marked points is a collection of configurations on each $N_i$;
since the marked points cannot be on the boundary of the $N_i$'s,
this is equivalent to a configuration of marked points on $∂M \backslash \text{corners}$.
In order to be able to relate the infinitesimal transverse data of a marked point $b$ to the $k$-jets of a curve $μ : [0,ε) \to M$ with $μ(0) = b$,
we always implicitly fix an embedding
$(-ε,0] \times \Int(N_i)$ or $[0,ε) \times \Int(N_i) ↪ M$ for some small $ε > 0$
for every boundary piece $N_i$,
depending on whether $N_i$ is outgoing or incoming, respectively. Again, actually only the $k$-jets of these embeddings matter.
When we endow an embedded boundary piece with a collar neighbourhood,
we also implicitly assume that this embedding
is given by restricting the collar neighbourhood.

The intersection of a graph $\ov{Γ}$ with each $N_i ⊆ ∂M$
defines a \emph{boundary configuration (of marked points)} on $N_i$,
where the set of points is $∂\ov{Γ}$, and for each point $b ∈ ∂\ov{Γ}$,
the framing of $T_b N_i$ is the unique framing that
descends to the framing at $b$ of the normal bundle along the edge $e$ ending at $b$,
and infinitesimal transverse data is given by $e$;
the boundary configuration of $\ov{Γ}$ refers to the collection of these
boundary configurations at all such pieces.
An ambient isotopy $φ^{\bullet}$ of $M$ that is $C^{∞}$-constant at the boundary
will preserve the boundary values of graphs.

We will consider isotopies $Γ^{\bullet}$ of graphs, that is,
a smooth
family $Γ^{\bullet} : [0,1] \times Γ \to M$ of graphs, in the sense that the restriction of $\Gamma^\bullet$ to each edge of $Γ$ is smooth. We will also consider
a more restrictive type of graph isotopies consisting of those that are
carried by an ambient isotopy, i.e.\ there is an isotopy $φ^{\bullet}$ of $M$
such that $Γ^t = φ^t(Γ)$.
Ignoring the framing, ambient isotopies can approximate graph isotopies
everywhere except in a tiny neighbourhood of vertices
(see Lemma \ref{l:ambient-graph}).
We say an isotopy of graphs $Γ^{\bullet}$ (of the former type) is
\emph{$C^{∞}$-constant at the boundary}
if the infinitesimal transverse data of $∂Γ$ is constant through the isotopy
and the framing is constant at $∂Γ$.
This condition allows a graph isotopy to be extended outside a region:
given a subset $U ⊆ M$, a graph isotopy of $Γ ∩ U$
extends to a graph isotopy of all of $Γ$ by the constant isotopy outside $U$
if the graph isotopy of $Γ ∩ U$ is $C^{∞}$-constant on $∂(Γ ∩ U)$.

\subsubsection{Algebraic input and relations}\label{s:alg-input}

\begin{table}[bt]
\begin{center}
\begin{tabular}{|c|c|}
\hline
$d$ & $\cA$
\\
\hline
1 & no requirement
\\
\hline
2 & pivotal
\\
\hline
3 & ribbon
\\
\hline
≥ 4 & symmetric ribbon
\\
\hline
\end{tabular}
\end{center}
\caption{Additional structure on the category $\cA$ depending on the dimension $d$ of the manifold in which the $\cA$-coloured graphs are embedded. Independent of $d$, $\cA$ is assumed to be essentially small and $R$-linear.}
\label{t:A-structure-vs-d}
\end{table}

Now we consider algebraic aspects. Fix a commutative ring $R$.\label{p:ring-R}
The construction of admissible skein modules
depends on a choice of an essentially small
$R$-linear category $\cA$ as input,\label{p:cat-A}
equipped with extra structure depending on $d$  as given in Table~\ref{t:A-structure-vs-d}.
That $\cA$ is essentially small is needed in the excision statement later on to ensure the existence of certain coends.
We do not require that $\cA$ is abelian or even additive.
Fix a full subcategory $\cS ⊆ \cA$.
Note that $\cS$, as yet, does not need to be closed under
retracts or tensor products with objects in $\cA$.
This differs from the setting in \cite{Costantino:2023},
but we will show in Proposition~\ref{p:ideal-closure} below
that our construction of admissible skein modules
is invariant under closure with respect to these operations.

\label{pg:edge-colouring}
An \emph{$\cA$-edge-colouring} of a graph $\ov{Γ}$ is an assignment $χ$
of an object $χ_e ∈ \cA$ to each edge $e$ of the graph.
We refer to $(\ov{Γ},χ)$ as an $\cA$-edge-coloured graph.
\label{pg:edge-colouring-admissible}

\begin{definition}
For a full subcategory $\cS ⊆ \cA$,
an \emph{$\cS$-admissible} edge-colouring $χ$ is a colouring such that
in each connected component of $M$ there is at least one edge $e$ of $\ov{Γ}$ with colour $χ_e \in \cS$.
\end{definition}

The subcategory $\cS$ will enter the construction of $\cS$-admissible skein modules only through the colouring of edges via objects in $\cS$ and not via its morphisms -- this is the reason we only consider full subcategories $\cS ⊆ \cA$.

Given an $\cA$-edge coloured graph $(\ov{Γ},χ)$,
each vertex $v$ has a space of colourings $\Col(\ov{Γ},χ, v) \in \Rmod$,
which is defined to be
\[
\cA(χ_{e₁}^{[*]} ⊗ \cdots ⊗ χ_{e_k}^{[*]},
χ_{e₁'}^{[*]} ⊗ \cdots ⊗ χ_{e_l'}^{[*]}) \ ,
\]
where $e₁,...,e_k;e_l',...,e₁'$ are the incoming and outgoing edges
(see Figure~\ref{f:vertex}), and the $[*]$ in the superscript indicates that we take the dual if
the edge's orientation disagrees with the ``expected'' direction
from being an incoming/outgoing edge.
If there are no incoming/outgoing edges, then the corresponding tensor product
is defined to be $\one$;
note that in the $d = 1$ case, this is ruled out by our stipulation that
the graph cover the entire manifold.
\label{pg:vertex-colouring}
A \emph{vertex colouring} of $(\ov{Γ}, χ)$
is an assignment $φ$ of an element $φ_v ∈ \Col(\ov{Γ},χ,v)$
for each vertex $v$.
\label{pg:vertex-colouring-all}
Then the space of vertex colourings on $(\ov{Γ},χ)$ is
\[
\Col(\ov{Γ},χ) := \prod_v \Col(\ov{Γ},χ,v) \ .
\]
We stress that this is the product of sets, not the tensor product over $R$ of $R$-modules -- the reason for this is explained in Remark~\ref{r:colouring} below.
\label{pg:coloured-graph}
We define an \emph{$\cA$-coloured graph} to be a triple $Γ = (\ov{Γ}, χ, φ)$;
its underlying edge-coloured graph is $\edgecl{Γ} = (\ov{Γ}, χ)$.
\label{pg:admissible-graph}
An $\cA$-coloured graph $Γ = (\ov{Γ}, χ, φ)$
is $\cS$-admissible if the edge colouring $χ$ is $\cS$-admissible.

\label{pg:boundary-value}
A \emph{boundary value} $\VV = (B, \{χ_b\})$
on a $(d-1)$-manifold $N$
is a configuration of marked points $B$ on $N$
together with an assignment of objects $χ_b ∈ \cA$ to each $b$.
\label{pg:bv-admissible-bulk}
\label{pg:bv-admissible}

\begin{definition}
Let $\cS ⊆ \cA$ be a full subcategory, $N$ be a $(d-1)$-manifold, and $\VV$ a boundary value on $N$.
\begin{enumerate}
    \item $\VV$ is \emph{$\cS$-admissible} if in each component of $N$ at least one point of $B$ is labelled by an object in $\cS$.
    \item If $N = ∂M$, $\VV$ is \emph{bulk-$\cS$-admissible (relative to $M$)} if in each component $M_j$ of $M$, the restriction $\VV|_{N_i}$ to at least one boundary piece $N_i ⊆ ∂M_j$ is $\cS$-admissible.
\end{enumerate}
\end{definition}

The boundary value of an $\cA$-coloured graph $Γ$
is the boundary configuration of its underlying uncoloured graph $\ov{Γ}$
together with the assignment $χ_b = χ_e$ where $b = e ∩ \ov{Γ}$;
we denote this boundary value by $∂Γ$, or, since it depends only on
the underlying edge-coloured graph, we may write $∂\edgecl{Γ}$.
Note that a graph whose boundary value is bulk-$\cS$-admissible
is automatically $\cS$-admissible, but the converse is not true;
in particular, there exist $\cS$-admissible graphs with no boundary.

We denote (here $\coprod$ denotes disjoint union of sets):
\begin{align}
\label{e:Graph}
\Graph(M;\VV) & :=
\{ (\ov{Γ}, χ, φ) \;|\; ∂(\ov{Γ}, χ) = \VV \} ~= \hspace{-.3em}
\coprod_{∂(\ov{Γ}, χ) = \VV} \hspace{-.4em} \Col(\ov{Γ}, χ)
\\
\label{e:VGraph}
\VGraph(M;\VV) & :=
\text{ free } R \text{-module generated by }
\Graph(M;\VV)
\end{align}
and
\begin{align}
\label{e:GraphS}
\GraphX(M;\VV) & :=
\{ (\ov{Γ}, χ, φ) ∈ \Graph(M;\VV) \;|\;
χ \text{ is } \cS\text{-admissible} \}
\\
\label{e:VGraphX}
\VGraphX(M;\VV) & :=
\text{ free } R \text{-module generated by }
\GraphX(M;\VV)
\end{align}

\begin{remark}
\label{r:colouring}
For $a ∈ R$ let us denote by $a ⋅_v Γ$
the same $\cA$-coloured graph as $Γ$ except at the vertex $v$,
where the colouring is scaled by $a$.
Then we actually have
\[
a ⋅ Γ ≠ a ⋅_v Γ ∈ \VGraphX(M;\VV) \ ,
\]
and a similar inequality holds for sums of vertex colourings.
Intuitively, these should be identified,
and indeed, they will be identified in the skein module.
However, we want the identification to be a consequence of
the null relations that we describe below.
This makes the separation between topology and algebra a bit clearer.
For example, as it is defined now,
the restriction of an $\cA$-coloured graph $Γ$ to a subset of $M$
is well-defined, but if the above inequality were an equality,
we would have $a ⋅_v Γ = a ⋅ Γ = a ⋅_{v'} Γ$,
so if we consider an open set $U$ containing $v$ but not $v'$,
then $Γ ∩ U$ is not well-defined.
\end{remark}


Let $M = \DD^d = \{\mathbf{x} ∈ \RR^d \;|\; |\mathbf{x}| ≤ 1\}$
the standard closed ball,
and for $d ≥ 2$, consider some standard fixed boundary configuration
on the unit circle in the $x_1$-$x_2$-plane.
For $\cA$-coloured graph $Γ ⊆ \DD^d$ with this boundary configuration,
there is an evaluation
\begin{equation}
\label{e:RT-evaluation}
\eval{Γ} ∈
\begin{cases}
\cA(\one,\bigotimes χ_e^{[*]})
& \text{ for } d ≥ 2
\\
\cA(χ_e, χ_{e'})
& \text{ for } d = 1
\end{cases}
\end{equation}
where in the $d ≥ 2$ case, $\bigotimes χ_e^{[*]}$ is the tensor product of
the labels of edges of $Γ$ intersecting $∂\DD^d$,
taking the dual if the edge is oriented inwards, and in the $d = 1$ case,
$e, e'$ are the edges going into and coming out of $\DD^d$.
For $d = 1$, $\eval{Γ}$ is evaluated by simply composing the morphisms
labelling the internal vertices of $Γ$.
For $d = 2, 3$, $\eval{Γ}$ is the usual Reshetikhin-Turaev evaluation
\cite{ReshetikhinTuraev:1990}.
For $d ≥ 4$, it is possible to ambiently isotope $Γ$ so that
its projection to $\DD³ = \DD^d ∩ \{x₄ = ... = x_d = 0\}$ is generic,
i.e.\ the image of $Γ$ under projection has no self-intersections,
and such that for vertices, the 2-plane $\Xi_v$ is tangent to $D^3$.
Furthermore, we homotope all framings to be the constant standard frame
in the last $d-3$ tangent directions.\footnote{
This can always be achieved by first rotating the framing in a neighbourhood of the vertices so that at each vertex the last $d-3$ vectors of the frame are $e_4,e_5,\dots$. For each edge pick a new framing arbitrarily such that the last $d-3$ vectors are of this form. The new framing either is in the same homotopy class as the original framing (relative to the fixed endpoints at the vertices), or it is not. If it is not add an extra twist to the first three components -- since there are just two homotopy classes of framings (by $π₁(\textrm{GL}_{d-1}^+(\RR)) \simeq \ZZ/2)$ for $d \ge 4$),
the resulting framing of the edge is then homotopic to the initial one.}
We can readily evaluate the projection $\wdtld{Γ}$ of $Γ$,
and we set $\eval{Γ} = \eval{\wdtld{Γ}}$.
We note that this means that even though the category is symmetric,
the framing on the graph can still matter (modulo inserting an even number of twists).

For an $\cA$-coloured graph $Γ$ in $M$ and a codimension-1 submanifold $N ⊆ M$,
we write $Γ \trv N$ if $\Gamma$ intersects $N$ transversally
and away from its boundary and corners.
Consider $D ⊆ M$ a smoothly embedded closed $d$-ball,
possibly meeting the boundary
(think ``squashed against the boundary in one or more places'',
see e.g. Figure \ref{f:squashed-ball}).
For an $\cA$-coloured graph $Γ$ such that $Γ \trv ∂D$,
we denote its \emph{evaluation in $D$} by
$\eval{Γ}_D = \eval{φ(Γ)}$,
where $φ$ is a diffeomorphism carrying $D$ to $\DD^d$
and the boundary configuration of $Γ$ to the standard one on $\DD^d$.
Certainly, $\eval{Γ}_D$ depends on $φ$, but ultimately for our purposes,
in particular for defining null relations,
the choice does not matter, since they are all related by
(non-canonical) isomorphisms,
and we only care about comparing with 0.

The most important properties of the evaluation are (see
\cite{ReshetikhinTuraev:1990}, as well as \cite[Thm.\,2.6]{TuraevVirelizier:2017} (2d case) and \cite[Thm.\,I.2.5]{Turaev-book} (3d case)):
\begin{itemize}
\item $\eval{⋅}_D$ is invariant under isotopy supported in $D$,
i.e.\ if $Γ^{\bullet}$ is a graph isotopy that is $C^{∞}$-constant outside
the interior of $D$, then $\eval{Γ⁰}_D = \eval{Γ¹}_D$.

\item $\eval{⋅}_D$ is invariant under continuous change of framing,
i.e.\ if $Γ⁰,Γ¹$ are $\cA$-coloured graphs, equal as unframed oriented graphs,
that are related by a continuous change of framing of its edges
fixed at their endpoints, then $\eval{Γ⁰}_D = \eval{Γ¹}_D$.

\item $\eval{⋅}_D$ is closed under inclusion of balls,
i.e.\ if $D' ⊆ D$ is a smaller ball, and $\sum a_i Γ_i,\sum b_j Γ_j'$ are
linear combinations of $\cA$-coloured graphs that agree outside $D'$ and
$\sum a_i \eval{Γ_i}_{D'} = \sum b_j \eval{Γ_j'}_{D'}$,
then $\sum a_i \eval{Γ_i}_D = \sum b_j \eval{Γ_j'}_D$.

\item For $D = M = \DD^d$, and $Γ$ a star graph,
i.e.\ one vertex at the origin and edges point radially out,
if the vertex is labelled by a morphism $f$,
then $\eval{Γ}_D = f$.
\end{itemize}


Let $D ⊆ M$ be a smoothly embedded closed $d$-ball,
possibly meeting the boundary.
\label{pg:primitive-null}
We define a \emph{primitive null graph with respect to $D$}
to be a sum $\sum a_j Γ_j$ such that $Γ_j \trv ∂D$ and
the $Γ_j$'s agree outside $D$,
and the linear combination of their evaluations in $D$ is 0,
$\sum a_j \eval{Γ_j}_D = 0$.
We denote the set of primitive null graphs with a given boundary value $\VV$ by
\begin{align*}
\PrimNull(M,D;\VV) \,:=~
	&\big\{\, \sum a_j Γ_j ∈ \VGraph(M;\VV) \,\big|
 \\
		& \quad Γ_j \backslash D \text{ are the same, }
		Γ_j \trv ∂D \,,~
		\sum a_j \eval{Γ_j}_D = 0
	\,\big\} \ .
\end{align*}
For \label{pg:null} a subset $U ⊆ M$,
we define a \emph{null relation in $U$} to be a sum of
primitive null graphs that are null with respect to balls
with interior contained in $U$:\footnote{
We take $\Int D ⊆ U$ instead of $D ⊆ U$,
as we will use $U = M \backslash L$, $L$ a codimension-1 submanifold,
and we want to allow $D$ to be ``squashed against $L$'';
see e.g. Figure \ref{f:squashed-ball}.}
\[
\Null(M,U;\VV) = \Span \big\{\, \sum a_j Γ_j ∈ \PrimNull(M,D;\VV) \,|\,
	\Int D ⊆ U \,\big\}
\]
Omitting $U$ means we take $U = M$.
\label{pg:skein}
The usual \emph{skein module of $M$ with boundary value $\VV$}
is defined as
\[
\SNJ{}(M;\VV) := \VGraph(M;\VV) / \Null(M;\VV) \ .
\]

Next we define the $\cS$-admissible versions. Set
\label{pg:S-primitive-null}
\begin{equation}
\label{eq:PrimNullS}
\PrimNullX(M,D;\VV) :=
	\big\{ \, \sum a_j Γ_j ∈  \PrimNull(M,D;\VV) \,\big|\,
		Γ_j \backslash D
		\text{ is } \cS \text{-admissible}
	\,\big\} \ .
\end{equation}
We say $\sum a_j Γ_j ∈ \PrimNull(M,D;\VV)$ are
\emph{primitive $\cS$-null graphs with respect to $D$}.
In other words, the primitive $\cS$-null graphs are those primitive null graphs
for which the part of $\Gamma_j$ that lies outside of $D$ is $\cS$-admissible
(recall that by definition all $\Gamma_j$ in $\sum a_j Γ_j$ agree outside of $D$).

\label{pg:S-null}

For a subset $U ⊆ M$,
we define an \emph{$\cS$-null relation in $U$} to be a sum of
primitive $\cS$-null graphs that are null
with respect to a ball $D$ with $\Int D ⊆ U$:
\begin{equation}
\label{eq:NullS}
\NullX(M,U;\VV) = \Span \big\{\, \sum a_j Γ_j ∈ \PrimNullX(M,D;\VV) \,\big|\,
	\Int D ⊆ U \,\big\} \ .
\end{equation}
As above, omitting $U$ means we take $U = M$.

\begin{definition}
\label{d:S-skein}
Let $\cS ⊆ \cA$ be a full subcategory.
The \emph{$\cS$-admissible skein module of $M$ with boundary value $\VV$}
is defined as
\[
\SNX(M;\VV) := \VGraphX(M;\VV) / \NullX(M;\VV) \ .
\]
\end{definition}

The two notions of skein modules, $\SN$ and $\SNX$, are naturally related,
\label{pg:iota-S}
namely, there are obvious inclusions
$\VGraphX(M;\VV) ⊆ \VGraph(M;\VV)$ and $\NullX(M;\VV) ⊆ \Null(M;\VV)$,
resulting in a map $\ov{ι}_{\cS} : \SNX(M;\VV) \to \SN(M;\VV)$:
\begin{equation}
\label{e:iota-S-A}
\begin{tikzcd}
\NullX(M;\VV) \ar[r] \ar[d]
& \VGraphX(M;\VV) \ar[r] \ar[d]
& \SNX(M;\VV)
\ar[d, "\ov{ι}_{\cS}"]
\\
\Null(M;\VV) \ar[r]
& \VGraph(M;\VV) \ar[r]
& \SN(M;\VV)
\end{tikzcd}
\end{equation}
Similarly, an inclusion of full subcategories $\cS ⊆ \cT$
induces a map $\ov{ι}_{\cS ⊆ \cT} : \SNX(M;\VV) \to \SNXp(M;\VV)$: \label{pg:iota-S-T}
\begin{equation}
\label{e:iota-S-Sp}
\begin{tikzcd}
\NullX(M;\VV) \ar[r] \ar[d]
& \VGraphX(M;\VV) \ar[r] \ar[d]
& \SNX(M;\VV)
\ar[d, "\ov{ι}_{\cS ⊆ \cT}"]
\\
\NullXp(M;\VV) \ar[r]
& \VGraphXp(M;\VV) \ar[r]
& \SNXp(M;\VV)
\end{tikzcd}
\end{equation}
These maps $\ov{ι}_{\cS}, \ov{ι}_{\cS ⊆ \cT}$ are compatible with each other in the following sense:
\[
\ov{ι}_{\cS} = \ov{ι}_{\cT} \circ \ov{ι}_{\cS ⊆ \cT} \ .
\]
We may even generalise to (pivotal/ribbon) functors $\mathcal{F} : \cA \to \cA'$,
sending a full subcategory $\cS ⊆ \cA$ into a full subcategory $\cS' ⊆ \cA'$,
resulting in a map $\ov{ι}_{\mathcal{F}} : \SNX \to \SNJ{\cS'}$
(see also \cite[Prop.\,2.3]{Costantino:2023}),
but we will not be using such maps in this paper.

\begin{remark}
\label{r:ncat-strat}
\begin{enumerate}
\item
In general, the map $\ov{ι}_{\cS ⊆ \cT}$ is neither injective nor surjective.
For example, consider $M = \DD²$ with empty boundary value.
Take $\cA$ to be unimodular finite tensor and non-semisimple,
and take $\cS = \Proj(\cA)$ to be the projective ideal.
Then both $\SNX(\DD²)$ and $\SNJ{}(\DD²)$ are 1-dimensional,
but the map $\ov{ι}_{\cS ⊆ \cA}$ is 0.
Skein modules for this choice of $\cA$ and $\cS$ are discussed in more detail
in Section~\ref{s:relation-MSWY}.

\item
It seems plausible to expect 
that there is a suitable notion of $d$-category,
which allows one to define
``$d$-dimensional skeins'' in $d$-dimensional manifolds.
Such skein modules are spanned by stratifications of the manifold,
with labelling of codimension-$k$ strata 
with $k$-morphisms,
and which are again considered up to null relations in $d$-balls.
See for example \cite{Fuchs:2021} for $d = 2$,
and \cite{BarrettMeusburger:2024} for $d = 3$.
In the context of the present paper, we can think of the graphs defining our skeins
as the $d$- and $(d-1)$-codimension strata,
and the $d$-category is highly degenerate in that there is a unique
$k$-morphism for $k ≤ d-2$. Hence there is no need to label
codimension-$k$ strata other than $k = d-1$ and $d$.
When $d = 1$, this means that the graphs representing skeins
should in fact be a stratification of the 1-manifold,
hence must be surjective on the whole 1-manifold.
\end{enumerate}
\end{remark}

\subsection{Some Basic Properties}
\label{s:basic-prop}

Recall from Section~\ref{s:topol-setup} the notion of an isotopy of graphs $Γ^{\bullet}: Γ \times [0,1] \to M$.
In \cite{Costantino:2023}, $\cA$-coloured graphs were considered up to isotopy,
even before taking quotient by null relations.
In our setting, isotopy invariance has to be proved:

\begin{lemma}
\label{l:isotopy-null}
Let $\{U_i\}$ be a finite open cover of $M$,
and let $Γ,Γ'$ be $\cA$-coloured graphs in $M$ with boundary value $\XX$,
and suppose they are related by a graph isotopy $Γ^{\bullet}$
with $Γ = Γ⁰, Γ' = Γ¹$, that is $C^{∞}$-constant on the boundary $∂Γ$.
Then there exists a sequence of graph isotopies
$Γ_j^{\bullet}$, $j = 1,...,k$,
with $Γ = Γ₁⁰, Γ_j¹ = Γ_{j+1}⁰, Γ_k¹ = Γ'$,
that are $C^{∞}$-constant on the boundary,
each $Γ_j^{\bullet}$ is supported in a ball $D_j$
contained in some $U_{i_j}$, and $Γ_j^t$ is transverse to $∂D_j$ for all $t \in [0,1]$.

Moreover, suppose that $Γ$ is $\cS$-admissible.
Then each $Γ_j^{\bullet}$ and $D_j$ can be chosen so that
$Γ_j^t \backslash D_j$ is in addition
$\cS$-admissible for all $t \in [0,1]$.
\end{lemma}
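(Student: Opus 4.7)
The plan is to use a compactness argument to decompose $Γ^\bullet$ into finitely many small sub-isotopies, localise each to a ball contained in one $U_i$, and arrange transversality and $\cS$-admissibility by small adjustments.

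First, I would invoke the Lebesgue number lemma for the open cover $\{(Γ^\bullet)^{-1}(U_i)\}$ of the compact product $[0,1] \times Γ$ to obtain $δ > 0$, then partition $[0,1]$ into subintervals $[s_{l-1},s_l]$ of length less than $δ/2$ and cover $Γ$ by finitely many open pieces $\{W_α\}$ of diameter less than $δ/2$, refining so that each vertex of $Γ$ lies in the interior of a single $W_α$. Then the image $Γ^\bullet(\ov{W_α} \times [s_{l-1},s_l])$ is a compact subset of some $U_{i(l,α)}$, and I can choose a closed ball $D_{l,α}$ with $Γ^\bullet(\ov{W_α} \times [s_{l-1},s_l]) \subseteq \Int D_{l,α}$ and $D_{l,α} \subseteq U_{i(l,α)}$. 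Enumerating the pairs $(l,α)$ in some order, I would construct for each a graph isotopy $Γ_j^\bullet$ supported in $\Int D_{l,α}$ that performs the corresponding displacement on $W_α$ over $[s_{l-1},s_l]$ while leaving the rest of the graph fixed, using a bump-function modification in a collar of $∂ W_α$ to make the restricted isotopy $C^\infty$-constant on $∂ W_α$ so that it extends by the identity as described in Section~\ref{s:topol-setup}.

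Since each sub-isotopy $Γ_j^\bullet$ is supported in the interior of $D_j := D_{l,α}$, the graph $Γ_j^t$ agrees with $Γ_j^0$ on $∂ D_j$ for all $t$, so the transversality condition $Γ_j^t \trv ∂ D_j$ reduces to the static condition $Γ_j^0 \trv ∂ D_j$, which can be achieved by a small Sard-type perturbation of the radius of $D_j$ within $U_{i(l,α)}$. For the $\cS$-admissibility refinement, I would further shrink the $W_α$'s so that each $D_j$ is too small to contain any entire $\cS$-labelled edge of $Γ^t$ throughout the relevant subinterval; since $Γ$ is $\cS$-admissible and edge labels are preserved under graph isotopy, a portion of some $\cS$-labelled edge in each connected component of $M$ then remains outside $D_j$ at every time, so $Γ_j^t \backslash D_j$ is $\cS$-admissible.

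The main obstacle I anticipate is localising the isotopy cleanly near vertices, where the rigid vertex data (the $2$-plane $Ξ_v$, its coframing, and the incident edge framings, cf.\ Figure~\ref{f:vertex}) prevents naively ``sliding'' a vertex across a partition boundary. I would circumvent this via the cover refinement above that keeps each vertex and a small neighbourhood of it inside a single $W_α$, so that each vertex-moving sub-isotopy is automatically a graph isotopy supported in one ball. A related technical point is that the bump-function modification of $Γ^\bullet|_{W_α}$ near $∂ W_α$ must preserve the smooth structure of edges transverse to $∂ W_α$; this can be arranged by interpolating linearly in the isotopy parameter $t$ between the identity and the prescribed displacement in a collar of $∂ W_α$, which leaves edge framings and orientations intact.
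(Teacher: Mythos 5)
Your decomposition plan runs into a genuine gap at the step ``construct a graph isotopy $Γ_j^{\bullet}$ supported in $\Int D_{l,α}$ that performs the corresponding displacement on $W_α$ over $[s_{l-1},s_l]$ while leaving the rest of the graph fixed.'' There is no well-defined ``corresponding displacement'' to localize, because a graph isotopy is a smooth family of embeddings, not the restriction of a flow of a time-dependent vector field on $M$. You cannot take $Γ^{\bullet}|_{W_α}$, damp it near $∂ W_α$, and extend by the identity: after you have performed the damped move on $W_{α_1}$, the part of the graph lying in $W_{α_1}\cap W_{α_2}$ is already partly displaced, so the ``restriction of $Γ^{\bullet}$ to $W_{α_2}$'' no longer applies to the current graph, and there is no canonical prescription for what the $W_{α_2}$ sub-isotopy should be. Your plan never explains why, after running through all $α$ for a fixed $l$, one lands on $Γ^{s_l}$. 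This composition problem is precisely what the paper's proof confronts head-on, and it does so by two mechanisms your proposal omits: first, Lemma~\ref{l:ambient-graph} extracts from $Γ^{\bullet}$ an ambient isotopy $φ^{\bullet}$ plus a residual graph isotopy confined to tiny vertex balls $B_v$, converting the main part of the problem into one about a diffeotopy of $M$ (which \emph{is} the flow of a time-dependent vector field). Second, Lemma~\ref{l:isotopy-moves} then decomposes that diffeotopy into moves supported in the cover by an induction in which one damps the generating vector field with a bump function, makes a small amount of \emph{time} progress, and recurses on the conjugated remaining isotopy with one fewer open set; this accounts carefully for exactly the overlap/composition bookkeeping that your enumeration over $(l,α)$ glosses over.

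A secondary but real issue is the reduction to ambient isotopies near vertices. You note the rigid vertex data and propose to isolate each vertex in a single $W_α$, but isolation alone does not tell you that the damped restriction of $Γ^{\bullet}$ near a vertex is still a valid graph isotopy (with edges tangent to $Ξ_v$, compatible framings, etc.), nor does it let you invoke the isotopy extension theorem, which is stated for submanifolds rather than graphs with branch points. The paper's $B_v$-device is the precise mechanism by which the non-ambient behaviour near a vertex is quarantined. Your $\cS$-admissibility argument by shrinking $δ$ so no $D_j$ swallows an entire $\cS$-edge is plausible on compactness grounds, but it is a more fragile route than the paper's push-map argument (which drags a bit of an $\cS$-edge across $∂ D_j$ on the fly and then undoes it), and in particular the paper's version works without choosing the cover at the outset with the $\cS$-edges in mind. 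Overall: the underlying intuition (Lebesgue-number localization in space and time) is the right one, and indeed underlies the paper's Lemma~\ref{l:isotopy-moves}, but as stated your argument is missing the reduction to ambient isotopies and the inductive bookkeeping that makes the localized moves actually compose to the given isotopy.
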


We give the proof in Appendix~\ref{s:appendix-isotopy-null}.
Since the evaluation in balls is isotopy invariant
by the properties of $\eval{⋅}_D$ as listed in Section~\ref{s:alg-input},
this shows that ($\cS$-admissible) skeins are isotopy invariant.

\medskip

An inclusion $ι: N ↪ N'$ of $(d-1)$-manifolds
sends a boundary value $\VV$ on $N$ to a boundary value on $N'$,
which we denote by $ι_*(\VV)$.
Let $φ: M \to M'$ be an inclusion of $d$-manifolds, such that every component of $M'$ is in the image of $\varphi$.
Let $\VV$ be a boundary value on $∂M$.
Suppose for each boundary piece $N_i ⊆ ∂M$ such that $\VV|_{N_i}$ is non-empty,
its image $φ(N_i)$ is contained in $∂M'$;
then it makes sense to define $φ_*(\VV) = \bigcup (φ|_{N_i})_*(\VV|_{N_i})$,
the union taken over such boundary pieces $N_i$,
and the image of any $Γ ∈ \GraphX(M;\VV)$ under $φ$ is a graph
$φ(Γ) ∈ \GraphX(M';φ_*(\VV))$.

Extending $R$-linearly, we get a map
$φ_* : \VGraphX(M;\VV) \to \VGraphX(M';φ_*(\VV))$.
It clearly preserves primitive $\cS$-null graphs,
so $φ_*(\NullX(M;\VV)) ⊆ \NullX(M';φ_*(\VV))$,
hence $φ_*$ descends to a map
\[
φ_* : \SNX(M;\VV) \to \SNX(M';φ_*(\VV))
\]

The following lemma says that
we can essentially ignore corner structure and most of the boundary:
\begin{lemma}
\label{l:ignore-corners}
Let $M$ be a $d$-manifold, and $\VV$ a boundary value on $∂M$.
Let $φ: M' ↪ M$ be a submanifold containing the interior of $M$
and some neighbourhood of $\VV$,
so we may regard $\VV$ as a boundary value on $∂M'$.
Then maps $φ_*$ induced by the inclusion are isomorphisms:
\begin{align*}
φ_* : \GraphX(M';\VV) &\simeq \GraphX(M;\VV)
\\
φ_* : \VGraphX(M';\VV) &\simeq \VGraphX(M;\VV)
\\
φ_*(\NullX(M';\VV)) &= \NullX(M;\VV)
\\
φ_* : \SNX(M';\VV) &\simeq \SNX(M;\VV)
\end{align*}
\end{lemma}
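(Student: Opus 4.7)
The plan is to prove the four statements in order; each reduces to a single topological observation, with the real work confined to the equality of null submodules.

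\textbf{Bijection on graphs.} The key point is that any graph $\Gamma$ in $M$ with boundary value $\VV$ is automatically contained in $M' \subseteq M$: the underlying uncoloured graph $\overline{\Gamma}$ meets $\partial M$ transversely only at the marked points $\partial \overline{\Gamma} \subseteq \VV$, which lie in the given neighbourhood of $\VV$ inside $M'$, while the rest of $\overline{\Gamma}$ lies in $\Int M \subseteq M'$. The edge- and vertex-colourings are unaffected by the inclusion, so $\varphi$ induces a bijection $\GraphX(M';\VV) \simeq \GraphX(M;\VV)$, and taking free $R$-modules gives the isomorphism on $\VGraphX$.

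\textbf{Equality of null submodules.} The inclusion $\varphi_*(\NullX(M';\VV)) \subseteq \NullX(M;\VV)$ is immediate, and the converse is where the work lies. Given a generator $\sum_j a_j \Gamma_j \in \PrimNullX(M, D; \VV)$ of $\NullX(M;\VV)$, each $\Gamma_j$ lies in $M'$ by the first step, so the compact set $K := D \setminus M' \subseteq \partial M$ is disjoint from every $\Gamma_j$. Choose an open neighbourhood $W$ of $K$ in $M$ disjoint from $\bigcup_j \Gamma_j$. Using the collar of $\partial M$ near $K$ (as set up in Appendix~\ref{s:mfld-corner}) together with a bump function supported in $W$, I would construct a diffeomorphism $\psi : M \to M$ that is isotopic to $\id_M$, equal to the identity outside $W$, and satisfies $\psi(D) \subseteq M'$. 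Since $\psi$ fixes every $\Gamma_j$ pointwise, $\psi(\Gamma_j) = \Gamma_j$, and if the trivialisation used to define $\eval{\,\cdot\,}_D$ is $D \to \DD^d$, then its composite with $\psi^{-1}$ gives a trivialisation of $\psi(D)$ under which $\eval{\Gamma_j}_{\psi(D)} = \eval{\Gamma_j}_D$, so $\sum_j a_j \eval{\Gamma_j}_{\psi(D)} = 0$. Moreover $\Gamma_j \cap \psi(D) = \Gamma_j \cap D$, so the graphs $\Gamma_j$ still coincide outside $\psi(D)$ and $\cS$-admissibility of $\Gamma_j \setminus \psi(D)$ is equivalent to that of $\Gamma_j \setminus D$. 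Hence $\sum_j a_j \Gamma_j$ is a primitive $\cS$-null graph with respect to the ball $\psi(D) \subseteq M'$, and so lies in $\varphi_*(\NullX(M';\VV))$. The isomorphism on skein modules then follows formally.

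\textbf{Main obstacle.} The delicate step is the construction of the diffeomorphism $\psi$ that pushes $D$ off $K$ while remaining supported in $W$ and while keeping $\psi(D)$ a smoothly embedded $d$-ball with $\Int \psi(D) \subseteq M'$. Away from corners this is a routine collar-plus-partition-of-unity argument; near corners of $M$ one must use collars compatible with the manifold-with-corners structure of Appendix~\ref{s:mfld-corner} to ensure $\psi$ is a genuine diffeomorphism of $M$ and that the pushed ball $\psi(D)$ inherits the required smoothness.
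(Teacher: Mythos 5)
Your proposal is correct and takes essentially the same route as the paper's proof: both observe that graphs with boundary value $\VV$ automatically lie in $M'$, and both dispose of balls $D\not\subseteq M'$ by deforming $D$ away from the offending part of $\partial M$ while leaving the graphs and their evaluation unchanged. You are somewhat more explicit than the paper (which says only ``shrink $D$ slightly away from the boundary'') in packaging this deformation as a diffeomorphism $\psi$ supported off the graphs, which is a clean way to guarantee that $\psi(D)$ is still a smoothly embedded ball, that $\Gamma_j\cap\psi(D)=\Gamma_j\cap D$, and that the evaluations agree; this is a welcome precision but not a different argument.
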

\begin{proof}
Since graphs are not allowed to touch the boundary again away from
its boundary value, it is clear that $φ$ induces a bijection on $\GraphX$,
and hence an isomorphism on $\VGraphX$.
Balls in $M$ that do not touch the boundary certainly can be considered
as balls in $M'$, so give the same $\cS$-null relations;
balls $D ⊆ M$ that do touch the boundary,
or more precisely, $D$ not contained in $M'$,
do not give any new null relations: again, since graphs avoid the boundary
except at their boundary value, if $D$ gives an $\cS$-null relation
involving graphs $Γ₁,...Γ_k$, then we may shrink $D$ slightly away from
the boundary, and now we have a ball $D'$ that gives
the same $\cS$-null relation.
\end{proof}

In particular, we can take $M'$ to be $M$ without its corners, and hence
we will not be too careful with corners and boundaries of $d$-manifolds;
for example, when we refer to a $d$-ball, we really mean a ball
with any possible boundary/corner structure.
This will be useful for example in the context of handle attachments
as used in the proof of Proposition~\ref{p:totally-I} below.

We say a $d$-manifold $M$ is \emph{finitary}
if it can be built by attaching a finite number of handles to a finite number of balls;
loosely speaking, it can be obtained by performing
a finite number of gluing operations on a finite number of balls.
Skeins are built from finite graphs, and
since each graph and each ball which is part of a null relation is
contained in some finitary submanifold $M' ⊆ M$,
the skein module for $M$ is determined by its finitary submanifolds.
This establishes the following lemma:

\begin{lemma}
\label{l:finitary-colimit}
Let $M$ be a $d$-manifold, with boundary value $\VV$ on $∂M$.
Let $N ⊆ ∂M$ be the union of boundary pieces $N_i ⊆ ∂M$
on which $\VV$ is non-empty.
Consider the collection of finitary submanifolds
\[
\cM^{fin} := \{M' ∈ \cM \;|\; N ⊆ ∂M'  \text{ and } M' \text{ is finitary} \} \ ,
\]
partially-ordered by inclusion.
We may naturally treat $\VV$ as a boundary value on $∂M'$ for $M' ∈ \cM$, so that
the inclusions induce maps $(φ_{M' ⊆ M''})_*: \SNX(M';\VV) \to \SNX(M'';\VV)$.
Then the induced maps $(φ_{M' ⊆ M})_* : \SNX(M';\VV) \to \SNX(M;\VV)$
descend to an isomorphism out of the colimit of $\SNX(M';\VV)$
over $\cM^{fin}$:
\[
\displaystyle \underset{M' ∈ \cM^{fin}}{\Colim} \SNX(M';\VV) \simeq \SNX(M;\VV) \ .
\]
\end{lemma}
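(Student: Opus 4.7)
The plan is to show that the canonical map
\[
\Phi : \underset{M' \in \cM^{fin}}{\Colim}\, \SNX(M';\VV) \longrightarrow \SNX(M;\VV)
\]
induced by the inclusion-pushforwards is an isomorphism by separately establishing surjectivity and injectivity. First I would verify that $\cM^{fin}$ is directed under inclusion: given $M_1', M_2' \in \cM^{fin}$, a finitary submanifold containing both is obtained by taking a regular neighbourhood of $M_1' \cup M_2'$ and attaching the required (still finitely many) handles, yielding a common upper bound that still has $N$ in its boundary.

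For surjectivity, any class in $\SNX(M;\VV)$ is represented by a finite $R$-linear combination of $\cA$-coloured graphs $\Gamma_i$, and each such graph has compact underlying image in $M$. A finitary submanifold $M' \in \cM^{fin}$ containing $\bigcup_i \Gamma_i$ with $N \subseteq \partial M'$ can then be built from a regular neighbourhood of $\bigcup_i \Gamma_i$ glued to a collar of $N$; Lemma~\ref{l:ignore-corners} is used to absorb any spurious boundary or corner structure introduced in this construction. Each $\Gamma_i$ lies in $\GraphX(M';\VV)$ and pushes forward to the original $\Gamma_i$ in $\GraphX(M;\VV)$, so every generator is in the image of $\Phi$.

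For injectivity, an element $x$ of the colimit is represented by some $[\xi] \in \SNX(M_0;\VV)$ with $M_0 \in \cM^{fin}$ and $\xi = \sum_i a_i \Gamma_i \in \VGraphX(M_0;\VV)$. If $\Phi(x) = 0$ then $\xi \in \NullX(M;\VV)$, which by definition means $\xi = \sum_j \eta_j$ in $\VGraphX(M;\VV)$ with each $\eta_j \in \PrimNullX(M, D_j; \VV)$ for some closed ball $D_j \subseteq M$. The sum is finite and each $D_j$ is compact, so I can enlarge $M_0$ to a finitary $M_1 \in \cM^{fin}$ containing $M_0 \cup \bigcup_j D_j$ while preserving $N \subseteq \partial M_1$; this uses the directedness argument from the first step, applied to $M_0$ together with regular neighbourhoods of the $D_j$. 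The identity $\xi = \sum_j \eta_j$ then already holds inside $\VGraphX(M_1;\VV)$ and each $\eta_j$ remains primitive $\cS$-null with respect to $D_j$ in $M_1$, so $[\xi]_{M_1} = 0$ in $\SNX(M_1;\VV)$, whence $x = 0$ in the colimit.

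The main obstacle is preserving the constraint $N \subseteq \partial M'$ throughout all enlargement steps, particularly when $N$ is non-compact. This is handled by performing all enlargements in the interior of $M$ and away from $\partial M \setminus N$: graphs meet $\partial M$ only at the marked points of $\VV$ (which lie on $N$), and the balls $D_j$ supporting null relations can always be taken disjoint from $\partial M \setminus N$ by Lemma~\ref{l:ignore-corners}, so the enlargement affects only the interior and the bulk of the finitary data, leaving the boundary piece $N$ unchanged.
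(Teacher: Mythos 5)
Your proposal is correct and follows essentially the same approach as the paper. The paper gives only a one-sentence justification preceding the lemma — that every graph and every ball supporting a null relation lies in some finitary submanifold, so the skein module is determined by its finitary submanifolds — and your proof is precisely the detailed unwinding of that remark into directedness, surjectivity, and injectivity arguments.
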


$\cS$-admissible skein modules also behave well under disjoint union.
Consider $d$-manifolds $M₁, M₂$.
Taking the disjoint of graphs in $M₁$ and $M₂$ defines a bijection
\begin{align}
\label{e:disjoint-union-graph}
\begin{split}
\GraphX(M₁;\VV₁) \times \GraphX(M₂;\VV₂) &\simeq \GraphX(M₁ \sqcup M₂;\VV)
\\
(Γ₁,Γ₂) &\mapsto Γ₁ \sqcup Γ₂
\end{split}
\end{align}
which induces an isomorphism of $R$-modules
\begin{align}
\label{e:disjoint-union-vgraph}
\begin{split}
\VGraphX(M₁;\VV₁) ⊗_R \VGraphX(M₂;\VV₂)
&\simeq
\VGraphX(M₁ \sqcup M₂;\VV)
\\
Γ₁ ⊗_R Γ₂ &\mapsto Γ₁ \sqcup Γ₂
\end{split}
\end{align}
This in turn induces an isomorphism on the level of $\SNX$:

\begin{lemma}
\label{l:M-disjoint-union}
For $d$-manifolds $M₁, M₂$ with boundary values $\VV₁,\VV₂$ respectively,
\[
\SNX(M₁; \VV₁) ⊗_R \SNX(M₂;\VV₂) 
~\simeq~
\SNX(M₁ \sqcup M₂; \VV₁ \sqcup \VV₂) 
\ .
\]
\end{lemma}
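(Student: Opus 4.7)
My plan is to lift the $R$-module isomorphism in \eqref{e:disjoint-union-vgraph} to the quotient level by matching up the null submodules on the two sides. Specifically, I claim that under \eqref{e:disjoint-union-vgraph}, the submodule $\NullX(M_1 \sqcup M_2; \VV_1 \sqcup \VV_2)$ corresponds to
$$\NullX(M_1;\VV_1) \otimes_R \VGraphX(M_2;\VV_2) \;+\; \VGraphX(M_1;\VV_1) \otimes_R \NullX(M_2;\VV_2).$$
Granting this, the lemma follows at once from the standard right-exact identity $(A \otimes_R B)/(A' \otimes_R B + A \otimes_R B') \simeq (A/A') \otimes_R (B/B')$ for $R$-modules $A,B$ and submodules $A' \subseteq A$, $B' \subseteq B$.

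For the key claim, the topological input is that a $d$-ball $D$ is connected, so any ball in $M_1 \sqcup M_2$ lies entirely in one factor; assume without loss of generality $D \subseteq M_1$. A primitive null graph $\sum_j a_j \Gamma_j$ with respect to such a $D$ decomposes as $\Gamma_j = \Gamma_j^{(1)} \sqcup \Gamma^{(2)}$ with a common $M_2$-part $\Gamma^{(2)}$ (since the $\Gamma_j$ all agree outside $D$), so under \eqref{e:disjoint-union-vgraph} the sum corresponds to $\big(\sum_j a_j \Gamma_j^{(1)}\big) \otimes_R \Gamma^{(2)}$, and the vanishing $\sum_j a_j \eval{\Gamma_j}_D = 0$ reduces to $\sum_j a_j \Gamma_j^{(1)} \in \PrimNull(M_1, D; \VV_1)$.

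It remains to track the $\cS$-admissibility condition. The requirement that $\Gamma_j \setminus D$ be $\cS$-admissible asks that each connected component of $M_1 \sqcup M_2$ contain at least one $\cS$-labelled edge of $\Gamma_j \setminus D$: restricted to components of $M_2$ (which lie entirely outside $D$) this forces $\Gamma^{(2)} \in \GraphX(M_2; \VV_2)$, and restricted to components of $M_1$ it says $\sum_j a_j \Gamma_j^{(1)} \in \PrimNullX(M_1, D; \VV_1)$. Conversely, any elementary pure tensor $n \otimes \Gamma^{(2)}$ with $n \in \PrimNullX(M_1, D; \VV_1)$ and $\Gamma^{(2)} \in \GraphX(M_2; \VV_2)$ (together with the symmetric variants, swapping the roles of $M_1$ and $M_2$) manifestly comes from such a primitive $\cS$-null relation in $M_1 \sqcup M_2$. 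Since both submodules in the claimed equality are $R$-spanned by these elementary tensors, the identification follows. I do not expect any substantive obstacle: the argument is purely bookkeeping, combining the connectedness of balls with the component-wise nature of the $\cS$-admissibility condition.
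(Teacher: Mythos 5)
Your proof is correct and takes essentially the same approach as the paper: both hinge on the observation that a $d$-ball is connected, hence lies entirely in one of the two factors, so that the $\cS$-null submodule of the disjoint union decomposes as $\NullX(M_1;\VV_1)\otimes_R\VGraphX(M_2;\VV_2)+\VGraphX(M_1;\VV_1)\otimes_R\NullX(M_2;\VV_2)$ under \eqref{e:disjoint-union-vgraph}. The paper phrases the decomposition of $\NullX(M_1\sqcup M_2;\VV)$ via the subsets $\NullX(M,M_1;\VV)$ and $\NullX(M,M_2;\VV)$, but the content and the bookkeeping of the $\cS$-admissibility condition are the same as yours.
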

\begin{proof}
The tensor product of skein modules is the quotient of the left hand side in
\eqref{e:disjoint-union-vgraph} by the subspace
\[
\NullX(M₁;\VV₁) ⊗_R \VGraphX(M₂;\VV₂) + \VGraphX(M₁;\VV₁) ⊗_R \NullX(M₂;\VV₂) \ .
\]
Write $M := M₁ \sqcup M₂, \VV = \VV₁ \sqcup \VV₂$.
We need to show that the above subspace is the image of the null relations on $M$ under the isomorphism \eqref{e:disjoint-union-vgraph}.
Since any ball $D ⊆ M$ is in either $M₁$ or $M₂$,
\[
\PrimNullX(M;\VV) =
\PrimNullX(M,M₁;\VV) ∪ \PrimNullX(M,M₂;\VV)
\]
where $M₁,M₂$ are treated as subsets of $M$, and hence
\[
\NullX(M;\VV) =
\NullX(M,M₁;\VV) + \NullX(M,M₂;\VV)
\ .
\]
It is then easy to see that under the isomorphism \eqref{e:disjoint-union-vgraph},
\begin{align*}
\NullX(M,M₁;\VV) &= \NullX(M₁;\VV₁) ⊗_R \VGraphX(M₂;\VV₂) \ ,
\\
\NullX(M,M₂;\VV) &= \VGraphX(M₁;\VV₁) ⊗_R \NullX(M₂;\VV₂)
\ .
\end{align*}
\end{proof}

Next we consider some properties related to choices of $\cS ⊆ \cA$,
in particular, we discuss some situations when $\SNX$ agrees with
the usual skein module $\SNJ{}$:

\begin{lemma}
\label{l:S=A}
For $\cS = \cA$ and for any boundary value $\VV$,
$\ov{ι}_{\cA}: \SNJ{\cA}(M;\VV) \simeq \SN(M;\VV)$.
\end{lemma}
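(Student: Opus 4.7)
The plan is to show $\ov{\iota}_\cA$ is both surjective and injective. Using Lemma~\ref{l:M-disjoint-union}, the problem reduces to $M$ connected. On any component where $\VV$ is non-empty, every graph must contain at least one edge, incident to a marked point of $\VV$, so $\cA$-admissibility is automatic and the claim is trivial; the case $d=1$ is similarly trivial by our convention that graphs cover the 1-manifold. The substantive case is thus $M$ connected, $d \geq 2$, and $\VV = \emptyset$, in which the only non-$\cA$-admissible graph is the empty graph $\emptyset$.

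The central computational input is that in any small ball $D \subseteq \Int(M)$, an appropriately framed $\one$-labelled small circle $C_\one$ evaluates to $\eval{C_\one}_D = \dim(\one)\cdot\id_\one = \id_\one = \eval{\emptyset}_D$, giving the primitive null relation $\emptyset - C_\one \in \PrimNull(M,D;\emptyset)$. Surjectivity is then immediate: either $\Gamma \neq \emptyset$ already lies in $\VGraph_\cA$, or $\Gamma = \emptyset$ and $[\Gamma] = [C_\one]$ in $\SN(M;\emptyset)$, with $C_\one \in \VGraph_\cA$.

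For injectivity, suppose $x \in \VGraph_\cA \cap \Null$ with a presentation $x = \sum_i a_i N_i$, where $N_i = \sum_k b_{ik}\Gamma_{ik} \in \PrimNull(M,D_i;\emptyset)$. For indices $i$ where the common outside $\Gamma_{ik}\setminus D_i$ is non-empty, $N_i \in \PrimNull_\cA$ directly. For the remaining indices, every $\Gamma_{ik}$ lies in $D_i$; choose a $\one$-labelled framed circle $C_i$ inside a small ball $D_i^C \subset M \setminus D_i$ (possible by connectedness of $M$; enlarge $M$ via Lemmas~\ref{l:finitary-colimit} and~\ref{l:ignore-corners} if necessary) and set $\tilde N_i := \sum_k b_{ik}(\Gamma_{ik} \sqcup C_i)$. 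Since $C_i$ lies outside $D_i$, the evaluation in $D_i$ is unchanged, and the common outside now contains $C_i$, making $\tilde N_i$ an element of $\PrimNull_\cA(M,D_i;\emptyset) \subseteq \Null_\cA$. The difference splits as
\[
N_i - \tilde N_i ~=~ \sum_{k:\Gamma_{ik}\neq\emptyset} b_{ik}(\Gamma_{ik} - \Gamma_{ik}\sqcup C_i) ~+~ \beta_i(\emptyset - C_i) \, ,
\]
with $\beta_i := \sum_{k:\Gamma_{ik}=\emptyset} b_{ik}$, and each term in the first sum is a primitive $\cA$-null relation in $D_i^C$ (the outside being the non-empty $\Gamma_{ik}$).

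Summing, $x \equiv \sum_i a_i\beta_i(\emptyset - C_i) \pmod{\Null_\cA}$. Since $x \in \VGraph_\cA$ has no $\emptyset$ summand, the coefficient of $\emptyset$ must vanish, forcing $\sum_i a_i\beta_i = 0$. The final and subtlest step is to argue $\sum_i a_i\beta_i C_i \in \Null_\cA$: each $C_i$ is $\cA$-admissible, and in the connected manifold $M$ they are all mutually ambient isotopic, so the second clause of Lemma~\ref{l:isotopy-null} realises the isotopies through $\cA$-null relations, giving $[C_i] = [C_0]$ in $\SN_\cA(M;\emptyset)$ for any fixed reference $C_0$. Hence $\sum_i a_i\beta_i C_i \equiv (\sum_i a_i\beta_i)\, C_0 = 0$ modulo $\Null_\cA$, and we conclude $x \in \Null_\cA$, completing injectivity.
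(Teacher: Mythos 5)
Your proof takes exactly the route the paper intends --- the paper's own argument is just the sketch "for $d\ge 2$, one just has to introduce $\one$-labelled trivial loops" --- and you fill in the details with the reduction to connected $M$ with empty boundary value and a coend-free, hands-on decomposition of null relations. The surjectivity step, the coefficient extraction for $\emptyset$, and the final step using the second part of Lemma~\ref{l:isotopy-null} to identify all $C_i$ modulo $\Null_\cA$, are all sound given the premise you state.

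The one genuine gap is in that premise: "the only non-$\cA$-admissible graph is the empty graph $\emptyset$." The definitions of the paper allow zero-valent vertices (the vertex colouring space degenerates to $\cA(\one,\one)$, cf.\ "if there are no incoming/outgoing edges, then the corresponding tensor product is defined to be $\one$"), so a non-empty graph can consist only of isolated vertices and have no edge at all, hence fail to be $\cA$-admissible. This breaks two steps. First, "for indices $i$ where the common outside $\Gamma_{ik}\setminus D_i$ is non-empty, $N_i\in\PrimNull_\cA$ directly" is not true if the common outside is non-empty but edgeless. Second, the decomposition of $N_i-\tilde N_i$ sorts the summands by $\Gamma_{ik}=\emptyset$ vs.\ $\Gamma_{ik}\neq\emptyset$, but the relevant dichotomy is $\cA$-admissible vs.\ not; an edgeless non-empty $\Gamma_{ik}$ produces a term $\Gamma_{ik}-\Gamma_{ik}\sqcup C_i$ whose common outside $\Gamma_{ik}$ is \emph{not} $\cA$-admissible, and such residuals are not detected by looking at the coefficient of $\emptyset$ alone. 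You should either argue that isolated vertices can be absorbed into a $\one$-labelled edge by an $\cA$-null relation placed near each such vertex (possible, but requires care about where the admissible outside of \emph{that} ball comes from), or reorganize the bookkeeping around "has an edge" throughout. A small side remark: the phrase "enlarge $M$ via Lemmas~\ref{l:finitary-colimit} and~\ref{l:ignore-corners}" to make room for $D_i^C$ does not quite make sense; what you want is to shrink $D_i$ slightly (the $\Gamma_{ik}$ do not meet $\partial D_i$ when the common outside carries no edges), which frees a collar in which to place $D_i^C$.
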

\begin{proof}
There is nothing to prove for $d = 1$; for $d ≥ 2$, 
one just has to introduce $\one$-labelled trivial loops.
We omit the details.
\end{proof}

When we impose $\cS$-admissibility on the boundary value,
we have the stronger result:

\begin{lemma}
\label{l:bulk-cS}
Let $\VV$ be a bulk-$\cS$-admissible boundary value on $∂M$.
Then for any $\cT ⊆ \cA$ containing $\cS$ and any $M$,
\[
 \ov{ι}_{\cS ⊆ \cT} : \SNX(M;\VV) \to \SNXp(M;\VV)
\]
is an isomorphism.
\end{lemma}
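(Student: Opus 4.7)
The plan is to verify that under bulk-$\cS$-admissibility of $\VV$, the inclusions $\VGraphX(M;\VV) \hookrightarrow \VGraphXp(M;\VV)$ and $\NullX(M;\VV) \hookrightarrow \NullXp(M;\VV)$ underlying $\ov{\iota}_{\cS \subseteq \cT}$ are both equalities, so that $\ov{\iota}_{\cS \subseteq \cT}$ is induced by the identity on the quotient and is therefore an isomorphism.

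For the graph modules, I plan to show every $\Gamma \in \Graph(M;\VV)$ is already $\cS$-admissible. Bulk-$\cS$-admissibility provides, in each connected component $M_k$ of $M$, a point $b$ of $\VV$ on some boundary piece $N_i \subseteq \partial M_k$ labelled by an object $\chi_b \in \cS$. Since $\Gamma$ must terminate an edge at $b$ with matching label $\chi_b$, it carries an $\cS$-labelled edge in each component of $M$. Hence $\Graph(M;\VV) = \GraphX(M;\VV) = \GraphXp(M;\VV)$, and taking free $R$-modules gives $\VGraphX(M;\VV) = \VGraphXp(M;\VV)$.

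For the null submodules, the inclusion $\NullX \subseteq \NullXp$ is immediate from $\cS \subseteq \cT$. For the reverse inclusion, take a primitive $\cT$-null relation $\sum_j a_j \Gamma_j$ with respect to a ball $D \subseteq M$. In each component of $M$, the $\cS$-labelled boundary point $b$ of $\VV$ is incident to an $\cS$-labelled edge $e$ of $\Gamma_j$; whenever $b \notin D$, a small neighbourhood of $b$ along $e$ lies in $\Gamma_j \setminus D$, showing that $\Gamma_j \setminus D$ is $\cS$-admissible and hence that $\sum_j a_j \Gamma_j \in \PrimNullX(M,D;\VV)$.

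The main obstacle is the pathological case in which the ball $D$ is squashed against $\partial M$ in such a way that it contains both $b$ and the entire $\cS$-labelled edge $e$ incident to $b$; in that case $e$ is absent from $\Gamma_j \setminus D$ and $\cS$-admissibility of the exterior is not visible from $b$ alone. I plan to handle this by invoking Lemma~\ref{l:isotopy-null} to locally isotope the interior of $e$ so that a portion of it protrudes from $D$, and by realising this isotopy as a combination of primitive null relations supported in small auxiliary balls straddling $\partial D$. The exterior part of the isotoped graph retains the $\cS$-labelled edge at $b$ by the same boundary argument, so these auxiliary relations already belong to $\NullX$. After this rewriting, the original primitive $\cT$-null relation reduces to the easy case, giving $\NullXp(M;\VV) \subseteq \NullX(M;\VV)$ and completing the proof that $\ov{\iota}_{\cS \subseteq \cT}$ is an isomorphism.
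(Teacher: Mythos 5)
Your treatment of the graph modules is correct and agrees with the paper: bulk-$\cS$-admissibility forces every graph with boundary value $\VV$ to have an $\cS$-labelled edge in each component, so $\GraphX(M;\VV)=\GraphXp(M;\VV)$. The paper additionally reduces to connected $M$ via Lemma~\ref{l:M-disjoint-union}, but your per-component phrasing is equivalent.

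The treatment of the null modules, however, has a gap in the case $b\in D$. You propose to isotope ``the interior of $e$ so that a portion of it protrudes from $D$'', using Lemma~\ref{l:isotopy-null} and auxiliary balls straddling $\partial D$. The problem is that there is no single edge $e$ to protrude: each $\Gamma_j$ has its own $\cS$-labelled edge $e_j$ ending at $b$, and these can differ arbitrarily inside $D$ (the boundary value at $b$ only constrains the $k$-jets at $b$, not an open neighbourhood). If you isotope each $\Gamma_j$ by a different protrusion, the resulting $\Gamma_j'$ no longer agree outside $D$, so $\sum a_j\Gamma_j'$ is not a primitive null graph with respect to $D$ at all; and even with a uniform protrusion you would still owe an argument that the modified graphs agree outside $D$ and that $\sum a_j\langle\Gamma_j'\rangle_D=0$ (they differ from the original evaluation, since the protruded edge now crosses $\partial D$ twice more). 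Neither issue is addressed in your ``reduces to the easy case'' step.

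The paper sidesteps both difficulties by doing things in the opposite order and by moving the ball rather than the graph: first isotope each $\Gamma_j$ \emph{inside} $D$ to make all $\Gamma_j'$ agree on an open neighbourhood of $b$ (these are manifestly $\cS$-null relations by Lemma~\ref{l:isotopy-null}, and since the isotopies are supported in $D$ the modified graphs still agree outside $D$), and only then shrink $D$ away from $b$. The shrinking changes $D$, not the graphs, so the ``agree outside $D$'' condition is trivially preserved and the $\cS$-admissibility of the exterior is visible. If you insert the paper's first step into your argument, the protrusion can be made uniform and your approach can be repaired, but as written the uniformity of the isotopy across the $\Gamma_j$ and the survival of the primitive-null property are genuine missing ingredients, not technicalities.
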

\begin{proof}
By Lemma \ref{l:M-disjoint-union}, it suffices to show this for $M$ connected.
The bulk-$\cS$-admissibility of $\VV$ implies that any
$Γ ∈ \Graph(M;\VV)$ is in fact also $\cS$-admissible,
so the natural inclusion $\GraphX(M;\VV) ⊆ \GraphXp(M;\VV)$ is a bijection.
In regard to the null relations,
let $Σ a_j Γ_j$ be a primitive $\cT$-null graph with respect to $D$,
and consider a point $b ∈ \VV$ with label in $\cS$.
If $b \nin D$, then clearly $Σ a_j Γ_j$ is also $\cS$-null.
If $b ∈ D$, we isotope each $Γ_j$ to some new $Γ_j'$ that is unchanged outside $D$,
so that all $Γ_j'$ agree in a neighbourhood of $b$.
By graph isotopy invariance (Lemma~\ref{l:isotopy-null}),
$Γ_j - Γ_j' ∈ \NullX(M;\VV)$.
Then we may deform $D$ away from $b$,
so that $Σ a_j Γ_j'$ is primitive $\cS$-null with respect to the deformed $D$,
hence $Σ a_j Γ_j = Σ a_j Γ_j' + Σ a_j (Γ_j - Γ_j')$ is $\cS$-null.
\end{proof}

In particular, setting $\cT = \cA$, we get 
$\SNX(\DD¹;V,W) \simeq \SN(\DD¹;V,W) \simeq \cA(V,W)$
if either $V$ or $W$ is in $\cS$,
and for $d ≥ 2$, $\SNX(\DD^d;\VV) \simeq \SN(\DD^d;\VV)
\simeq \cA(\one,V₁⊗\cdots⊗ V_k)$,
where $\VV$ is an $\cS$-admissible
boundary value on $∂\DD^d$ with labels $V₁,...,V_k$ and
all marked points going outwards
(we take the dual for marked points going inwards), as in the last of the basic properties of the evaluation $\eval{⋅}_D$ in Section~\ref{s:adm-skein-defn}.

\medskip

Finally, let us give a simple relation between skein modules for manifolds of different dimensions. First note that a choice of $\cA$ which is suitable in dimension $d$ is also suitable for any dimension $1 \le k < d$, and so for a full subcategory $\cS ⊆ \cA$ it makes sense to talk about $\cS$-admissible skein modules of $k$-manifolds $M^k$. Then $\cS$-admissible skein modules for $M^k$ and for $M^k \times \DD^{d-k}$ agree in the following sense:

\begin{lemma}
\label{l:thickened-mfld}
Let $\cA$ be a category suitable for $d$-dimensional skein modules
(Table~\ref{t:A-structure-vs-d}),
and let $\cS ⊆ \cA$ be a full subcategory. For $1 \le k < d$ consider a k-dimensional manifold $M^k$ and set $M := M^k \times \DD^{d-k}$. The inclusion $ι : M^k \simeq M^k \times \{0\} ↪ M$
induces inclusions of graphs and boundary values
into $M$ and $∂M$ respectively, where the framing is extended by the constant standard framing on $\DD^{d-k}$.
Then this inclusion induces an isomorphism
\[
ι_* : \SNX(M^k;\XX) \to \SNX(M;ι_*(\XX)) \ .
\]
\end{lemma}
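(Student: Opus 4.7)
The map $\iota_*$ on graphs is well-defined: it takes an $\cA$-coloured graph $\Gamma^k \subseteq M^k$ to the graph $\iota_*(\Gamma^k) \subseteq M$ living on the zero section, with framings extended by the constant standard framing on $\DD^{d-k}$. This preserves $\cS$-admissibility (the same edges, hence the same labels, witness it) and boundary values, so it descends to a well-defined map on admissible skein modules. I would show it is an isomorphism by constructing a two-sided inverse, using the key observation that a $k$-ball $D \subseteq M^k$ thickens to the $d$-ball $D \times \DD^{d-k} \subseteq M$, and that under compatible diffeomorphisms $D \cong \DD^k$ and $D \times \DD^{d-k} \cong \DD^d$ the evaluations match: $\eval{\Gamma^k}_D = \eval{\iota_*(\Gamma^k)}_{D \times \DD^{d-k}}$, because the extra $(d-k)$ directions only carry the constant framing and therefore contribute trivially to the Reshetikhin--Turaev evaluation (using the homotopy invariance of framings in dimension $d \ge 4$).

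By Lemma~\ref{l:finitary-colimit} and the naturality of $\iota_*$ with respect to inclusions of submanifolds of $M^k$, it suffices to treat the case where $M^k$ is finitary, admitting a finite cover by closed $k$-balls $D_1, \ldots, D_n$; correspondingly $M$ is covered by the $d$-balls $D_i \times \DD^{d-k}$, and graphs and null relations in $M$ can be analysed locally in these.

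For surjectivity, given any $\Gamma \subseteq M$, I would first apply a compressing ambient isotopy (decomposed via Lemma~\ref{l:isotopy-null} into local ball moves, hence inducing an equality in $\SNX(M;\iota_*(\XX))$) to push $\Gamma$ into $M^k \times B_\epsilon$ for a small ball $B_\epsilon \subseteq \DD^{d-k}$. Inside each $d$-ball $D_i \times \DD^{d-k}$ I would then replace the local piece of $\Gamma$ by an equivalent graph lying in $D_i \times \{0\}$ realising the same evaluation, modulo a primitive $\cS$-null relation; assembling these replacements produces a graph $\iota_*(\Gamma^k)$ equivalent to $\Gamma$. For injectivity, if $\iota_*(\Gamma^k) \equiv 0$ in $\SNX(M; \iota_*(\XX))$, then the witnessing null relation is a finite sum of primitive null relations in $d$-balls, which after a small isotopy I arrange to have the form $D_i \times \DD^{d-k}$; the matching of evaluations from the first paragraph then converts each such relation into a primitive $\cS$-null relation in $D_i \subseteq M^k$, establishing $\Gamma^k \equiv 0$ in $\SNX(M^k; \XX)$.

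The main obstacle is the assembly step in the surjectivity argument: the local replacements in adjacent thickened balls $D_i \times \DD^{d-k}$ and $D_j \times \DD^{d-k}$ must be compatible on their overlap, which requires a careful choice of ordering (e.g.\ treating one ball at a time and using graph isotopy to align the transverse crossings at $\partial D_i \times \DD^{d-k}$) together with the locality of evaluation under nested balls (the third bullet in Section~\ref{s:alg-input}). Throughout, $\cS$-admissibility is preserved because the ball moves never affect edges strictly outside the active ball, so an $\cS$-labelled edge of $\Gamma$ can always be retained to witness admissibility of the intermediate and final graphs.
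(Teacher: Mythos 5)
The paper's own proof of this lemma is omitted (``The proof is standard and amounts to defining an inverse map by using the properties of $\eval{\cdot}_D$ to replace a graph in $M$ by a graph in $M^k\times\{0\}$ which represents the same element\ldots''), so it gives exactly the strategy you announce in your opening sentence: build the flattening map $\rho$. Your surjectivity sketch is in line with that, and you correctly flag the overlap-compatibility of the local replacements as the subtle point. However, your injectivity argument as written has a genuine gap, and it is not a cosmetic one.

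You claim that a null relation witnessing $\iota_*(\Gamma^k)\equiv 0$ is a sum of primitive $\cS$-null graphs in $d$-balls, that these balls can be isotoped to the form $D_i\times\DD^{d-k}$, and that ``the matching of evaluations from the first paragraph then converts each such relation into a primitive $\cS$-null relation in $D_i\subseteq M^k$.'' But the matching of evaluations you established is $\eval{\Gamma^k}_{D}=\eval{\iota_*(\Gamma^k)}_{D\times\DD^{d-k}}$, which applies only to graphs that already lie on the zero section. A primitive null graph $\sum_j a_j\Gamma_j$ with respect to $D_i\times\DD^{d-k}$ consists of \emph{arbitrary} $\cA$-coloured graphs $\Gamma_j$ in $M$, which in general do not lie in $M^k\times\{0\}$ (and cannot be assumed to, since the intermediate graphs in a chain of null relations starting from $\iota_*(\Gamma^k)$ will in general leave the zero section). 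To convert $\sum_j a_j\Gamma_j$ into a primitive null graph in $D_i$ you must first flatten each $\Gamma_j$, i.e.\ apply the putative inverse map $\rho$, and then argue that $\rho(\sum_j a_j\Gamma_j)$ is still a primitive null graph. This is precisely the statement that $\rho$ is well-defined on skein modules, which you have not established and which is the actual content of the lemma. As written, the injectivity step is circular.

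The clean way to organize the argument is the one the paper gestures at: define $\rho:\VGraphX(M;\iota_*(\XX))\to\VGraphX(M^k;\XX)$ on representatives by the flattening construction (generic projection, recording over/under information in vertices for $d=3$, etc., using the local evaluation properties of $\eval{\cdot}_D$), prove $\rho(\NullX(M;\iota_*(\XX)))\subseteq\NullX(M^k;\XX)$ so that $\rho$ descends, and then observe that $\rho\circ\iota_*=\id$ and $\iota_*\circ\rho=\id$ up to isotopy (invoking Lemma~\ref{l:isotopy-null}). Injectivity and surjectivity then both fall out at once, and the compatibility-of-overlaps worry you raise for surjectivity is absorbed into the single well-definedness claim for $\rho$. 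Your finitary reduction via Lemma~\ref{l:finitary-colimit} and the preservation of $\cS$-admissibility are fine and carry over to this reorganisation.
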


\begin{proof}
The proof is standard and amounts to defining an inverse map by using the properties of $\eval{⋅}_D$ to replace a graph in $M$ by a graph in $M^k \times \{0\}$ which represents the same element in $\SNX(M;ι_*(\XX))$. We omit the details.
\end{proof}

\subsection{Cylinder Categories}
\label{s:cylinder-cat}

We define cylinder categories based on $\cS$-admissible skeins,
analogous to the skein categories of
\cite{Walker:2006,JohnsonFreyd:2015}.
Let $N$ be a $(d-1)$-manifold as in the previous section, and
let $\BVX(N)$ be the set of $\cS$-admissible boundary values on $N$.

Consider the $d$-manifold $M = [0,1] \times N$,
and let the identifications of $N$ with the bottom (incoming)
and top (outgoing) boundaries be denoted by
$ι₀: N \simeq \{0\} \times N ↪ [0,1] \times N$
and $ι₁: N \simeq \{1\} \times N ↪ [0,1] \times N$.
Then we define the assignment
$\HomT_\cS: \BVX(N) \times \BVX(N) \to \Rmod$, by
\[
\HomT_\cS(\VV, \WW) = \VGraphX(N \times [0,1]; \VV, \WW)
\]
where $\VV$ is placed at $N \times 0$, and $\WW$ at $N \times 1$.
Stacking and shrinking cylinders, we get a notion of composition,
\[
∘: \HomT_\cS(\WW, \XX) \times \HomT_\cS(\VV, \WW)
	\to \HomT_\cS(\VV, \XX)
\]
which is $R$-bilinear, but this operation is not associative,
only associative up to isotopy ($A_{∞}$-associative).
By Lemma \ref{l:isotopy-null}, $\cS$-null relations include isotopies,
hence quotienting by $\cS$-null relations makes the composition associative
and independent of the precise stacking and shrinking operation.
\begin{definition}
The \emph{$\cS$-admissible cylinder category over $N$} is
\[
\hZX(N) :=
\begin{cases}
\Obj = \BVX(N)
\\
\Hom(\VV,\WW) = \SNX(N \times [0,1]; \VV, \WW)
\end{cases}
\]
\end{definition}

For $N = \DD^{d-1}$, fixing some marked point $p ∈ \DD^{d-1}$,
we get a functor $\cS \to \hZX(\DD^{d-1})$
sending an object $V ∈ \cS$ to the boundary value $(\{p\},\{V\})$.
Since we are looking at the special case of a cylinder category over a disc, by
Lemma \ref{l:bulk-cS} and the remark after it, this functor is fully faithful.
When $\cS$ is closed under certain operations,
this functor is an equivalence (see Section \ref{s:closure});
in particular, for $\cS = \cA$, we have $\hZJ{\cA}(\DD^{d-1}) \simeq \cA$.

Forgetting the $\cS$-admissibility conditions on boundary values
and skein modules, we can also consider the category $\hZ(N)$.
Compared with $\hZJ{\cA}(N)$, the only new objects are those boundary values
that have some component of $N$ with no marked points;
such objects, in case $d ≥ 2$, are equivalent to
a boundary value with $\one$-labelled marked points,
and in case $d = 1$, are ruled out by definition.
With Lemma~\ref{l:S=A}, this means that the inclusion of boundary values
induces an equivalence $\hZJ{\cA}(N) \simeq \hZ(N)$.

More generally, by Lemma \ref{l:bulk-cS},
for full subcategories $\cS ⊆ \cT$,
the inclusion of $\cS$-admissible boundary values
into $\cT$-admissible boundary values induces a fully faithful functor
\begin{equation}
\label{e:iSSp}
\ov{ι}_{\cS ⊆ \cT}: \hZX(N) \to \hZXp(N) \ .
\end{equation}

The cylinder category of the disjoint union
is an $R$-linear cartesian product,
as defined below:
\begin{definition}
\label{d:cartesian-tnsr}
For $\Rmod$-enriched categories $\cA,\cB$,
define the category $\cA \hatbox \cB$
to have objects given by pairs of objects from $\cA$ and $\cB$,
$\Obj \cA \hatbox \cB = \Obj \cA \times \Obj \cB$,
and morphisms given by the tensor product:
\[
\Hom_{\cA \hatbox \cB}((A,B),(A',B'))
= \Hom_\cA(A,A') ⊗_R \Hom_\cB(B,B')
\]
where composition is just the tensor product
of the appropriate composition maps.
\end{definition}

This construction is not very well-behaved;
for example, it does not preserve additivity as $(A₁,B₁) ⊕ (A₂,B₂)$
is not necessarily found in $\cA \hatbox \cB$
(though if we Karoubi complete $\cA \hatbox \cB$,
it would be a subobject of $(A₁⊕ A₂,B₁⊕ B₂)$).

\begin{lemma}
\label{l:union-hatbox}
For $N = N₁ \sqcup N₂$, we have isomorphisms
\[
\hZX(N) \cong \hZX(N₁) \hatbox \hZX(N₂)
\]
\end{lemma}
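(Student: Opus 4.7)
The plan is to construct the isomorphism by matching objects, morphisms, and composition separately, drawing directly on Lemma~\ref{l:M-disjoint-union}.

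For objects, I would first observe that a boundary value on $N = N_1 \sqcup N_2$ is, by definition, a pair $(\VV_1, \VV_2)$ where $\VV_i$ is a boundary value on $N_i$, since a configuration of marked points on $N$ is a collection of configurations on each boundary piece. The $\cS$-admissibility condition, being imposed on each connected component, and using that the connected components of $N_1 \sqcup N_2$ are precisely the disjoint union of components of $N_1$ and of $N_2$, translates to $\VV_1$ being $\cS$-admissible \emph{and} $\VV_2$ being $\cS$-admissible. This gives $\BVX(N) \cong \BVX(N_1) \times \BVX(N_2) = \Obj(\hZX(N_1) \hatbox \hZX(N_2))$.

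For morphisms, I would use that $N \times [0,1] = (N_1 \times [0,1]) \sqcup (N_2 \times [0,1])$ as a $d$-manifold, so Lemma~\ref{l:M-disjoint-union} applied directly gives
\[
\SNX(N \times [0,1]; (\VV_1,\VV_2), (\WW_1,\WW_2)) \;\simeq\; \SNX(N_1 \times [0,1]; \VV_1, \WW_1) \otimes_R \SNX(N_2 \times [0,1]; \VV_2, \WW_2),
\]
which is exactly the definition of $\Hom$ in $\hZX(N_1) \hatbox \hZX(N_2)$ (Definition~\ref{d:cartesian-tnsr}).

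The last step, verifying compatibility with composition, is where some care is needed but should pose no substantial obstacle. Composition in $\hZX(N)$ is defined by stacking cylinders $[0,1] \times N$ on top of each other and rescaling. Because this stacking happens component-wise on $N$, the isomorphism from Lemma~\ref{l:M-disjoint-union} is realised by sending $\Gamma_1 \otimes_R \Gamma_2 \mapsto \Gamma_1 \sqcup \Gamma_2$, and the stacked graph in $N \times [0,1]$ is precisely the disjoint union of the two stacked graphs in $N_i \times [0,1]$. Hence composition in $\hZX(N)$ is mapped to the tensor product of compositions, which matches composition in $\hZX(N_1) \hatbox \hZX(N_2)$. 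The identity morphisms, being the (classes of) constant cylinders on $\VV_1 \sqcup \VV_2$, split as the disjoint union of the identity cylinders on $\VV_1$ and $\VV_2$. This shows the assignment is a functor, and since it is bijective on objects and on hom-spaces, it is an isomorphism of $R$-linear categories. The only mild subtlety — and the step I would be most careful about — is to check that the $\cS$-admissibility pattern is preserved throughout: a graph in $N \times [0,1]$ representing a morphism in $\hZX(N)$ is $\cS$-admissible iff its restriction to each of $N_i \times [0,1]$ is, which again follows from the component-wise nature of admissibility.
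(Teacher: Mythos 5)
Your proof follows the same approach as the paper: bijection on objects from the component-wise decomposition of $\cS$-admissible boundary values, and fully faithfulness from Lemma~\ref{l:M-disjoint-union}. You spell out the compatibility with composition and identities, which the paper leaves implicit; this is a sensible addition but does not change the argument.
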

\begin{proof}
An $\cS$-admissible boundary value on $N$ can naturally be considered as
a pair of a boundary value on $N₁$ and a boundary value on $N₂$,
and vice versa, so we have bijection of objects.
Fully faithfulness follows immediately from Lemma \ref{l:M-disjoint-union}.
\end{proof}

\subsection{Skein Modules as Functors from Cylinder Categories}
\label{s:skein-module-as-functor}

The gluing operation that defines the composition in the cylinder categories
can be generalised to manifolds with a collar neighbourhood.
Let $M$ be a $d$-manifold, and let $ι: N ↪ ∂M$
be an outgoing embedded boundary piece with some collar neighbourhood
$\wdtld{ι}: [-1,0] \times N ↪ M$.
Consider the operation of gluing $[0,1] \times N$ to $M$ along
$\{0\} \times N \sim N$, followed by a diffeomorphism
$[0,1] \times N ∪_{\{0\} \times N \sim N} M \simeq M$
that is fixed point-wise away from the collar neighbourhood of $N$.

Let $\cT ⊆ \cA$ be a full subcategory containing $\cS$.
Let $\VV,\WW$ be $\cT$-admissible boundary values,
and let $\XX$ be any boundary value on $∂M$ that is empty on the
collar neighbourhood of $N$.
For an $\cS$-admissible graph $Γ$ in $M$ with boundary value $\XX ∪ \VV$,
and $\cT$-admissible graph $Γ'$ in $[0,1] \times N$
with boundary value $\VV ∪ \WW$,
their union $Γ' ∪_{\VV} Γ$ in $M$, obtained from the gluing operation above,
is also an $\cS$-admissible graph.
Moreover, for an $\cT$-null graph $Σ a_j Γ_j'$ in $[0,1] \times N$,
the sum $Σ a_j Γ_j' ∪ Γ$ is also an $\cS$-null graph;
similarly for $\cS$-null graph in $M$.
Thus, we have an $R$-bilinear map
\[
\lact : \hZXp(N)(\VV,\WW) \times \SNX(M;\XX,\VV) \to
\SNX([0,1] \times N ∪_{N \sim N \times \{0\}} M)
\simeq \SNX(M;\XX,\WW) \ ,
\]
which, by isotopy invariance (Lemma \ref{l:isotopy-null}), is associative from the left,
i.e.\ $(g ∘ f) \lact Γ = g \lact (f \lact Γ)$,
and does not depend on the choice of collar neighbourhood $ι$.
In other words, we have a functor
\[
\SNX(M; \XX, -) : \hZXp(N) \to \Rmod
\]
for any boundary value $\XX$ on $∂M \backslash N$
(one can take the collar neighbourhoods to be arbitrarily small
so as to avoid $\XX$).
It is easy to see that $\lact$ is compatible with $\ov{ι}_{\cT ⊆ \cU}$
($\cU ⊆ \cA$ is some full subcategory containing $\cT$):
\begin{equation}
\label{e:SNX-diff-Z}
\SNX(M;\XX,-) ∘ \ov{ι}_{\cT ⊆ \cU} = \SNX(M;\XX,-) : \hZXp(N) \to \Rmod
\end{equation}
hence there is no ambiguity in the notation $\SNX(M;\XX,-)$.

Similarly, if $N$ is an incoming boundary,
we get $R$-bilinear map
\[
\ract : \SNX(M;\XX,\WW) \times \hZXp(N)(\VV,\WW) \to
\SNX(M ∪_{N \sim \{1\} \times N} [0,1] \times N)
\simeq \SNX(M;\XX,\VV)
\]
which is associative from the right,
and we have a contravariant functor $\hZXp(N) \to \Rmod$:
\[
\SNX(M; \XX, -) : (\hZXp(N))^\op \to \Rmod
\]

We can consider multiple boundary pieces at once,
outgoing or incoming, and not necessarily disjoint
(though in our main use case, in Theorem \ref{t:excision},
the relevant boundary pieces must be pairwise disjoint).
It is not hard to see that the actions coming from different pieces
pairwise commute, so that, if we have
$k$ outgoing boundary pieces $N₁,...,N_k$
and $l$ incoming boundary pieces $N₁',...,N_l'$,
then we have an $R$-multilinear functor
\[
\SNX(M;\XX,-,\cdots,-):
	\hZXp(N₁')^\op \times \cdots \times \hZXp(N_l')^\op \times
	\hZXp(N₁) \times \cdots \times \hZXp(N_k) \to \Rmod
\]

It is straightforward to check that this functor is natural
with respect to $\cS$, in the sense that
given another full subcategory $\cS' ⊆ \cS$,
$\ov{ι}_{\cS' ⊆ \cS} : \SNJ{\cS'}(M;\XX,-,\cdots,-) \to \SNX(M;\XX,-\cdots,-)$
is a natural transformation;
in particular, Lemma~\ref{l:bulk-cS} can upgraded as follows:

\begin{lemma}
\label{l:bulk-cS-natural}
Let $\cS ⊆ \cT ⊆ \cA$ be full subcategories,
and let $M$ be a $d$-manifold.
Let $\hZX^b(∂M) ⊆ \hZXp(∂M)$ be the full subcategory consisting of
bulk-$\cS$-admissible boundary values on $∂M$.
Then $\ov{ι}_{\cS ⊆ \cT} : \SNX(M;-)|_{\hZX^b(∂M)}
\simeq \SNXp(M;-)|_{\hZX^b(∂M)}$ is a natural isomorphism.
\end{lemma}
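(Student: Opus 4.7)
The plan is to combine the pointwise statement of Lemma~\ref{l:bulk-cS} with a direct check that the isomorphisms it provides are natural in morphisms of $\hZX^b(\partial M)$.

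First I would invoke Lemma~\ref{l:bulk-cS} directly: for every object $\VV \in \hZX^b(\partial M)$, the component $\ov{\iota}_{\cS \subseteq \cT}(\VV) : \SNX(M;\VV) \to \SNXp(M;\VV)$ is already an isomorphism of $R$-modules. The only remaining content is therefore naturality, namely the commutativity, for every morphism $f : \VV \to \WW$ in $\hZX^b(\partial M)$, of
\[
\ov{\iota}_{\cS \subseteq \cT}(\WW) \circ (f \lact -) \;=\; (f \lact -) \circ \ov{\iota}_{\cS \subseteq \cT}(\VV) \colon \SNX(M;\VV) \to \SNXp(M;\WW) \ .
\]

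Next I would verify this equality on representatives. Since $\hZX^b(\partial M)$ is a full subcategory of $\hZXp(\partial M)$, the morphism $f$ is represented by a $\cT$-admissible graph $\Gamma'$ in $\partial M \times [0,1]$ with boundary value $(\VV,\WW)$, and any element of $\SNX(M;\VV)$ by an $\cS$-admissible graph $\Gamma \in \GraphX(M;\VV)$. By the definition \eqref{e:iota-S-Sp}, the map $\ov{\iota}_{\cS \subseteq \cT}$ is the identity on the underlying $\cA$-coloured graph, merely reinterpreting its admissibility class, while the action $\lact$ on both $\SNX$ and $\SNXp$ is defined by the same topological gluing of Section~\ref{s:skein-module-as-functor}. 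Both routes around the square therefore send $[\Gamma]$ to the class of the glued graph $\Gamma' \cup_\VV \Gamma$ in $\SNXp(M;\WW)$, so commutativity holds on every generator. Extending $R$-linearly and passing to the quotients via the inclusion $\NullX \subseteq \NullXp$ then yields commutativity on all of $\SNX(M;\VV)$.

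Finally, the only point that requires a moment's thought, and which is the closest thing to an obstacle, is that the top arrow $f \lact - : \SNX(M;\VV) \to \SNX(M;\WW)$ must actually land in the $\cS$-admissible skein module; equivalently, $\Gamma' \cup_\VV \Gamma$ should be $\cS$-admissible. This is automatic: by $\cS$-admissibility of $\Gamma$ each connected component of $M$ already contains an edge of $\Gamma$ labelled in $\cS$, and such edges are preserved under the gluing. Since both $\ov{\iota}_{\cS \subseteq \cT}$ and $\lact$ are defined by the same underlying operation on representative graphs, the naturality square is really a bookkeeping identity, and no further technical input is needed beyond the pointwise statement of Lemma~\ref{l:bulk-cS} and the functorial setup of Section~\ref{s:skein-module-as-functor}.
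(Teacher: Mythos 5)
Your argument is correct and follows the route the paper itself takes implicitly: naturality of $\ov{\iota}_{\cS \subseteq \cT}$ is a direct check on representative graphs (which the paper, in the paragraph preceding the lemma, calls ``straightforward to check''), and the pointwise isomorphism is Lemma~\ref{l:bulk-cS}. The only remark is that the well-definedness of $f \lact -$ on $\cS$-admissible skeins, which you flag as the closest thing to an obstacle, is already part of the functorial setup of Section~\ref{s:skein-module-as-functor} (where it is shown that $\Gamma' \cup_{\VV} \Gamma$ is $\cS$-admissible whenever $\Gamma$ is), so it is not a new point, but your re-derivation of it is harmless.
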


\section{Excision}
\label{s:excision}

\subsection{Main Theorem}
\label{s:theorem-setup}

Let $M$ be a $d$-manifold, and let $ι: L ↪ M$ be a
codimension-1 oriented submanifold of $M$
with finitely many components,
which admits an (orientation-preserving)
tubular neighbourhood
$ι: [-1,1] \times L ↪ M$;
in particular, the (possibly empty)
intersection of $L$ with $∂M$
is transverse, and $L ∩ ∂M = ∂L$.
Let $M'$ be the $d$-manifold obtained by cutting $M$ along $L$,
as described in Section~\ref{s:topol-setup}
(and Appendix~\ref{s:mfld-corner})
with natural quotient map $π: M' \to M$.
We rename the preimages of $L$ as
$N = L_-$ (outgoing) and $N' = L_+$ (incoming),
which consist of pairwise disjoint embedded boundary pieces,
and the tubular neighbourhood of $L$ lifts to collar neighbourhoods of $N,N'$.

Let $\XX$ be some boundary value on $∂M' \backslash (N ∪ N')$;
we will also denote $π_*(\XX)$ on $∂M \backslash L$ by $\XX$.
Using a smaller tubular neighbourhood of $L$ if necessary,
we may assume that $\XX$ is disjoint from the collar neighbourhoods of $N,N'$.
We will implicitly identify boundary values on $N, L, N'$
with their images under the diffeomorphisms
$π|_N : N \simeq L \simeq N' : π|_{N'}$.

Let $\cT ⊆ \cA$ be a full subcategory containing $\cS$.
Recall from Section \ref{s:skein-module-as-functor} the functors
$\lact$ and $\ract$ from $\hZXp(N)$ and $\hZXp(N')^\op$ to $\Rmod$, respectively.
Using that $N \simeq L \simeq N'$, these combine into a bifunctor
\begin{equation}
\SNX(M';\XX,-,-) : \hZXp(N)^\op \times \hZXp(N) \to \Rmod
\ .
\end{equation}

The coend $\int^{\hZXp(N)} \SNX(M';\XX,-,-)$ exists and can be
be defined as the cokernel of
\begin{align}
\label{eq:skein-coend-def}
\begin{split}
\bigoplus_{\VV,\WW ∈ \hZXp(N)} \bigoplus_{f : \VV \to \WW} \SNX(M';\XX,\WW,\VV)
&~\to~
\bigoplus_{\VV} \SNX(M';\XX,\VV,\VV) \ ,
\\
Ψ \quad &~\mapsto ~~f \lact Ψ - Ψ \ract f \ .
\end{split}
\end{align}

The map $π : M' \to M$ glues up an $\cA$-coloured graph
in $\Graph(M';\XX,\VV,\VV)$ along $\VV$,
producing a graph in $M$, which gives us a map
\begin{equation}
\label{e:pi-graph}
π_*: \VGraphX(M';\XX,\VV,\VV) \to \VGraphX(M;\XX) \ .
\end{equation}
This map descends to skeins, and we denote the resulting map also by $π_*$,
\begin{equation}
\label{e:pi-skein}
π_*: \SNX(M';\XX,\VV,\VV) \to \SNX(M;\XX) \ .
\end{equation}
Using the isotopy invariance, it is easy to see from Lemma \ref{l:isotopy-null}
that $f \lact Ψ$ and $Ψ \ract f$ are mapped to the same skein under $π_*$,
hence $π_*$ defines a dinatural transformation
\begin{equation}
\label{eq:Sk-dinat}
 π_*: \SNX(M';\XX,-,-) \xrightarrow{∙} \SNX(M;\XX)
\end{equation}
of functors $\hZXp(N)^\op \times \hZXp(N) \to \Rmod$.
Equivalently, $π_*$ factors through the coend via a universal map
\begin{equation}
\label{eq:pi*-coend-to-skein}
\hat{π}_* : \int^{\hZXp(N)} \SNX(M';\XX,-,-) \to \SNX(M;\XX) \ .
\end{equation}
With these preparations, we can state the main result of this paper:

\begin{theorem}
\label{t:excision}
Let $\cA$ be as in Table~\ref{t:A-structure-vs-d}, and
let $\cS ⊆ \cT ⊆ \cA$ be full subcategories.
Let $M$ be a $d$-manifold, and let $L ⊆ M$ be a codimension-1 submanifold
with finitely many components that admits a tubular neighboorhood.
Let $M'$ be the manifold obtained by cutting $M$ along $L$,
with natural quotient map $π: M' \to M$
identifying parts of the boundary $N, N' ⊆ ∂M'$ with $L$.
Then the corresponding universal map from the coend is an isomorphism:
\[
\hat{\pi}_* : \int^{\hZXp(N)} \SNX(M';\XX,-,-) ~\simeq~ \SNX(M;\XX)
\]
\end{theorem}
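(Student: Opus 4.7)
The plan is to construct an explicit inverse map
\[
\kappa : \SNX(M;\XX) ~\longrightarrow~ \int^{\hZXp(N)} \SNX(M';\XX,-,-)
\]
to $\hat\pi_*$, given by cutting skeins along $L$. Before even writing $\kappa$ down, I would first reduce to a manageable setting. By Lemma~\ref{l:finitary-colimit}, both $\SNX(M;\XX)$ and (after checking that coends commute with the relevant filtered colimit of skein modules on finitary submanifolds) the coend on the left are expressed as colimits over finitary submanifolds, so I may assume $M$ is finitary; by Lemma~\ref{l:M-disjoint-union} and Lemma~\ref{l:union-hatbox} I may further assume $M$ is connected, so each component of $L$ lies in the same component of $M$. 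Lemma~\ref{l:ignore-corners} lets me forget any corner subtleties near $L$.

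To construct $\kappa$, given a class in $\SNX(M;\XX)$, choose a representative $\Gamma$ that is transverse to $L$ (a generic perturbation, which does not change the class by Lemma~\ref{l:isotopy-null}). Before cutting, ensure that on each connected component of $N \simeq L$ the intersection $\Gamma \cap L$ contains at least one $\cS$-labelled point: if not, use $\cS$-admissibility of $\Gamma$ in $M$ to pick an $\cS$-labelled edge $e$, and isotope a small arc of $e$ so that it crosses the relevant component of $L$ and returns; this modification is a graph isotopy, hence leaves the class unchanged. Let $\VV := \Gamma \cap L$, viewed as a $\cT$-admissible (in fact $\cS$-admissible) boundary value, and set $\kappa([\Gamma]) := [\Gamma' \in \SNX(M';\XX,\VV,\VV)]$, where $\Gamma'$ is the cut skein and $[\,\cdot\,]$ denotes the image in the coend. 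The cut skein $\Gamma'$ is automatically $\cS$-admissible, since $\cS$-labelled edges of $\Gamma$ survive cutting (possibly split into two pieces crossing $L$, both still $\cS$-labelled).

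The key verification is well-definedness of $\kappa$, which breaks into three items: independence of the chosen transverse representative, independence of the way $\VV$ was made $\cT$-admissible, and compatibility with the defining null relations on $\SNX(M;\XX)$. For the first two, one uses that any two choices are related by a graph isotopy supported in a neighbourhood of $L$, which by Lemma~\ref{l:isotopy-null} decomposes into moves supported in small balls; each move straddling $L$ translates, upon cutting, into postcomposition and precomposition with the same morphism in $\hZXp(N)$, identified by the dinatural relation $f \lact \Psi - \Psi \ract f = 0$ of \eqref{eq:skein-coend-def}. For the third, a primitive $\cS$-null relation in a ball $D \subseteq M$ disjoint from $L$ pulls back directly to an $\cS$-null relation in $M'$; for a ball $D$ straddling $L$, first isotope the involved graphs so the relation can be taken in a ball pushed slightly off $L$, reducing to the previous case. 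Finally, $\hat\pi_* \circ \kappa = \id$ is essentially tautological (cut, then glue), while $\kappa \circ \hat\pi_*$ is the identity on each summand $\SNX(M';\XX,\VV,\VV)$ up to an isotopy within a collar of $L$, absorbed by the coend relations.

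The hardest step will be the treatment of null relations in balls straddling $L$, because here the interaction between the topology of the cut and the algebraic $\cS$-admissibility condition is tightest: the relation must be simultaneously realised as an $\cS$-null relation in $M'$ (on each side of the cut) and absorbed by the dinaturality of the $\hZXp(N)$-coend. This is precisely the point where the extra freedom $\cS \subseteq \cT$ is essential: the intermediate boundary values produced by sliding a ball off $L$ need only be $\cT$-admissible in order to appear as objects of the indexing category of the coend, while the skeins themselves maintain the stricter $\cS$-admissibility. I expect the clean formulation of this step to require carefully arranging that every straddling configuration can be rewritten, up to finitely many $\cS$-null relations in $M'$ and dinatural moves, as a difference of skeins whose cuts agree on the nose.
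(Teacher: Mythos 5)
Your proposal takes essentially the same cut-and-glue approach as the paper, reorganised as the construction of an explicit inverse $\kappa$ rather than an analysis of the image and kernel of $\pi_*$; the preparatory moves (transverse representative, dragging an $\cS$-edge across $L$ to enforce $\cT$-admissibility of the cut locus) all appear in the paper's argument. The genuine gap is precisely at the step you flag as hardest, and the brief claim you make there is not correct as stated. A move straddling $L$ is an arbitrary ambient isotopy supported in a ball meeting $L$; it is not a translation, the intermediate stages need not be transverse to $L$, and even when they are, the intersection boundary value may fail to be $\cT$-admissible. So ``each move straddling $L$ translates, upon cutting, into postcomposition and precomposition with the same morphism, identified by the dinatural relation'' is false without further work: you must first decompose such a move into a translation across $L$, a move in $U_0 = M\setminus L$, and a translation back, and verify that every graph appearing along the way stays transverse with $\cT$-admissible intersection. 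This decomposition is the content of Lemma~\ref{l:cT-s} in the paper, namely $\NullXs(M;\XX) = \NullXs(M,U_0;\XX) + \Theta^{\trvs}$, whose proof occupies Appendix~\ref{s:appendix-cT-s} and includes exactly the ``drag an $\cS$-edge across $L$'' device you anticipate, but wrapped in nontrivial bookkeeping (the covers $\{U_0',U_{1m}'\}$, the perturbation to maintain transversality at the endpoints of each move, and the push-maps needed to make the intersection $\cT$-admissible before translating). Without that lemma, well-definedness of $\kappa$ -- both independence of the transverse representative and compatibility with straddling null relations -- does not follow, so the proposal as written is incomplete rather than incorrect: it correctly identifies the structure of the proof but defers its essential technical content.
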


The isomorphism is compatible across different $\cT$:
\begin{corollary}
\label{c:excision}
For $\cS ⊆ \cT ⊆ \cU ⊆ \cA$ full subcategories,
the functor $\ov{ι}_{\cT ⊆ \cU} : \hZXp(N) \to \hZJ{\cU}(N)$
in \eqref{e:iSSp}
is fully faithful, hence induces a map of coends
\[
\int^{\hZXp(N)} \SNX(M';\XX,-,-) \to \int^{\hZJ{\cU}(N)} \SNX(M';\XX,-,-) \ .
\]
This map is naturally compatible with $\hat{π}_*$,
\[
\begin{tikzcd}
\int^{\hZXp(N)} \SNX(M';\XX,-,-) \ar[r] \ar[rr, bend right=10, "\hat{π}_*"']
& \int^{\hZJ{\cU}(N)} \SNX(M';\XX,-,-) \ar[r, "\hat{π}_*"]
& \SNX(M;\XX)~,
\end{tikzcd}
\]
and hence is also an isomorphism.
\end{corollary}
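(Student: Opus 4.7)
The plan is to deduce everything from Theorem~\ref{t:excision} together with the universal property of coends. First, I would establish fully faithfulness of $\ov{ι}_{\cT ⊆ \cU}$ by exactly the argument given immediately after \eqref{e:iSSp} for $\ov{ι}_{\cS ⊆ \cT}$: any $\cT$-admissible boundary value $\VV$ on $N$ yields a bulk-$\cT$-admissible boundary value on the cylinder $N \times [0,1]$, so Lemma~\ref{l:bulk-cS-natural} (applied to the pair $\cT ⊆ \cU$ in place of $\cS ⊆ \cT$) gives an isomorphism $\SNXp(N \times [0,1]; \VV, \WW) \simeq \SNJ{\cU}(N \times [0,1]; \VV, \WW)$ for all $\VV, \WW$ in $\hZXp(N)$.

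Next, I would construct the map between coends and check the compatibility with $\hat{π}_*$. By \eqref{e:SNX-diff-Z}, the bifunctor $\SNX(M';\XX,-,-)$ on $\hZJ{\cU}(N)^\op \times \hZJ{\cU}(N)$ restricts along $\ov{ι}_{\cT ⊆ \cU}^\op \times \ov{ι}_{\cT ⊆ \cU}$ to the bifunctor $\SNX(M';\XX,-,-)$ on $\hZXp(N)^\op \times \hZXp(N)$; restricting the universal dinatural cone of the $\cU$-coend to objects of $\hZXp(N)$ therefore yields a dinatural cone on the latter bifunctor, whose unique factorisation through the $\cT$-coend defines the desired map between the coends. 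The dinatural transformation $π_*$ in \eqref{eq:Sk-dinat} is defined purely topologically via the gluing \eqref{e:pi-skein}, so the same formula gives a dinatural transformation on $\hZJ{\cU}(N)$ and these two dinatural transformations correspond under pullback along $\ov{ι}_{\cT ⊆ \cU}$. Consequently both the direct arrow $\hat{π}_*$ out of the $\cT$-coend and the composite of the induced map with the $\hat{π}_*$ out of the $\cU$-coend are unique factorisations, through $\int^{\hZXp(N)} \SNX(M';\XX,-,-)$, of the very same $\hZXp$-side dinatural transformation; by uniqueness they coincide, which gives commutativity of the triangle.

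Finally, Theorem~\ref{t:excision} applied separately with $\cT$ and with $\cU$ shows that both instances of $\hat{π}_*$ in the diagram are isomorphisms, so the 2-out-of-3 property applied to the commuting triangle forces the induced map between the coends to be an isomorphism as well. I do not expect any serious obstacle here: once Theorem~\ref{t:excision} is granted, the corollary is a purely formal consequence of the universal property of the coend. The only care needed is in matching up the bifunctor and the dinatural transformation $π_*$ under pullback along $\ov{ι}_{\cT ⊆ \cU}$, which is exactly what \eqref{e:SNX-diff-Z} and the topological description of $π_*$ provide; fully faithfulness of $\ov{ι}_{\cT ⊆ \cU}$ itself plays only a cosmetic role and is not actually used to deduce the isomorphism.
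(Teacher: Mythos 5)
Your proof is correct and matches the intended argument: fully faithfulness is established via Lemma~\ref{l:bulk-cS} (or its natural variant Lemma~\ref{l:bulk-cS-natural}) applied to the cylinder, the map of coends is the one induced by restriction of the universal dinatural cone along $\ov{ι}_{\cT ⊆ \cU}$ together with \eqref{e:SNX-diff-Z}, the triangle commutes because both composites factor the same dinatural cone through $\int^{\hZXp(N)}$ (hence coincide by the universal property), and the 2-out-of-3 argument using Theorem~\ref{t:excision} for both $\cT$ and $\cU$ gives the isomorphism. Your closing remark that fully faithfulness of $\ov{ι}_{\cT ⊆ \cU}$ is not logically needed to construct the map of coends or to deduce the isomorphism is accurate -- the map exists for any functor satisfying \eqref{e:SNX-diff-Z}, and the isomorphism is extracted purely from the commuting triangle; the ``hence'' in the corollary statement is therefore a slight misattribution, with fully faithfulness serving only to identify $\hZXp(N)$ as a full subcategory of $\hZJ{\cU}(N)$.
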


The rest of Section~\ref{s:excision} will be dedicated to the proof of
Theorem \ref{t:excision}.

\subsection{Proof of Theorem \ref{t:excision}}
\label{s:excision-proof}

Our approach will be similar to \cite{AlexanderKirillov:2020hkv},
but we are more methodical about keeping arguments
on the level of graphs rather than on the level of skeins,
and we keep the observations about $\SNX(M';\XX,-,-)$ and $\SNX(M;\XX)$
separate until the very end where we combine them.
Note that in \cite{AlexanderKirillov:2020hkv},
the categories are assumed to be semisimple, but upon close inspection,
the proof of \cite[Thm.\,7.4]{AlexanderKirillov:2020hkv}
(excision for skein categories) does not rely on semisimplicity.

First we need to get a better understanding of the coend over skein modules.
We show that an element of the coend is a graph in $M'$
modulo the $\cS$-null relations in $M'$
and relations that ``carry graphs across $N$'',
which is encapsulated in (\ref{e:kernel-NS}) below.

To make the diagrams below less bulky,
we shorten $\VGraph$ to $\VG$, and further
\begin{align*}
\VGX'(\VV,\WW) &:= \VGraphX(M';\XX, N' = \VV, N = \WW) \ ,
\end{align*}
and similarly for $\SNX'(\VV,\WW), \NullX'(\VV,\WW)$.
Consider the commutative diagram below,
which we explain in the numbered list following it:

\begin{equation}
\label{e:big-cd-N-I}
\begin{tikzcd}
0 \ar[rd]
	&
	& 0
	\ar[d]
\\
	& \cN_\cS
	\ar[rd,"ι","(4)"']
	& \displaystyle \bigoplus_{\VV \in \hZXp(N)} \hspace{-.7em} \NullX'(\VV,\VV)
	\ar[d,"i","(3)"']
\\
0 \ar[r]
	& V\cK_\cS
	\ar[r, "(2)"'] \ar[d, "(3)"']
	& \displaystyle \bigoplus_{\VV \in \hZXp(N)} \hspace{-.7em} \VGX'(\VV,\VV)
	\ar[r, "q", "(2)"'] \ar[d,"p'","(3)"'] \ar[rd,"\ov{q} \circ p'","(4)"']
	& \displaystyle \mathrm{I}\VGX'
	\ar[r] \ar[d, "\ov{p'}","(3)"']
	& 0
\\
0 \ar[r]
	& \ker(\ov{q})
	\ar[r, "(1)"'] \ar[d]
	& \displaystyle \bigoplus_{\VV \in \hZXp(N)} \hspace{-.7em} \SNX'(\VV,\VV)
	\ar[r, "\ov{q}", "(1)"'] \ar[d]
	& \displaystyle \int^{\hZXp(N)} \SNX'(-,-)
	\ar[r] \ar[d] \ar[rd]
	& 0
\\
	& 0
	& 0
	& 0
	& 0
\end{tikzcd}
\end{equation}

\begin{enumerate}
\item The bottom row is the realisation of the
coend as a quotient of a direct sum.
	$\ker(\ov{q}) = \Span\{ f \lact Ψ - Ψ \ract f \}$
	where $f, Ψ$ range over all $f: \WW → \VV, Ψ ∈ \SNX'(\VV,\WW)$,
	and $\VV,\WW ∈ \hZXp(N)$ as in \eqref{eq:skein-coend-def}.

\item In the middle row,
define $V\cK_\cS := \Span\{ f \lact Ψ - Ψ \ract f \}$,
with $Ψ ∈ \VGX(\WW,\VV), f: \VV \to \WW$;
we define $\mathrm{I}\VGX'$
to be the quotient of $\bigoplus \VGX'(\VV,\VV)$ by $V\cK_\cS$.
We note that $\mathrm{I}\VGX'$ is not a coend since there is no dinatural transformation
analogous to \eqref{eq:Sk-dinat} for $\VGraphX$ instead of $\SNX$.

\item The maps $i,p'$ in the middle column are simply the defining maps
for $\cS$-admissible skeins.
The leftmost vertical arrow is the restriction 
of $p'$ to $V\cK_\cS$,
which clearly maps to $\ker(\ov{q})$ surjectively.
As a consequence, $p'$ also descends to $\mathrm{I}\VGX'$,
giving the rightmost vertical arrow $\ov{p'} : \mathrm{I}\VGX' \to \int \SNX'$.

\item The right square commutes; we define
	$\cN_\cS := \ker(\ov{q} ∘ p') = \ker(\ov{p'} ∘ q)$.
\end{enumerate}
By diagram chasing, one finds
\begin{equation}
\label{e:kernel-NS}
 \cN_\cS ~=~ V\cK_\cS \,+ \!\!\! \bigoplus_{\VV \in \hZXp(N)} \!\!\!\! \NullX'(\VV,\VV)
 ~ \subseteq ~ \bigoplus_{\VV} \VGX'(\VV,\VV) \ .
\end{equation}
Namely, for $x ∈ \cN_\cS$, $\ov{q}(p'(x)) = 0$ so $p'(x)$ lies in $\ker(\ov{q})$.
By surjectivity of the leftmost vertical arrow $p'|_{V\cK_\cS}$,
$p'(x)$ has a lift $y ∈ V\cK_\cS$.
Then $p'(x - y) = 0$, so $x - y$ is in the image of $i$ and we can write
$x = y + i(z)$ for some
$z ∈ \bigoplus_{\VV \in \hZXp(N)} \NullX'(\VV,\VV)$.

Next, we make a few more definitions and state Lemma \ref{l:cT-s}
which is the counterpart to (\ref{e:kernel-NS}) for skeins in $M$.
Consider the following map
which collects all the gluing operations $π_*$
induced by the gluing map $π: M' \to M$ into a single map
\[
π_*: \bigsqcup_{\VV ∈ \hZXp(N)} \GraphX(M';\XX,\VV,\VV) \to \GraphX(M;\XX) \ .
\]
If we allowed any boundary value $\VV$ (in the subscript under $\bigsqcup$),
then this map is almost surjective -
it misses precisely those graphs in $M$ which are
not transverse to $L$, the image of $N$ (and $N'$) under $π$.
The restriction to boundary values $\VV ∈ \hZXp(N)$
means that the graphs in the image of $π_*$
must have an intersection with $L$ that is a $\cT$-admissible boundary value.
This motivates the following definitions:
\begin{align*}
\GraphX^{\trvs}(M;\XX)
	&:= \big\{ Γ ∈ \GraphX(M;\XX) \,\big|\,
	Γ \trv L \text{ and } Γ ∩ L \text{ is } \cT \text{-admissible} \big\}
\\
\VGraphX^{\trvs}(M;\XX) &:= \Span\big( \GraphXs(M;\XX)\big) ~⊂~ \VGraphX(M;\XX)
\\
\PrimNullX^{\trvs}(M,U;\XX) &:= \PrimNullX(M,U;\XX) ∩ \VGraphXs(M;\XX)
\\
\NullX^{\trvs}(M,U;\XX) &:= \Span\big( \PrimNullX^{\trvs}(M,U;\XX)\big)
	~⊆~ \NullX ∩ \VGraphXs
\end{align*}

From the discussion above, the gluing map $π_*$ defines an isomorphism
\begin{equation}
\label{e:pi-bijective}
π_* : \bigoplus_{\VV ∈ \hZXp(N)} \VGX'(\VV,\VV) \simeq \VGraphX^{\trvs}(M;\XX)
\end{equation}

We make precise the idea of ``translation across $L$'';
this is the counterpart to taking the coend over boundary values on $N$,
which might be intuitively thought of as allowing the operation of
taking a chunk of a graph near $N$ and gluing it to $N'$.
Recall the tubular neighbourhood $ι: [-1,1] \times L \to M$ of $L$.
Let $U₀ := M \backslash L$ and $U₁ = ι( (-1,1) \times L)$.
Let $X = ι_*( (f ⊗ \id_L) ⋅ ∂_t)$,
where $∂_t$ is the constant vector field on $[-1,1] \times L$
in the $[-1,1]$ direction,
and $f$ is a smooth function on $\RR$ with support on $(-1,1)$,
Let $U_{1m} = ι( (-1,1) \times L_m)$ be a neighbourhood of
the $m$-th component $L_m$ of $L$,
and let $X_m$ be the restriction of $X$ to $U_{1m}$.

We define $\{θ_m^{α} = \exp(α X_m) \;|\; α ∈ \RR\}$,
a 1-parameter subgroup of diffeomorphisms of $M$ generated by $X_m$.
Note that the $θ_m^{α}$ do not fix the boundary of $M$,
which might have the unintended effect of not preserving
a boundary value $\XX$ of $∂M$.
Given an $\XX$ that has no points on $∂L$
(which is always the case in our context,
as $\XX$ actually arises as $π_*(\XX)$ from a boundary value on $M'$),
we can rescale the $[-1,1]$ direction of the tubular neighbourhood of $L$,
so as to have $\XX$ not intersect the tubular neighbourhood $[-1,1] \times L$;
we will assume this to be the case in the following.

The key property of this subgroup of diffeomorphisms
is the ability to displace compact sets:
any compact set $V ⊆ U_{1m}$ can be displaced from a neighbourhood of $L_m$,
that is, for any $0 < ε < 1$, there exist $C$ such that for all $|α| > C$,
$θ_m^{α}(V) ∩ ι([-ε,ε] \times L_m) = ∅ $.

We want to show that null graphs of $M$ that have a
$\cT$-admissible intersection with $L$
are made of null graphs in $U₀ = M \backslash L$
plus something that ``moves things across $L$''.
Define $Θ_m^\trvs$ to be the subspace of $\VGraphX^{\trvs}(M;\XX)$
spanned by elements of the form $Γ- θ_m^{α}(Γ)$,
where both $Γ,θ_m^{α}(Γ)$ are transverse to $L$
and their intersection boundary values on $L$ are $\cT$-admissible,
and let $Θ^\trvs = \sum_{L_m ⊆ L} Θ_m^\trvs$.
Then the counterpart to (\ref{e:kernel-NS}) for skeins in $M$ is:

\begin{lemma}
\label{l:cT-s}
$\NullXs(M;\XX) = \NullXs(M,U₀;\XX) + Θ^\trvs$.
\end{lemma}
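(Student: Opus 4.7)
The lemma asserts equality of two subspaces of $\VGraphXs(M;\XX)$, and we prove the two inclusions separately. The inclusion $\NullXs(M,U_0;\XX) + \Theta^\trvs \subseteq \NullXs(M;\XX)$ contains $\NullXs(M,U_0;\XX)$ by definition. For $\Theta^\trvs$, observe that the vector field $X_m$ is supported in the tubular neighbourhood of $L_m$, which is disjoint from $\XX$, so the flow $\theta_m^\alpha$ yields an ambient isotopy of $M$ that is $C^\infty$-constant on $\partial M$. Applied to a graph $\Gamma$, this gives a graph isotopy from $\Gamma$ to $\theta_m^\alpha(\Gamma)$, and Lemma~\ref{l:isotopy-null} lets us decompose it into a finite sequence of isotopies each supported in a small ball, so that $\Gamma - \theta_m^\alpha(\Gamma)$ becomes a telescoping sum of primitive $\cS$-null differences. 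Using genericity of the flow parameter together with the $\cT$-admissibility of the endpoints, the decomposition can be arranged so that every intermediate graph stays in $\GraphXs$, placing the sum in $\NullXs(M;\XX)$.

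For the reverse inclusion it suffices to treat a single primitive $\cS$-null generator $x = \sum_j a_j \Gamma_j$ with respect to a ball $D \subseteq M$, with each $\Gamma_j \in \GraphXs$. If $\Int(D) \cap L = \emptyset$ then $x \in \NullXs(M,U_0;\XX)$ directly. Otherwise $D$ meets components $L_{m_1}, \ldots, L_{m_k}$ of $L$, and we argue by induction on $k$. By the displacement property recorded before the lemma, there exists $\alpha$ with $\theta_{m_1}^\alpha(D) \cap L_{m_1} = \emptyset$; since the tubular neighbourhoods for distinct components are disjoint, the flow is the identity on a neighbourhood of each $L_{m_i}$ for $i \geq 2$, so $\theta_{m_1}^\alpha(D)$ meets these components in exactly the same sets as $D$, i.e.\ meets one fewer component of $L$ overall. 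Writing
\[
x \;=\; \sum_j a_j\, \theta_{m_1}^\alpha(\Gamma_j) \;+\; \sum_j a_j \bigl( \Gamma_j - \theta_{m_1}^\alpha(\Gamma_j) \bigr),
\]
the first summand is a primitive $\cS$-null relation with respect to $\theta_{m_1}^\alpha(D)$ by the diffeomorphism invariance of the evaluation $\eval{\cdot}_D$, while the second summand lies in $\Theta_{m_1}^\trvs$ provided each $\theta_{m_1}^\alpha(\Gamma_j)$ again lies in $\GraphXs$. Iterating over the remaining components, using that the flows $\theta_m^\alpha$ for distinct $m$ commute and have disjoint supports, produces a primitive $\cS$-null generator whose ball is disjoint from $L$, modulo an element of $\Theta^\trvs$.

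The main obstacle is arranging that $\theta_m^\alpha(\Gamma_j) \in \GraphXs$, namely that the translated graph remains transverse to $L$ with $\cT$-admissible intersection on each component of $L$. Transversality holds for generic $\alpha$. For admissibility, the set $\theta_m^\alpha(\Gamma_j) \cap L_m$ equals $\theta_m^\alpha\bigl(\Gamma_j \cap \theta_m^{-\alpha}(L_m)\bigr)$, which tracks how $\Gamma_j$ intersects a copy of $L_m$ shifted in the $t$-direction of the tubular neighbourhood. A $\cT$-labelled edge of $\Gamma_j$ that crosses this entire family of parallel shifts provides the required admissibility; the concern is that a short crossing of $L_m$ might exit the tubular neighbourhood before reaching the shifted slice, destroying $\cT$-admissibility on $L_m$. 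This is handled by first performing preliminary local moves on $\Gamma_j$ lying inside $\NullXs(M,U_0;\XX)$ (and, where needed, inside $\Theta^\trvs$) to relocate a $\cT$-labelled edge so that it crosses every parallel shift of $L_m$ throughout the tubular neighbourhood on the relevant side. This bookkeeping, performed independently on each component since the flows commute, is the technical heart of the proof.
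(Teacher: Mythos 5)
The reverse inclusion collapses at the displacement step. You invoke ``the displacement property recorded before the lemma'' to conclude that, for a primitive generator with respect to a ball $D$ meeting $L_{m_1}$, there is some $\alpha$ with $\theta_{m_1}^{\alpha}(D) \cap L_{m_1} = \emptyset$. But that property is stated only for compact sets $V \subseteq U_{1m_1}$, whereas the defining ball of a null relation is an arbitrary ball in $M$. Since $X_{m_1}$ is supported inside $U_{1m_1}$, the diffeomorphism $\theta_{m_1}^{\alpha}$ is the identity outside $U_{1m_1}$; so if $D$ extends past both walls $\iota(\{-1\} \times L_{m_1})$ and $\iota(\{1\} \times L_{m_1})$ of the tubular neighbourhood, then $\theta_{m_1}^{\alpha}(D)$ contains fixed points on both sides of $L_{m_1}$ and, being connected, must meet $L_{m_1}$ for every $\alpha$ (take $M = \RR^{d}$, $L$ a hyperplane, $D$ a large round ball for an explicit counterexample). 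The ball $D$ simply cannot be translated off $L$ by the flows $\theta_m^{\alpha}$, so the induction never gets started.

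The paper's proof avoids this by shrinking rather than translating: it chooses an ambient isotopy $\varphi^{\bullet}$, supported in a slightly larger ball $D'$, which contracts $D$ into $D \cap U_0$, so that $\sum_j a_j\,\varphi^{1}(\Gamma_j)$ is primitive $\cS$-null with respect to $\varphi^{1}(D) \subseteq U_0$. The remaining task is then to show $\varphi^{1}(\Gamma_j) - \Gamma_j \in \NullXs(M,U_0;\XX) + \Theta^{\trvs}$, and this is done by decomposing the shrinking isotopy into moves supported in small balls of an open cover (Lemma~\ref{l:isotopy-null-submfld}); the translations $\theta_m^{\alpha}$ enter only at that stage, applied to small balls $D_a$ that \emph{are} contained in $U_{1m}$, exactly the regime in which the displacement property is valid. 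Your final paragraph on preserving $\cT$-admissibility is also not a proof but a promissory note: in the paper this is handled by first dragging an $\cS$-edge, via push-maps and a translation costing only $\NullXs(M,U_0;\XX) + \Theta^{\trvs}$, so that it crosses a full cylinder $\wdtld{K} = [-1,1] \times K$ over each component of $L$ away from $D'$; this edge then certifies $\cT$-admissibility of every intermediate intersection with $L$ throughout the open-cover decomposition. But the displacement error is the more fundamental gap and renders the overall strategy unworkable as stated.
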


We give the proof in Appendix~\ref{s:appendix-cT-s}.

\medskip


Finally, we can put the observations about skeins in $M'$ and $M$ together.
We consider the following commutative diagram:
\begin{equation}
\label{e:big-cd-Phi-I}
\begin{tikzcd}
0 \ar[r]
	& \cN_\cS
	\ar[r,"ι"] \ar[d, "π_*|_{\cN_\cS}"]
	& \displaystyle \bigoplus_{\VV \in \hZXp(N)} \VGX'(\VV,\VV)
	\ar[r,"\ov{q}\circ p'"] \ar[d,"\simeq"',"π_*"]
	& \int^{\hZXp(N)} \SNX'(-,-)
	\ar[r] \ar[d,"\hat{π}_*"]
	& 0
\\
0 \ar[r]
	& \NullXs
	\ar[r] \ar[d, hookrightarrow]
	& \VGXs
	\ar[r,"p|_\trv"] \ar[d, hookrightarrow]
	& \SNX
	\ar[r]
	& 0
\\
0 \ar[r]
	& \NullX
	\ar[r]
	& \VGX
	\ar[ru, "p"']
\end{tikzcd}
\end{equation}
The middle vertical arrow $π_*$ is the bijection from (\ref{e:pi-bijective}),
and the rightmost vertical arrow $\hat{π}_*$ is the map
from \eqref{eq:pi*-coend-to-skein} that we want to prove is an isomorphism;
the square they form is commutative by definition.
This implies that $π_*(\cN_\cS) ⊆ \NullXs$,
justifying the target of the arrow labelled $π_*|_{\cN_\cS}$.
Therefore, since $π_*$ is bijective,
$π_*|_{\cN_\cS}$ is injective and $\hat{π}_*$ is surjective.

\medskip

It remains to show that $\hat{π}_*$ is injective,
or equivalently, that $π_*|_{\cN_\cS}$ is surjective.
Using \eqref{e:kernel-NS} and Lemma \ref{l:cT-s} to rewrite source and target,
we need to show that
    \[
π_*|_{\cN_\cS} ~:~ \cN_\cS \,=\!\!\!
	\bigoplus_{\VV \in \hZXp(N)} \!\!\! \NullX'(\VV,\VV) + V\cK_\cS
~\to~ \NullXs(U₀) + Θ^\trvs = \NullXs
\]
is surjective (we omit $M,\XX$ from the notation for brevity).

First observe that 
$\bigoplus_{\VV \in \hZXp(N)} \NullX'(\VV,\VV)$
is sent onto $\NullXs(U₀)$;
indeed, 
\[ π_
    *(\NullX'(\VV,\VV)) = \NullXs(U₀;L = \VV)
\]
for any $\VV ∈ \hZXp(N)$, so
\[
π_* \big( \bigoplus_{\VV ∈ \hZXp(N)} \NullX'(\VV,\VV) \big)
~= \!\!\bigoplus_{\VV ∈ \hZXp(N)}\!\!\! \NullXs(U₀;L = \VV)
\,=\, \NullXs(U₀) \ .
\]

For $Γ - θ^{α}(Γ) ∈ Θ^\trvs$, say for $α > 0$,
suppose $Γ ∩ L = \VV$, and $θ^{α}(Γ) ∩ L = \WW$.
Let $f$ be the part of $Γ$ that is moved across $L$,
i.e.\ $f = Γ ∩ ι([α',0] \times L)$ where $α'<0$ is such that $θ^{α}(\{α'\} \times L) = \{0\} \times L$;
as a skein, $f$ defines a morphism $\ov{f} ∈ \hZX(N)(\WW,\VV)$.
Let $Γ'$ be the rest of $Γ$,
i.e.\ $Γ' = Γ ∩ (M \backslash ι((α',0) \times L))$.
We can identify $M \backslash ι((α',0) \times L)$ with $M'$,
hence we think of $Γ'$ as a graph in $M'$,
with boundary values $\XX$, $N = \WW$, and $N' = \VV$,
i.e.\ $Γ' ∈ \VGX'(\VV,\WW)$.

It is easy to see that $π_*(f \lact Γ')$ is isotopic to $Γ$,
and $π_*(Γ' \ract f)$ is isotopic to $θ^{α}(Γ)$,
and these isotopies can be taken to be fixed on $L$, so
\[
π_*(f \lact Γ' - Γ' \ract f) - (Γ - θ^{α}(Γ))
\,∈\, \NullXs(U₀) \ .
\]
Since $f \lact Γ' - Γ' \ract f \in V\cK_\cS$ by definition, and
since we have already shown that $π_*|_{\cN_\cS}$ is surjective onto $\NullXs(U₀)$,
this implies that $Γ - θ^{α}(Γ)$ is in the image of $π_*|_{\cN_\cS}$.
Altogether we see that $π_*|_{\cN_\cS}$ is surjective.

This completes the proof of Theorem~\ref{t:excision}.

\section{Properties}
\label{s:properties}

In this section we establish a number of general properties of admissible skein modules. We show how skein modules with different admissibility conditions can be glued together; we give a version of excision for cylinder categories and relate it to the so-called horizontal trace of a bimodule category; we show invariance of admissible skein modules under two closure operations, the sum-retract closure and the tensor-dual closure; and finally we explain the relation to a version of $\cS$-admissible skeins where all non-boundary edges are labelled in $\cS$.

\subsection{Gluing Different Types of Skein Modules}
\label{s:different-skein}

\begin{proposition}
\label{p:different-skein}
Consider $M = M₁ ∪_N M₂$, such that every connected component of $M₂$
meets $M₁$ somewhere along $N$.
Let $\cS ⊆ \cT ⊆ \cA$ be full subcategories,
and $\XX$ some boundary value on $∂M \backslash N$,
with restrictions $\XX₁,\XX₂$ to $M₁,M₂$ respectively.
Then
\[
\SNX(M;\XX) \simeq
\hspace{-10pt} \int\displaylimits^{\VV ∈ \hZXp(N)} \hspace{-10pt}
\SNX(M₁;\XX₁,\VV) ⊗_R \SNXp(M₂;\XX₂,\VV)
\]
\end{proposition}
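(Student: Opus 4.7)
The plan is to apply the excision theorem (Theorem~\ref{t:excision}) to $M$ cut along $L = N$, and then decompose the integrand using the disjoint-union lemma and the bulk-admissibility lemma from Section~\ref{s:basic-prop}.

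First I would invoke Theorem~\ref{t:excision}: the submanifold $L = N \subseteq M$ is codimension-$1$ and inherits collar neighbourhoods from both $M_1$ and $M_2$, giving a tubular neighbourhood in $M$. Cutting $M = M_1 \cup_N M_2$ along $N$ produces $M' = M_1 \sqcup M_2$, with the two new boundary pieces $L_-, L_+$ embedded as copies of $N$, one in $M_1$ and the other in $M_2$. The theorem then gives
\[
\SNX(M;\XX) \;\simeq\; \int^{\VV \in \hZXp(N)} \SNX(M';\XX,\VV,\VV).
\]

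Since $M'$ is a disjoint union and the two copies of $\VV$ land in different components, I would apply Lemma~\ref{l:M-disjoint-union} to split the integrand as a natural isomorphism
\[
\SNX(M';\XX,\VV,\VV) \;\simeq\; \SNX(M_1;\XX_1,\VV) \otimes_R \SNX(M_2;\XX_2,\VV),
\]
which lifts to an isomorphism of the corresponding coends over $\hZXp(N)$.

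The final step, and the one that uses the hypothesis on $M_2$ in an essential way, is to promote $\cS$-admissibility to $\cT$-admissibility on the $M_2$ factor. By assumption, every connected component of $M_2$ meets $M_1$ along $N$, and every $\VV \in \hZXp(N)$ is $\cT$-admissible on each component of $N$ by definition. Consequently, the boundary value $(\XX_2,\VV)$ on $\partial M_2$ is bulk-$\cT$-admissible relative to $M_2$. Lemma~\ref{l:bulk-cS-natural} then provides a natural isomorphism
\[
\ov{\iota}_{\cS \subseteq \cT} \,:\, \SNX(M_2;\XX_2,-) \;\xrightarrow{\ \sim\ }\; \SNXp(M_2;\XX_2,-)
\]
of functors $\hZXp(N) \to \Rmod$. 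Substituting this into the coend produces the claimed right-hand side.

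The only real bookkeeping is verifying that each of these isomorphisms is dinatural in $\VV$ so that they assemble into an isomorphism of coends. This is encoded in the "natural" versions of Lemmas~\ref{l:M-disjoint-union} and \ref{l:bulk-cS-natural} already established. There is no substantive obstacle beyond this, since each step is an isomorphism of functors over the same indexing category $\hZXp(N)$.
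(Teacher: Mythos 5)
Your first two steps are correct and match the natural starting point: Theorem~\ref{t:excision} applied to $L = N$ gives $\SNX(M;\XX) \simeq \int^{\VV \in \hZXp(N)} \SNX(M';\XX,\VV,\VV)$, and Lemma~\ref{l:M-disjoint-union} splits the integrand as $\SNX(M_1;\XX_1,\VV) \otimes_R \SNX(M_2;\XX_2,\VV)$, where crucially \emph{both} factors carry $\cS$-admissibility. The gap is in the last step. You claim that $(\XX_2,\VV)$ is bulk-$\cT$-admissible on $\partial M_2$ and then invoke Lemma~\ref{l:bulk-cS-natural} to get a natural isomorphism $\SNX(M_2;\XX_2,-) \simeq \SNXp(M_2;\XX_2,-)$. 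But Lemma~\ref{l:bulk-cS-natural} (and the underlying Lemma~\ref{l:bulk-cS}) requires bulk-$\cS$-admissibility, not bulk-$\cT$-admissibility, in order to conclude that $\ov{\iota}_{\cS \subseteq \cT}$ is an isomorphism. Since $\VV$ ranges over $\hZXp(N)$, it is only guaranteed to be $\cT$-admissible, and for $\VV$ labelled entirely in $\cT \setminus \cS$ the inclusion $\VGraphX(M_2;\XX_2,\VV) \subseteq \VGraphXp(M_2;\XX_2,\VV)$ is genuinely proper and $\ov{\iota}_{\cS \subseteq \cT}$ is in general neither injective nor surjective (cf.\ Remark~\ref{r:ncat-strat}(1)). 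So the natural transformation $\ov{\iota}_{\cS \subseteq \cT}$ is not an isomorphism of the integrands, and your final step fails.

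This is precisely the obstruction the paper's proof is built to circumvent: instead of cutting along $N$ alone, it cuts $M_1$ along an additional interface $N'$ bounding a collar $C$ of $N$ in $M_1$, and uses the auxiliary coend variable $\WW \in \hZX(N')$, which by construction \emph{is} $\cS$-admissible. The $\cS$-admissibility of $\WW$ is what makes the boundary values of $C$ and of $C \cup_N M_2$ bulk-$\cS$-admissible, so that Lemma~\ref{l:bulk-cS-natural} applies legitimately; the swap to $\SNXp$ on the $M_2$-side is threaded through $\WW$ rather than attempted directly at $\VV$. Your two coends $\int^{\VV} \SNX(M_1;\XX_1,\VV) \otimes_R \SNX(M_2;\XX_2,\VV)$ and $\int^{\VV} \SNX(M_1;\XX_1,\VV) \otimes_R \SNXp(M_2;\XX_2,\VV)$ are in fact isomorphic (both are $\SNX(M;\XX)$), but that isomorphism is not induced by $\ov{\iota}_{\cS \subseteq \cT}$ on integrands; establishing it requires an argument like the Fubini-and-collar manoeuvre in the paper.
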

\begin{proof}
The proof simply amounts to moving the $\cS$-admissibility condition
between the boundary value and the skein module.
Let $C$ be a collar neighbourhood of $N$ in $M₁$,
let $M₁' = \overline{M₁ \backslash C}$,
and let $N' = M₁' ∩ C$ be the $(d-1)$-manifold separating them.
Then (see below for explanations):
{\allowdisplaybreaks
\begin{align*}
\int\displaylimits^{\VV ∈ \hZXp(N)} &\SNX(M₁;\XX₁,\VV) ⊗_R \SNXp(M₂;\XX₂,\VV)
\\
&\overset{(1)}{\simeq}
\int\displaylimits^{\VV ∈ \hZXp(N)}
\Big( \int\displaylimits^{\WW ∈ \hZX(N')}
\SNX(M₁';\WW) ⊗_R \SNX(C;\XX₁,\WW,\VV) \Big) ⊗_R \SNXp(M₂;\XX₂,\VV)
\\
&\overset{(2)}{\simeq}
\int\displaylimits^{\VV ∈ \hZXp(N)} 
\int\displaylimits^{\WW ∈ \hZX(N')}
\SNX(M₁';\WW) ⊗_R \SNX(C;\XX₁,\WW,\VV) ⊗_R \SNXp(M₂;\XX₂,\VV)
\\
&\overset{(3)}{\simeq}
\int\displaylimits^{\WW ∈ \hZX(N')} \int\displaylimits^{\VV ∈ \hZXp(N)}
\SNX(M₁';\WW) ⊗_R \SNX(C;\XX₁,\WW,\VV) ⊗_R \SNXp(M₂;\XX₂,\VV)
\\
&\overset{(4)}{\simeq}
\int\displaylimits^{\WW ∈ \hZX(N')} \int\displaylimits^{\VV ∈ \hZXp(N)}
\SNX(M₁';\WW) ⊗_R \SNXp(C;\XX₁,\WW,\VV) ⊗_R \SNXp(M₂;\XX₂,\VV)
\\
&\overset{(5)}{\simeq}
\int\displaylimits^{\WW ∈ \hZX(N')}
\SNX(M₁';\WW) ⊗_R \SNXp(C ∪_{N} M₂;\XX₁,\XX₂,\WW)
\\
&\overset{(6)}{\simeq}
\int\displaylimits^{\WW ∈ \hZX(N')}
\SNX(M₁';\WW) ⊗_R \SNX(C ∪_{N} M₂;\XX₁,\XX₂,\WW)
\\
&\overset{(7)}{\simeq}
\SNX(M;\XX)
\end{align*}} The individual isomorphisms are obtained as follows:
\begin{enumerate}
\item Theorem \ref{t:excision}, cutting $M₁$ along $N'$,
\item right exactness of $⊗_R$ to pull the coend $\int^{\hZX(N')}$ out,
\item Fubini theorem\footnote{One can easily show that
both coends agree with the coend over the $R$-linear cartesian product,
$\int^{(\VV,\WW) ∈ \hZXp(N) \hatbox \hZX(N')}
\SNX(D;\WW) ⊗_R \SNX(M₁';\XX₁,\WW,\VV) ⊗_R \SNXp(M₂;\XX₂,\VV)
$.}
to swap the order of coends,
\item
change $\SNX$ to $\SNXp$ in the middle tensor factor using
$\ov{ι}_{\cS ⊆ \cT}$ is an isomorphism
(by Lemma \ref{l:bulk-cS-natural}
    -- the boundary value $\WW \cup \VV$ on $C$ is always bulk-$\cS$-admissible as $C$ is a cylinder and $\WW$ is $\cS$-admissible), 
\item right exactness of $⊗_R$ and Theorem \ref{t:excision}
	to glue up along $\VV$,
\item 
use the isomorphism $\ov{ι}_{\cS ⊆ \cT}$ to replace $\SNXp$ by $\SNX$ in the last tensor factor
    (again by Lemma \ref{l:bulk-cS-natural} --
	the connectivity condition on $M₂$ is needed to ensure that
	$\WW$ is bulk-$\cS$-admissible in $C ∪_{N} M₂$),
\item Theorem \ref{t:excision} to glue along $N'$.
\end{enumerate}
\end{proof}

\subsection{Excision for Cylinder Categories}
\label{s:excision-cylinder}

The cylinder categories, as defined in Section \ref{s:cylinder-cat},
also satisfy an excision property, where the cylinder category of
the glued up manifold is given by the ``horizontal trace''
(\cite{AlexanderKirillov:2020hkv}, see also \cite[Sec.\,2.4]{Beliakova:2017}),
which we recount below:

\begin{definition}[{\cite[Sec.\,3.2]{AlexanderKirillov:2020hkv}}]
\label{d:htr}
Let $\cC$ be a monoidal category, and $\cB$ a $\cC$-bimodule category.
Define the \emph{horizontal trace} $\htr_\cC(\cB)$
as the category with the same objects as $\cB$,
morphisms given by
\[
\Hom_{\htr_\cC(\cB)}(B₁, B₂) :=
\int^{X ∈ \cC} \cB(X \lact B₁, B₂ \ract X)
\]
and composition as follows:
for $ψ₁ ∈ \cB(X \lact B₁, B₂ \ract X)$
representing $[ψ₁] ∈  \htr_\cC(\cB)(B₁, B₂)$,
and $ψ₂ ∈ \cB(Y \lact B₂, B₃ \ract Y)$
representing $[ψ₂] ∈  \htr_\cC(\cB)(B₂, B₃)$,
their composition is
\begin{align*}
[ψ₂] ∘ [ψ₁] &:= [ψ₂ ∘_{B₂} ψ₁] \ ,
\\
ψ₂ ∘_{B₂} ψ₁ &:= (ψ₂ ⊗ \id_X) ∘ (\id_Y ⊗ ψ₁)
∈ \cB(YX \lact B₁, B₃ \ract YX) \ ,
\end{align*}
where we omit the coherence isomorphisms of the $\cC$ actions.
\end{definition}

There is a functor 
\[
i: \cB \to \htr_{\cC}(\cB)
\]
that is the identity on objects and
sends a morphism $ψ ∈ \cB(B₁,B₂) \simeq \cB(\one \lact B₁, B₂ \ract \one)$
to $[ψ] ∈ \htr_{\cC}(\cB)(B₁,B₂)$,
where the isomorphism uses the unit constraints of the $\cC$-actions on $\cB$.

\medskip

We will now give a skein-theoretic counterpart of the algebraic construction above.
Let $d ≥ 2$. We consider a similar setup of manifolds to the main theorem
as laid out in Section \ref{s:theorem-setup}, but one dimension lower.
Let $N$ be a $(d-1)$-manifold, and let
$ι: Q ↪ N$ be a codimension-1 oriented submanifold of $N$
that admits a tubular neighbourhood $\wdtld{ι} : [-1,1] \times Q ↪ N$.
Let $N'$ be the $(d-1)$-manifold obtained by cutting $N$ along $Q$.
Let $π: N' \to N$ be the natural quotient map,
and let $P,P'$ be the preimages of $Q$ corresponding to the two sides of $Q$,
with $P$ outgoing and $P'$ incoming.
The tubular neighbourhood $\wdtld{ι}$ lifts to collar neighbourhoods
$\wdtld{P} := [-1,0] \times P ↪ N'$, $\wdtld{P'}:= [0,1] \times P' ↪ N'$
that join up to $\wdtld{ι}$ under $π$
(see Figure~\ref{f:Np-to-N}).

\begin{figure}
\centering
\includegraphics[width=38em]{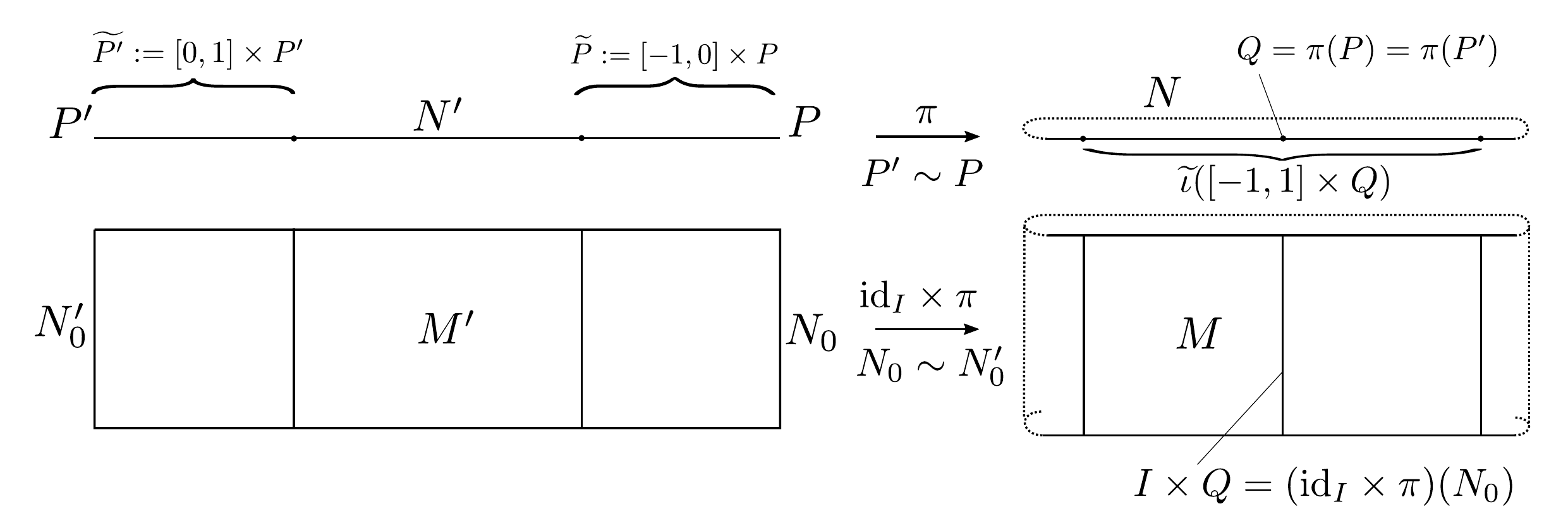}
\caption{Top: Gluing map $π: N' \to N$ identifying $P$ with $P'$.
Shown is the case $d=2$, where $N'$ is an interval and $P,P'$ its endpoints.
Bottom: Thickening by the interval $I$.}
\label{f:Np-to-N}
\end{figure}

Let $\cS ⊆ \cT ⊆ \cA$ be full subcategories.
Let $I = [3,4]$ (to avoid confusion with the other intervals).
We consider a thickened version of the setup above,
crossing everything with $I$, that is,
we consider $d$-manifolds $M = I \times N$, $M' = I \times N'$,
and $(d-1)$-manifolds $N₀ = I \times P ⊆ ∂M'$, $N₀' = I \times P' ⊆ ∂M'$,
so that gluing $N₀$ to $N₀'$ turns $M'$ into $M$ (see again Figure~\ref{f:Np-to-N}).

The ``stacking'' monoidal structure on $\hZXp(\wdtld{P})$ is defined as follows.
The tensor product $\VV ⊗ \WW$ on objects $\VV,\WW ∈ \hZXp(\wdtld{P})$
is obtained from applying the stacking-then-shrinking operation on $\wdtld{P}$
as seen in Section \ref{s:cylinder-cat},
to $\VV \sqcup \WW ∈ \hZXp(\wdtld{P} \sqcup \wdtld{P})$
with a fixed choice of embedding $\wdtld{P} \sqcup \wdtld{P} ↪ \wdtld{P}$.
This operation is associative up to ambient isotopy,
and the isotopy is used to define the associativity constraint.
The tensor product $f ⊗ g$ of morphisms
$f ∈ \hZXp(\wdtld{P})(\VV,\VV')$, $g ∈ \hZXp(\wdtld{P})(\WW,\WW')$
is obtained from applying the $I$-thickened version of the above operation.
The same structure can be defined for $\hZXp(\wdtld{P'})$,
and identifications $P \simeq P'$ and $[-1,0] \simeq [0,1]$
give a monoidal equivalence $\hZXp(\wdtld{P}) \simeq \hZXp(\wdtld{P'})$.

Likewise, we obtain a $\hZXp(\wdtld{P'})$-$\hZXp(\wdtld{P})$-bimodule structure
on $\hZX(N')$ by the operations on the collar neighbourhood
as seen in Section \ref{s:skein-module-as-functor}
(see Figure \ref{f:Z-bimodule-action});
with the monoidal equivalence $\hZXp(\wdtld{P}) \simeq \hZXp(\wdtld{P'})$,
we now have the $\hZXp(\wdtld{P})$-bimodule category $\hZX(N')$,
and we may consider its horizontal trace
$\htr_{\hZXp(\wdtld{P})}(\hZX(N'))$.

\begin{figure}
\centering
\includegraphics[width=10cm]{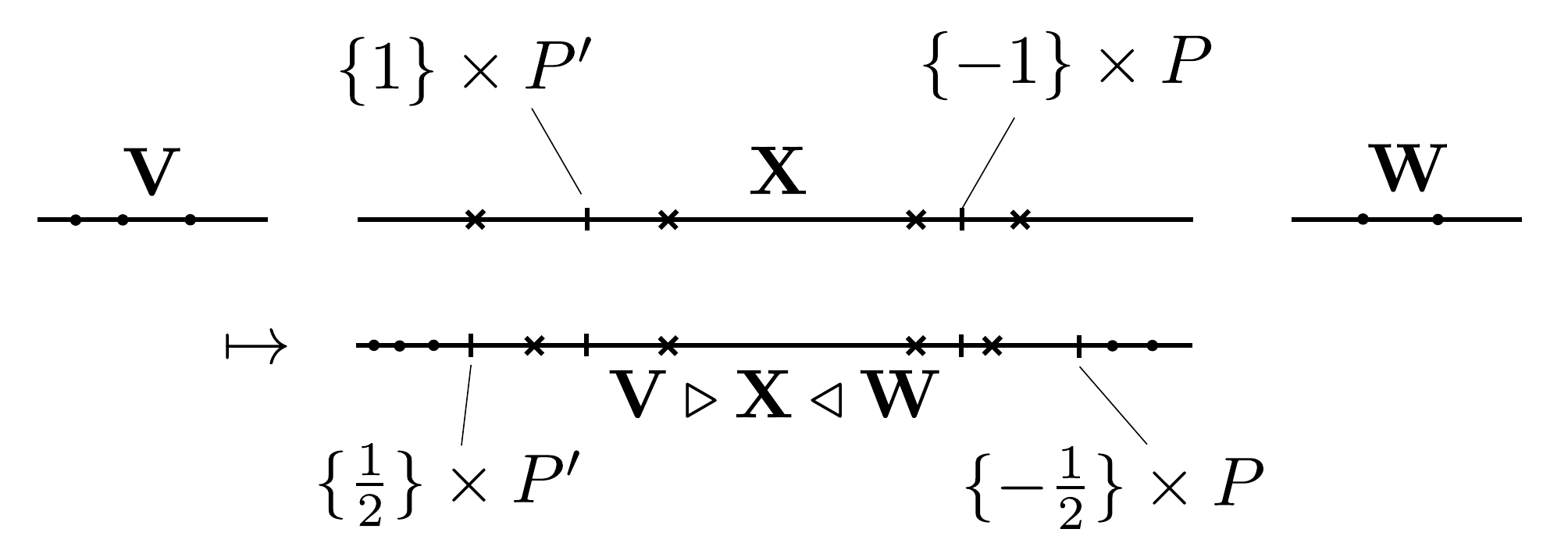}
\includegraphics[width=10cm]{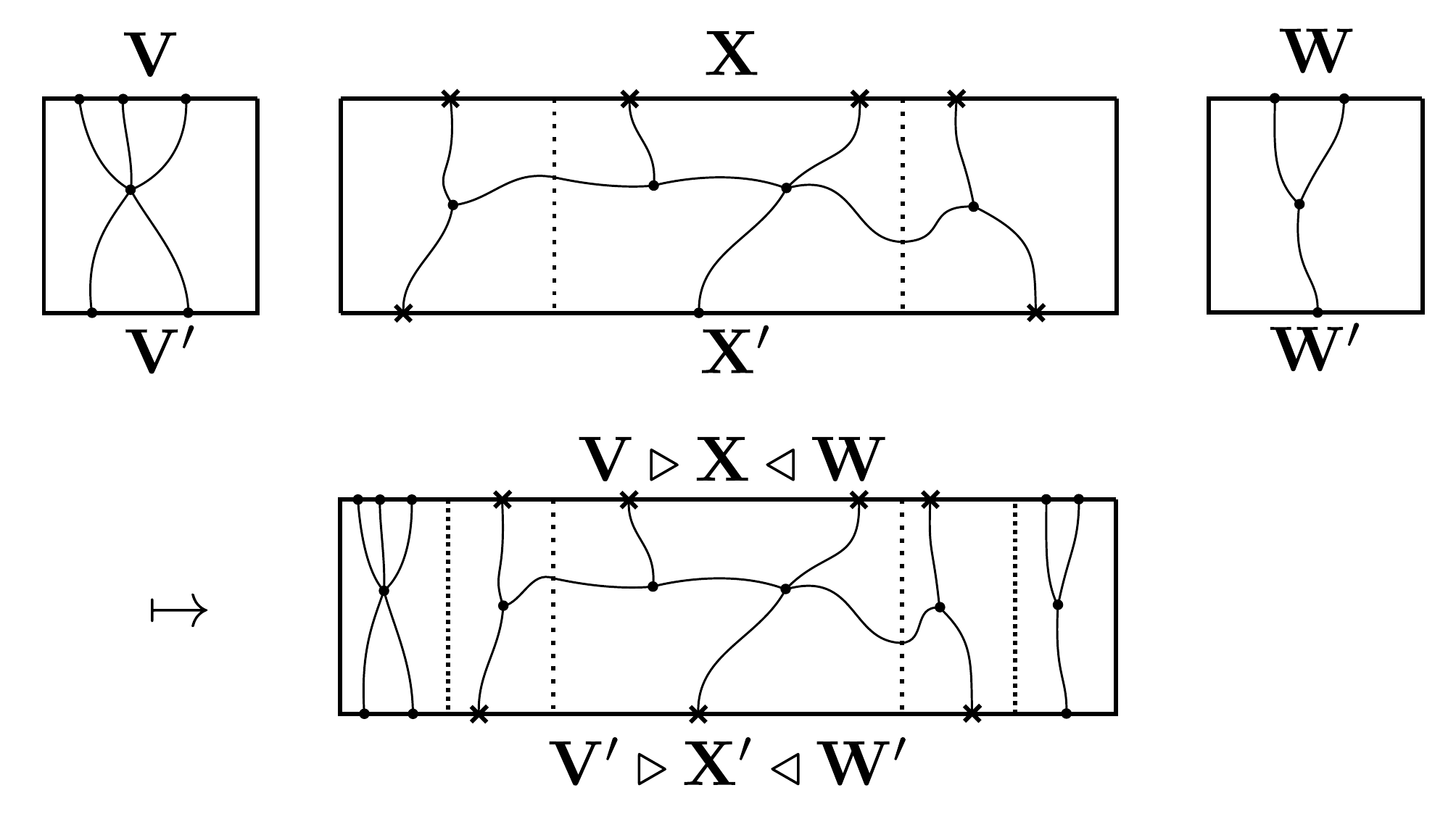}
\caption{$\hZXp(P)$-bimodule structure on $\hZX(N)$ from gluing.}
\label{f:Z-bimodule-action}
\end{figure}

We will now define an extension 
$\wdtld{π}_*: \htr_{\hZXp(\wdtld{P})}(\hZX(N')) \to \hZX(N)$
of the functor 
\[
    \pi_* : \hZX(N') \to \hZX(N)
\]    
induced by $π: N' \to N$ along the functor
$i: \hZX(N') \to \htr_{\hZXp(\wdtld{P})}(\hZX(N'))$.
Figure \ref{f:excision-glue} describes a homeomorphism $φ$ from $M'$ to itself,
which is a diffeomorphism away from corners as detailed in the figure. It induces an isomorphism
\[
φ_*: \SNX(M';\VV \lact \XX, \YY \ract \VV, ∅ , ∅ )
\simeq
\SNX(M';φ_*(\XX), φ_*(\YY),\VV,\VV) \ .
\]
On the left, $\SNX(M';\VV \lact \XX, \YY \ract \VV, ∅ , ∅ )$
has boundary value $\VV \lact \XX$ at the bottom $\{3\} \times N'$,
$\YY \ract \VV$ at the top $\{4\} \times N'$,
and empty boundary values on the vertical walls $I \times P', I \times P$.
On the right, $\SNX(M';φ_*(\XX), φ_*(\YY),\VV,\VV)$
has boundary value $φ_*(\XX)$ at the bottom, $φ_*(\YY)$ at the top,
and $\VV$ on the left and right vertical walls.
There are natural isomorphisms $φ_*(\XX) \simeq \XX$, $φ_*(\YY) \simeq \YY$,
which induce an isomorphism
\[
σ : \SNX(M';φ_*(\XX),φ_*(\YY),\VV,\VV)
\simeq
\SNX(M';\XX,\YY,\VV,\VV)
\]
and the gluing map $\id_I \times π: M' \to M$ induce a map
\[
(\id_I \times π)_* : \SNX(M';\XX,\YY,\VV,\VV)
\to
\SNX(M;π_*(\XX),π_*(\YY))
\]
with boundary values $π_*(\XX)$ at the bottom and $π_*(\YY)$ at the top.
Composing $φ_*$, $\sigma$, and $(\id_I \times π)_*$
we get a map
\[
\kappa_\VV : \hZX(N')(\VV \lact \XX, \YY \ract \VV) \to \hZX(N)(\XX,\YY)
\ .
\]
Adding these up over $\VV ∈ \hZXp(\wdtld{P})$,
we get a map $\sum \kappa_\VV :
\bigoplus \hZX(N')(\VV \lact \XX, \YY \ract \VV) \to \hZX(N)(\XX,\YY)$,
and it is easy to see that this factors through the coend
(dinaturality follows as in \eqref{eq:Sk-dinat} in the excision statement),
\[
\wdtld{\kappa} :
\int^{\VV ∈ \hZXp(\wdtld{P})} \hZX(N')(\VV \lact \XX, \YY \ract \VV)
\to \hZX(N)(\XX,\YY)
\ .
\]

\begin{figure}
\centering
\includegraphics[width=15cm]{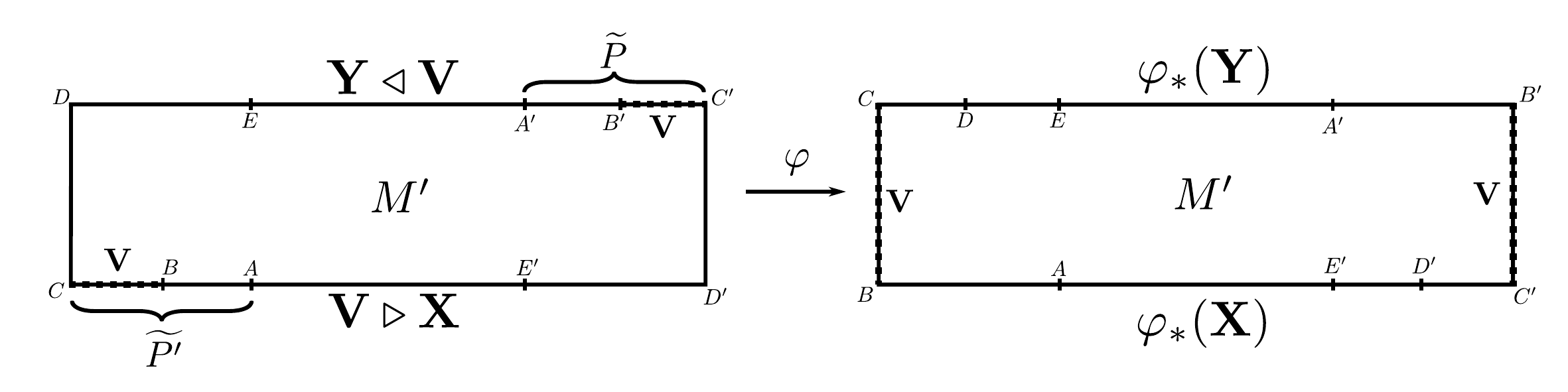}
\caption{
Left: 
Gluing operation from $M' = I \times N'$ to $M = I \times N$.
We glue the thickened segments
$[0,1/2] \times P'$, i.e.\ the segment $B$-$C$,
to $[-1/2,0] \times P$, i.e.\ to the segment $B'$-$C'$.
Right: $M'$
after applying a deformation 
$\varphi : M' \to M'$ (a diffeomorphism away from 
$B$, $C$, $D$ and $B'$, $C'$, $D'$).
We depict the deformation by keeping track of the points $A$ to $E$
and $A'$ to $E'$ (each of these points represents a copy of $P'$); the segments $A$-$E'$ and $A'$-$E$ are fixed.
The segment $C'$-$D'$-$E'$ in the left figure is compressed into the bottom copy of $N'$;
the segment $A$-$B$-$C$ in the left figure is expanded so that only $A$-$B$ remains in $N'$. A corresponding deformation is carried out for the top copy of $N'$.
}
\label{f:excision-glue}
\end{figure}

We can now state excision for cylinder categories in terms of the horizontal trace:

\begin{proposition}
\label{p:excision-cylinder-cat}
Consider the $\hZXp(\wdtld{P})$-bimodule category $\hZX(N')$, where $\cS ⊆ \cT$.
The functor 
\[
\wdtld{π}_*: \htr_{\hZXp(\wdtld{P})}(\hZX(N')) \to \hZX(N) \ ,
\]
defined by $π_*$ on objects and $\wdtld{\kappa}$ on morphisms
extends $π_*$ along $i$, i.e.\ $π_* = \wdtld{π}_* ∘ i$.
Furthermore, $\wdtld{π}_*$ is an equivalence.
\end{proposition}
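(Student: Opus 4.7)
The plan is to verify the proposition in four steps: (a) the extension identity $\wdtld{\pi}_* \circ i = \pi_*$, (b) essential surjectivity of $\wdtld{\pi}_*$, (c) fully faithfulness, and (d) functoriality (compatibility with composition). Parts (a), (b), (d) are geometrically transparent; the real content is (c), where I will reduce to Theorem~\ref{t:excision}.

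For (a), the functor $i$ sends $\psi \in \hZX(N')(\XX,\YY)$ into the coend at the component labeled by the empty boundary value $\VV = \one$ on $\wdtld{P}$. When $\VV = \one$ nothing needs to be moved across the cut, so the deformation $\varphi$ and the relabeling $\sigma$ in the construction of $\kappa_\VV$ reduce to identifications of boundary values, and $\kappa_{\one}(\psi) = (\id_I \times \pi)_*(\psi) = \pi_*(\psi)$. For (b), given $\XX \in \hZX(N)$, the set of marked points of $\XX$ is finite and $Q \subseteq N$ is codimension $1$, so I can choose an ambient isotopy of $N$ displacing the marked points of $\XX$ off $Q$. Realizing this isotopy as a skein on the cylinder $[0,1] \times N$ yields an isomorphism $\XX \simeq \XX'$ in $\hZX(N)$, and since $\XX'$ avoids $Q$ it lifts uniquely through $\pi$ to a boundary value $\YY$ on $N'$ with $\pi_*(\YY) = \XX'$; $\cS$-admissibility is preserved because labels are unchanged.

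For (c), I apply Theorem~\ref{t:excision} to the $d$-manifold $M = I \times N$ cut along the codimension-$1$ submanifold $L = I \times Q$, which admits a tubular neighbourhood from the product of $I$ with the tubular neighbourhood $\wdtld{\iota}$ of $Q$ in $N$. Cutting yields $M' = I \times N'$ with new boundary pieces $N_0 = I \times P$ (outgoing) and $N_0' = I \times P'$ (incoming), so excision gives
\[
\hZX(N)(\XX,\YY) \,=\, \SNX(M;\XX,\YY) \,\simeq\, \int^{\WW \in \hZXp(N_0)} \SNX(M';\XX,\YY,\WW,\WW)\,.
\]
Both $N_0 = I \times P$ and $\wdtld{P} = [-1,0] \times P$ are cylinders over $P$, so they are diffeomorphic and induce a monoidal equivalence $\hZXp(N_0) \simeq \hZXp(\wdtld{P})$. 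The homeomorphism $\varphi$ of Figure~\ref{f:excision-glue}, followed by the relabeling isomorphism $\sigma$, provides a natural-in-$\WW$ isomorphism $\SNX(M';\XX,\YY,\WW,\WW) \simeq \hZX(N')(\WW \lact \XX, \YY \ract \WW)$, so the coend becomes $\htr_{\hZXp(\wdtld{P})}(\hZX(N'))(\XX,\YY)$. Unwinding the construction of $\wdtld{\kappa}$, the composite of these identifications with the universal map from excision is precisely $\wdtld{\kappa}$, so $\wdtld{\pi}_*$ is fully faithful. For (d), composition in the horizontal trace is defined using the tensor product $YX$ in $\hZXp(\wdtld{P})$, which geometrically corresponds to stacking cylinders within $\wdtld{P}$; gluing the stacked configuration back into $M$ along $Q$ agrees, up to isotopy, with composing two cylindrical skeins in $\hZX(N)$, so $\wdtld{\pi}_*$ respects composition.

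The main obstacle is the bookkeeping in step (c): verifying that the dinatural structure on $\SNX(M';\XX,\YY,-,-)$ induced by the $\hZXp(N_0)$-action (which enters via Theorem~\ref{t:excision}) matches the dinatural structure on $\hZX(N')(- \lact \XX, \YY \ract -)$ coming from the $\hZXp(\wdtld{P})$-bimodule action, under the identification $\varphi_* \circ \sigma$. This requires tracing through the collar neighbourhoods, the monoidal equivalence $\hZXp(N_0) \simeq \hZXp(\wdtld{P})$, and checking that acting by a morphism $f : \WW \to \WW'$ on either side produces the same skein after applying $\varphi_*$ — essentially a geometric check that the bending deformation $\varphi$ intertwines ``acting on the vertical wall'' with ``acting on top/bottom inserted at $\wdtld{P}$''.
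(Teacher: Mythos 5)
Your proposal is correct and follows essentially the same route as the paper: essential surjectivity by isotoping marked points off $Q$, and fully faithfulness by applying Theorem~\ref{t:excision} to $M = I \times N$ cut along $L = I \times Q$, identifying the resulting coend with the $\htr$-coend via $\sigma \circ \varphi_*$. You spell out a few points the paper leaves implicit (the $\kappa_{\one} = \pi_*$ computation, the identification $\hZXp(N_0) \simeq \hZXp(\wdtld{P})$, and the need to check naturality of $\sigma \circ \varphi_*$ in the coend variable), which is a reasonable level of added care but not a different argument.
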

\begin{proof}
Essential surjectivity is clear:
$\wdtld{π}_*$ only misses boundary values that have points on $Q ⊆ N$,
but such boundary values are isomorphic to ones obtained
by displacing marked points off of $Q$.
For fully-faithfulness,
\begin{align*}
\htr_{\hZXp(\wdtld{P})}(\hZX(N'))(\XX,\YY)
&= \int^{\VV ∈ \hZXp(\wdtld{P})} \hZX(N')(\VV \lact \XX,\YY \ract \VV)
\\
&= \int^{\VV ∈ \hZXp(\wdtld{P})} \SNX(M';\VV \lact \XX, \YY \ract \VV, ∅ , ∅ )
\\
&\overset{σ ∘ φ_*}{\simeq}
\int^{\VV ∈ \hZXp(\wdtld{P})} \SNX(M';\XX,\YY,\VV,\VV)
\\
&\overset{(*)}{\simeq}
\SNX(M;\XX,\YY)
\overset{\text{def.}}= \hZX(\XX,\YY)
\end{align*}
where in $(*)$ we apply Theorem \ref{t:excision} to the map $\id_I \times π$.
It straightforward to check that $\wdtld{π}_*$ respects composition,
so we omit the details.
\end{proof}

\begin{example}
\label{x:S1}
Let $\cC$ be a finite pivotal tensor category,
and let $\cI = \Proj(\cC)$ be the projective ideal.
By the excision result in Proposition \ref{p:excision-cylinder-cat} above,
we have
\begin{align*}
\hZJ{\cC}(S¹) \simeq \htr_{\hZJ{\cC}(I)}(\hZJ{\cC}(I)) \simeq \htr_\cC(\cC) \ ,
\\
\hZI(S¹) \simeq \htr_{\hZJ{\cC}(I)}(\hZI(I)) \simeq \htr_\cC(\cI) \ ,
\\
\hZI(S¹) \simeq \htr_{\hZI(I)}(\hZI(I)) \simeq \htr_\cI(\cI) \ ,
\end{align*}
in particular $\htr_\cC(\cI) \cong \htr_\cI(\cI)$.

Concretely, the equivalence $\htr_\cC(\cC) \simeq \hZJ{\cC}(S¹)$
sends $V ∈ \cC$ to the boundary value $\VV$
with one marked point $p$ (fixed beforehand) labelled by $V$,
and sends a morphism $[φ] ∈ \htr_\cC(\cC)(V,W)$,
represented by $φ ∈ \cC(AV,WA)$,
to the graph in the cylinder with one vertex labelled by $φ$,
with the $A$-labelled edge looping around the cylinder.
Similarly for the other equivalences.

The cylinder category over $S¹$ is closely related to $\cZ(\cC)$,
the Drinfeld centre of $\cC$.
For $\cC$ semisimple, \cite{Kirillov:2011} showed that
$\text{Kar}(\hZJ{\cC}(S¹)) \simeq \cZ(\cC)$,
where Kar indicates the Karoubi envelope.
This was generalised in \cite{Muller:2023} to non-semisimple $\cC$,
where they give a fully faithful functor $\hZI(S¹) \to \cZ(\cC)$
whose extension to the finite cocompletion of $\hZI(S¹)$
(in lieu of Karoubi envelope) is an equivalence;
here we implicitly claim that $\hZI(S¹)$ is equivalent to the
boundary categories $\textbf{sn}_\cC(S¹)$ of \cite{Muller:2023}
(see also Section~\ref{s:relation-MSWY}).
The second equivalence above reproduces \cite[Prop.\,4.2]{Muller:2023}:
\[
\hZI(S¹)(P,Q) \simeq \htr_\cC(\cI)(P,Q)
\simeq \int^{X ∈ \cC} \hspace{-.5em} \cC(XP,QX)
\simeq \int^{X ∈ \cC} \hspace{-.5em} \cC(P,X^*QX)
\simeq \cC(P, z(Q))
\]
where $P,Q \in \cI$, and
$z(Q) = \int^{X ∈ \cC} X^*QX$ is the central monad on $\cC$.
Since $P$ is projective by assumption, $\cC(P,-)$ is exact and we can pull the coend into the Hom-functor.
\end{example}

\subsection{Ideal Closure}
\label{s:closure}

\begin{definition}
\label{d:sum-retract-closure}
Let $\cS ⊆ \cA$ be a full subcategory.
The \emph{sum-retract closure $\rcls{\cS}$ of $\cS$}
is the smallest full subcategory of $\cA$ containing $\cS$
that is closed under taking retracts and finite products in $\cA$.
\end{definition}

Finite products in an $R$-linear category are always biproducts,
hence the use of the symbol $⊕$,
and we will use ``direct sum" interchangeably with product.
Since we do not require $\cA$ to be additive or idempotent complete,
the direct sums and retracts are taken when they exist.
On the other hand, if $\cA$ is additive and idempotent complete and if $\cS$ is additive,
then $\rcls{\cS}$ is equivalent to the Karoubi envelope of $\cS$, i.e.\ its idempotent completion.

\begin{remark}\label{rem:retract-sum-closure}
Let us separate the two parts of the sum-retract closure into closure under taking retracts, say $\cS^{rt}$, and closure under finite products, say $\cS^{Σ}$.
If $\cA$ is additive, then 
    $\rcls{\cS} = (\cS^{Σ})^{rt}$,
i.e.\ $\rcls{\cS}$ consists of objects in $\cA$
that are retracts of finite products $\bigoplus X_j$ of objects in $\cS$. But this is not true in general: for example, let $\Vect_\kk[\ZZ/3]$ be the semisimple category with 3 simple objects $X₀,X₁,X₂$. 
Take $\cA$ to be
the full subcategory with objects
$\cA = \{X₀^{⊕ 2}, X₁^{⊕ 2}, X₂^{⊕ 2},
X₀^{⊕ 2} ⊕ X₁^{⊕ 2}, X₀ ⊕ X₁, X₀ ⊕ X₁ ⊕ X₂^{⊕ 2}, X₀ ⊕ X₁ ⊕ X₂\}$,
and take $\cS = \{X₀^{⊕ 2}, X₁^{⊕ 2}, X₂^{⊕ 2}\}$.
One has $\cS_{(1)} := (\cS^{Σ})^{rt} 
= \cS \cup \{ X₀^{⊕ 2} ⊕ X₁^{⊕ 2} , X₀ ⊕ X₁ \}
≠ \rcls{\cS}$,
but $((\cS_{(1)})^{Σ})^{rt}= \rcls{\cS} = \cA$.
In general, one has to apply $((-)^{Σ})^{rt} $
infinitely many times to reach the sum-retract closure,
i.e.\ $\rcls{\cS} = \bigcup \cS_{(k)}$,
where $\cS_{(k+1)} = ((\cS_{(k)})^{Σ})^{rt}$.
\end{remark}

We will need the following technical lemma, whose proof is given in Appendix~\ref{s:appendix-closure}.

\begin{lemma}
\label{l:coend-closure}
Let $\cC$ be an essentially small $R$-linear category,
and let $\cB ⊆ \cC$ be a full subcategory.
Let $F: \cC^\op \times \cC \to \Rmod$ be a bifunctor,
and let $G : \cB^\op \times \cB \to \Rmod$ be its restriction to $\cB$.
Let $τ_X : F(X,X) \to \int^{\cC} F$ and $σ_Y: G(Y,Y) \to \int^{\cB} G$
be the canonical dinatural transformations to the respective coends.
Since for $Y\in \cB$ we have $G(Y,Y) = F(Y,Y)$, restricting $τ_Y$ to $Y \in \cB$ yields a dinatural transformation from $G$ to $\int^{\cC} F$,
which canonically descends to a map
\[
\xi ~:~ \int^{X ∈ \cB} G(X,X) \to \int^{X ∈ \cC} F(X,X) \ .
\]
If $\cC = \rcls{\cB}$, then $\xi$ is an isomorphism.
\end{lemma}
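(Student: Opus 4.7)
The plan is to reduce to two elementary closure operations and iterate. By Remark~\ref{rem:retract-sum-closure}, $\rcls{\cB} = \bigcup_k \cB_{(k)}$ with $\cB_{(k+1)} = ((\cB_{(k)})^{\Sigma})^{rt}$; since coends commute with filtered colimits over the indexing category, it suffices to prove the isomorphism in two elementary cases: (a) $\cC$ is obtained from $\cB$ by adjoining a single object $X = X_1 \oplus \cdots \oplus X_n$ realised as a finite direct sum of objects $X_i \in \cB$, and (b) $\cC$ is obtained from $\cB$ by adjoining a single object $X$ that is a retract of some $Y \in \cB$ via $r: Y \to X$ and $s: X \to Y$ with $rs = \id_X$.

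Surjectivity of $\xi$ in both cases follows from direct computation in $\int^{\cC} F$ showing that every class has a representative coming from $\bigoplus_{Y \in \cB} G(Y, Y)$. In case~(a), with inclusions $\iota_i: X_i \to X$ and projections $p_i: X \to X_i$, any $\phi \in F(X, X)$ satisfies
\[
\phi \;=\; F\bigl(1_X, \textstyle\sum_i \iota_i p_i\bigr)\phi \;=\; \sum_i F(1, \iota_i)\,F(1, p_i)\phi \;\sim\; \sum_i F(\iota_i, p_i)\phi,
\]
where the $\sim$ applies the coend relation to each $\iota_i: X_i \to X$ and $F(1, p_i)\phi \in F(X, X_i)$; each summand lies in $G(X_i, X_i)$. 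In case~(b), applying the coend relation to $r: Y \to X$ and $F(1, s)\phi \in F(X, Y)$ yields
\[
\phi \;=\; F(1, rs)\phi \;=\; F(1, r)\,F(1, s)\phi \;\sim\; F(r, 1)\,F(1, s)\phi \;=\; F(r, s)\phi \;\in\; G(Y, Y).
\]

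For injectivity, I would construct an inverse to $\xi$ via the universal property of $\int^{\cC} F$. Concretely, I would extend the canonical dinatural $\sigma: G \to \int^{\cB} G$ to a dinatural $\tilde{\sigma}: F \to \int^{\cB} G$ by setting, in case~(a), $\tilde{\sigma}_X(\phi) := \sum_i \sigma_{X_i}(F(\iota_i, p_i)\phi)$, and, in case~(b), $\tilde{\sigma}_X(\phi) := \sigma_Y(F(r, s)\phi)$. Independence of these formulas from the choice of direct-sum decomposition or retract data follows from dinaturality inside $\cB$: for instance, given two retract presentations $(Y, s, r)$ and $(Y', s', r')$ of the same $X$, taking $\chi := F(r', s)\phi \in F(Y', Y)$ and invoking the coend relation in $\cB$ for $s'r: Y \to Y'$ (together with $r's' = \id_X$) yields $\sigma_Y(F(r, s)\phi) = \sigma_{Y'}(F(r', s')\phi)$. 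Dinaturality of $\tilde{\sigma}$ for arbitrary morphisms of $\cC$ reduces similarly to dinaturality in $\cB$ after expressing such morphisms through the retract/sum data, and the resulting map $\int^{\cC} F \to \int^{\cB} G$ from the universal property is the sought inverse of $\xi$. The main obstacle is the careful bookkeeping required for these well-definedness and dinaturality checks across the layers $\cB_{(k)}$, since an object of $\rcls{\cB}$ may admit many inequivalent retract-of-sum presentations; however each individual verification reduces to a routine diagram-chase using only dinaturality in $\cB$, so no new ingredient beyond the universal property is needed.
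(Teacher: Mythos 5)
Your proposal is correct and hinges on the same explicit inverse formulas as the paper's proof (for the sum case $\tilde\sigma_X(\phi) = \sum_i \sigma_{X_i}(F(\iota_i,p_i)\phi)$, for the retract case $\tilde\sigma_X(\phi) = \sigma_Y(F(r,s)\phi)$), but the reduction to these two cases is organized differently. The paper first assumes $\cC$ is additive, proves the two special cases $\cC = \cB^{\Sigma}$ and $\cC = \cB^{rt}$ in one shot (i.e.\ all sums or all retracts at once), composes them using $\rcls{\cB} = (\cB^{\Sigma})^{rt}$, and then removes the additivity assumption by embedding $\cC$ into its additive closure via Yoneda. You instead invoke Remark~\ref{rem:retract-sum-closure} to write $\rcls{\cB} = \bigcup_k \cB_{(k)}$ with $\cB_{(k+1)} = ((\cB_{(k)})^{\Sigma})^{rt}$, observe that coends over an increasing union of full subcategories are a filtered colimit (because the coend is a cokernel of maps between direct sums indexed by objects and morphisms, and filtered colimits in $\Rmod$ are exact), and then reduce further to adjoining a single direct sum or a single retract at a time. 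This is a clean way to avoid the additive-closure detour entirely, at the cost of an extra layer of bookkeeping: to pass from $\cB_{(k)}$ to $(\cB_{(k)})^{\Sigma}$ you must in general adjoin infinitely many objects, so you need to invoke the same filtered-colimit argument once more (taking the directed union of $\cB_{(k)} \cup S$ over finite sets $S$ of adjoined sums/retracts) before the single-object case applies. You gesture at this but should state it explicitly. The well-definedness and dinaturality checks you outline for $\tilde{\sigma}$ (independence of retract/sum data, compatibility with arbitrary morphisms of the larger category) are exactly the computations the paper carries out for $\wdtld{\psi}$, so your argument does close up; it is simply a finer-grained iteration than the paper's two-step one.
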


With these preparations, we can state the behaviour of cylinder categories under sum-retract closure.

\begin{lemma}
\label{l:r-closure-cylinder}
Let $N$ be a $(d-1)$-manifold,
and let $\cT ⊆ \cA$ be a full subcategory, with sum-retract closure $\rcls{\cT}$.
Recall that the functor $\ov{ι}_{\cT ⊆ \rcls{\cT}} : \hZXp(N) \to
\hZJ{\rcls{\cT}}(N)$ is fully faithful,
and is injective on the set of objects,
i.e.\ it presents $\hZXp(N)$ as a full subcategory of $\hZJ{\rcls{\cT}}(N)$.
Then we have 
\[
\rcls{\big(\hZXp(N)\big)} = \hZJ{\rcls{\cT}}(N) \ .
\]
\end{lemma}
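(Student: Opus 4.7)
The plan is to prove the non-trivial inclusion $\hZJ{\rcls{\cT}}(N) \subseteq \rcls{(\hZXp(N))}$, interpreting the sum-retract closure as taken within the ambient $\hZJ{\rcls{\cT}}(N)$ (which contains $\hZXp(N)$ as a full subcategory by the recalled statement in the lemma); the reverse inclusion is then immediate from the definition of closure.

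The main technical ingredient is a pointwise correspondence between biproduct and retract decompositions of a label $X \in \cA$ at a marked point $b$ of a boundary value $\VV$, and biproduct and retract decompositions of $\VV$ in the cylinder category. Concretely, if $\VV$ has a marked point $b$ labelled by $X$, and $X \cong X' \oplus X''$ is a biproduct in $\cA$, let $\VV'$ and $\VV''$ denote the boundary values agreeing with $\VV$ except that the label at $b$ is replaced by $X'$ and $X''$, respectively. The canonical biproduct morphisms $\iota_{X'}, \iota_{X''}, p_{X'}, p_{X''}$, placed as vertices on a short vertical edge at $b$ in $N \times [0,1]$ with straight edges at every other marked point of $\VV$, define skeins $\tilde\iota_{X'} : \VV' \to \VV$, $\tilde p_{X'} : \VV \to \VV'$, and similarly for $X''$. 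A local computation using the evaluation $\eval{\cdot}_D$ in a small ball around $b$, combined with isotopy invariance (Lemma~\ref{l:isotopy-null}), shows that the biproduct axioms are satisfied in the cylinder category, so $\VV \cong \VV' \oplus \VV''$ in $\hZJ{\cA}(N)$. An analogous argument shows that if $X$ is a retract of $X'$ in $\cA$, then $\VV$ is a retract of the boundary value obtained by replacing $X$ with $X'$ at $b$.

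With this correspondence in place, we proceed by induction using the filtration $\rcls{\cT} = \bigcup_k \cS_{(k)}$ from Remark~\ref{rem:retract-sum-closure}, with $\cS_{(0)} = \cT$. Given $\VV \in \hZJ{\rcls{\cT}}(N)$, pick in each component $C$ of $N$ a marked point $b_C$ with label $X_{b_C} \in \rcls{\cT}$, and let $k(\VV)$ be the maximum over $C$ of the smallest index $k$ with $X_{b_C} \in \cS_{(k)}$. If $k(\VV) = 0$, then $\VV \in \hZXp(N)$. Otherwise, pick a component $C$ with $k_C = k(\VV)$ and write $X_{b_C}$ as a retract of a finite biproduct $\bigoplus_i Y_i$ with $Y_i \in \cS_{(k_C - 1)}$. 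The correspondence at $b_C$ then exhibits $\VV$ as a retract of a boundary value $\tilde\VV$ (with $\bigoplus_i Y_i$ at $b_C$), which is itself the biproduct of boundary values $\VV^{(i)}$ (with $Y_i$ at $b_C$). Since all intermediate labels at the distinguished point remain in $\rcls{\cT}$, every intermediate object lies in $\hZJ{\rcls{\cT}}(N)$ and the biproduct and retract decompositions are valid in the ambient. Iterating this reduction over the finite set of components of $N$, the induction terminates with boundary values in $\hZXp(N)$, so each $\VV^{(i)} \in \rcls{(\hZXp(N))}$, and hence $\VV \in \rcls{(\hZXp(N))}$.

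The main obstacle is the verification of the biproduct axioms in the correspondence, in particular the identity $\tilde\iota_{X'} \circ \tilde p_{X'} + \tilde\iota_{X''} \circ \tilde p_{X''} = \id_{\VV}$ as an element of the skein module. This reduces, by localising in a small ball $D$ around $b$, to the corresponding biproduct identity in $\cA$, translated into a primitive null relation via $\eval{\cdot}_D$ (the star-graph property of the evaluation reads off $\iota_{X'} p_{X'} + \iota_{X''} p_{X''} - \id_X = 0$ from the inserted vertices); the remaining axioms $\tilde p_{X'} \tilde\iota_{X'} = \id_{\VV'}$ and $\tilde p_{X'} \tilde\iota_{X''} = 0$ follow by similar local calculations.
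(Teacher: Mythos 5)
Your proposal is correct, and the overall mechanism — decomposing labels at marked points into biproducts and retracts and transporting the decomposition to the cylinder category via vertices inserted on short vertical edges — is the same as the paper's. The paper's own proof writes it as a single step: it asserts that every label $Y_b \in \rcls{\cT}$ is a retract of a direct sum $\bigoplus_j X_j^{(b)}$ with all $X_j^{(b)} \in \cT$, forms the intermediate boundary value $\XX_\bullet = (B,\{\bigoplus_j X_j^{(b)}\})$, identifies it with the biproduct of the $\XX_{\{j_b\}}$'s and exhibits $\YY$ as a retract of $\XX_\bullet$. The paper also phrases the argument as if all labels of $\YY$ lie in $\rcls{\cT}$, whereas $\rcls{\cT}$-admissibility only requires one such label per component; that is a cosmetic issue easily patched by leaving the other labels untouched, exactly as you do.

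The more substantial difference is that the paper's single-step decomposition tacitly uses $\rcls{\cT} = (\cT^{\Sigma})^{rt}$, which by the paper's own Remark~\ref{rem:retract-sum-closure} can fail when $\cA$ is not additive (and $\cA$ is explicitly not assumed additive in this paper). Your inductive argument along the filtration $\rcls{\cT} = \bigcup_k \cS_{(k)}$ from that remark, reducing the invariant $k(\VV)$ one distinguished marked point at a time and recording that all intermediate objects stay in $\hZJ{\rcls{\cT}}(N)$, genuinely repairs this. You would want to make the termination slightly more precise — inducting on the pair $\big(k(\VV),\,\#\{C : k_C = k(\VV)\}\big)$ in lexicographic order, since decomposing one component at the maximal level can leave other components at the same level — but this is a straightforward bookkeeping point. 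In short: same key idea and same local verification of the biproduct axioms via $\eval{\cdot}_D$ and Lemma~\ref{l:isotopy-null}, but your version correctly handles the iteration that the non-additive setting requires and that the paper's written proof elides.
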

\begin{proof}
Let $\YY = (B,\{Y_b\}) ∈ \hZJ{\rcls{\cT}}(N)$ be a boundary value on $N$
with labels $Y_b ∈ \rcls{\cT}$.
By definition, $\rcls{\big(\hZXp(N)\big)} \subseteq \hZJ{\rcls{\cT}}(N)$,
so equality follows if we can show that $\YY \in \rcls{\big(\hZXp(N)\big)}$.
    
Each $Y_b$ is a retract of some direct sum $\bigoplus_{j=1}^{N_b} X_j^{(b)}$
of objects in $\cT$.
Let $\XX_{\{j_b\}} = (B,\{X_{j_b}^{(b)}\})$ be the boundary value
with the same marked points as $\YY$,
but with labels given by choosing some $X_{j_b}^{(b)}$ for each $b ∈ B$;
clearly $\XX_{\{j_b\}} ∈ \hZXp(N)$.
Let $\XX_{\bullet} = (B,\{\bigoplus X_j^{(b)}\})$.

For boundary values with the same marked points,
say $\VV = (B,\{V_b\})$ and $\WW = (B,\{W_b\})$,
and morphisms $f_b : V_b \to W_b$,
we can promote the morphisms to a morphism
$\{f_b\} : \VV \to \WW$ in the cylinder category
given by a graph in $[0,1] \times N$ which is $[0,1] \times B$
with each $[0,1] \times \{b\}$ interrupted by a vertex labelled by $f_b$.
Applying this operation to the inclusion/projection morphisms
of $X_j^{(b)} ⇌ \bigoplus X_j^{(b)}$,
it is clear that $\XX_{\bullet}$ is the direct sum of all the $\XX_{\{j_b\}}$'s,
and similarly, $\YY$ is a retract of $\XX_{\bullet}$,
and hence $\YY$ is in $\rcls{\big(\hZXp(N)\big)}$.
\end{proof}

Next we turn to closure under tensor products and taking duals.

\begin{definition}
\label{d:tensor-dual-closure}
Let $\cC$ be a pivotal monoidal category, and let $\cS ⊆ \cC$ be a full subcategory.
Define the \emph{tensor-dual closure of $\cS$}, denoted $\tcls{\cS}$,
to be the smallest subcategory of $\cC$ containing $\cS$
that is closed under tensoring by objects of $\cC$ from the left and right
as well as taking duals,
i.e. for $X ∈ \tcls{\cS}, A ∈ \cC$, objects that are isomorphic to
$A ⊗ X$, $X ⊗ A$, and $X^*$ are also in $\tcls{\cS}$.
We say that $\cS$ is \emph{tensor-dual closed} if $\cS = \tcls{\cS}$.
\end{definition}

Equivalently, $\tcls{\cS}$ consists of objects isomorphic to $A ⊗ d(X) ⊗ A'$
where $A,A' ∈ \cA$, $X ∈ \cS$ and $d(X)$ stands for $X$ or $X^*$.
It turns out that cylinder categories do not change when one passes to the tensor-dual closure:

\begin{lemma}
\label{l:tensor-dual-closure}
Let $d ≥ 2$, let $N$ be an $(d-1)$-manifold,
and let $\cS ⊆ \cA$ be a full subcategory.
Then we have an equivalence of cylinder categories
$\ov{ι}_{\cS ⊆ \tcls{\cS}} : \hZX(N) \simeq \hZJ{\tcls{\cS}}(N)$.
\end{lemma}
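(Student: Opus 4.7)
The plan is to prove essential surjectivity of $\ov{\iota}_{\cS \subseteq \tcls{\cS}}$; fully faithfulness was already recorded after \eqref{e:iSSp} as a direct consequence of Lemma \ref{l:bulk-cS}, since any cylinder boundary value in which both endpoints are $\cS$-admissible is automatically bulk-$\cS$-admissible.

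The first step is to invoke the equivalent description of $\tcls{\cS}$ noted just below Definition \ref{d:tensor-dual-closure}. Given $\YY = (B, \{Y_b\}) \in \hZJ{\tcls{\cS}}(N)$, I pick for each $b \in B$ an isomorphism $Y_b \simeq A_b \otimes d_b(X_b) \otimes A_b'$ with $X_b \in \cS$, $A_b, A_b' \in \cA$, and $d_b \in \{\id, (-)^*\}$. I then construct an $\cS$-admissible $\XX$ on $N$ by replacing each marked point $b$ with up to three nearby marked points $b_1, b_2, b_3$ along a small arc through $b$, labeled $A_b$, $X_b$, $A_b'$ in order and with trivially-labeled points omitted; when $d_b = (-)^*$, the orientation of $b_2$ is flipped so that the dual is absorbed into the edge orientation via the standard pivotal identification of a $V^*$-labeled edge with a reverse-oriented $V$-labeled edge. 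Since $\tcls{\cS}$-admissibility of $\YY$ forces every connected component of $N$ to contain some $b$, every such component now carries a marked point labeled by $X_b \in \cS$, so $\XX$ is $\cS$-admissible.

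Next I would produce an isomorphism $\YY \simeq \ov{\iota}_{\cS \subseteq \tcls{\cS}}(\XX)$ in $\hZJ{\tcls{\cS}}(N)$ via an explicit skein in $N \times [0,1]$: near each $b$, an edge from $(b,0)$ labeled $Y_b$ meets a single vertex colored by the chosen isomorphism, from which three edges labeled $A_b$, $d_b(X_b)$, $A_b'$ emanate and end at $(b_1,1), (b_2,1), (b_3,1)$, with the middle edge bent when $d_b = (-)^*$ so that its orientation matches $b_2$. The inverse is the mirror-image skein with the inverse isomorphism at the vertex. To show the two compositions equal the identity skeins I rely on three local ingredients, all of which reduce to null relations inside small balls: collapsing two opposite tensor-split vertices along a common edge, the pivotal snake identity (when $d_b = (-)^*$), and isotopy invariance via Lemma \ref{l:isotopy-null}.

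The step I expect to need the most care is verifying these local identities inside the $\cS$-admissible skein module and not merely inside $\hZJ{\tcls{\cS}}(N)$; this is exactly what Lemma \ref{l:bulk-cS} (in the refined form of Lemma \ref{l:bulk-cS-natural}) achieves, because once $\XX$ is $\cS$-admissible, every intermediate cylinder boundary value appearing in the verifications is bulk-$\cS$-admissible, so $\cS$- and $\tcls{\cS}$-null relations coincide there. Combining this with the construction above then yields the desired equivalence of cylinder categories.
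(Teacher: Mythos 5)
Your proposal is correct and follows essentially the same route as the paper's own proof: fully faithfulness cited from \eqref{e:iSSp}, and essential surjectivity by splitting each marked point of an arbitrary $\tcls{\cS}$-boundary value into several points labelled by tensor factors according to the description of $\tcls{\cS}$ after Definition~\ref{d:tensor-dual-closure}, with a dualized factor handled by reversing the orientation of its marked point. The paper's own essential-surjectivity argument is a one-liner asserting the split boundary value is ``clearly equivalent'' to the original, while you additionally spell out the explicit isomorphism in the cylinder category (split/merge vertices, the snake identity when duals occur, and isotopy invariance via Lemma~\ref{l:isotopy-null}) and correctly observe that Lemma~\ref{l:bulk-cS-natural} guarantees these identities already hold at the level of $\cS$-null relations because the new boundary value is $\cS$-admissible. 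This is the same proof, merely more detailed.
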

\begin{proof}
Fully faithfulness holds by \eqref{e:iSSp}. For essential surjectivity consider an $\tcls{\cS}$-boundary value $\VV = (B, \{V_b\})$ on $N$. Each $V_b ∈ \tcls{\cS}$ is equivalent to some tensor product, say $V_b \simeq A₁ ⊗  ⋅⋅⋅ ⊗ A_k$, where for
at least one $i$ we have $A_i ∈ \cS$ or $A_i^* ∈ \cS$. We now split $b$ into $k$ points labelled by the $A_j$'s (if $A_i^* ∈ \cS$, we label the point with $A_i$
and give it a negative orientation).
This new boundary value is clearly $\cS$-admissible and equivalent to $\VV$.
\end{proof}

\begin{definition}
\label{d:tensor-ideal}
Let $\cC$ be an $R$-linear pivotal monoidal category.
An \emph{ideal} $\cI \subseteq \cC$ is a full subcategory that is
closed under duality, tensor products from $\cC$, direct sums, and retracts,
i.e. $\tcls{\cI} = \cI = \rcls{\cI}$.
The \emph{ideal closure} of a full subcategory $\cS \subseteq \cC$
the smallest ideal $\cI$ in $\cC$ containing $\cS$.
\end{definition}

Note that this definition differs slightly from that of ideals in
\cite{GeerPatureauMirand:2013} and \cite{Costantino:2023},
where closure under direct sums is not required.
On the other hand, while \cite{GeerPatureauMirand:2013,Costantino:2023} do not explicitly state closure under duality, this is a consequence of being simultaneously closed under retracts and tensor products \cite[Lem.\,1]{GeerPatureauMirand:2013}.

\begin{lemma}
\label{l:ideal-closure}
Let $\cC$ be an $R$-linear pivotal monoidal category.
Let $\cS ⊆ \cC$ be a full subcategory,
and let $\cI$ be the ideal closure of $\cS$.
Then $\cI$ can be obtained from $\cS$ in two steps:
first take the tensor-dual closure and then take sum-retract closure, i.e.
\[
\cI = \rcls{\tcls{\cS}} \ .
\]
\end{lemma}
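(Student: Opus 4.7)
The plan is to prove the two inclusions separately. The inclusion $\rcls{\tcls{\cS}} \subseteq \cI$ is immediate: since $\cI$ is an ideal containing $\cS$, it is in particular closed under tensor products with objects of $\cC$ and under duals, so $\tcls{\cS} \subseteq \cI$; and since $\cI$ is also closed under direct sums and retracts, $\rcls{\tcls{\cS}} \subseteq \cI$.

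For the reverse inclusion $\cI \subseteq \rcls{\tcls{\cS}}$, it suffices to show that $\rcls{\tcls{\cS}}$ is itself an ideal containing $\cS$, since $\cI$ is by definition the smallest such. It clearly contains $\cS$, and it is sum-retract closed by construction, so the real content is to verify closure under tensor products with objects of $\cC$ and under duals. Following Remark~\ref{rem:retract-sum-closure}, write $\rcls{\tcls{\cS}} = \bigcup_{k \ge 0} \cT_k$ with $\cT_0 = \tcls{\cS}$ and $\cT_{k+1} = ((\cT_k)^{\Sigma})^{rt}$, and I will prove by induction on $k$ that each $\cT_k$ is closed under tensor products with objects of $\cC$ and under duals.

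The base case $k=0$ holds by definition of the tensor-dual closure. For the inductive step, suppose $\cT_k$ has these closure properties, and let $X \in \cT_{k+1}$, so $X$ is a retract of a finite direct sum $\bigoplus_i X_i$ with $X_i \in \cT_k$. For any $A \in \cC$, the functor $A \otimes (-)$ is $R$-linear and hence preserves finite direct sums and retracts; thus $A \otimes X$ is a retract of $\bigoplus_i A \otimes X_i$, and each $A \otimes X_i$ lies in $\cT_k$ by the inductive hypothesis, so $A \otimes X \in \cT_{k+1}$, and similarly for $X \otimes A$. For duals, use the standard rigidity isomorphism $(\bigoplus_i X_i)^* \cong \bigoplus_i X_i^*$ and the fact that $(-)^*$ sends retracts to retracts (a section/retraction pair $(\iota,\pi)$ with $\pi \circ \iota = \id$ dualises to $(\pi^*, \iota^*)$ with $\iota^* \circ \pi^* = \id$); by induction each $X_i^* \in \cT_k$, so $X^*$ is a retract of a direct sum of objects in $\cT_k$, i.e.\ $X^* \in \cT_{k+1}$.

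The only step that requires care is keeping the induction honest: one might be tempted to just tensor up objects in $\tcls{\cS}$ and take one round of sums and retracts, but as Remark~\ref{rem:retract-sum-closure} illustrates the sum-retract closure genuinely requires transfinite (in fact just countable) iteration, which is why the induction on $k$ is set up along the stages $\cT_k$. Once the induction is complete, the union $\rcls{\tcls{\cS}} = \bigcup_k \cT_k$ inherits all four closure properties, so it is an ideal containing $\cS$, and by minimality $\cI \subseteq \rcls{\tcls{\cS}}$, finishing the proof.
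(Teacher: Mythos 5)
Your proof is correct and follows essentially the same approach as the paper. The paper phrases the key step as a stabilization statement, $\rcls{\tcls{\cT}} = \rcls{\tcls{\rcls{\cT}}}$, proved by pushing the functor $F = A \otimes d(-) \otimes A'$ (which preserves sums and retracts) through the transfinite stages of the sum-retract closure, and then collapses the exhausting sequence $\cS^{(k)}$; you instead directly verify that $\rcls{\tcls{\cS}}$ is tensor-dual closed by the same stagewise induction and invoke minimality of $\cI$, which is the same argument organized slightly differently.
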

\begin{proof}
We consider the increasing sequence of full subcategories $\{\cS^{(k)}\}$
defined inductively by $\cS^{(0)} = \cS$ and $\cS^{(k+1)} = \rcls{\tcls{\cS^{(k)}}}$;
each term is contained in $\cI$, and it follows from the minimality of $\cI$
that the sequence exhausts $\cI$, i.e.\ $\bigcup_k \cS^{(k)} = \cI$.
We will show that $\rcls{\tcls{\cT}} = \rcls{\tcls{\rcls{\cT}}}$
for any full subcategory $\cT$ of $\cC$;
applied to $\cT = \tcls{\cS}$, this gives $\cS^{(1)} = \cS^{(2)}$,
and hence the entire sequence collapses, so that $\cS^{(1)} = \cI$.

Since clearly $\tcls{\cT} ⊆ \tcls{\rcls{\cT}}$,
it suffices to show that $\tcls{\rcls{\cT}} ⊆ \rcls{\tcls{\cT}}$.
Let $A ⊗ d(X) ⊗ A' ∈ \tcls{\rcls{\cT}}$,
where $X ∈ \rcls{\cT}$ and
$d(-)$ stands for $(-)^*$ or the identity functor.
Consider the functor $F = A ⊗ d(-) ⊗ A'$ from $\cC$ to $\cC$ (or $\cC^\op$);
it preserves direct sums and retracts.
As discussed in Remark~\ref{rem:retract-sum-closure},
the sum-retract closure can be constructed by
repeatedly taking closures by direct sums and retracts in an alternating manner,
just as with $\cI$ above, so applying $F$ at each stage,
we find that $F(X) ∈ \rcls{\tcls{\cT}}$. In more detail,
$\im F|_{\cT^{rt}} ⊆ (\im F|_{\cT})^{rt}$,
and $\im F|_{\cT^{Σ}} ⊆ (\im F|_{\cT})^{Σ}$
(here $\im$ refers to the essential image), and
since $X ∈ ((⋅⋅⋅(\cT^{rt})^{Σ} ⋅⋅⋅)^{rt})^{Σ} ⊆ \rcls{\cT}$,
we then have 
\[
A ⊗ d(X) ⊗ A' = 
F(X) ∈ ((\cdots ((\im F|_{\cT})^{rt})^{Σ} \cdots)^{rt})^{Σ}
⊆ \rcls{(\im F|_{\cT})} ⊆ \rcls{\tcls{\cT}} \ .
\]
\end{proof}

The main result of this section is that $\cS$-admissible skein modules do not change if one passes to the ideal closure of $\cS$. 

\begin{proposition}
\label{p:ideal-closure}
Let $\cS ⊆ \cA$ be a full subcategory.
For $d = 1$, let $\cI = \cS^{⊕ }$ be the sum-retract closure,
and for $d ≥ 2$, let $\cI = \eval{\cS}^{⊕ }$ be the ideal closure.
Then 
\[ 
    \ov{ι}_{\cS ⊆ \cI} : \SNX(M;\XX) \to \SNI(M;\XX) 
\] 
is an isomorphism for any $M$ and $\XX$.
\end{proposition}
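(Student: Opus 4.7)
The plan is to reduce the statement to the case of bulk-$\cS$-admissible boundary values via excision, where Lemma~\ref{l:bulk-cS-natural} already supplies the required isomorphism. The three ingredients I intend to combine are Theorem~\ref{t:excision}, the closure behaviour of cylinder categories (Lemmas~\ref{l:r-closure-cylinder} and~\ref{l:tensor-dual-closure}), and the invariance of coends under sum-retract closure (Lemma~\ref{l:coend-closure}).

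Concretely, choose a codimension-1 submanifold $L \subseteq M$ consisting of one small embedded $(d-1)$-sphere in each connected component of $M$, each bounding a $d$-ball in the interior of that component (for $d=1$, take one interior point per component). Then $L$ admits a tubular neighbourhood, has finitely many components, and cutting yields $M'$ such that every connected component of $M'$ meets $L$ along at least one full component of $N \subseteq \partial M'$. Applying Theorem~\ref{t:excision} with $\cT = \cI$ to both $\SNX$ and $\SNI$ produces a commutative square
\[
\begin{tikzcd}[column sep=large]
\SNX(M;\XX) \ar[r, "\hat{\pi}_*"] \ar[d, "\ov{\iota}_{\cS \subseteq \cI}"']
& \int^{\VV \in \hZI(N)} \SNX(M';\XX,\VV,\VV) \ar[d] \\
\SNI(M;\XX) \ar[r, "\hat{\pi}_*"']
& \int^{\VV \in \hZI(N)} \SNI(M';\XX,\VV,\VV)
\end{tikzcd}
\]
in which the horizontal maps are isomorphisms and the right vertical map is induced by $\ov{\iota}_{\cS \subseteq \cI}$; it therefore suffices to prove that this right vertical arrow is an isomorphism.

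By Lemma~\ref{l:r-closure-cylinder} together with Lemma~\ref{l:tensor-dual-closure} (the latter only needed for $d \ge 2$), one has $\hZI(N) = \rcls{\hZX(N)}$, so Lemma~\ref{l:coend-closure} allows both coends to be computed over the smaller index category $\hZX(N)$, compatibly with the map induced by $\ov{\iota}_{\cS \subseteq \cI}$. For any $\VV \in \hZX(N)$, the boundary value $\XX \sqcup \VV \sqcup \VV$ on $\partial M'$ is bulk-$\cS$-admissible: each component of $M'$ meets some full component of $N$ by our choice of $L$, and $\VV$ being $\cS$-admissible supplies an $\cS$-labelled point on every component of $N$. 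Hence Lemma~\ref{l:bulk-cS-natural} upgrades $\ov{\iota}_{\cS \subseteq \cI}$ to a natural isomorphism of functors $\hZX(N)^{\op} \times \hZX(N) \to \Rmod$, which descends to an isomorphism of the coends over $\hZX(N)$. Chaining the three isomorphisms concludes the proof.

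The main obstacle is bookkeeping: making sure that all of the intermediate isomorphisms (excision, Lemma~\ref{l:coend-closure}, and Lemma~\ref{l:bulk-cS-natural}) genuinely commute with $\ov{\iota}_{\cS \subseteq \cI}$. Once this compatibility is in place, the argument is a straightforward diagram chase.
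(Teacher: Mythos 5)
Your proof is correct and takes a genuinely different route from the paper. The paper's proof inducts on the number of connected components of $M$ on which $\XX$ fails to be bulk-$\cS$-admissible: in the inductive step it chooses a boundary piece $N$ of one such component, attaches a collar $[0,1] \times N$, applies excision, and then uses the inductive hypothesis on one tensor factor and Lemma~\ref{l:bulk-cS} on the other. You instead cut every connected component along one small interior sphere (a single interior point for $d=1$), so that after one application of excision every boundary value occurring in the coend is already bulk-$\cS$-admissible and Lemma~\ref{l:bulk-cS-natural} applies across the board, with no induction needed. Your version is arguably cleaner, and in particular it handles closed components of $M$ without any further modification, whereas the paper's choice of ``a boundary piece $N$ of $M^{(a)}$'' needs a supplementary cut when $\partial M^{(a)} = \emptyset$. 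The one imprecision is in the closure step: for $d \geq 2$ the asserted equality $\hZI(N) = \rcls{\hZX(N)}$ is not literally true. What Lemmas~\ref{l:tensor-dual-closure} and~\ref{l:r-closure-cylinder} give is an \emph{equivalence} $\hZX(N) \simeq \hZJ{\tcls{\cS}}(N)$ followed by the genuine identity $\hZI(N) = \rcls{\big(\hZJ{\tcls{\cS}}(N)\big)}$; one should first transport the coend along that equivalence and only then invoke Lemma~\ref{l:coend-closure}. Since the paper's own proof performs exactly this two-step replacement in its steps (4) and (5), the fix is routine. (Cosmetically, the horizontal arrows in your square should point from the coends to the skein modules, as $\hat{\pi}_*$ is defined out of the coend.)
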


\begin{proof}
We induct on the number of connected components $M^{(a)}$ of $M$
such that $\XX|_{∂M^{(a)}}$ is not bulk-$\cS$-admissible
(we always assume that a $d$-manifold $M$ has finitely many connected components,
cf.\ the conventions stated just before Section~\ref{s:defn}).
In the base case, i.e.\ when $\XX$ itself is bulk-$\cS$-admissible, $\ov{ι}_{\cS ⊆ \cI} : \SNX(M; \XX) \simeq \SNI(M; \XX)$ follows from Lemma \ref{l:bulk-cS}.

For the inductive step,
let $M^{(a)}$ be a connected component of $M$
such that $\XX|_{∂M^{(a)}}$ is not bulk-$\cS$-admissible,
and let $N$ be a boundary piece of $M^{(a)}$.
Let $\XX₀ := \XX|_N$, and let $\XX' := \XX|_{∂M \backslash N}$.
Writing $M \simeq M ∪ _{N \sim \{0\} \times N} [0,1] \times N$,
we have (with explanation to follow):
\begin{align*}
\SNX(M; \XX)
&\overset{(1)}{\simeq}
\int^{\VV ∈ \hZX(N)} \SNX([0,1] \times N \sqcup M; \XX_0,\VV, \VV, \XX')
\\
&\overset{(2)}{\simeq}
\int^{\VV ∈ \hZX(N)} \SNX([0,1] \times N; \XX₀,\VV ) ⊗_R \SNX(M; \VV,\XX')
\\
&\overset{(3)}{\simeq}
\int^{\VV ∈ \hZX(N)} \SNI([0,1] \times N; \XX₀,\VV) ⊗_R \SNI(M; \VV,\XX')
\\
&\overset{(4)}{\simeq}
\int^{\VV ∈ \hZJ{\eval{\cS}}(N)} \SNI([0,1] \times N; \XX₀,\VV) ⊗_R \SNI(M; \VV,\XX')
\\
&\overset{(5)}{\simeq}
\int^{\VV ∈ \hZI(N)} \SNI([0,1] \times N; \XX₀,\VV) ⊗_R \SNI(M; \VV,\XX')
\\
&\overset{(6)}{\simeq}
\SNI(M;\XX) \ ,
\end{align*}
where
\begin{enumerate}
\item by Theorem \ref{t:excision},

\item by Lemma \ref{l:M-disjoint-union},

\item the inductive hypothesis applies to $\VV ∪ \XX'$,
	as it is bulk-$\cS$-admissible on $M^{(a)}$
	in addition to components of $M$ on which $\XX$ is bulk-$\cS$-admissible;
for the admissible skeins on $[0,1] \times N$ one can directly apply Lemma~\ref{l:bulk-cS},

\item by Lemma \ref{l:tensor-dual-closure},
	$\ov{ι}_{\cS ⊆ \tcls{\cS}}$ is an equivalence
	(only applies/needed in the case $d ≥ 2$),

\item by Lemmas \ref{l:coend-closure} and \ref{l:r-closure-cylinder}
	(applied to $\cT = \cS$ for $d = 1$ and to
	$\cT = \tcls{\cS}$ for $d ≥ 2$),

\item by Theorem \ref{t:excision}.
\end{enumerate}
\end{proof}

\begin{remark}\label{r:ideal-closure}
We have seen in 
Remark~\ref{r:ncat-strat}\,(1) that the map $\ov{ι}_{\cS ⊆ \cT} : \SNX(M;\VV) \to \SNXp(M;\VV)$ is in general neither injective nor surjective. One way to improve this is to impose conditions on the boundary value $\VV$ as in Lemma~\ref{l:bulk-cS}, resulting in $\ov{ι}_{\cS ⊆ \cT}$ being an isomorphism. Proposition~\ref{p:ideal-closure} provides another criterion for $\ov{ι}_{\cS ⊆ \cT}$ to be an isomorphism which leaves $\VV$ arbitrary but restricts the choice of $\cT$. Namely, for any $\cT ⊆ \cI$ containing $\cS$
(in particular, in the $d ≥ 2$ case, for $\cT = \eval{\cS}$),
the sum-retract/ideal closure of $\cT$ is also $\cI$,
so it immediately follows that $\ov{ι}_{\cS ⊆ \cT}
= \ov{ι}_{\cT ⊆ \cI}^{-1} ∘ \ov{ι}_{\cS ⊆ \cI}$ is an isomorphism.
\end{remark}

\subsection{Totally-$\cS$ Skeins}
\label{s:totally-S}

Let $\cS ⊆ \cA$ be a full subcategory.
We say an $\cA$-coloured graph is \emph{totally-$\cS$}
if it is non-empty and every internal edge is in $\cS$.
We do allow boundary values, and hence boundary edges, to have labels not from $\cS$.
    This greater generality is sometimes convenient, we will see an example of this in the proof of Proposition~\ref{p:finite-dim} later on.
We denote the subset of totally-$\cS$ graphs by $\GraphAX$, and we say a boundary value is \emph{totally-$\cS$}
if it is non-empty and every label is in $\cS$.

Given a $d$-manifold with boundary value $\XX$ on $∂M$,
we have the following definitions:
\begin{align*}
\GraphAX(M;\XX) &:= \{ \text{totally-} \cS \text{ graphs} \}
	⊆ \GraphX(M;\XX)
\\
\VGraphAX(M;\XX) &:= \Span ( \GraphAX(M;\XX) ) ⊂ \VGraphX(M;\XX)
\\
\PrimNullAX(M,U;\XX) &:= \PrimNullX(M,U;\XX) ∩ \VGraphAX(M;\XX)
\\
\NullAX(M,U;\XX) &:= \Span(\PrimNullAX(M,U;\XX))
\\
\SNAX(M;\XX) &:= \VGraphAX(M;\XX) / \NullAX(M;\XX)
\end{align*}

The natural inclusions
$ι_{\mathrm{t}\cS \to \cS} : \VGraphAX(M;\XX) ↪ \VGraphX(M;\XX)$ descends to skeins, $\ov{ι}_{\mathrm{t}\cS \to \cS} : \SNAX(M;\XX) \to \SNX(M;\XX)$,
since $\NullAX ⊂ \NullX ∩ \VGraphAX$.

\begin{remark}
The map $\ov{ι}_{\mathrm{t}\cS \to \cS}$ may be neither injective nor surjective,
in particular it can happen that
    $\NullAX ≠ \NullX ∩ \VGraphAX$.
For example, we may take $\cA$ 
to be the non-spherical
pivotal category of
$\ZZ/3$-graded vector spaces over $\CC$, with simple objects $X_0 = \one$, $X_1$, $X_2$, and where $X_1$ has dimensions $\dim_r X₁ = ζ := e^{2πi/3}$
and $\dim_l X₁ = ζ^\inv$, and $\cS = \{\one\}$.
Then $\ov{ι}_{\mathrm{t}\cS \to \cS}$ is non-injective for $S²$ as
$\SNAX(S²) \simeq \CC$ and 
$\SNX(S²) = 0$ (see \cite[Prop.\,4.4]{Runkel:2019vze}),
while $\ov{ι}_{\mathrm{t}\cS \to \cS}$ is non-surjective for $\Ann$ as
$\SNAX(\Ann) \simeq \CC$ and $\SNX(\Ann) \simeq \CC³$.
\end{remark}

The next proposition allows us to pass between totally-$\cI$ skein modules and $\cI$-admissible skein modules under the condition that $\cI$ is tensor-dual closed.

\begin{proposition}
\label{p:totally-I}
Let $\cI ⊆ \cA$ be a full subcategory,
and in the case $d ≥ 2$
suppose $\cI$ is tensor-dual closed, i.e.\ $\cI = \eval{\cI}$.
Then $\ov{ι}_{\mathrm{t}\cI \to \cI}$ is an isomorphism for any $M,\XX$:
\[
\ov{ι}_{\mathrm{t}\cI \to \cI} : \SNAI(M;\XX) \simeq \SNI(M;\XX)
\]
\end{proposition}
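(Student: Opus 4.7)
My plan is to establish both surjectivity and injectivity of the natural map $\ov{\iota}_{\mathrm{t}\cI \to \cI}$ by means of a local ``fusion/contraction'' move that exploits the tensor-dual closure of $\cI$ for $d \ge 2$. The key technical ingredient is the following local move inside a ball $D \subseteq M$: a segment of a non-$\cI$-labelled internal edge $e$ (label $Y$) running parallel to a segment of an $\cI$-labelled edge (label $X \in \cI$) can be replaced by a single segment labelled $X \otimes Y$, with vertices placed on $\partial D$ whose morphism is $\id_{X \otimes Y} = \id_X \otimes \id_Y$ and which split/merge the fused edge back into $X$ and $Y$. Since $X \otimes Y \in \cI$ by tensor-dual closure, this converts a non-$\cI$ internal edge into an $\cI$-one inside $D$, and the evaluation $\eval{\,\cdot\,}_D$ is unchanged. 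For $d = 1$, I use instead the contraction of a non-$\cI$ internal edge (merging two adjacent vertices into one via composition of morphisms), which is likewise a valid local relation.

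For \textbf{surjectivity}, given an $\cI$-admissible graph $\Gamma$, I induct on the number of non-$\cI$ internal edges. For each such edge $e$, the $\cI$-admissibility guarantees an $\cI$-labelled edge in the same component of $M$; using isotopy (Lemma~\ref{l:isotopy-null}) together with vertex-by-vertex propagation along a path in $\Gamma$ from this $\cI$-edge to $e$, I arrange an $\cI$-labelled edge parallel to $e$ inside a suitable ball $D$, and apply the fusion move. The ball $D$ must be chosen so that at least one $\cI$-edge remains outside $D$ in each component of $M$, ensuring $\Gamma \setminus D$ is $\cI$-admissible and the move constitutes a valid $\cI$-null relation. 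This can always be arranged by first enlarging or duplicating $\cI$-edges using handle-attachment moves where needed.

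For \textbf{injectivity}, suppose $\xi = \sum a_j \Gamma_j \in \NullI(M;\XX) \cap \VGraphAI(M;\XX)$; I need $\xi \in \NullAI(M;\XX)$. The cleanest observation is that a primitive $\cI$-null relation $\sum a_j \Gamma_j$ with respect to a ball $D$ all of whose summands $\Gamma_j$ are themselves totally-$\cI$ is automatically a primitive totally-$\cI$-null relation, because the common outside part is then totally-$\cI$ and the definition of $\PrimNullAI$ places no additional constraint on the inside-$D$ parts beyond those of $\PrimNullI$. The real work is to arrange a decomposition of $\xi$ into primitive $\cI$-nulls \emph{all of whose constituent graphs} are totally-$\cI$; intermediate graphs that appear only in the decomposition but not in the final sum are replaced by totally-$\cI$ ones by applying the surjectivity argument locally, using the freedom in the primitive null structure to absorb the differences as additional totally-$\cI$-null relations.

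The \textbf{main obstacle} I expect is in injectivity: ensuring the re-decomposition through totally-$\cI$ intermediate graphs respects the primitive null structure (agreement outside balls and the vanishing of the inside-ball evaluation sum), which requires a careful choice of balls and a handle-attachment argument to relate the different ball supports involved. Tensor-dual closure is crucial throughout, as without it the fusion move would not produce $\cI$-labelled edges and local reductions to totally-$\cI$ form would fail.
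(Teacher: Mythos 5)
Your surjectivity argument matches the paper's: a fusion move using tensor-dual closure for $d \ge 2$ (this is the argument in [Prop.\,2.3, Costantino:2023], which the paper cites) and edge contraction for $d=1$. That part is fine.

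Your injectivity argument, however, has a genuine gap, and it departs sharply from the paper's approach. You want to start from a decomposition $\xi = \sum_k \eta_k$ into primitive $\cI$-null relations $\eta_k$ with respect to balls $D_k$, and then replace the non-totally-$\cI$ graphs appearing in the $\eta_k$ by totally-$\cI$ ones while ``absorbing the differences as additional totally-$\cI$-null relations.'' But there is no reason this replacement can be done coherently. A primitive $\cI$-null relation $\eta_k = \sum_j a_j \Gamma_j$ requires the $\Gamma_j$ to agree outside $D_k$ and $\sum_j a_j \eval{\Gamma_j}_{D_k} = 0$. If you replace each $\Gamma_j$ by a totally-$\cI$ graph $\Gamma_j'$, you must preserve \emph{both} conditions simultaneously for every $k$, while the same $\Gamma$ may appear in several $\eta_k$'s with incompatible ball supports $D_k$, and the non-$\cI$ edges may lie inside some of the $D_k$ but outside others. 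Nothing in the surjectivity argument controls this; it produces \emph{some} totally-$\cI$ representative of a skein, at the cost of passing through $\cI$-null relations that are themselves \emph{not} totally-$\cI$. So ``applying surjectivity locally'' lands you back in the set you are trying to control. This is not a presentational gap---the remark just before the proposition shows that without the tensor-dual-closure hypothesis one really does get $\NullAX \ne \NullX \cap \VGraphAX$, so an argument must make essential and nontrivial use of the closure hypothesis, and your sketch does not explain how it enters the injectivity step beyond making the fusion move available.

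The paper sidesteps the local re-decomposition problem entirely. It proves injectivity first for finitary $M$ with bulk-$\cI$-admissible $\XX$ by induction on a handle decomposition: the base case $M = \DD^d$ reduces, via an isotopy that makes all summands agree near $\partial M$, to a single primitive totally-$\cI$-null relation in one big ball; the inductive step invokes Theorem~\ref{t:excision} together with the equivalence $\hZAI(N)\simeq \hZI(N)$ of cylinder categories (which uses tensor-dual closure) to compare the two coends and deduce that $\ov{\iota}_{\mathrm{t}\cI\to\cI}$ is an isomorphism after gluing. General $\XX$ is handled by a second excision argument as in the proof of Proposition~\ref{p:ideal-closure}, and non-finitary $M$ by the colimit Lemma~\ref{l:finitary-colimit}. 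If you want to pursue your route, you would need a precise mechanism that exhibits, for an arbitrary element of $\NullI \cap \VGraphAI$, a decomposition whose summands are each primitive null \emph{and} lie in $\VGraphAI$; as stated, the proposal does not supply one, and the known proof replaces exactly this missing local manipulation by the global excision machinery.
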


\begin{proof}
We first show surjectivity,
i.e.\ that any $\cI$-admissible graph is equal to some
totally-$\cI$ graph as $\cI$-admissible skeins.
For $d ≥ 2$, the map is surjective by \cite[Prop.\,2.3]{Costantino:2023}
(they do not deal with skeins with boundary values,
but the core argument works all the same:
combining the $\cI$-labelled edge with another edge
results in a new edge with label in $\cI$ by closure under tensor products).
For $d = 1$, given an $\cI$-admissible graph,
we can always reduce the number of internal edges that are not $\cI$
by using an $\cI$-null relation, which squeezes the edge
and combines the labels at its endpoints by composition of morphisms.
It remains to show injectivity,
i.e.\ $\NullAI = \NullI ∩ \VGraphAI$.

\medskip

We first prove injectivity in the special case that $\XX$ is bulk-$\cI$-admissible and $M$ finitary by induction on the number of gluings it takes
to produce $M$ from a disjoint union of finitely many balls. 

\smallskip

\noindent
\emph{Base case}:
$M = \DD^d \sqcup \cdots \sqcup \DD^d$
(with any corner structure on the boundary),
and $\XX$ is bulk-$\cI$-admissible.

\smallskip

\noindent
\emph{Proof of base case}:
We just prove it for $M = \DD^d$,
as it is easy to see that $\SNAI$ is also behaves well under
disjoint union, as in Lemma~\ref{l:M-disjoint-union}.
Let $Σ b_k Γ_k$ be a (not necessarily primitive) $\cI$-null graph
whose summands are totally-$\cI$, i.e.\ $Σ b_k Γ_k ∈ \NullI \cap \VGraphAI$.
Similarly to the proof of Lemma \ref{l:bulk-cS},
we may isotope each $Γ_k$ to $Γ_k'$ with an ambient isotopy
supported in a small neighbourhood of $∂M$
such that $Γ_k'$ all agree in a neighbourhood of $∂M$.
Then $Γ_k - Γ_k'$ is in $\NullAI$,
and $Σ b_k Γ_k'$ is primitive totally-$\cI$-null
with respect to the ball $M \backslash \{\text{neighbourhood of } ∂M\}$.
Thus $Σ b_k Γ_k' \in \NullAI$. $\triangle$

\smallskip

\noindent
\emph{Inductive step:}
Suppose $\SNAI(M';\XX') \simeq \SNI(M';\XX')$
for any bulk-$\cI$-admissible boundary value $\XX'$ on $∂M'$.
Let $M = M'/(N \sim N')$ be obtained by gluing two embedded boundary pieces $N,N'$ of $M'$.
Then $\SNAI(M;\XX) \simeq \SNI(M;\XX)$
for bulk-$\cI$-admissible boundary values $\XX$ on $∂M$.

\smallskip

\noindent
\emph{Proof of inductive step}:
Let $\hZAI(N) ⊆ \hZI(N)$ 
be the full subcategory consisting of
totally-$\cI$ boundary values.
Observe that, for $d = 1$, $\hZAI(N) = \hZI(N)$ on the nose,
and for $d ≥ 2$, $\hZAI(N)$ is equivalent to $\hZI(N)$,
since any boundary value $\VV ∈ \hZI(N)$ is isomorphic
to a boundary value $\VV'$ that has one marked point $b^{(a)}$
on each connected component $N^{(a)}$ of $N$,
where the label on $b^{(a)}$ is the tensor product
(in arbitrary order if $\dim N = d-1 \ge 2$)
of the labels of $\VV$ on $N^{(a)}$,
so by closure of $\cI$ under tensor product, $\VV'$ is totally-$\cI$.

We have the following commutative diagram:
\[
\begin{tikzcd}
\int^{\hZAI(N)} \SNAI(M';\XX,-,-)
	\ar[r,"\simeq","(1)"']
	\ar[d,"f"']
& \int^{\hZAI(N)} \SNI(M';\XX,-,-)
\ar[r, "\simeq","(2)"']
& \int^{\hZI(N)} \SNI(M';\XX,-,-)
	\ar[d,"\hat{π}_*","\simeq"']
\\
\SNAI(M;\XX)
	\ar[rr, "\ov{ι}_{\mathrm{t}\cI \to \cI}"]
& & \SNI(M;\XX)
\end{tikzcd}
\]
where (1) is an isomorphism by the inductive hypothesis,
(2) is an isomorphism since $\hZAI(N) \cong \hZI(N)$,
and $\hat\pi_*$ is an isomorphism by Theorem \ref{t:excision}.
It remains to show that $f$ is surjective
-- then by commutativity of the diagram, $f$ and $\ov{ι}_{\mathrm{t}\cI \to \cI}$ must be isomorphisms.

Let $L$ be the image of $N$ (and $N'$) under the gluing $M' \to M$.
Given a totally-$\cI$ graph $Γ$ in $M$,
using some isotopy, say a push-map as discussed in Definition \ref{d:push-map},
we can drag edges of $Γ$ to intersect $L$ transversally.
Such a graph is in the image of the map in question,
as it can be cut and considered a graph in $M'$
with bulk-$\cI$-admissible boundary value.
Thus the inductive step is done. $\triangle$

\medskip

For $M$ with boundary value $\XX$ on $∂M$ that is not necessarily bulk-$\cI$-admissible,
we may use excision again, as in the proof of
Proposition~\ref{p:ideal-closure},
where we induct on the number of components of $M$
on which $\XX$ is not bulk-$\cI$-admissible.

Finally, for non-finitary $d$-manifolds $M$,
an analogue of Lemma \ref{l:finitary-colimit} holds for $\SNAI$ as well,
so that $\SNAI(M;\XX)$ is a colimit of $\SNAI(M';\XX)$
over finitary submanifolds $M'$ of $M$;
since we know $\SNAI(M';\XX) \simeq \SNI(M';\XX)$
for these finitary submanifolds, the same holds for their colimits,
and we are done.
\end{proof}

\section{Applications}
\label{s:applications}

In this section we consider admissible skein modules in more specific situations, such as when $\cA$ is abelian, when $\cS$ is the projective ideal, or for low dimensions. We provide an exactness result (Proposition~\ref{p:right-exact}), recover the relation to modified traces (Proposition~\ref{p:trace-skein-D2-S2}), and give upper and lower bounds on the rank of admissible skein modules under suitable extra assumptions on $R$, $\cA$ and $\cS$ (Propositions~\ref{p:finite-dim} and \ref{p:infinite-dim}).

\subsection{Right Exactness}
\label{s:relation-MSWY}

Let $\cS ⊆ \cA$ be a full subcategory,
$M$ a $d$-manifold, and $B$
a configuration of marked points on $∂M$
with $B_+$ consisting of the outward-oriented points of $B$
and $B_-$ the inward-oriented points.
Labelling each point $b ∈ B$ with an object $X_b ∈ \cA$,
we get admissible skein modules $\SNX(M;(B,\{X_b\})) ∈ \Rmod$, which assemble into a functor
\[
\SNX(M;(B,-)) : \cA^{B_+} \times (\cA^\op)^{B_-} \to \Rmod \ .
\]
For a morphism $\{f_b\}_{b ∈ B} : \{X_b\}_{b ∈ B} \to \{Y_b\}_{b ∈ B}$,
i.e.\ $f_b : X_b \to Y_b$ for $b ∈ B_+$
and $f_b : Y_b \to X_b$ for $b ∈ B_-$,
the map $\SNX(M;(B,\{f_b\}))$ inserts a bivalent vertex
labelled by $f_b$ near each $b ∈ B$.
Equivalently, this is the functor from Section~\ref{s:skein-module-as-functor},
where we take $N ⊆ ∂M$ to be a disc-shaped neighbourhood of $B$ and add corners to $M$ if necessary so that $N$ is indeed a boundary piece containing only $b$.

\begin{proposition}
\label{p:right-exact}
Let $d \ge 2$ and assume that $\cA$ is in addition abelian. Let $\cI = \Proj(\cA)$ be the projective ideal.
Then for a $d$-manifold $M$ we have that
\[
\SNI(M;(B,\{-\})) ~:~ \cA^{B_+} \times \cA^{B_-} ~\to~ \Rmod
\]
is right exact in each argument.
\end{proposition}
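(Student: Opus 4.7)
The plan is to apply excision to isolate the marked point $b$ at which we test right exactness, reducing the claim to right exactness of a $\Hom$-functor out of a projective object.

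First I would choose a small closed $d$-ball $D \subset M$ such that $D \cap \partial M$ is a disc-neighbourhood of $b$ in $\partial M$ containing no other marked point of $B$, and such that $L := \overline{\partial D \setminus \partial M}$ is a $(d-1)$-disc embedded in $\Int(M)$. Cutting $M$ along $L$ yields $M' = D \sqcup M_2$ with $M_2 := \overline{M \setminus D}$. Applying Theorem~\ref{t:excision} with $\cS = \cT = \cI$, together with Lemma~\ref{l:M-disjoint-union} to split off the $D$-factor, gives
\[
\SNI(M;\XX) \;\cong\; \int^{\VV \in \hZI(L)} \SNI(D; X_b, \VV) \tnsrR \SNI(M_2; \XX', \VV),
\]
where $\XX = \{X_b\} \sqcup \XX'$.

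Since every $\VV \in \hZI(L)$ is $\cI$-admissible, the total boundary value on $D$ is bulk-$\cI$-admissible, so by Lemma~\ref{l:bulk-cS} and the evaluation of skeins on a ball,
\[
\SNI(D; X_b, \VV) \;\cong\; \cA\bigl(\one,\, X_b^{[*]} \tnsr R(\VV)\bigr) \;\cong\; \cA\bigl(R(\VV)^*,\, X_b^{[*]}\bigr),
\]
where $R(\VV)$ is the tensor product of the labels of $\VV$ (each dualised according to its induced orientation on $\partial D$), and $X_b^{[*]}$ equals $X_b$ or $X_b^*$ according to whether $b \in B_+$ or $b \in B_-$. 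Because $\cI = \Proj(\cA)$ is an ideal, it is closed under tensor products and duality, so $R(\VV)$ and $R(\VV)^*$ both lie in $\cI$; in particular $R(\VV)^*$ is projective, and $\cA(R(\VV)^*,-) : \cA \to \Rmod$ is covariantly right exact.

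For $b \in B_+$ this directly yields right exactness of $X_b \mapsto \cA(R(\VV)^*, X_b)$; for $b \in B_-$ the functor $X_b \mapsto \cA(R(\VV)^*, X_b^*)$ is the composition of the exact contravariant equivalence $(-)^* : \cA \to \cA^\op$ with $\cA(R(\VV)^*, -)$, hence right exact as a covariant functor $\cA^\op \to \Rmod$. Right exactness then propagates through the tensor product $- \tnsrR \SNI(M_2; \XX', \VV)$ and through the coend $\int^\VV$, since coends are colimits and therefore preserve cokernels. The chief technical obstacle I anticipate is the orientation bookkeeping to ensure that $R(\VV) \in \cI$ really follows from ideal closure; once this is in place, the projectivity of $R(\VV)^*$ is what carries the argument.
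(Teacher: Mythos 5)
Your proof is correct and follows essentially the same route as the paper: isolate the marked point $b$ in a small ball/cylinder via excision, reduce to exactness of $\cA(P,-)$ for projective $P$, and propagate right-exactness through $\tnsrR$ and the coend. The only cosmetic differences are that the paper attaches a cylinder $[0,1]\times D$ rather than cutting out a squashed ball, and replaces the coend over the cylinder category $\hZI(L)$ with an equivalent coend over $\cI$ before unwinding it as an explicit cokernel and performing the diagram chase that your phrase ``coends preserve cokernels'' compresses.
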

\begin{proof}
Fixing labels on all points except at one fixed choice
$b ∈ B_+$ (resp.\ $B_-$),
we have a functor
$\SNI(M;\VV,-): \cA \to \Rmod$ (resp.\ $\cA^\op \to \Rmod$).
Below we will omit writing out $\VV$ in $\SNI(M;\VV,X)$ for simplicity. We consider the case $b ∈ B_+$ first.
Let $D ⊆ ∂M$ be a disc-shaped neighbourhood containing the marked point $b$ on $∂M$.
Adding corners if necessary, we can present
$M = M ∪  ([0,1] \times D)$, 
and by Theorem~\ref{t:excision} we have
\[
\SNI(M;X) \simeq \int^{P ∈ \cI} \SNI(M;P) 
⊗_R \SNI([0,1] \times D; P,X)
\simeq \int^{P ∈ \cI} \SNI(M;P) ⊗_R \cA(P,X)
\]
where in the term $\SNI([0,1] \times D; P,X)$,
$P,X$ are labels for the marked points $(0,b), (1,b)$ respectively,
where $(0,b)$ is negatively oriented and $(1,b)$ positively.
We can write the coend explicitly as a cokernel, so that we have
\begin{align*}
\begin{split}
\label{e:strnet-coker}
G(X) \xrightarrow{α_X} F(X) \to \SNI(M;X) \to 0
\end{split}
\end{align*}
where
$G(X) = \bigoplus_{P_i,P_j} \bigoplus_{f ∈ \cA(P_i,P_j)} \SNI(M;P_i) ⊗_R \cA(P_j,X)$
and $F(X) = \bigoplus_{P_i} \SNI(M;P_i) ⊗_R \cA(P_i,X)$, with
$\{P_i\}$ ia set of representatives for the isomorphism classes of projectives.
Note that $F$ and $G$ are right exact in $X$,
since $\cA(P_i,X)$ is exact in $X$ and $⊗_R$ is right exact.
Then, for an exact sequence $0 \to Z \to Y \to X \to 0$ in $\cA$,
consider the following diagram:
\[
\begin{tikzcd}
G(Z) \ar[r] \ar[d]
	& F(Z) \ar[r] \ar[d]
	& \SNI(M;Z) \ar[r] \ar[d] & 0
\\
G(Y) \ar[r] \ar[d]
	& F(Y) \ar[r] \ar[d]
	& \SNI(M;Y) \ar[r] \ar[d] & 0
\\
G(X) \ar[r] \ar[d]
	& F(X) \ar[r] \ar[d]
	& \SNI(M;X) \ar[r] \ar[d] & 0
\\
0 & 0 & 0
\end{tikzcd}
\]
A priori, all rows and columns except the rightmost column are exact.
From standard diagram chasing, it follows that the rightmost column
is exact as well, i.e.\ $\SNI(M;-)$ is right exact.
Finally, for the case $b ∈ B_-$, we have
\[
\SNI(M;X) \simeq \int^{P ∈ \cI} \SNI(M;P) ⊗_R \cA(X,P)
\]
and we note that $\cA(X,P)$ is also exact in $X$ since
rigidity of $\cA$ implies that projectives are injective,
so the rest of the argument follows through;
alternatively, we note that $\SNI(M;X) \simeq \SNI(M;(B',\{X^*,...\}))$
where $B'$ is the same as $B$ except the orientation on $b$ is flipped.
Then right exactness follows since the duality functor is exact
and sends $\cI$ to itself.
\end{proof}

\begin{remark}
\begin{enumerate}
    \item 
Going through the proof of the proposition in the case $d=1$, one finds that the statement also holds if we restrict ourselves to the outgoing points $B_+$. In that case the argument still reduces to $\cA(P,-)$ being exact.
\item
In general, $\SNI(M;-)$ is not left exact.
For example, take $d=2$ and 
let $\cA$ be a non-semisimple
unimodular finite pivotal tensor category
over a field $R = \kk$ and $Σ = \DD²$. Consider the short exact sequence
$0 \to \one \to P_\one \to P_\one / \one \to 0$,
where $P_\one$ is the projective cover of $\one$.
From \cite[Thm.\,3.1]{Costantino:2023}
(see also Proposition \ref{p:trace-skein-D2-S2} below),
the dual of $\SNI(\DD²;\one) \simeq \SNI(\DD²)$
is the space of right m-traces on $\cI$.
By \cite[Cor.\,3.2.1]{GeerKujawa:2013},
$\cI$ admits a unique right m-trace up to scalars, in particular is non-zero.
But the map $\SNI(\DD²;\one) \to \SNI(\DD²;P_\one)$ is 0:
it sends a graph $Γ ∈ \SNI(\DD²)$ to $Γ' = Γ ∪ γ$,
where $γ$ is a graph with one edge, which is attached to the boundary point
(hence labelled by $P_\one$), and one internal vertex
labelled by the morphism $\one \to P_\one$;
now $Γ$ can be evaluated in a ball that is disjoint from $γ$,
and since $Γ$ was $\cI$-admissible to begin with,
it must evaluate to some $f ∈ \cA(\one,\one)$
that factors through a projective, and such morphisms are always 0
for non-semisimple $\cA$.

\item For ideals $\cI$ besides the projective ideal,
$\SNI(M;-)$ may not even be right exact.
For example, consider $\cA$ as in part 2, but with $\cI = \cA$,
and consider the epimorphism $P_\one \to \one$,
and the corresponding map of skein modules
$\SNA(\DD²;P_\one) \to \SNA(\DD²;\one) \simeq \SNA(\DD²)$.
By the same logic above, this map is the zero map,
and yet $\SNA(\DD²) ≠ 0$, since the categorical trace on $\cA$
is a non-zero right m-trace on $\cA$,
and again by Proposition \ref{p:trace-skein-D2-S2},
$\SNA(\DD²)^* \simeq \{\text{right m-traces on } \cA\}$.
\end{enumerate}
\end{remark}

\begin{remark}
\label{r:relation-MSWY}
In \cite{Muller:2023}, for a pivotal finite tensor category $\cC$
over an algebraically closed field $\kk$,
string-net spaces $\StrNet_\cC(Σ;X₁,...,X_n)$ are defined
for surfaces $Σ$ without corners,
and it is shown that they collectively define a modular functor
that agrees with the Lyubashenko modular functor associated to
the Drinfeld centre $\cZ(\cC)$.
Here the boundary of $Σ$ is decorated with one marked point on each component,
labelled by the $X_i$'s. The spaces $\StrNet_\cC$ are defined indirectly
via $\strnet_\cC$, which only take projective inputs $X_i ∈ \cI = \Proj(\cC)$,
and are essentially the same as totally-$\cI$-skein modules defined
in Section~\ref{s:totally-S}.
In brief, for a set of outwardly oriented marked points $B$ on $∂Σ$,
$\strnet_\cC(Σ;X₁,...,X_n)$ is the colimit of a diagram in $\Vect_\kk$,
indexed over edge-labelled graphs $(\ov{Γ},χ)$
with boundary value $(B,\{X_i\})$ such that $χ_e ∈ \cI$;
the corresponding vector space $\mathbf{E}(\ov{Γ},χ)$
is the space of vertex colourings $\bigotimes_v \Col(\ov{Γ},χ,v)$
(slightly different from $\Col(\ov{Γ},χ)$, which is a product as sets),
and arrows relate graphs that have the same evaluation in a ball.
We have a map from the diagram to $\SNAI(Σ;X₁,...,X_n)$,
where from each $\mathbf{E}(\ov{Γ},χ)$,
we send a vertex colouring $φ$ to the $\cC$-coloured graph $(\ov{Γ},χ,φ)$.
It is not hard to check that this commutes with the arrows in the diagram,
and descend to a natural isomorphism
\begin{equation}
\label{e:strnet-proj}
\strnet_\cC(Σ;X₁,...,X_n)
= \Colim_{(\ov{Γ},χ)} \mathbf{E}(\ov{Γ},χ)
\simeq \SNAI(Σ;\XX = (B,\{X_i\}))
\simeq \SNI(Σ;\XX) \ ,
\end{equation}
where we used Proposition~\ref{p:ideal-closure} for the last isomorphism.
This defines a functor
$\strnet_\cC(Σ;-,...,-) : \cI \times ⋅⋅⋅ \times \cI \to \Vect_\kk$
which extends uniquely to a functor
$\StrNet_\cC(Σ;-,...,-) : \cC \times ⋅⋅⋅ \times \cC \to \Vect_\kk$
that preserves colimits.
For example, in the case of a single boundary point and 
given a projective resolution
$Q \to P \to X \to 0$,
we have
\begin{equation}
\StrNet_\cC(Σ;X) \simeq \coker \big(
	\strnet_\cC(Σ;Q) \to \strnet_\cC(Σ;P)
\big) \ .
\end{equation}
By Proposition~\ref{p:right-exact}, $\SNI(Σ;-)$ is right-exact,
so we also have $\StrNet_\cC(Σ;-) \simeq \SNI(Σ;-)$.
\end{remark}

\subsection{Relation to Traces}
\label{s:trace}

We recall modified traces, m-traces for short, as in \cite{Costantino:2023}
(see also \cite{GeerKujawa:2013}, \cite{GeerPatureauMirand:2013}).
Let $\cS ⊆ \cA$ be a full subcategory.
A \emph{trace on $\cS$} is an assignment
$\ttt_{-} = \{ \ttt_V ∈ \End_\cA(V) \to R \}_{V ∈ \cS}$
of an $R$-linear functional on the endomorphism space of each $V ∈ \cS$,
which is  \emph{cyclic} in the sense that for all $V,W ∈ \cS$,
and $f : V \to W, g : W \to V$, we have
$\ttt_V(gf) = \ttt_W(fg)$.

When $\cA$ is pivotal, for $f ∈ \cA(V ⊗ W, V ⊗ W')$, where $V,W,W' ∈ \cA$, we define its \emph{left partial trace (over $V$)},
denoted $\ptr_l^V(f)$, to be the morphism
\footnote{Here our convention is
$\ev_V: V^* ⊗ V \to \one$,
$\coev_V: \one \to V ⊗ V^*$,
$\wdtld{\ev}_V: V ⊗ V^* \to \one$,
$\wdtld{\coev}_V: \one \to V^* ⊗ V$.}
\[
\ptr_l^V(f) = (\ev_V ⊗ \id_W) ∘ (\id_{V^*} ⊗ f) ∘ (\wdtld{\coev}_V ⊗ \id_{W'})
∈ \cA(W,W') \ .
\]
Similarly, for $g ∈ \cA(W ⊗ V, W' ⊗ V)$ we define its \emph{right partial trace (over $W$)},
denoted $\ptr_r^V(g)$, to be the morphism
\[
\ptr_r^V(g) = (\id_{W'} ⊗ \wdtld{\ev}_V) ∘ (g ⊗ \id_{V^*}) ∘ (\id_W ⊗ \coev_V)
∈ \cA(W,W') \ .
\]
For a tensor-dual closed subcategory $\cI$ (recall Definition~\ref{d:tensor-dual-closure}),
we say a trace $\ttt$ on $\cI ⊆ \cA$ has the
\emph{right partial trace property} if $\ttt_{V ⊗ W}(f) = \ttt_V(\ptr_r^W(f))$,
for $V ∈ \cI$, $W ∈ \cA$, and $f ∈ \End_\cA(V ⊗ W)$,
i.e.\ $\ttt$ is invariant under right partial traces;
the \emph{left partial trace property} is similarly defined.
We say $\ttt$ is a \emph{left (resp. right) m-trace}
if it has the left (resp. right) partial trace property.
We say $\ttt$ is a \emph{two-sided m-trace}, or simply \emph{m-trace},
if it is simultaneously a left and right m-trace.

There is a close relation between admissible skein modules and (modified) traces, see \cite[Thm.\,3.1, 3.3]{Costantino:2023} and the announcement \cite{Reutter:2020}. Here we explain how to recover these results using excision.
One advantage of our approach is that it makes the relation between
the traces and topology conceptually clearer:
the partial trace properties of left/right m-traces
are ``due to'' the closing of a boundary component of the annulus.
Our proofs are also slightly more general, in that we do not require closure
of $\cS$ under direct sums or retracts,
and in the annulus case, not even closure under tensor products.

The following is essentially \cite[Thm.\,3.3]{Costantino:2023},
generalised to any dimension:

\begin{proposition}
\label{p:trace-skein-Ann}
Let $\cS ⊆ \cA$ be a full subcategory.
For $d = 1$, let $\cI = \cS$, and for $d ≥ 2$,
let $\cI = \eval{\cS}$ be the tensor-dual closure of $\cS$
(recall Definition \ref{d:tensor-dual-closure}).
Let $M = S¹ \times \DD^{d-1}$.
Then
\[
\SNX(M) \simeq \bigoplus_{X ∈ \cI} \End(X) / \Span \{fg - gf\} \ ,
\]
where the $\Span$ is over all pairs of morphisms $f : X \to Y, g : Y \to X$
for $X,Y ∈ \cI$. Hence
\[
\SNX(M)^* \simeq \{\text{traces on }\cI\} \ .
\]
\end{proposition}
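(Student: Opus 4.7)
The plan is to apply excision to $M = S^1 \times \DD^{d-1}$ cut along a codimension-$1$ disk $N := \{*\} \times \DD^{d-1}$. The resulting manifold $M'$ is a ball $\DD^d$ (modulo corner structure, which is irrelevant by Lemma~\ref{l:ignore-corners}) with two new boundary pieces $N, N' \simeq \DD^{d-1}$ glued back together by the quotient map $\pi$. Taking $\cT = \cI$ in Theorem~\ref{t:excision} (which is allowed since $\cI \supseteq \cS$), I would obtain
\[
\SNX(M) \;\simeq\; \int^{V \in \hZI(\DD^{d-1})} \SNX(\DD^d; V, V) \ .
\]

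Next I would identify $\hZI(\DD^{d-1}) \simeq \cI$. For $d=1$ this holds on the nose, and for $d \ge 2$ it follows from the tensor-dual closure of $\cI = \eval{\cS}$ via Lemma~\ref{l:tensor-dual-closure}: the functor sending $X \in \cI$ to the boundary value with a single marked point labelled by $X$ is fully faithful by the discussion after the definition of cylinder categories in Section~\ref{s:cylinder-cat}, and it is essentially surjective because any multi-point $\cI$-admissible boundary value is isomorphic to a single-point one labelled by the tensor product of all labels (which remains in $\cI$ by tensor-dual closure). For such a $V \in \cI$ the boundary value $(V,V)$ on $\DD^d$ is bulk-$\cI$-admissible, so Lemma~\ref{l:bulk-cS} together with the ball evaluation recalled immediately after it identifies $\SNX(\DD^d; V, V) \simeq \cA(V, V) = \End(V)$. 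Combining these identifications gives $\SNX(M) \simeq \int^{V \in \cI} \End(V)$.

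Finally, I would write this coend as the cokernel of the standard difference map $\bigoplus_{\phi : X \to Y \text{ in } \cI} \cA(Y, X) \to \bigoplus_{X \in \cI} \End(X)$ sending $g$ to $\phi \circ g - g \circ \phi$, which yields exactly the first isomorphism of the proposition. Applying $\Hom_R(-,R)$ turns this cokernel into a kernel and produces the second isomorphism: $\SNX(M)^*$ becomes the space of families $\{\ttt_X : \End(X) \to R\}_{X \in \cI}$ obeying $\ttt_Y(\phi \circ g) = \ttt_X(g \circ \phi)$ for all $\phi : X \to Y$, $g : Y \to X$ in $\cI$, which is exactly the cyclicity condition in the definition of a trace on $\cI$. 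The only step that is not immediate is the identification $\hZI(\DD^{d-1}) \simeq \cI$ for $d \ge 2$; this is precisely where the tensor-dual closure assumption on $\cI$ enters essentially, and without it the coend would not collapse to such a clean sum indexed by the objects of $\cI$.
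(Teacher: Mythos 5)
Your overall strategy matches the paper's: cut $S^1 \times \DD^{d-1}$ along a transverse disc, apply Theorem~\ref{t:excision} with $\cT = \cI$, identify the cylinder category over $\DD^{d-1}$ with $\cI$, present the coend as a cokernel, and dualise to get traces. (The paper passes through the full subcategory $\hZI^*(N)$ of single-marked-point boundary values rather than writing an equivalence $\hZI(\DD^{d-1}) \simeq \cI$ outright, but this is a presentational difference.)

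There is, however, one genuine gap. You claim $\SNX(\DD^d; V, V) \simeq \cA(V, V)$ for $V \in \cI$ by invoking Lemma~\ref{l:bulk-cS} on the grounds that $(V,V)$ is bulk-$\cI$-admissible. But Lemma~\ref{l:bulk-cS} relates $\SNX$ -- the \emph{$\cS$-admissible} skein module -- to $\SNXp$ (or to $\cA(-,-)$ via the remark after it) only when the boundary value is bulk-\emph{$\cS$}-admissible. For $V \in \cI \setminus \cS$, which happens as soon as $\cI = \eval{\cS}$ is strictly larger than $\cS$, the hypothesis of the lemma fails; bulk-$\cI$-admissibility gives you $\SNI(\DD^d;V,V) \simeq \cA(V,V)$, not the claimed statement about $\SNX$. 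This is precisely why the paper opens its proof with $\SNX(M) \simeq \SNI(M)$ via Proposition~\ref{p:ideal-closure} (see also Remark~\ref{r:ideal-closure}), so that every skein module appearing downstream carries an $\cI$-subscript and bulk-$\cI$-admissibility becomes the correct hypothesis. Adding that one citation at the outset closes your gap; alternatively, for each $V \in \cI$ you could decompose $V \cong A \otimes d(X) \otimes A'$ with $X \in \cS$, note that $(V,V)$ is isomorphic in $\hZI(\DD^{d-1})$ to a bulk-$\cS$-admissible boundary value, and transport the isomorphism along functoriality of $\SNX(\DD^d;-,-)$. Either way, some such step must be supplied.
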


Note that the quotient on the right hand side is precisely the
the 0-th Hochschild-Mitchell homology group of $\cI$, $\HH₀(\cI)$
(see \cite{Beliakova:2017,Mitchell:1972}). 

\begin{proof}
We think of $M$ as being obtained from
$M' = [0,1] \times \DD^{d-1} \simeq \DD^d$ by gluing
$N = \{1\} \times \DD^{d-1}$ to $N' = \{0\} \times \DD^{d-1}$
(with $N$ outgoing, $N'$ incoming).

Choose some point $p ∈ \Int N$.
Define $\hZI^*(N)$ to be the full subcategory of $\hZI(N)$
consisting of boundary values with exactly one marked point at $p$.
Then by closure under tensor products,
any $\cI$-admissible boundary value is isomorphic to such a boundary value,
so $\hZI^*(N) \simeq \hZI(N)$.
We can identify the objects of $\hZI^*(N)$ with $\Obj \cI$, and using Proposition~\ref{p:ideal-closure} and Theorem~\ref{t:excision} we get
\[
\SNX(M) \simeq \SNI(M)
\simeq \int^{\hZI(N)} \SNI(\DD^d;-,-)
\simeq \int^{\hZI^*(N)} \SNI(\DD^d;-,-)
\simeq \int^{\cI} \cA(-,-) \ ,
\]
where in the last isomorphism, we used Lemma \ref{l:bulk-cS}
(more specifically the observation about $\DD^d$ afterwards).
Unpacking the definition of the last coend,
we see that it is indeed equal to 
$\bigoplus_{X ∈ \cI} \End(X) / \Span \{fg - gf\}$ as in the statement of the proposition. 
Taking the dual $R$-modules,
\[
\SNX(M)^*
\simeq \Big(\int^{\cI} \cA(-,-) \Big)^*
\simeq \int_{\cI} \cA(-,-)^*
\simeq \{\text{traces on }\cI\} \ ,
\]
where the middle isomorphism follows from the left-exactness of 
$(-)^* = \Hom_R(-,R)$,
and the last isomorphism is
the definition of cyclicity:
as subspaces of $\bigoplus_{V ∈ \cI} \End_\cA(V)^*$,
the cyclic property of a trace precisely defines the end
$\int_{\cI} \End_\cA(V)^*$.
\end{proof}

\begin{proposition}[{\cite[Thm.\,3.1]{Costantino:2023}}]
\label{p:trace-skein-D2-S2}
Take $d=2$, let
$\cS ⊆ \cA$ be a full subcategory, and let
$\cI = \eval{\cS}$ be the tensor-dual closure of $\cS$.
Then there are isomorphisms
\begin{align*}
\SNX(\DD²)^* &\simeq \{\text{right m-traces on }\cI\}
\simeq \{\text{left m-traces on }\cI\} \ ,
\\
\SNX(S²)^* &\simeq \{\text{m-traces on }\cI\} \ .
\end{align*}
\end{proposition}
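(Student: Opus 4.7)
The plan is to mirror the proof of Proposition~\ref{p:trace-skein-Ann} by applying excision (Theorem~\ref{t:excision}) to $\DD^2$ and $S^2$. Proposition~\ref{p:ideal-closure} reduces the problem from $\SNX$ to $\SNI$ with $\cI$ taken to be the full ideal closure; the right m-traces are insensitive to this passage because an m-trace on the tensor-dual closure $\eval{\cS}$ extends uniquely to its sum-retract closure via the block-diagonal sum formula and, on a retract $Z$ of $X$, by $\ttt_Z(f):=\ttt_X(ifp)$, both well-defined by cyclicity.

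For $\SNI(\DD^2)$, write $\DD^2 = \Ann \cup_{S^1} \DD^2_{\mathrm{cap}}$ by cutting along an interior circle. Theorem~\ref{t:excision} yields
\[
\SNI(\DD^2) ~\simeq~ \int^{\VV \in \hZI(S^1)} \SNI(\Ann;\emptyset,\VV) \otimes_R \SNI(\DD^2_{\mathrm{cap}};\VV)\ .
\]
Lemma~\ref{l:bulk-cS} evaluates the cap factor as $\cA(\one,V_1^{[*]}\otimes\cdots\otimes V_n^{[*]})$, Lemmas~\ref{l:r-closure-cylinder} and~\ref{l:coend-closure} reduce the $\hZI(S^1)$-coend to single-marked-point objects indexed by $X\in\cI$, and a further excision cut of the annulus along a radial arc (or the horizontal-trace description of Proposition~\ref{p:excision-cylinder-cat}) turns $\SNI(\Ann;\emptyset,X)$ into a coend of hom-spaces in $\cA$.

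Taking $R=\kk$ a field so that $\Hom_\kk(-,\kk)$ is exact, dualising converts all coends into ends, and $\SNI(\DD^2)^*$ becomes a natural family of pairings parametrised by $X\in\cI$. I would then exhibit the bijection with right m-traces directly: send $\ttt$ to the functional evaluating the class of a closed $X$-labelled loop with vertex $h\in\End(X)$ to $\ttt_X(h)$, and conversely send $\phi$ to $\ttt_X(h):=\phi([\mathrm{loop}_X(h)])$. Cyclicity is automatic from the annulus computation of Proposition~\ref{p:trace-skein-Ann}, while the right partial trace property $\ttt_{X\otimes V}(f)=\ttt_X(\ptr_r^V(f))$ corresponds precisely to the additional $\cI$-null relations contributed by balls in $\DD^2$ that spill out of $\Ann$ into the cap: such a ball can encircle a small $V$-subloop attached to an $X$-loop and contract it to the partial trace on the $X$-strand, the exterior remaining $\cI$-admissible because of the $X$-edge. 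The identification with left m-traces follows from the same argument with the opposite orientation of the shared circle, i.e.\ from the orientation-reversing self-diffeomorphism of $\DD^2$ which exchanges $\ptr_r$ and $\ptr_l$ in a pivotal category.

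For $\SNI(S^2)$, cut along an equator so that $S^2 = \DD^2_+ \cup_{S^1} \DD^2_-$. Theorem~\ref{t:excision} gives
\[
\SNI(S^2) ~\simeq~ \int^{\VV \in \hZI(S^1)} \SNI(\DD^2_+;\VV) \otimes_R \SNI(\DD^2_-;\VV) \ ,
\]
and now both hemisphere factors are evaluated by Lemma~\ref{l:bulk-cS}. A functional on $\SNI(S^2)$ restricts to each hemisphere to produce by the disc argument simultaneously a right and a left m-trace on $\cI$, and conversely any two-sided m-trace glues across the equator. The main obstacle I expect is the orientation and duality bookkeeping at the shared circle --- aligning the incoming/outgoing marked-point conventions so that the pairing reads as $\cA(\one,X)\otimes\cA(X,\one)^\sim$ (yielding traces rather than a misaligned variant) and so that the $\cI$-null relations crossing $\partial\Ann$ translate precisely into the right partial trace identity rather than a weaker substitute.
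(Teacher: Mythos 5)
Your overall decomposition — annulus plus cap by excision, then a nested coend over $\cI$, then dualise — is the same skeleton the paper uses, and several of your intermediate steps (Proposition~\ref{p:different-skein}, Lemma~\ref{l:bulk-cS}, the reduction of $\hZXp(S^1)$ to single-marked-point objects) are exactly what the paper deploys. But there is a genuine gap at the step that carries the full weight of the proposition, and a couple of smaller inaccuracies worth flagging.

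The substantive gap is in the sentence ``the right partial trace property \dots corresponds precisely to the additional $\cI$-null relations contributed by balls in $\DD^2$ that spill out of $\Ann$ into the cap.'' What you describe topologically is the easy inclusion $N_l \subseteq \ker(\iota_*)$ (or $N_r$, depending on which cap you glue): a ball reaching into the cap lets you contract the non-$\cI$ loop to a partial trace while the $\cI$-edge in the annulus keeps the exterior admissible. But the proposition needs the \emph{equality} $\ker(\iota_*) = N_l$, i.e.\ that \emph{no other} relations arise from balls meeting the cap. In the paper this is precisely the content of constructing the explicit left inverse $\sigma$ to the map $\wdtld{\iota}_* : \SNI(\Ann)/N_l \to \SNI(\DD^2)$ and verifying, via a non-trivial manipulation in the horizontal trace $\htr_{\cA}(\cA)$, that $\sigma$ is well-defined on the coend relations. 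Without that (or some equivalent argument), your map from $\SNI(\DD^2)^*$ to traces is not shown to be injective onto the right/left m-traces. Likewise for $S^2$, your hemisphere decomposition yields $\int^{X\in\cI}\cA(\one,X)\otimes_R\cA(X,\one)$, but the connection of \emph{that} coend to the space of two-sided m-traces is not at all visible from the formula, and the claim that ``restriction to each hemisphere produces simultaneously a right and a left m-trace'' is asserted rather than proved; the paper instead glues two caps onto the annulus one at a time, so that each cap contributes exactly one of $N_l$, $N_r$, and the argument from the disc case applies verbatim twice.

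Two smaller issues. First, you take $R=\kk$ a field so that $\Hom_\kk(-,\kk)$ is exact — this is both stronger than needed and narrower than the statement. Over an arbitrary commutative ring $R$, left-exactness of $\Hom_R(-,R)$ already turns coends into ends and identifies the dual of a quotient with the annihilator of the kernel, which is all the paper uses; invoking a field silently restricts the scope. Second, passing from right to left m-traces via ``the orientation-reversing self-diffeomorphism of $\DD^2$'' is not correct as stated: skein modules are defined for oriented manifolds and the Reshetikhin--Turaev evaluation is not orientation-reversal invariant. The correct move is to glue the cap to the \emph{other} boundary circle of the annulus (or equivalently, swap the roles of $N$ and $N'$ in the excision), which exchanges left and right partial traces without touching the orientation of the ambient disc.
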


Note that we do not assume closure under direct sums/retracts of $\cS$ or $\cI$.
It is not hard to prove that the natural map
$\{\text{right m-traces on }\rcls{\cI}\}
\to \{\text{right m-traces on }\cI\}$
which just takes a trace $\ttt$ on $\rcls{\cI}$ to its restriction to $\cI$
is in fact a bijection; alternatively, this is follows from
Proposition \ref{p:ideal-closure},
i.e.\ from the fact that $\SNI(\DD²) \simeq \SNJ{\rcls{\cI}}(\DD²)$.

\begin{proof}
By  Remark~\ref{r:ideal-closure}
we can just consider $\cS = \cI$.
For the equalities involving $\DD²$, we prove the left m-traces case,
with the right m-traces being essentially the same.
Let $M = \DD²$ be obtained from $M' = \Ann \sqcup \DD²$ by gluing along $S^1$;
more precisely, we glue $N = S¹ \times \{-1\} ⊆ S¹ \times [-1,1] = \Ann$
to $N' = ∂\DD²$ (with $N$ outgoing, $N'$ incoming).
This gives us an inclusion $ι: \Ann ↪ M$, which induces a map
$ι_* : \SNI(\Ann) \to \SNI(M)$ that is clearly surjective.
Our goal is to understand the kernel of $ι_*$.

By Proposition \ref{p:different-skein},
\[
\SNI(M)
\simeq \int^{\XX ∈ \hZA(S¹)} \SNA(\DD²;\XX) ⊗_R  \SNI(\Ann;\XX) \ ,
\]
and the map $ι_* : \SNI(\Ann) \to \SNI(M)$ induced by inclusion
is, under the isomorphism above, given by
\[
\SNI(\Ann) \to \SNA(\DD²) ⊗_R \SNI(\Ann)
\overset{τ_{\one,\one}}{\to}
\int^{\XX ∈ \hZA(S¹)} \SNA(\DD²;\XX) ⊗_R \SNI(\Ann;\XX) \ ,
\]
where the first map is the identity on $\SNI(\Ann)$, tensored with the map $R \to \SNA(\DD²)$
given by sending $1 ∈ R$ to the empty graph $∅ ∈ \SNA(\DD²)$.
In the second map, $τ_{-,-}$ is the canonical dinatural map
to the coend,
and we implicitly identify the empty boundary value with $\one$,
a boundary value with one marked point $p ∈ S¹$,
which is labelled by $\one$. Similarly to before,
we replace $\hZJ{\cA}(S¹)$ with the equivalent full subcategory $\hZJ{\cA}^*(S¹)$ with exactly one marked point $p$, and we
identify the objects of $\hZJ{\cA}^*(S¹)$ with $\Obj \cA$.
From Example \ref{x:S1} we have $\hZJ{\cA}^*(S¹) \simeq \htr_\cA(\cA)$,
so the map above can be rewritten
\[
\ov{ι}_* : \SNI(\Ann) \to
\int^{X ∈ \htr_{\cA}(\cA)} \SNA(\DD²;X) ⊗_R \SNI(\Ann;X)
\]
We have $\SNA(\DD²;X) \simeq \cA(X,\one)$,
and from a slight generalisation of the arguments in the proof of
Proposition~\ref{p:trace-skein-Ann}, we have
$\SNI(\Ann;X) \simeq \int^{V ∈ \cI} \cA(V,XV)$.
Thus we may rewrite the above map as
\begin{align*}
\ov{ι}_*: \int^{V ∈ \cI} \cA(V,V)
&~\to~
\int^{X ∈ \htr_{\cA}(\cA)}
\Big( \cA(X,\one) ⊗_R \int^{V ∈ \cI} \cA(V,XV) \, \Big)
\\
[f] &~\mapsto~
τ_{\one,\one} ( \id_\one ⊗_R [f] ) \ .
\end{align*}
In writing $[f]$ on the right hand side, we implicitly used the unit constraint map
$V \simeq \one ⊗ V$; we will subsequently omit $τ_{\one,\one}$.
By unpacking the topological actions on skein modules,
the coend on the right is described purely algebraically as follows.
A morphism $[φ] ∈ \htr_{\cA}(\cA)(X,Y)$, represented by $φ ∈ \cA(AX,YA)$,
induces the maps
\begin{align*}
\SNI(\Ann;X) \simeq \int^{V ∈ \cI} \cA(V,XV)
&~\to~ \int^{W ∈ \cI} \cA(W,YW) \simeq \SNI(\Ann;Y)
\\
[f: V \to XV] &~\mapsto~ [φ] \lact [f] := [(φ ⊗ \id_V) ∘ (\id_A ⊗ f) : AV \to YAV]
\end{align*}
and
\begin{align*}
\SNI(\DD²;Y) \simeq \cA(Y,\one)
&\to \cA(X,\one) \simeq \SNI(\DD²;X)
\\
(g: Y \to \one) &\mapsto g \ract [φ] := \ptr_l^A(g ∘ φ) \ .
\end{align*}
Hence the coend in question can be described as the quotient
\[
\bigoplus_{X ∈ \htr_{\cA}(\cA)}
\cA(X,\one) ⊗_R
\Big(\int^{V ∈ \cI} \cA(V,XV) \Big)
\Big/
\big\{ g ⊗_R ([φ] \lact [f]) - (g \ract [φ]) ⊗_R [f] \big\} \ ,
\]
where in the denominator, $φ,f,g$ run over all ($\cA$-)morphisms
$φ : AX \to YA, f : V \to XV, g : Y \to \one$.
Since $ι_*: \SNI(\Ann) \to \SNI(M)$ is surjective,
then so is $\ov{ι}_*$, but we can also see it explicitly from the algebraic
description.
Indeed, any $h : X \to \one$ represents a morphism
$[φ_h: \one ⊗ X \to \one ⊗ \one] ∈ \htr_{\cA}(\cA)(X,\one)$
which is just $h$ composed with the
left and right unit constraints on $X$ and $\one$ respectively,
and $h = \id_\one \ract [φ_h]$,
so setting $Y = \one, g = \id_\one$, we have the relation
\[
h ⊗_R [f] = (\id_\one \ract [φ_h]) ⊗_R [f]
~\sim~ \id_\one ⊗_R [φ_h] \lact [f]
= \id_\one ⊗_R [(φ_h ⊗ \id_V) ∘ (\id_\one ⊗ f)] \ ,
\]
and the rightmost expression is in the image of $\ov{ι}_*$.

Consider the relations $N_l$ in $\SNI(\Ann)$ generated by left partial traces,
that is,
\[
N_l := \Span \{ [f] - [\ptr_l^X(f)] \;|\; f ∈ \End_\cA(XV), X ∈ \cA, V ∈ \cI \}
⊆ \SNI(\Ann)
\]
It is easy to see from the topological picture that $N_l$
should lie in the kernel of $\ov{ι}_*$.
We can show it algebraically as follows:
for $φ: AX \to \one A$ representing a morphism
$[φ] ∈ \htr_{\cA}(\cA)(X,\one)$,
we have $\id_\one \ract φ = \ptr_l^A(φ)$,
and we can always factor any $f ∈ \End_\cA(XV)$ as
$f = \wdtld{f} \lact ψ_V$,
where $\wdtld{f} := (\id_X ⊗ \wdtld{\ev}_V) ∘ f : X (VV^*) \to X \simeq \one X$
(representing a morphism in $\htr_{\cA}(\cA)(VV^*,\one)$)
and $ψ_V := \id_V ⊗ \wdtld{\coev}_V: V \to V(V^*V) \simeq (VV^*)V$,
so we have
\begin{align*}
\id_\one ⊗_R [f]
&= \id_\one ⊗_R [\wdtld{f} \lact ψ_V]
\\
&\sim (\id_\one \ract \wdtld{f}) ⊗_R [ψ_V]
\\
&= \ptr_l^X(\wdtld{f}) ⊗_R [ψ_V]
\\
&= (\id_\one \ract \ptr_l^X(\wdtld{f})) ⊗_R [ψ_V]
\\
&\sim \id_\one ⊗_R [\ptr_l^X(\wdtld{f}) \lact ψ_V]
\\
&= \id_\one ⊗_R [\ptr_l^X(f)] \ ,
\end{align*}
i.e.\ $\ov{ι}_*([f]) = \ov{ι}_*([\ptr_l^X(f)])$.
So $\ov{ι}_*$ descends to a map
\begin{align*}
\wdtld{ι}_* : \int^{V ∈ \cI} \cA(V,V) \Big/ N_l
&\to \int^{X ∈ \htr_{\cA}(\cA)}
\Big( \cA(X,\one) ⊗_R \int^{V ∈ \cI} \cA(V,XV) \Big)
\\
[f] &\mapsto \id_\one ⊗_R [f]
\end{align*}

We claim that the assignment
\[
σ: g ⊗_R [f] \mapsto [(g ⊗ \id_V) ∘ f]
\]
for $f: V \to XV, g: X \to \one$,
is a left-inverse to $\wdtld{ι}_*$.
In fact, that it is a left inverse is clear from the above explicit expression for $\wdtld{ι}_*$, and we only need to check that $\sigma$ is well-defined.
For $φ : AX \to YA, f : V \to XV, g : Y \to \one$,
we have
\begin{align*}
[((g \ract φ) ⊗ \id_V) ∘ f]
&= [(\ptr_l^A((g ⊗ \id_A) ∘ φ) ⊗ \id_V) ∘ f]
\\
&= \big[\ptr_l^A\big((((g ⊗ \id_A) ∘ φ) ⊗ \id_V) ∘ (\id_A ⊗ f)\big)\big]
\end{align*}
and
\begin{align*}
[(g ⊗ \id_{AV}) ∘ (φ \lact f)]
&= [(g ⊗ \id_{AV}) ∘ ((φ ⊗ \id_V) ∘ (\id_A ⊗ f))]
\end{align*}
and these are equal in $\SNX(\Ann)/N_l$.
Thus $\sigma$ is a left-inverse, and hence $\wdtld{ι}_*$ is injective. But $\wdtld{ι}_*$ is surjective by construction, so we arrive at
\[
\wdtld{ι}_* : \SNX(\Ann)/N_l \simeq \SNX(\DD²) \ .
\]

Taking duals (of $R$-modules),
\[
\SNX(\DD²)^* \simeq
\big( \SNI(\Ann)/N_l \big)^*
= \{ \ttt ∈ \SNI(\Ann)^* \;|\; \ttt|_{N_l} = 0 \} \ .
\]
By definition of $N_l$, traces $\ttt$ that vanish on $N_l$ are precisely left m-traces on $\cI$.

Finally, by gluing two discs to the annulus, one along each boundary component,
$\SNX(S²) \simeq \SNI(S²) \simeq \SNI(\Ann)/(N_r + N_l)$,
where $N_r$ is defined similarly to $N_l$ except using right partial traces,
and
\[
\SNX(S²)^* \simeq
\big( \SNI(\Ann)/(N_l+N_r) \big)^*
= \{ \ttt ∈ \SNI(\Ann)^* \;|\; \ttt|_{N_l} = \ttt|_{N_r} = 0 \}
\]
and traces $\ttt$ that vanish on both $N_l,N_r$
are precisely two-sided m-traces on $\cI$.
\end{proof}

\subsection{(In)finite Dimensionality}
\label{s:dimension}

In this section, we fix a full subcategory $\cI ⊆ \cA$
that is sum-retract closed, and for $d ≥ 2$ also tensor-dual closed,
\[
\cI = \begin{cases} \rcls{\cI} &; d=1 \ ,
\\
\rcls{\eval{\cI}} &; d \ge 2 \ .
\end{cases}
\]
We will investigate when skein modules are finitely generated, and when not. We start with a finiteness result.

\begin{proposition}
\label{p:finite-dim}
Suppose $\cA$ has hom-spaces that are finitely generated as $R$-modules.
If $\cI$ is generated by finite direct sums,
i.e.\ if there exist finitely many objects $X₁,...,X_k ∈ \cI$
such that all objects of $\cI$ are direct sums of $X_i$'s,
then $\SNI(M;\XX)$ is finitely generated as an $R$-module for any $\XX$
and any finitary $M$.
\end{proposition}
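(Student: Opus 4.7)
The plan is to induct on a disk-based decomposition of the finitary manifold $M$ using iterated excision, and at each step to reduce the coend over a cylinder category to a coend over the finite set $\{X_1,\ldots,X_k\}$ via Lemma \ref{l:coend-closure}. Specifically, any finitary $M$ admits a decomposition as an iterated gluing of finitely many $d$-balls along embedded codimension-$1$ disks (for instance by taking a finite smooth triangulation of $M$ and cutting along its $(d-1)$-faces, or by refining the attaching regions of a handle decomposition). This yields a chain $M_0, M_1, \ldots, M_n = M$ in which $M_0$ is a finite disjoint union of balls and each $M_{i+1}$ is obtained from $M_i$ by identifying a pair of embedded boundary disks $N, N' \simeq \DD^{d-1}$ of $\partial M_i$.

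\emph{Base case.} By Lemma \ref{l:M-disjoint-union} it suffices to show that $\SNI(\DD^d;\XX)$ is finitely generated. If $\XX$ is $\cI$-admissible, Lemma \ref{l:bulk-cS} and the remark following it identify this skein module with a single hom-space of $\cA$, which is finitely generated by hypothesis. For general $\XX$ (possibly not $\cI$-admissible, or empty), cut $\DD^d$ along an equatorial disk and apply Theorem \ref{t:excision} together with Lemma \ref{l:M-disjoint-union} to obtain
\[
\SNI(\DD^d;\XX) \,\simeq\, \int^{\VV \in \hZI(\DD^{d-1})} \SNI(\DD^d;\XX_1,\VV) \otimes_R \SNI(\DD^d;\XX_2,\VV) \,,
\]
in which each half-ball now has $\cI$-admissible boundary via $\VV$, so each factor is again a hom-space of $\cA$. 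The functor $V \mapsto (\{p\},V)$ provides an equivalence $\cI \simeq \hZI(\DD^{d-1})$ (fully faithful by Lemma \ref{l:bulk-cS}; essentially surjective by tensor-dual closure of $\cI$ for $d \ge 2$, or trivially for $d=1$). Since $\cI = \rcls{\{X_1,\ldots,X_k\}}$ by hypothesis, Lemma \ref{l:coend-closure} applied with $\cB = \{X_1,\ldots,X_k\} \subseteq \cI$ reduces the coend to one over $\cB$, yielding a quotient of $\bigoplus_{j=1}^k \SNI(\DD^d;\XX_1,X_j) \otimes_R \SNI(\DD^d;\XX_2,X_j)$, which is finitely generated.

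\emph{Inductive step.} When $M_{i+1} = M_i/(N \sim N')$ for embedded disks $N, N' \simeq \DD^{d-1}$, Theorem \ref{t:excision} gives
\[
\SNI(M_{i+1};\XX) \,\simeq\, \int^{\VV \in \hZI(\DD^{d-1})} \SNI(M_i;\XX,\VV,\VV) \,,
\]
and the same equivalence $\hZI(\DD^{d-1}) \simeq \cI$ combined with Lemma \ref{l:coend-closure} reduces this to a coend indexed by $\{X_1,\ldots,X_k\}$. By the inductive hypothesis each $\SNI(M_i;\XX,X_j,X_j)$ is finitely generated, so the coend is a quotient of a finite direct sum of finitely generated modules and is therefore finitely generated.

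The main non-formal input is the disk-gluing decomposition of a finitary $M$ used in the topological setup; the remainder is a mechanical combination of excision (Theorem \ref{t:excision}), the equivalence $\hZI(\DD^{d-1}) \simeq \cI$, and the closure principle (Lemma \ref{l:coend-closure}). The hypothesis that $\cI$ is generated by direct sums of finitely many $X_i$ is used precisely to apply Lemma \ref{l:coend-closure} at each inductive step and collapse each coend to a finite one.
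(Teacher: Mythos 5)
Your argument is correct, but it follows a genuinely different route from the paper's. The paper first applies Proposition~\ref{p:totally-I} to pass to totally-$\cI$ skein modules, then works at the level of graphs: it fixes a finite spine $G\subseteq M$ onto which every graph can be isotoped, merges parallel strands so the underlying graph becomes exactly $G$, and finally uses $\cI$-null relations to decompose each edge label as a direct sum of the $X_i$'s, yielding an explicit finite spanning set (graphs on $G$ with edges labelled by $X_i$'s, and vertex labels from a finitely generated module). You instead set up an induction over a decomposition of $M$ into balls glued along codimension-$1$ disks and repeatedly apply Theorem~\ref{t:excision} together with the equivalence $\hZI(\DD^{d-1})\simeq\cI$ and Lemma~\ref{l:coend-closure} to collapse each coend to one indexed by the finite set $\{X_1,\dots,X_k\}$; the base case for a ball with non-admissible boundary is handled by one more cut. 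Both approaches rely on the same closure hypotheses on $\cI$ (sum-retract closure, and tensor-dual closure for $d\ge 2$, needed respectively for Lemma~\ref{l:coend-closure} and the essential surjectivity of $\cI\to\hZI(\DD^{d-1})$ in your case, and for Proposition~\ref{p:totally-I} and the merging of parallel strands in the paper's). What your version buys is a purely formal reuse of the excision machinery, in the same spirit as the proofs of Propositions~\ref{p:ideal-closure} and~\ref{p:totally-I}; what the paper's version buys is an explicit spanning set which is convenient for computations. The one step you should justify a bit more carefully is the existence of the disk-gluing decomposition: a handle $D^k\times D^{d-k}$ is attached along $S^{k-1}\times D^{d-k}$, which is not itself a disk, so one has to subdivide the attaching region into finitely many $(d-1)$-disks and perform the gluings one at a time (this is standard, but it is the non-formal input and deserves a sentence).
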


If $\cA$ is in addition additive then we could instead demand the existence of a generator $G \in \cI$ in the sense that $\{G\}^\oplus = \cI$. We do not assume that $\cA$ is additive, and so it could happen that neither $X_1 \oplus \cdots \oplus X_n$ is in $\cI$, nor is any other direct sum that might serve as a generator.

\begin{proof}
By our conditions on $\cI$ in this subsection, Proposition \ref{p:totally-I} states that $\SNI(M;\XX) \simeq \SNAI(M;\XX)$, and so it suffices prove that the latter is finitely generated.

Choose an uncoloured graph	$G$ in $M$
with boundary $\XX$ and such that the inclusion $G ⊆ M$
induces a surjective map on fundamental groups
(e.g.\ for closed $M$, we may take $G$ to be a 1-skeleton
of a triangulation of $M$).
Then any ($\cA$-coloured) graph with boundary value $\XX$
can be isotoped into a neighbourhood of $G$.
Note that since $M$ is finitary, $G$ can be chosen to be finite.
Let $Γ$ be a totally-$\cI$ graph.
We isotope $Γ$ into a neighbourhood of $G$,
and by further isotopy, we can arrange $Γ$ so that,
away from the internal vertices of $G$, the edges of $Γ$ run parallel to $G$.
Using $\cI$-null relations, we can merge these parallel strands into one,
so that the underlying graph of $Γ$ now agrees with $G$
except in a neighbourhood of internal vertices of $G$.
Finally, applying $\cI$-null relations in these neighbourhoods,
we can convert $Γ$ into a graph that has underlying graph exactly matching $G$.

Thus, $\SNAI(M;\XX)$ is spanned by graphs whose underlying graph is $G$.
Now let $Γ$ be such a graph.
For an internal edge $e$ of $G$, suppose $Γ$ is labelled by $Y$.
It has a direct sum decomposition into the $X_i$'s,
with projection-inclusions $p_a : Y ⇌  X_{i_a} : ι_a$.
With an $\cI$-null relation applied in the middle of $e$,
this allows us to write $Γ = \sum Γ_a$,
where $Γ_a$ has two extra vertices in the middle of $e$,
labelled by $p_a$ and $ι_a$.
We then merge these new vertices with respective vertices of $G$.
Repeating this procedure for every edge of $G$,
we arrive at a sum of graphs, such that each of them has internal edges
labelled by some $X_i$.

Thus, $\SNAI(M;\XX)$ is spanned by graphs whose underlying graph is $G$
and each internal edge is labelled by some $X_i$.
Since the label on the boundary edge is already determined by $\XX$,
this means that there are only finitely many edge labels.
Furthermore, for each of these edge labels,
the set of vertex labels is the tensor product of a finite number of
hom spaces in $\cA$, hence is finitely generated as an $R$-module.
\end{proof}

\begin{example}\label{e:fin-gen-skein}
Let $\cA$ be  a pivotal/ribbon finite tensor category over $\mathbb{C}$.
The projective ideal $\Proj(\cA)$ is 
finitely direct-sum-generated by the projective covers 
of the simple objects of $\cA$,
of which there are finitely many.
Thus by the above proposition, $\SNJ{\Proj(\cA)}(M;\XX)$ is finite dimensional for finitary $M$.
\end{example}

Next we consider admissible skein modules for $S¹ \times \DD^{d-1}$. For $X \in \cS$ denote by $Γ_{X}$ a single loop $S¹ \times\{ 0\} ⊆ S¹ \times \DD^{d-1}$ and coloured by $X$. We will show a linear independence result for these elements which allows one to estimate the dimension of the skein module from below.

\begin{proposition}
\label{p:infinite-dim}
Let $R = \kk$ be an algebraically closed field, and let
$\cA$ be in addition abelian and Hom-finite.
Let $J$ be a minimal set of indecomposable objects generating $\cI$ under direct sums.
Then the set $\{Γ_{X} \;|\; X ∈ J \}$ is linearly independent in $\SNI(S¹ \times \DD^{d-1})$, and so
$\dim_\kk \SNI(S¹ \times \DD^{d-1}) ≥ |J|$.
\end{proposition}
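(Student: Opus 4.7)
The plan is to leverage Proposition~\ref{p:trace-skein-Ann}, which identifies $\SNI(S^1 \times \DD^{d-1})^*$ with the space of traces on $\cI$ via $\ttt \mapsto \bigl(\Gamma_Y \mapsto \ttt_Y(\id_Y)\bigr)$. To prove linear independence of $\{\Gamma_X\}_{X \in J}$, it therefore suffices to construct, for each $X_0 \in J$, a trace $\ttt^{X_0}$ on $\cI$ satisfying $\ttt^{X_0}_X(\id_X) = \delta_{X, X_0}$ for all $X \in J$. A vanishing linear combination $\sum c_X \Gamma_X = 0$ could then be paired against these separating traces to force every $c_X = 0$.

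To build $\ttt^{X_0}$, I will use Krull--Schmidt, which is available because $\cA$ is abelian and Hom-finite over the algebraically closed field $\kk$: every $Y \in \cI$ decomposes essentially uniquely as $Y \cong X_0^{\oplus m(Y)} \oplus Y'$ with $Y'$ having no summand isomorphic to $X_0$, and each indecomposable $X$ has a local finite-dimensional endomorphism algebra with residue field $\kk$. Let $\pi : \End(X_0) \twoheadrightarrow \End(X_0)/\mathrm{rad}(\End(X_0)) \cong \kk$. Given $f \in \End(Y)$, I will take the $X_0^{\oplus m(Y)} \to X_0^{\oplus m(Y)}$ block $f_{X_0 X_0} \in M_{m(Y)}(\End(X_0))$ and set
\[
\ttt^{X_0}_Y(f) \,:=\, \mathrm{tr}\bigl(\pi(f_{X_0 X_0})\bigr),
\]
applying $\pi$ entry-wise before the usual matrix trace. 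A different decomposition conjugates $f_{X_0 X_0}$ by an element of $\mathrm{GL}_{m(Y)}(\End(X_0))$; since matrix trace is cyclic modulo commutators in $\End(X_0)$, and $[\End(X_0), \End(X_0)] \subseteq \mathrm{rad}(\End(X_0))$, the quantity $\ttt^{X_0}_Y(f)$ is independent of the choice. The normalisation $\ttt^{X_0}_X(\id_X) = \delta_{X, X_0}$ on $J$ is then immediate.

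The substantive step is cyclicity $\ttt^{X_0}_Y(gf) = \ttt^{X_0}_Z(fg)$ for $f : Y \to Z$, $g : Z \to Y$. Decomposing $Y \cong X_0^m \oplus Y'$, $Z \cong X_0^n \oplus Z'$ and writing $f, g$ in block form, I compute
\[
(gf)_{X_0 X_0} \,=\, g_{X_0 X_0}\, f_{X_0 X_0} \,+\, g_{X_0 Z'}\, f_{Z' X_0},
\]
and symmetrically for $(fg)_{X_0 X_0}$. The ``pure $X_0$'' contributions match by standard cyclicity of matrix trace over the ring $\End(X_0)$. The cross terms factor each entry through an object ($Z'$ or $Y'$) with no $X_0$-summand; here I will invoke the key observation that any endomorphism $X_0 \to X_0$ factoring through such an object is non-invertible --- otherwise $X_0$ would split off as a summand, contradicting Krull--Schmidt uniqueness --- hence lies in $\mathrm{rad}(\End(X_0))$ and is annihilated by $\pi$. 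Both cross terms therefore vanish, and cyclicity holds.

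I expect the cross-term vanishing to be the only delicate point, as it rests on the conjunction of Krull--Schmidt in $\cA$ and indecomposability of $X_0$. Everything else is bookkeeping with matrix trace and the local structure of $\End(X)$ for indecomposable $X$, and the passage from traces back to linear independence of the $\Gamma_X$ is a formal consequence of Proposition~\ref{p:trace-skein-Ann}.
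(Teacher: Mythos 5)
Your proof is correct, and it takes a genuinely different route from the paper's. The paper works directly with the coend presentation $\SNI(S^1 \times \DD^{d-1}) \cong \int^{\cI}\cA(-,-)$ established in Proposition~\ref{p:trace-skein-Ann}: it introduces an auxiliary map $\zeta'$ whose image contains the coend relations and decomposes object-by-object, then uses Krull--Schmidt to show $\id_{X_i}$ is never in the corresponding image $\Ima(\zeta_i')$. You instead work on the dual side, using the identification of $\SNI(S^1 \times \DD^{d-1})^*$ with traces on $\cI$ and building explicit separating traces $\ttt^{X_0}$. Both proofs hinge on the same Krull--Schmidt fact (an endomorphism of the indecomposable $X_0$ that factors through an object with no $X_0$-summand is non-invertible, hence in $\mathrm{rad}(\End(X_0))$), but your construction of concrete dual vectors is conceptually cleaner and makes the role of the residue field visible. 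The trade-off is generality: the paper's intermediate Lemma~\ref{l:infinite-dim} works over an arbitrary field $\kk$ under a commutator hypothesis, and the Proposition is deduced by checking that hypothesis when $\kk$ is algebraically closed; your trace $\pi : \End(X_0) \twoheadrightarrow \kk$ uses algebraic closedness directly and would need a substitute functional on the division algebra $\End(X_0)/\mathrm{rad}(\End(X_0))$ in the non-closed case.

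One small imprecision worth fixing: in the well-definedness step you say a different decomposition ``conjugates $f_{X_0 X_0}$ by an element of $\mathrm{GL}_{m(Y)}(\End(X_0))$.'' That is not literally true, since the change-of-decomposition automorphism $\alpha$ of $Y \cong X_0^{\oplus m} \oplus Y'$ need not be block-diagonal, so $(\alpha f \alpha^{-1})_{X_0 X_0}$ is not a conjugate of $f_{X_0 X_0}$ on the nose. Rather, writing $\alpha$ and $\alpha^{-1}$ in block form with $X_0^m$-blocks $a$ and $a'$, one gets $(\alpha f \alpha^{-1})_{X_0 X_0} = a\,f_{X_0 X_0}\,a' + (\text{terms factoring entrywise through } Y')$; the cross terms die under $\pi$ by exactly your Krull--Schmidt observation, and $\pi(a)\pi(a') = \pi(a')\pi(a) = \id$ because $a a' - \id$ and $a' a - \id$ also factor through $Y'$. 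So $\pi$ of the new block is a $\mathrm{GL}_{m}(\kk)$-conjugate of $\pi$ of the old block, and the trace agrees. This is the same cross-term argument you already deploy for cyclicity, so the fix is routine, but as written the justification appeals to cyclicity of matrix trace over $\End(X_0)$ rather than the correct mechanism.
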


The proof relies on the following lemma, which is actually a slightly more general version of that statement, replacing algebraic closedness by a restriction on the endomorphism spaces.

\begin{lemma}
\label{l:infinite-dim}
Let $R = \kk$, $\cA$ and $J$ be as in Proposition~\ref{p:infinite-dim}, except that $\kk$ need not be algebraically closed. Let $J' ⊆ J$ be a subset consisting of $X ∈ J$
such that all commutators $[a,b]$ of endomorphisms $a,b ∈ \End(X)$
are non-invertible.
Then the set $\{Γ_{X} \;|\; X ∈ J'\}$ is linearly independent
in $ \SNI(S¹ \times \DD^{d-1})$.
\end{lemma}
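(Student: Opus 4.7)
The plan is to use Proposition~\ref{p:trace-skein-Ann}, which identifies $\SNI(S^1 \times \DD^{d-1})^*$ with the space of traces on $\cI$, and to construct, for each $X \in J'$, a separating trace $t^{(X)}$ on $\cI$ that sends $Γ_X$ to a nonzero value while vanishing on $Γ_Y$ for every indecomposable $Y \in J \setminus \{X\}$. Linear independence of $\{Γ_X \mid X \in J'\}$ then follows immediately by pairing any putative relation $\sum c_X Γ_X = 0$ against each $t^{(X_0)}$ to force $c_{X_0}=0$.

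To set up the algebraic ingredient, I would first observe that Hom-finiteness combined with abelianness of $\cA$ implies Krull--Schmidt in $\cI$, so for every $X \in J$ the ring $\End(X)$ is a finite-dimensional local $\kk$-algebra with maximal ideal $\mathfrak{m}_X$ consisting of non-invertibles and residue division algebra $D_X := \End(X)/\mathfrak{m}_X$, which is finite over $\kk$. The defining property of $J'$ forces every commutator $[a,b]$ of $a,b \in \End(X)$ to lie in $\mathfrak{m}_X$, so $D_X$ is commutative, hence a finite field extension of $\kk$. Fix any $\kk$-linear functional $λ_X : D_X \to \kk$ with $λ_X(1)=1$ and set $\ov{t}_X := λ_X \circ π_X$, where $π_X : \End(X) \to D_X$ is the quotient. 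Commutativity of $D_X$ makes $\ov{t}_X$ annihilate $[\End(X),\End(X)]$, so $\ov{t}_X$ is a trace on the ring $\End(X)$ with $\ov{t}_X(\id_X)=1$.

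Next I would extend $\ov{t}_X$ to a categorical trace $t^{(X)}$ on $\cI$. For $Y \in \cI$, Krull--Schmidt provides $Y \cong X^{⊕ m} ⊕ Y'$ with $Y'$ having no $X$-summand, and relative to this decomposition any $f \in \End(Y)$ has an ``$X$-block'' $f_{XX} \in M_m(\End(X))$. Define
\[
t^{(X)}_Y(f) := \ov{t}_X\!\big(\operatorname{tr}(f_{XX})\big),
\]
with $\operatorname{tr}$ the matrix trace. Well-definedness under a different Krull--Schmidt decomposition is immediate once one notes that the matrix trace is invariant under $\mathrm{GL}_m(\End(X))$-conjugation modulo $[\End(X),\End(X)]$, and that $\ov{t}_X$ kills these commutators. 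For cyclicity, given $f : Y \to Z$ and $g : Z \to Y$ with $Y \cong X^{⊕ m} ⊕ Y'$ and $Z \cong X^{⊕ n} ⊕ Z'$, the $X$-block of $fg$ decomposes as $f_{XX}g_{XX} + f_{XY'}g_{Y'X}$, where the second summand has entries that are endomorphisms of $X$ factoring through $Y'$. Since $Y'$ has no $X$-summand, Krull--Schmidt forces any such endomorphism to be non-invertible (otherwise $X$ would split off $Y'$), hence to lie in $\mathfrak{m}_X$ and be annihilated by $\ov{t}_X$. The remaining identity $\ov{t}_X(\operatorname{tr}(f_{XX}g_{XX})) = \ov{t}_X(\operatorname{tr}(g_{XX}f_{XX}))$ is the standard matrix cyclicity of traces modulo commutators.

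By construction $t^{(X)}(Γ_X) = \ov{t}_X(\id_X) = 1$ and $t^{(X)}(Γ_Y) = 0$ for every indecomposable $Y \not\cong X$, so via Proposition~\ref{p:trace-skein-Ann} we obtain the desired $\kk$-linear functionals on $\SNI(S^1 \times \DD^{d-1})$ that separate $\{Γ_X \mid X \in J'\}$. The principal obstacle I anticipate is the verification of well-definedness and cyclicity of $t^{(X)}$; both hinge on two points, namely the radical nature of morphisms between non-isomorphic indecomposables in a Krull--Schmidt category, and the commutator hypothesis defining $J'$, which is precisely what guarantees the existence of a trace $\ov{t}_X$ on $\End(X)$ that is nonzero on $\id_X$.
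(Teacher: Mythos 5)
Your proof is correct and takes a genuinely different route from the paper's. The paper works on the primal side of the coend presentation $\SNI(S^1 \times \DD^{d-1}) \cong \bigoplus_i \End(X_i)/\Ima(\zeta)$: it enlarges the relation map $\zeta$ to a block-diagonal map $\zeta'$ and shows by a Krull--Schmidt contradiction that $\id_{X_i} \notin \Ima(\zeta_i')$ for $X_i \in J'$, factoring each commutator $[a_m,b_m]$ through its image $Y_m$ in $\cA$ and concluding that $X_i$ would otherwise be a retract of $\bigoplus_l X_{j_l} \oplus \bigoplus_m Y_m$, a direct sum of indecomposables none of which is isomorphic to $X_i$. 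You argue dually, using $\SNI(S^1 \times \DD^{d-1})^* \cong \{\text{traces on } \cI\}$ from Proposition~\ref{p:trace-skein-Ann} and constructing an explicit separating trace $t^{(X)}$ for each $X \in J'$: the commutator hypothesis makes the residue division algebra $D_X = \End(X)/\mathfrak{m}_X$ commutative, hence a finite field extension of $\kk$, so a $\kk$-functional $\lambda_X$ with $\lambda_X(1)=1$ exists; pushing through the $X$-multiplicity block and observing (Krull--Schmidt again) that any $X \to Y' \to X$ factoring through an $X$-free $Y'$ lies in $\mathfrak{m}_X$ gives well-definedness and cyclicity. The underlying ingredients (Krull--Schmidt, locality of $\End(X)$, the commutator condition) coincide; your dual formulation is arguably more transparent and has the bonus of producing concrete trace functionals on $\cI$, of some independent interest given the relation between admissible skeins and modified traces. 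One small refinement to your well-definedness step: a change of Krull--Schmidt decomposition of $Y$ may mix the $X^{\oplus m}$-block with $Y'$, so the relevant change $f_{XX} \mapsto A f_{XX} A'$ is not literally a $\mathrm{GL}_m(\End(X))$-conjugation; rather, the off-diagonal corrections land in $M_m(\mathfrak{m}_X)$, and $\bar A,\bar A'$ become mutual inverses only in $M_m(D_X)$, where the commutativity of $D_X$ (and not merely cyclicity modulo commutators) is what closes the argument. This is a point to tidy, not a gap.
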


\begin{proof}
Let $\cS$ and $\cS'$ be the full subcategories of $\cA$
with set of objects $J$ and $J'$, respectively.
In the proof of Proposition \ref{p:trace-skein-Ann}
we saw that $\SNI(M) \simeq \int^\cI \cA(-,-)$.
By Lemma \ref{l:coend-closure}
and since by assumption $\cS^\oplus \cong \cI$,
a coend over $\cI$
is the same as a coend over $\cS$,
so we have the following exact sequence
\[
\bigoplus_{i,j, f:X_j \to X_i} \cA(X_i,X_j)
\xrightarrow{ζ}
\bigoplus_{i} \cA(X_i,X_i)
\xrightarrow{q}
\int^\cS \cA(-,-) \simeq \int^\cI \cA(-,-)
\to 0
\]
where the first map $ζ$ sends $g ∈ \cA(X_i,X_j)$ to
$gf - fg ∈ \cA(X_i,X_i) ⊕ \cA(X_j,X_j)$.

We claim that $\{q(\id_{X_i}) \;|\; X_i ∈ J' \}$
is linearly independent in the coend;
clearly this set corresponds to the set $\{Γ_{X_i} \;|\; X_i ∈ J' \}$.
Let us modify the first map $ζ$ above, which would result in a larger image:
We define
\[
ζ' :
\bigoplus_{i,j, f:X_j \to X_i} \cA(X_i,X_j)
\to
\bigoplus_{i} \cA(X_i,X_i)
\]
where, writing $ζ_f'$ for the restriction of $ζ'$ to the
copy of $\cA(X_i,X_j)$ indexed by $i,j,f:X_j \to X_i$,
\[
ζ_f' : g \mapsto
\begin{cases}
fg & \text{ if } i ≠ j
\\
fg - gf & \text{ if } i = j
\end{cases}
∈ \cA(X_i,X_i)
\]
Clearly, for $i ≠ j$ we have $ζ_f(g) = ζ_f'(g) - ζ_g'(f)$,
so the image of $ζ'$ contains the image of $ζ$.

Grouping the direct sum $\bigoplus_{i,j, f:X_j \to X_i} \cA(X_i,X_j)$ by $X_i$,
observe that the image of $ζ_i'$,
the restriction of $ζ'$ to $\bigoplus_{j, f:X_j \to X_i} \cA(X_i,X_j)$,
is contained in $\cA(X_i,X_i)$.
Thus the cokernel can be written as:
\[
\bigoplus_i \bigoplus_{j, f:X_j \to X_i} \cA(X_i,X_j)
\xrightarrow{ζ'}
\bigoplus_{i} \cA(X_i,X_i)
\xrightarrow{q'}
\bigoplus_i \big( \cA(X_i,X_i) / \Ima(ζ_i') \big)
\to 0
\]

Since $\Ima(ζ) ⊆ \Ima(ζ')$, the map $q'$ factors through
a surjective map from the coend $\int^{\cS} \cA(-,-)$.
Hence, it suffices to show that $\id_{X_i} \nin \Ima(ζ_i')$
for all $X_i ∈ \cS'$, which we do by way of contradiction.

Suppose $\id_{X_i} ∈ \Ima(ζ_i')$.
Then there are finitely many morphisms
$f_l: X_{j_l} ⇌  X_i : g_l$, with $j_l ≠ i$,
and finitely many pairs of morphisms $a_m,b_m : X_i \to X_i$,
such that
\[
\id_{X_i} = \sum_l f_l g_l + \sum_m [a_m,b_m]
\]

Since $\cA$ is abelian, there exists an object
$Y_m = \Ima([a_m,b_m])$
and we may write the endomorphism $[a_m,b_m]$
as a composition $[a_m,b_m] = ι_m h_m$ factoring through $Y_m$,
with $h_m : X_i \to Y_m$ epic and
$ι_m : Y_m \to X_i$ monic;
note that $Y_m$ is not necessarily in $\cS$ or even in $\cI$.
Since $X_i ∈ \cS'$, $[a_m,b_m]$ is non-invertible by definition,
so $Y_m$ is not isomorphic to $X_i$.

Consider
\begin{align*}
g = \sum_l g_l + \sum_m h_m
&~:~ X_i ~\to~ \bigoplus_l X_{j_l} ⊕ \bigoplus_m Y_m \ ,
\\
f = \sum_l f_l + \sum_m ι_m
&~:~ \bigoplus_l X_{j_l} ⊕ \bigoplus_m Y_m ~\to~ X_i \ .
\end{align*}
By definition, $fg = \id_{X_i}$, hence $(g,f)$ is an
inclusion-projection pair that realises $X_i$ is a direct summand of
$\bigoplus_l X_{j_l} ⊕ \bigoplus_m Y_m$.
By the Krull-Schmidt theorem
(for Hom-finite abelian categories, see e.g.\ Theorems 4.2 and 5.5 and Lemma 5.1 in \cite{Krause:2014}),
the decomposition into indecomposable objects is unique up to permutation.
Hence $X_i$ must be isomorphic to some $X_{j_l}$ or $Y_m$,
but this is impossible.
\end{proof}

\begin{proof}[Proof of Proposition~\ref{p:infinite-dim}]
We check that the additional condition in Lemma~\ref{l:infinite-dim} holds automatically if $\kk$ is algebraically closed. Since $X \in J$ is indecomposable, $\End(X)$ is a local ring
(see e.g.\ \cite[Prop.\,5.4]{Krause:2014}). Denote by $\mathfrak{m} \subseteq \End(X)$ its maximal ideal, which consists precisely of the non-units in $\End(X)$. 
The quotient $\End(X) / \mathfrak{m}$ is a finite-dimensional division algebra over $\kk$, hence equal to $\kk$. For $a,b \in \End(X)$ write $a = \alpha 1 +  m$ and $b = \beta 1 + n$ with $\alpha,\beta \in \kk$ and $m,n \in \mathfrak{m}$. Then $[a,b] = mn-nm \in \mathfrak{m}$.
\end{proof}

\begin{remark}
    \begin{enumerate}
\item The set $\{Γ_{X} \;|\; X ∈ J\}$ in Proposition~\ref{p:infinite-dim} does not in general span $\SNI(S¹ \times \DD^{d-1})$.
An example in the case $d=3$ can be obtained from the non-semisimple TFT $Z$ defined in \cite{DeRenzi:2022}:

\textbf{Claim.} Let $\cA$ be a modular tensor category over $\mathbb{C}$. The set $\{Γ_{P} \;|\; P ∈ \Proj(\cA)\}$ spans $\SNJ{\Proj(\cA)}(S¹ \times \DD^{2})$ if and only if $\cA$ is semisimple.

\begin{proof}
Fix a set $\mathrm{Irr}(\cA)$ of (representatives of isomorphism classes of) simple objects in $\cA$. Suppose $\cA$ is semisimple. Then $\Proj(\cA) = \cA$, and the set $\{Γ_{U} \;|\; U ∈ \mathrm{Irr}(\cA) \}$ is actually a basis of $\SNJ{\cA}(S¹ \times \DD^{2})$.

The more interesting statement is the converse direction. Thus let $\cA$ be non-semisimple. 
The indecomposable projectives in $\cA$ are given by projective covers $P_U$ of simple objects $U \in \cA$, and so we get a minimal set of indecomposables $\{ P_U | U \in \mathrm{Irr}(\cA) \}$ generating $\Proj(\cA)$.
The subspace spanned by $\{Γ_{P} \;|\; P ∈ \Proj(\cA)\}$ in $\SNJ{\Proj(\cA)}(S¹ \times \DD^{2})$ thus has exactly dimension $|\mathrm{Irr}(\cA)|$. We will now show that $\SNJ{\Proj(\cA)}(S¹ \times \DD^{2})$ surjects onto a vector space of dimension strictly larger than $|\mathrm{Irr}(\cA)|$.

Namely, the state space $Z(T^2)$ of the TFT constructed from $\cA$ on the 2-torus $T^2$ is spanned by $\Proj(\cA)$-admissible graphs in the solid torus $S¹ \times \DD^{2}$. The gluing properties of the TFT imply that $Z$ respects $\Proj(\cA)$-null-relations, and we get a surjective $\mathbb{C}$-linear map
\[
    \SNJ{\Proj(\cA)}(S¹ \times \DD^{2}) \to Z(T^2) \ .
\]
There is a (non-canonical) isomorphism $Z(T^2) \cong \cA(\int^{X \in \cA} X^* \otimes X, \one)$, cf.\ \cite[Sec.\,4]{DeRenzi:2022}. 
Since $\cA$ is modular, we have $\int^{X \in \cA} X^* \otimes X \cong \int_{X \in \cA} X^* \otimes X$ as objects in $\cA$.
But by \cite[Thm.\,5.9]{Shimizu:2017}), for non-semisimple $\cA$ the dimension of $\cA(\int_{X \in \cA} X^* \otimes X, \one)$ is strictly larger than $|\mathrm{Irr}(\cA)|$.
\end{proof}

To obtain a spanning set, define $Γ_{X}(f)$ to be the $X$-coloured loop $Γ_{X}$ together with a vertex labelled by $f \in \End(X)$ placed on it. 
By the argument in the proof of Proposition~\ref{p:finite-dim}, the set $\{Γ_{X}(f) \;|\; X ∈ J, f \in \End(X) \}$ spans $\SNI(S¹ \times \DD^{d-1})$. 
\item
For the choice $\cI=\cA$, Proposition~\ref{p:infinite-dim} states that skein modules for $S¹ \times \DD^{d-1}$ are infinite dimensional if $\cA$ has infinitely many pairwise non-isomorphic indecomposable objects. This happens often in representation theoretic examples, even if $\cA$ is in addition a finite tensor category. Example are
$\cA = \mathrm{Rep}(\ov{U}_q(\mathfrak{sl}_2))$, the category of
finite dimensional representations of the restricted quantum group, see \cite{KondoSaito:2010},
or $\cA = \bigwedge V\text{-}\smod$, 
where $V$ is an odd vector space of dimension at least 2 -- see Example~\ref{e:also-disc-infinite} below for more details. 
\end{enumerate}
\end{remark}

Next we give a lower bound on the dimensions of admissible skein modules of surfaces by providing a surjective map to the admissible skein module of an annulus. The proof will make use of admissible skein modules in one dimension higher, hence the stronger requirement on $\cA$ in the statement:

\begin{proposition}
\label{p:surface-inf-dim}
Let $\cA$ be suitable for $d=3$ and let $R,J$ be as in Proposition~\ref{p:infinite-dim}.
Then for any connected surface $Σ ≠ \DD²,S²$, we have $\dim \SNI(Σ) ≥ |J|$.
\end{proposition}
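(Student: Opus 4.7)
The plan is to reduce to Proposition~\ref{p:infinite-dim} in dimension three, using the hypothesis that $\cA$ is suitable for $d=3$ to pass from surfaces to their three-dimensional thickenings.

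First I would pick an essential two-sided simple closed curve $\gamma \subset \Sigma$. For a connected surface $\Sigma \neq \DD^2, S^2$, such a $\gamma$ exists by the classification of surfaces: in the orientable case $\gamma$ can be taken to be any essential non-separating or boundary-parallel curve, and in the non-orientable case one takes the boundary of a Möbius-band neighborhood of a one-sided essential curve (the case $\Sigma \cong \RR P^2$ may require a separate direct argument). Let $N \subset \Sigma$ be an annular tubular neighborhood of $\gamma$. By Lemma~\ref{l:thickened-mfld} applied with $k=2, d=3$, we have $\SNI(\Sigma) \simeq \SNI(\Sigma \times I)$ and $\SNI(\Ann) \simeq \SNI(N \times I) = \SNI(S^1 \times \DD^2)$, where $I = \DD^1$; the 2d inclusion $N \hookrightarrow \Sigma$ thickens to a 3d inclusion of the solid torus $T := N \times I$ into $\Sigma \times I$ as a tubular neighborhood of $\gamma \times \{1/2\}$. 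Applying Proposition~\ref{p:infinite-dim} to $T \cong S^1 \times \DD^2$ in dimension three gives that $\{\Gamma_X : X \in J\}$ is linearly independent in $\SNI(T)$, and the inclusion induces $\iota_\ast : \SNI(T) \to \SNI(\Sigma)$ sending each $\Gamma_X$ to the loop on $\gamma$ coloured by $X$. Thus the proposition reduces to injectivity of $\iota_\ast$ on $\Span_\kk\{\Gamma_X : X \in J\}$.

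For this final step I would apply Proposition~\ref{p:different-skein} to the decomposition $\Sigma \times I = T \cup_{T^2} M'$ with $M' = \overline{(\Sigma \times I) \setminus T}$, yielding
\[
\SNI(\Sigma) \simeq \int^{\VV \in \hZI(T^2)} \SNI(T; \VV) \otimes_R \SNI(M'; \VV) \ ,
\]
under which $\iota_\ast$ becomes tensoring with the empty skein $[\emptyset_{M'}] \in \SNI(M'; \emptyset)$ followed by the canonical dinatural map. A dinatural family of $R$-linear functionals $\phi_\VV : \SNI(M'; \VV) \to R$ with $\phi_\emptyset([\emptyset_{M'}]) = 1$ would then descend to a left inverse of $\iota_\ast$ on $\Span\{\Gamma_X\}$ and complete the proof.

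The hard part is constructing this dinatural family. The strategy I would pursue is to iteratively apply Theorem~\ref{t:excision} along essential codimension-one submanifolds of $M'$ (properly embedded discs, spheres, or further incompressible tori) to reduce $M'$ to a disjoint union of handlebodies and balls, on which explicit functionals can be written down using the evaluation $\eval{\cdot}_D$ from Section~\ref{s:adm-skein-defn} together with the ribbon structure of $\cA$. The two-sidedness and essentiality of $\gamma$ are used to ensure that $T^2 = \partial T$ is incompressible in $M'$, which is the key geometric input preventing the coend relations from identifying the $\VV = \emptyset$ contribution with non-empty ones in a way that could destroy the linear independence of the $\Gamma_X$'s.
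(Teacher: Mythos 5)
The reduction in your first two paragraphs is sound: thickening to $\Sigma\times I$ via Lemma~\ref{l:thickened-mfld}, applying Proposition~\ref{p:infinite-dim} to a solid torus neighbourhood $T\cong S^1\times\DD^2$ of a thickened curve $\gamma$, and observing that the statement reduces to injectivity of $\iota_*\colon\SNI(T)\to\SNI(\Sigma\times I)$ on $\Span_\kk\{\Gamma_X : X\in J\}$. This matches the paper's setup. (As an aside, the paper's manifolds are oriented by convention, so the one-sided/non-orientable caveats you raise do not arise, and the cut locus $\partial T\cap\Int(\Sigma\times I)$ is a pair of annuli rather than $T^2$, so the invocation of Proposition~\ref{p:different-skein} needs a different gluing locus --- but these are cosmetic.)

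The genuine gap is in the last paragraph. You have not constructed the dinatural family $\phi_\VV\colon\SNI(M';\VV)\to R$, and the strategy you sketch --- iterating excision to reduce $M'$ to handlebodies and then ``writing down explicit functionals'' using $\eval{\cdot}_D$ --- does not obviously work in the non-semisimple setting: the Reshetikhin--Turaev evaluation lives in a ball and does not provide a dinatural cone under $\SNI(M';-)$ on a general $3$-manifold with boundary, and incompressibility of the cut locus is not by itself enough to control the coend relations. Moreover, even granting such $\phi_\VV$, there is a variance mismatch: to split $\iota_*$ you would need a dinatural family $\SNI(T;\VV)\otimes_R\SNI(M';\VV)\to\SNI(T;\emptyset)$, not merely functionals on $\SNI(M';\VV)$, since for $\VV\neq\emptyset$ there is no natural map $\SNI(T;\VV)\to\SNI(T;\emptyset)$. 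As stated, the proof is incomplete exactly where the difficulty lies.

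The paper's proof avoids building a left inverse algebraically by using a geometric retraction. It embeds the finitary surface $\Sigma=S_{g,n}$ in $\RR^3$, sets $M=\Sigma\times[-1,1]$, and enlarges $M$ to a bigger $3$-manifold $M'=M_1\cup M\cup B$ (a filling solid torus $M_1$ inside the one-holed torus piece, plus a ball $B$ on the other side) which is itself a solid torus, ambient-isotopic \emph{rel $\gamma$} to the regular neighbourhood $M_\gamma$ of $\gamma$. Then the composite
\[
\SNI(M_\gamma)\to\SNI(M)\to\SNI(M')\cong\SNI(M_\gamma)
\]
induced by inclusions is the identity, so the first map is split injective, and
$\dim\SNI(\Sigma)=\dim\SNI(M)\ge\dim\SNI(M_\gamma)=\dim\SNI(\Ann)\ge|J|$.
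The genus-$0$ case with $n\ge 3$ is handled analogously, and non-finitary surfaces are then treated via the colimit Lemma~\ref{l:finitary-colimit} (a step your proposal also omits). If you want to complete your argument, you should replace the ``dinatural functionals'' step by this sandwiching of $M=\Sigma\times I$ between two copies of a solid torus.
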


\begin{proof}
Let us first assume that $Σ$ is a finitary connected surface.
Then if we ignore corners, $Σ$ is diffeomorphic to
some standard surface $S_{g,n}$ of genus $g$ with $n$ open balls cut out,
and since $Σ ≠ \DD², S²$, we have $g ≥ 1$ or $n ≥ 2$ or both.
By Lemma \ref{l:ignore-corners},
$\SNI(Σ) \simeq \SNI(S_{g,n})$, so we will assume $Σ = S_{g,n}$.
Proposition~\ref{p:infinite-dim} already applies to the annulus,
i.e.\ $g = 0, n = 2$. Our strategy is to compare
$\SNI(Σ)$ to $\SNI(\Ann)$.

Suppose $g ≥ 1$.
Write $Σ = S_{1,1} ∪_{S¹} S_{g-1,n+1}$.
We may embed $Σ$ in $\RR^3$ so that there is a ball $B$
such that $S_{g-1,n+1} = Σ ∩ B, S_{1,1} = Σ \backslash B$.
Let $D$ be a 2-ball spanning the $S¹$ between $S_{1,1}$ and $S_{g-1,n+1}$
(e.g. we can take $D ⊆ ∂B$ to be one of the components
of $∂B \backslash Σ$).
After some smoothing, $S_{1,1} ∪_{S¹} D$ is an embedded torus.
Let $M₁$ be the solid torus obtained from filling it,
and let $γ$ be a longitude on this torus that avoids $D$,
($γ$ should be isotopic to a core of the solid torus $M₁$).

Let $M = Σ \times [-1,1]$,
which we also identify with a neighbourhood of the embedded $Σ$.
By Lemma \ref{l:thickened-mfld}, we have $\SNI(Σ) \simeq \SNI(M)$.
Let $M' = M₁ ∪ M ∪ B$. Clearly $M'$ is a solid torus,
and it is isotopic to a solid torus neighbourhood $M_{γ}$ of $γ$;
this isotopy can be chosen to be fixed on $γ$.
Thus, the inclusion of 3-manifolds induces maps
\begin{equation}
\label{e:M-gamma}
\SNI(M_{γ}) \to \SNI(M) \to \SNI(M') \simeq \SNI(M_{γ})
\end{equation}
whose composition is the identity,
so the first map is injective, and in particular,
\begin{equation}
\label{e:M-gamma-dim}
\dim \SNI(Σ) = \dim \SNI(M) ≥ \dim \SNI(M_{γ}) = \dim \SNI(\Ann) ≥ |J|
\end{equation}
where $\SNI(M_{γ}) \simeq \SNI(\Ann)$ by Lemma \ref{l:thickened-mfld},
and $\dim \SNI(\Ann) ≥ |J|$ 
by Proposition \ref{p:infinite-dim}.

Now suppose $g = 0$, so that we have $n ≥ 2$.
For $n = 2$, $S_{0,2} = \Ann$, so we may assume $n ≥ 3$.
The argument is similar to above.
We write $Σ = S_{0,2} ∪_{[0,1]} S_{0,n-1}$,
and embed $Σ$ in $\RR^3$ in an unknotted fashion,
e.g.\ as a ball with $n-1$ holes cut out,
so that there is a ball $B$ such that
$S_{0,n-1} = Σ ∩ B, S_{0,2} = Σ \backslash B$.
Let $γ$ be the core circle of $S_{0,2} = \Ann$.
Let $M = Σ \times [-1,1]$ be a thickening,
and $M' = M ∪ B$.
Then (\ref{e:M-gamma}), (\ref{e:M-gamma-dim}) work the same.

For a non-finitary surface, we present it as a colimit of finitary surfaces,
$Σ = \Colim \{ Σ₁ ⊆  Σ₂ ⊆ ... \}$,
with $Σ₁ = S_{g,n}$ such that either $g ≥ 1$ or $n ≥ 2$ or both.
Note that the genus of the $Σ_k$'s is non-decreasing with $k$.
If $Σ_k$ has genus at least one for some $k$,
then we start from $Σ_k$ and reindex it to $Σ₁$,
and we choose $γ$ for $Σ₁$ as above.
Treating $γ$ as loops in subsequent $Σ_k$'s,
the above arguments are compatible under the inclusion maps,
that is, for each $k$ the inclusions $j_k : \SNX(M_{γ}) ↪ \SNX(Σ_k)$
produced by the arguments above commute with
the inclusion-induced maps $i_k: \SNX(Σ_k) \to \SNX(Σ_{k+1})$,
i.e.\ $j_{k+1} = i_k ∘ j_k$.
Similarly for the case when $Σ_k$ has genus 0 for all $k$.
Hence, in the colimit, we still have the inclusion
$\SNI(M_{γ}) ⊆ \Colim \SNI(Σ_j) \simeq \SNI(Σ)$,
where the isomorphism is given by Proposition \ref{l:finitary-colimit},
and thus the dimension lower bound follows.
\end{proof}

Proposition \ref{p:surface-inf-dim} excludes the surfaces $\DD², S²$, 
which are nonetheless interesting because of their relation to modified traces (Proposition~\ref{p:trace-skein-D2-S2}).
Below, we give an example of a finite pivotal tensor category $\cA$ (in fact a ribbon category), together with an ideal $\cI$ such that even
$\SNI(\DD²)$ and $\SNI(S²)$ are infinite-dimensional.
Note that the two canonical choices $\cI=\cA$ and $\cI = \Proj(\cA)$ result in $\SNI(\DD²)$ and $\SNI(S²)$ being finite-dimensional: for $\cI=\cA$ one gets the usual skein modules which for $\DD²$ and $S²$ are spanned by the empty skein, and for $\cI = \Proj(\cA)$ one is in the situation of Example~\ref{e:fin-gen-skein}. Thus we need to find an $\cA$ which has a suitable intermediate ideal $\Proj(\cA) \subsetneq \cI \subsetneq \cA$.

\begin{example}\label{e:also-disc-infinite}
This example is adapted from \cite[Sec.\,4.4]{Berger:2023}.
Take $R = \kk$ to be an algebraically closed field.
Let $W$ be a vector space with basis $\{a,b,c\}$,
and $V ⊆ W$ be the 2-dimensional subspace spanned by $a,b$, and
let $A = \bigwedge W$ be the free super algebra generated by $W$
placed in odd degree, and let $\ov{A} = \bigwedge V$ be the subalgebra generated by $V$.
Then $A$, $\ov{A}$ are in fact Hopf algebras in super-vector spaces, and they can furthermore be equipped with an $R$-matrix and a ribbon element (we only need these structures for $A$).
Let $\cA = A\text{-}\smod, \ov{\cA} = \ov{A}\text{-}\smod$
be the category of super modules over $A,\ov{A}$ respectively.
Since $A \in \mathrm{sVec}$ is a ribbon Hopf algebra, $\cA$ is a finite ribbon category over $\kk$.
The inclusion $\ov{A} ↪ A$ induces
the restriction functor $\Res^* : \cA \to \ov{\cA}$ which is monoidal.
Then we take $\cI = (\Res^*)^\inv(\Proj(\ov{\cA}))$,
the pullback ideal of the projective ideal of $\ov{\cA}$ along $\Res^*$,
i.e.\ $\cI ⊆ \cA$ is the full subcategory consisting of objects $X$
such that $\Res^*(X) ∈ \Proj(\ov{\cA})$.
Since the only projective modules over $\ov{A}\text{-}\smod$
are direct sums of $\ov{A}$ and $Π\ov{A}$, where $Π(-)$ shifts the parity,
modules in $\cI$ can be thought of as such a direct sum
with an action of $c$ that (super) commutes with the actions of $a$ and $b$.

Write $L_{X,f,g} \in \SNI(\DD²)$ for an $X$-labelled line segment embedded in $\DD^2$ with a 1-valent vertex at either end, labelled by $f \in \cA(\one,X)$ and $g \in \cA(X,\one)$, respectively.

\medskip

\noindent
\textbf{Claim:} There is a family $\{ X(\lambda,\mu) \}_{\lambda,\mu \in \kk}$ of objects in $\cI$, as well as $f_{\lambda,\mu},g_{\lambda,\mu}$ as above, such that the vectors $\{ L_{X(\lambda,\mu),f_{\lambda,\mu},g_{\lambda,\mu}}\}_{\lambda,\mu \in \kk}$ are linearly independent in $\SNI(\DD²)$.

\medskip

Since $\kk$ is algebraically closed, it is necessarily infinite, and so the claim implies that $\SNI(\DD²)$ is infinite-dimensional.
Note that $\SNI(S²) \simeq \SNI(\DD²)$ because $\cA$ is ribbon 
(use that left/right modified traces and m-traces agree for ribbon categories together with Proposition~\ref{p:trace-skein-D2-S2}), so the same follows for $S²$.

\begin{proof}[Proof of claim.]
We present the ball $\DD²$ as two balls glued along an interval;
then by excision and similar methods to the proof of
Proposition~\ref{p:trace-skein-D2-S2},
$\SNI(\DD²) \simeq \int^{X ∈ \cI} \cA(X,\one) ⊗ \cA(\one,X)$.
Write $F(X,Y) = \cA(X,\one) ⊗ \cA(\one,Y)$ for simplicity.

The key idea is to produce some $\kk$-linear endomorphisms
out of the action of $c$ that are natural in $X ∈ \cI$,
and relate them to elements of $F(X,X)$.
The eigenvalues these endomorphisms will force them to be
linearly independent in the coend, and will result in the lower bound for the dimension.

\smallskip

\noindent
\emph{Step 1: Remove projective direct summands.} 
Let $\cI' ⊆ \cI$ be the full subcategory consisting of objects
without projective direct summands.
Then the coend of $F$ over $\cI$ is the same when restricted to $\cI'$,
\[
\SNI(\DD²) \simeq \int^{X ∈ \cI'} F(X,X) \ .
\]
To see this, let $\cI_{\mathrm{indec}} ⊆ \cI$ be the full subcategory of indecomposable objects. By Lemma~\ref{l:coend-closure}, the coend over $\cI$ is the same as the coend over $\cI_{\mathrm{indec}}$. Let $P$ be a projective indecomposable with respect to $A$. Then $P \cong A$ or $P \cong \Pi A$. Since $W$ is odd-dimensional, in each case either $\cA(\one,P) = 0$ or $\cA(P,\one) = 0$.
Omitting indecomposable projectives also does not remove any relations. E.g.\ for $a \otimes b \in F(X,P) = \cA(X,\one) ⊗ \cA(\one,P)$ and $h : P \to X$, the relation in 
$F(X,X) \oplus F(P,P)$ is $a \otimes (h \circ b) - (a \circ h) \otimes b$. But $F(P,P)=0$, and $h \circ b : \one \to P \to X$ with $X$ non-projective is always zero in our example.

\smallskip

\noindent
\emph{Step 2: Simplify $F(X,Y)$.} 
Let $B₀(-) = \cA(\one,-) : \cI' \to \Vect_\kk$. Then
$B₀(X)$ is the subspace of $X$ which is even and annihilated by $a,b,c$. Since as an $\ov{A}$-module, $X$ is a sum of $\ov{A}$'s and $\Pi\ov{A}$'s, $B₀(X)$ is also the even part of the image of the action of $ab$ on $X$. Indeed, since $X$ is not projective as an $A$-module, $abc$ automatically acts as zero. We get a natural isomorphism $F(X,Y) \simeq \cA(X,B₀(Y))$.
Acting with $ab$ and projecting to the even subspace defines a surjective $A$-intertwiner $π₀: X \to B₀(X)$. Pick a $\kk$-linear inverse $\sigma_0 : B₀(X) \to X$ and define $\tau : \cA(X,B₀(Y)) \to \Vect_\kk(B₀(X),B₀(Y))$ via $\varphi \mapsto \varphi \circ \sigma_0$. One checks that $\tau$ is independent of the choice of $\sigma_0$ and is in fact an isomorphism. Altogether $F(X,Y) \cong \Vect_\kk(B₀(X),B₀(Y))$, naturally in $X,Y$.

\smallskip

\noindent
\emph{Step 3: Reduce the coend.} 
The action of $ac$ on $X$ defines a morphism $ψ_{ac}' ∈ \cA(X,B₀(X))$
that is natural in $X$, i.e.\ a natural transformation $\id_{\cA} \to B₀$.
Then we have a natural endomorphism $ψ_{ac} := τ(ψ_{ac}') : B₀ \to B₀$.
Similarly, from the action of $bc$ we get $ψ_{bc}$.
The coend 
$\int^{X ∈ \cI'} \Vect_\kk(B₀(X),B₀(X))$ surjects onto the quotient
\begin{align*}
    C := \bigoplus_{X ∈ \cI'} \Vect_\kk(B₀(X),B₀(X)) \,\Big/\,
\Big\{
	g ∘ Ψ - Ψ ∘ g \;\Big|\;
& 
	Ψ : B₀(X)
	\overset{\kk-\text{linear}}{\xrightleftharpoons{\hspace{1cm}}}
	B₀(Y) : g,
	\text{ and}
 \\[-.9em]
&	\quad g ∘ ψ_{ac} = ψ_{ac} ∘ g
~,~
	g ∘ ψ_{bc} = ψ_{bc} ∘ g
~\Big\}\ .
\end{align*}
Indeed, for the coend the quotient would only contain terms where $g$ is the form $h \circ (-) : \one \to X$, where $h : Y \to X$ is an $A$-intertwiner.

\smallskip

\noindent
\emph{Step 4: Trace on eigenspaces.} 
We observe that $ψ_{ac} , ψ_{bc} : B_0(X) \to B_0(X)$ commute, and so we can decompose each $B_0(X)$ into joint generalised eigenspaces $B_0(X)_{\lambda,\mu}$, where $\lambda \in \kk$ is the generalised eigenvalue of $ψ_{ac}$ and $\mu$ that of $ψ_{bc}$:
$B_0(X) = \bigoplus_{\lambda,\mu \in \kk} B_0(X)_{\lambda,\mu}$. We define a linear map $\mathrm{tr}_{\alpha,\beta} : C \to \kk$ by setting, for $[f] \in C$, $f : B_0(X)_{\lambda,\mu} \to B_0(X)_{\lambda',\mu'}$,
\[
\mathrm{tr}_{\alpha,\beta}\big( [f] \big) 
= \begin{cases}
\mathrm{tr}_{B_0(X)_{\alpha,\beta}}(f) &; \lambda=\lambda'=\alpha \,,\, \mu = \mu' = \beta \ ,
\\
0 &; \text{else} \ .    
\end{cases}
\]
To see that $\mathrm{tr}_{\alpha,\beta}$ is well-defined, first note that
a relation $g ∘ Ψ - Ψ ∘ g$ arising from $Ψ : B₀(X)_{\lambda,\mu}
\xrightleftharpoons{~~}	B₀(Y)_{\lambda',\mu'} : g$ can only be nontrivial for $\lambda=\lambda'$ and $\mu=\mu'$ as $g$ preserves generalised eigenspaces, and in this case well-definedness follows from cyclicity of the trace.

\smallskip

\noindent
\emph{Step 5: Linearly independent elements in $C$.} 
For $\lambda,\mu \in \kk$ consider the $A$-module $X(\lambda,\mu)$
defined as $X(\lambda,\mu) := \ov{A}$ as an $\ov{A}$-module, with $c$ acting as $-μa + λb$. Then $B_0(X(\lambda,\mu))$ is one-dimensional, and $\psi_{ac} = \lambda \id$, $\psi_{bc} = \mu \id$ on $B_0(X(\lambda,\mu))$. It follows that
\[
\mathrm{tr}_{\alpha,\beta}\big( [\id_{X(\lambda,\mu)}] \big) 
=
\delta_{\alpha,\lambda} \delta_{\beta,\mu} \ .
\]
In particular the elements $[\id_{X(\lambda,\mu)}]$ are linearly independent in $C$, and hence also 
in $\SNI(\DD²)$.
\end{proof}
\end{example}

\appendix
\section{Appendix}

\subsection{List of Notation}
\label{s:notation}

Summary of notation/terminology:
\begin{itemize}

\item $M$: $d$-manifold, $N$: $(d-1)$-manifold.
\hfill \pageref{pg:manifold}

\item Boundary piece of $M$: closure of connected top dimensional stratum of $∂M$
\hfill \pageref{pg:boundary-piece}

\item Isotopy $φ^{\bullet}$ is $C^{∞}$-constant on $V ⊆ M$: $φ^t|_V = \id$
up to all $k$-jets, for all $t$
\hfill \pageref{pg:Cinfty-isotopy}

\item $\ov{Γ}$, (uncoloured-)graph in $M$:
finite smoothly embedded oriented graph in $M$,
with framing on normal bundle
\hfill \pageref{pg:uncoloured-graph}

\item Configuration of marked points on $N$:
finite set of framed points $B ⊆ N \backslash ∂N$
\hfill \pageref{pg:config-marked-points}

\item $R$: commutative ring
\hfill \pageref{p:ring-R}

\item $\cA$: $R$-linear essentially small category with extra structure as Table~\ref{t:A-structure-vs-d}, depending on the dimension $d$ of the manifolds that skein modules are considered for
\hfill \pageref{p:cat-A}

\item $\cS,\cT$: typically denotes a full subcategory of $\cA$

\item $\cI$ ideal: full subcategory of $\cA$, closed under $⊗$, duals, direct sum, and retracts
\hfill \pageref{d:tensor-ideal}

\item $χ$ an $\cA$-edge-colouring on $\ov{Γ}$: assign object to edges of $\ov{Γ}$
\hfill \pageref{pg:edge-colouring}

\item $χ$ is $\cS$-admissible:
	at least one $\cS$ label in each component of $M$
\hfill \pageref{pg:edge-colouring-admissible}

\item $\Col(\ov{Γ},χ, v) ∈ \Rmod$: space of colourings for vertex $v$,
given by $\cA(χ_{e₁}^{[*]} ⊗ \cdots ⊗ χ_{e_k}^{[*]},
χ_{e₁'}^{[*]} ⊗ \cdots ⊗ χ_{e_l'}^{[*]})$
\hfill \pageref{pg:vertex-colouring}

\item vertex colouring of $(\ov{Γ}, χ)$:
	assignment $φ$ of a colouring $φ_v ∈ \Col(\ov{Γ},χ,v)$
	for each vertex $v$ of $\ov{Γ}$
\hfill \pageref{pg:vertex-colouring-all}

\item $\cA$-coloured graph: $Γ = (\ov{Γ}, χ, φ)$
\hfill \pageref{pg:coloured-graph}

\item $\cS$-admissible graph: $χ$ is $\cS$-admissible
\hfill \pageref{pg:admissible-graph}

\item Boundary value: $\VV = (B, \{χ_b\})$,
	configuration of marked points $B$
	with object assignments $χ_b ∈ \cA$ for each point $b ∈ B$;
	write $∂Γ$ for boundary value $Γ ∩ ∂M$.
\hfill \pageref{pg:boundary-value}

\item $\cS$-admissible boundary value $\VV$: at least one $\cS$ label in each component of $N$
\hfill \pageref{pg:bv-admissible}

\item Bulk-$\cS$-admissible:
for each component $M_j$ of $M$, $\VV ∩ ∂M_j$ has at least one $\cS$-labelled point
\hfill \pageref{pg:bv-admissible-bulk}

\item $\VGraph(M;\VV)$: free $R$-module spanned by graphs with b.v. $\VV$
\hfill \pageref{e:VGraph}

\item $\VGraphX(M;\VV)$: free $R$-module spanned by $\cS$-admissible graphs with b.v. $\VV$
\hfill \pageref{e:VGraphX}

\item $\eval{Γ}_D$: (Reshetikhin-Turaev) evaluation of a graph in a ball $D ⊆ M$
\hfill \pageref{e:RT-evaluation}

\item $\PrimNull(M,D;\VV)$: primitive null graphs w.r.t.\ $D$, i.e.\ linear combinations $\sum a_j Γ_j$ such that $Γ_j$ agree outside $D$,
	and $\sum a_j \eval{Γ_j}_D = 0$
\hfill \pageref{pg:primitive-null}

\item $\Null(M,U;\VV)$, null graphs in $U$: sums of primitive null graphs
	w.r.t.\ $D$'s such that $\Int D ⊆ U$.
\hfill \pageref{pg:null}

\item Skein module $\SNJ{}(M;\VV) := \VGraph(M;\VV) / \Null(M;\VV)$
\hfill \pageref{pg:skein}

\item $\PrimNullX(M,D;\VV)$: primitive $\cS$-null graphs w.r.t.\ $D$, i.e.\ primitive null graphs $\sum a_j Γ_j$ w.r.t.\ $D$ such that
	the graphs $Γ_j \backslash D$ are also $\cS$-admissible
\hfill \pageref{pg:S-primitive-null}

\item $\NullX(M,U;\VV)$, $\cS$-null graphs in $U$: sums of primitive $\cS$-null graphs
	w.r.t.\ several discs such that $\Int D ⊆ U$.
\hfill \pageref{pg:S-null}

\item $\cS$-admissible skein module $\SNX(M;\VV) := \VGraphX(M;\VV) / \NullX(M;\VV)$
\hfill \pageref{d:S-skein}

\item $ι_{\cS} : \SNX \to \SNJ{}$: induced by forgetting $\cS$-admissibility
\hfill \pageref{pg:iota-S}

\item $ι_{\cS ⊆ \cT} : \SNX \to \SNXp$: induced by inclusion of
	subcategories $\cS ⊆ \cT$
\hfill \pageref{pg:iota-S-T}
\end{itemize}

\subsection{Manifolds with Corners}
\label{s:mfld-corner}

The following exposition on manifolds with corners
is largely based on \cite{Hajek:2014} (which in turn follows \cite{Lee-book,Joyce:2012}).
Manifolds with corners have coordinate charts
that are locally modelled on $\RRO{d}$, with $\RRO{} = [0,∞)$,
where smooth maps from a subset $A ⊆ \RRO{d}$ to $\RRO{m}$
are functions that extend to a smooth map from an open neighbourhood
$U_A ⊆ \RR^d$ of $A$ as a subset in $\RR^d$.
More precisely, from \cite[Sec.\,2.1]{Hajek:2014},
a $d$-dimensional manifold with corner
is a Hausdorff, second-countable topological space $M$
with a maximal collection of charts $(U ⊆ M, φ: U \to \RRO{d})$
such that transition maps between charts are diffeomorphisms
(smooth map with smooth inverse).
A smooth map $f: M \to P$ between manifolds with corners
is a continuous map that is smooth in coordinate charts.
Our manifolds with corners will be assumed to have
finitely many connected components, but they need not be compact.

The \emph{depth} of $p ∈ M$ is the number of 0 coordinates of $φ(p)$
in a coordinate chart \cite[Def.\,6]{Hajek:2014};
note that smooth maps do not necessarily preserve depth,
but diffeomorphisms do, hence depth is independent of coordinate chart.
For $0 ≤ k ≤ d$, a $k$-stratum is a connected component
of the subspace of $M$ consisting of depth-$k$ points,
and a $k$-face is the closure of a $k$-stratum.
There is a different notion of $k$-face,
the (iterated) boundary $∂^kM = ∂(\cdots(∂M)\cdots)$
as described in \cite[Def.\,2.6, Prop.\,27]{Joyce:2012}:
for $k = 1$, $∂M$ consists of pairs $(p,β)$,
where $p$ has depth at least 1,
and $β$ is a local choice of 1-stratum
(see \cite[Def.\,2.5]{Joyce:2012});
in particular, if $p$ has depth $l$, then there are $l$ choices of $β$.
For example, the boundary of $\RRO{d}$ consist of $d$ components,
each obtained by setting one coordinate to 0.
We call a connected component of $∂M$ a \emph{boundary piece of $M$},
and refer to its interior as an \emph{open boundary piece}.
The sets of boundary pieces and 1-faces of $M$ are naturally in bijection,
and there is a natural map from each boundary piece
to its corresponding 1-face which is a diffeomorphism
(onto the corresponding 1-stratum) when restricted to its interior.
We also refer to 1-strata as open boundary pieces,
as there is no real ambiguity by the previous observation.

An instructive example showing the difference between these two notions
is the teardrop \cite[Fig.\,2.1]{Joyce:2012},
defined as $T = \{x ≥ 0, y² ≤ x² - x⁴\} ⊆ \RR²$,
where the boundary $∂T$ according to \cite{Joyce:2012} is a closed interval,
while the 1-face according to \cite{Hajek:2014} is topologically a circle;
this distinction will be irrelevant for the types of boundary components
that we are particularly interested in, namely embedded boundary pieces
(i.e.\ boundary pieces whose natural map to its corresponding 1-face
is a homeomorphism).

The \emph{interior $\Int(M)$} is the submanifold of depth 0 points.
An \emph{orientation} on $M$ is an orientation of its interior.

A \emph{neat embedding $ι: M^d ↪ P^n$} is a smooth map
which is a homeomorphism onto its image
and locally of the form $\RRO{d} \ni x \mapsto (x,a) ∈
\RRO{d} \times \RRO{n-d} = \RRO{n}$,
where $a$ is some fixed vector
(see \cite[Def.\,15]{Hajek:2014}, which gives a coordinate-independent definition
of neat embedding, and \cite[Thm.\,1, Def.\,15]{Hajek:2014},
which proves the existence of the above local form).
It follows that neat embeddings preserve the depth of points.

A \emph{collar neighbourhood} of an embedded boundary piece $N ⊆ ∂M$,
or more generally
of a union $N$ of pairwise disjoint embedded boundary pieces,
is a neat and proper embedding $ξ: [0,1] \times N ↪ M$ that is a diffeomorphism
onto its image, is the identity when restricted to $\{0\} \times N$
(slightly modified from \cite[Def.\,35]{Hajek:2014},
in particular a half-open interval $[0,ε)$ is used instead of $[0,1]$);
we also use collar neighbourhoods of the form $ξ: [-1,0] \times N ↪ M$.
Given a neatly embedded $L ⊆ M$ whose embedding is proper
(i.e.\ preimages of compact sets are compact),
a \emph{tubular neighbourhood of $L$}
is a neat and proper embedding $\wdtld{ι}: [-1,1] \times L ↪ M$
that restricts to the identity on $\{0\} \times L$.

The properness condition on the extension of $\wdtld{ι}$
means that $[-1,1] \times L$ only meets the rest of $M$ at
$\{-1,1\} \times L$, or more precisely,
$\wdtld{ι}([-1,1] \times L)$ is open and closed in
$M \backslash \wdtld{ι}(\{-1,1\} \times L)$.
In particular, it ensures that the vector field $X = ι_*( (f ⊗ \id_L) ⋅ ∂_t)$
used in Section~\ref{s:excision-proof}
to construct the translations $θ_m^{\bullet}$ is continuous and indeed smooth,
and it is also used the construction of $M'$
described in the following paragraph.
Likewise for collar neighbourhoods, the properness condition
implies that we can construct a ``squishing map'' $M \to M$
that is the identity away from the collar neighbourhood
and is of the form $(a,x) \mapsto (g(a),x)$ in the collar neighbourhood,
where $g: [0,1] \to [0,1/2] ⊆ [0,1]$,
which we may use in Section~\ref{s:skein-module-as-functor}
to construct the diffeomorphism
$[0,1] \times N ∪_{\{0\} \times N \sim N} M \simeq M$
as well as in Section~\ref{s:excision-cylinder}
to construct the bimodule structure.

For such $L ⊆ M$ with tubular neighbourhood $\wdtld{ι}$,
we construct the manifold with corners $M'$ that is
\emph{obtained by cutting $M$ along $L$},
as $M' = (M \backslash \wdtld{ι}((0,1/2] \times L)
∪_{\wdtld{ι}((1/2,1) \times L) \sim (1/2,1) \times L} [0,1) \times L$;
denote $L_- = \wdtld{ι}(\{0\} \times L) ⊆ M \backslash \wdtld{ι}((0,1/2] \times L)$
and $L_+ = \{0\} \times L ⊆ [0,1] \times L$.
Basically, we remove $L$ from $M$ and attached two copies of $L$,
one copy $L_-$ to the $\wdtld{ι}([-1,0) \times L)$ end,
the other copy $L_+$ to $\wdtld{ι}((0,1] \times L)$.
The natural quotient map $π: M' \to M$ identifying $L_-$ and $L_+$
is a diffeomorphism when restricted to $M' \backslash (L_- ∪ L_+)$,
and is a diffeomorphism when restricted to $L_-$ and $L_+$.
There are unique collar neighbourhoods
$ξ_-: [-1,0] \times L_- \to M', ξ_+: [0,1] \times L_+ \to M'$
such that, when composed with $π$,
become the restrictions of $\wdtld{ι}$ to
$[-1,0] \times L$ and $[0,1] \times L$,
or more precisely,
$π ∘ ξ_- = \wdtld{ι} ∘ (\id_{[-1,0]} \times π|_{L_-})$
and similarly for $ξ_+$.

\subsection{Proof of Lemma \ref{l:isotopy-null}}
\label{s:appendix-isotopy-null}

Let us recall the following isotopy extension theorem:

\begin{proposition}[{\cite[Thm.\,1.3]{Hirsch:2012}}]
\label{p:isotopy-extension-submfld}
Let $V ⊆ M$ be a compact submanifold and $F^{\bullet}: V \to M$
an isotopy of $V$ such that $F^t(V) ∩ ∂M = ∅$ for all $t$.
Then $F$ extends to an ambient isotopy of $M$ with compact support.
\end{proposition}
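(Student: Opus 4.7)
The plan is to realise the extending ambient isotopy as the flow of a compactly supported time-dependent vector field on $M$, following the standard Thom--Palais approach. First I would form the \emph{track} of the isotopy, i.e.\ the map $\wdtld{F} : V \times [0,1] \to M \times [0,1]$, $(v,t) \mapsto (F^t(v), t)$, and set $W := \wdtld{F}(V \times [0,1])$. Because $V$ is compact and $F^t(V) \cap \partial M = \emptyset$ for every $t$, $W$ is a compact embedded submanifold sitting inside $\Int(M) \times [0,1]$. By construction $W$ is transverse to the $[0,1]$-fibres of the projection $\pi : M \times [0,1] \to [0,1]$, and along $W$ there is a canonical smooth section $Y$ of $T(M \times [0,1])$ whose $[0,1]$-component is $1$: at $(F^t(v), t) \in W$ one sets $Y_{(F^t(v),t)} := \bigl(\tfrac{d}{ds}\big|_{s=t} F^s(v),\, 1\bigr)$.

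Next I would extend $Y$ to a smooth vector field $\wdtld{Y}$ on all of $M \times [0,1]$ whose $[0,1]$-component is still identically $1$ and which differs from $\partial_t$ only on a compact set disjoint from $\partial M \times [0,1]$. Using compactness of $W$ and positive distance from $\partial M \times [0,1]$, pick an open neighbourhood $U$ of $W$ with $\ov{U}$ compact and $\ov{U} \cap (\partial M \times [0,1]) = \emptyset$, together with a tubular-neighbourhood retraction onto $W$ allowing the extension of $Y|_W$ to a smooth field $Y_U$ on $U$; after a small adjustment we may assume the $[0,1]$-component of $Y_U$ is $1$. Choose a smooth bump function $\rho$ on $M \times [0,1]$ that is $1$ in a neighbourhood of $W$ and has support in $U$, and set $\wdtld{Y} := \rho \cdot Y_U + (1-\rho) \cdot \partial_t$. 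Then $\wdtld{Y}$ is smooth, coincides with $Y$ on $W$, has $[0,1]$-component $1$ everywhere, and agrees with $\partial_t$ outside the compact set $\mathrm{supp}(\rho) \cap (M \times [0,1])$.

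Finally I would integrate $\wdtld{Y}$. Since its $[0,1]$-component is $1$ and the non-trivial part is compactly supported, the flow $\Phi^s$ is globally defined for $s \in [0,1]$ and has the form $\Phi^s(x,t) = (\varphi^{t,s}(x),\, t+s)$ for a smooth family $\{\varphi^{t,s}\}$ of diffeomorphisms of $M$. Setting $\varphi^s := \varphi^{0,s}$ gives an ambient isotopy of $M$ with $\varphi^0 = \id_M$ which, by construction of $Y$ along $W$, satisfies $\varphi^s(v) = F^s(v)$ for all $v \in V$ and all $s$; the support of $\varphi^{\bullet}$ is contained in the projection of $\mathrm{supp}(\rho)$ to $M$, hence is compact. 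The main technical obstacle is the presence of corners on $M$: the tubular neighbourhood and bump-function construction must be carried out so that $\Phi^s$ does not send any point out of $M$. This is handled by using the hypothesis $F^t(V) \cap \partial M = \emptyset$ uniformly in $t$ (which holds by compactness of $[0,1]$ and $V$) to shrink $U$ so that $\ov{U}$ is separated from $\partial M \times [0,1]$ by a positive distance, forcing $\wdtld{Y} = \partial_t$ near $\partial M \times [0,1]$ and thereby ensuring the flow preserves $M$.
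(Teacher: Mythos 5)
The paper does not actually prove this statement itself — it is quoted verbatim from Hirsch's book — and your argument is exactly the standard Thom--Palais proof given there: take the track of the isotopy, extend its velocity field to a compactly supported time-dependent field on $M\times[0,1]$ with $[0,1]$-component identically $1$, keep it equal to $\partial_t$ near $\partial M\times[0,1]$ using $F^t(V)\cap\partial M=\emptyset$, and integrate. The proposal is correct; the only step to phrase with a little care is the extension of the field off the track (since the track is a compact submanifold with boundary, doing the extension in charts with a partition of unity is marginally cleaner than invoking a tubular neighbourhood), but this does not affect the argument.
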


The \emph{track} of an isotopy $F^{\bullet} : U \to M$
of a subset $U ⊆ M$
is the map $\hat{F}: U \times [0,1] \to M \times [0,1]$,
where $\hat{F}(x,t) = (F(x),t)$.

\begin{proposition}[{\cite[Thm.\,1.4]{Hirsch:2012}}]
\label{p:isotopy-extension-2}
Let $U ⊆ M$ be an open set and $A ⊆ U$ a compact set.
Let $F^{\bullet}: U \to M$ be an isotopy of $U$
such that the image of its track $\hat{F}(U \times [0,1]) ⊆ M$ is open.
Then there is an ambient isotopy of $M$ having compact support,
which agrees with $F$ on a neighbourhood of $A \times [0,1]$.
\end{proposition}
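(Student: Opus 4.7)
The plan is to convert the isotopy $F$ into a time-dependent vector field on the open set $\hat{F}(U \times [0,1]) \subseteq M \times [0,1]$, cut it off to compact support using a bump function, and then integrate it to obtain the ambient isotopy. This is the standard ``integrate the infinitesimal generator'' strategy, and the openness hypothesis on the track is exactly what allows the velocity field to be defined as a smooth object on an open neighbourhood of $\hat{F}(A \times [0,1])$.

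First I would construct the velocity vector field. Let $W := \hat{F}(U \times [0,1])$, which is open in $M \times [0,1]$ by hypothesis. Since $\hat{F}\colon U \times [0,1] \to W$ is a smooth injection between manifolds of equal dimension whose image is open, it is a diffeomorphism onto $W$. Pushing forward the vector field $\partial_t$ on $U \times [0,1]$ via $\hat{F}$ gives a smooth vector field on $W$ whose component in the $[0,1]$ direction is again $\partial_t$, and whose $M$-component is a smooth time-dependent vector field $X^t$ on the time-$t$ slice of $W$. Concretely $X^t(F^t(x)) = \tfrac{d}{ds}\big|_{s=t} F^s(x)$, so $X^t$ is the instantaneous velocity of the isotopy $F$.

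Next I would cut off. The set $K := \hat{F}(A \times [0,1])$ is compact and contained in the open set $W$, so one can choose a smooth function $\rho \colon M \times [0,1] \to [0,1]$ compactly supported in $W$ and identically $1$ on some open neighbourhood $K'$ of $K$. Setting $\tilde{X}^t := \rho(-,t)\, X^t$ on $W$ and extending by zero to all of $M$ produces a smooth time-dependent vector field on $M$ whose support projects to a compact subset of $M$ uniformly in $t$, and which coincides with $X^t$ on $K'$. Since $\tilde{X}^t$ is compactly supported, its flow $\Phi^t \colon M \to M$ is defined for all $t \in [0,1]$, each $\Phi^t$ is a diffeomorphism of $M$ equal to $\id_M$ outside a fixed compact set, and $\Phi^0 = \id_M$. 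By uniqueness of solutions of the defining ODE, on the open set $\hat{F}^{-1}(K') \supseteq A \times [0,1]$ the flow $\Phi^t$ agrees with $F^t$, which is the required property.

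The one technical subtlety, given that $M$ has corners, is that $\tilde{X}^t$ must be tangent to the boundary strata for its flow to stay inside $M$. This is automatic on $W$, because $F^t$ maps $U$ into $M$ and therefore the associated velocity respects the stratification wherever it is nonzero; outside a compact set the cut-off vector field vanishes identically, so there is nothing to check. The hardest part of the argument, such as it is, lies not in any clever construction but in verifying smoothness of the pushed-forward vector field, which boils down to $\hat{F}$ being a smooth open embedding and hence a diffeomorphism onto its open image.
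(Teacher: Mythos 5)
The paper does not prove this proposition --- it cites it directly from Hirsch's \emph{Differential Topology} (Thm.\ 1.4 of the isotopy chapter). Your argument is precisely Hirsch's standard proof: push forward $\partial_t$ along the track, cut off with a bump function on $M \times [0,1]$, and integrate. (Hirsch writes the cut-off vector field as $\lambda\,\hat F_*(\partial_t) + (1-\lambda)\,\partial_t$ on $M \times [0,1]$; you separate out the $M$-component $\rho\, X^t$ and flow on $M$. These are equivalent.) Your remark about tangency to boundary strata, needed because the paper works with manifolds with corners, is correct and worth making explicit.

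Two points deserve tightening. First, ``a smooth injection between manifolds of equal dimension whose image is open is a diffeomorphism'' is not true as stated ($x \mapsto x^3$ on $\mathbb{R}$ is a counterexample); what you actually need, and do have, is that $\hat F$ is an \emph{injective immersion} with open image. The immersion property comes from the definition of isotopy: each $F^t$ is an embedding, so $D\hat F_{(x,t)}$ is block upper-triangular with $D_xF^t$ and $1$ on the diagonal, hence invertible. Second, the final sentence claiming agreement of $\Phi^t$ with $F^t$ ``on the open set $\hat F^{-1}(K')$'' is imprecise. The ODE-uniqueness argument compares the curve $s \mapsto F^s(x)$ with $s \mapsto \Phi^s(x)$ on the whole interval $[0,t]$, so it requires that $(F^s(x),s) \in K'$ for \emph{all} $s \in [0,t]$, not merely at $s = t$. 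This is guaranteed on a smaller set: apply the tube lemma to the compact $A \times [0,1]$ sitting inside the open $\hat F^{-1}(K')$ to get an open $V \supseteq A$ with $V \times [0,1] \subseteq \hat F^{-1}(K')$; then $\Phi^t = F^t$ on $V \times [0,1]$. The stated conclusion --- agreement on \emph{some} neighbourhood of $A \times [0,1]$ --- is what is needed and is correct, but the specific set you named is not the right one.
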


\begin{definition}
Given a manifold $M$ with an open cover $\{U_i\}$
and diffeomorphisms $φ,ψ: M \to M$,
a \emph{sequence of moves $(h_j^{\bullet})_{j = 1,...,k}$ connecting $φ$ to $ψ$}
is a sequence of ambient isotopies $h_j^{\bullet}$ of $M$
such that each $h_j^{\bullet}$ begins at $\id_M$
and is supported in some $U_{i_j}$,
and $ψ = h_k¹ ∘ \cdots ∘ h₁¹ ∘ φ$.
We say that $(h_j^{\bullet})$ is \emph{constant (or fixed) on a subset $A ⊆ M$}
if each $h_j^{\bullet}$ is fixed on $A$.
\end{definition}

\begin{lemma}
\label{l:isotopy-moves}
Let $M$ be a compact manifold with smooth boundary
(no corners, possibly empty boundary),
and let $A ⊆ M$ be a subset.
Let $\{U_i\}$ be a finite open cover of $M$.
Suppose $ψ: M \to M$ is a diffeomorphism of $M$
that is isotopic relative $A$ to $\id_M$, i.e.\ there is an ambient isotopy
$h^{\bullet}: M \to M$ with $h⁰ = \id_M$ and $h¹ = ψ$,
and $h^{\bullet}$ is constant on $A$.
Then there exists a sequence of moves supported on $\{U_i\}$
that connects $\id_M$ to $ψ$, and that is constant on $A$.
\end{lemma}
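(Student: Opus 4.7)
The plan is to prove the lemma by a fragmentation argument: subdivide the given ambient isotopy $h^{\bullet}$ in both time and space until each piece is supported in a single $U_i$, and verify that constancy on $A$ is preserved throughout. This is the classical fragmentation lemma for $\mathrm{Diff}(M)$, adapted to keep $A$ pointwise fixed.

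First I would shrink the cover: pick a finite refinement $\{V_\alpha\}$ of $\{U_i\}$ with each $\overline{V_\alpha}$ compact and contained in some $U_{i(\alpha)}$, which is possible by compactness of $M$. Then I would use uniform continuity of the track $\hat{h}: M \times [0,1] \to M$ (again using compactness) to choose a partition $0 = t_0 < t_1 < \cdots < t_N = 1$ so fine that, for each $j$ and $\alpha$, the incremental isotopy
\[
k_j^s := h^{t_{j-1} + s(t_j - t_{j-1})} \circ (h^{t_{j-1}})^{-1}, \qquad s \in [0,1],
\]
displaces $\overline{V_\alpha}$ only within $U_{i(\alpha)}$, i.e.\ $\bigcup_{s \in [0,1]} k_j^s(\overline{V_\alpha}) \subseteq U_{i(\alpha)}$.

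Next I would perform the spatial fragmentation on each near-identity piece $k_j^\bullet$. Choose a partition of unity $\{\rho_\alpha\}$ subordinate to $\{V_\alpha\}$. If $Y_s$ is the time-dependent vector field generating $k_j^\bullet$, write $Y_s = \sum_\alpha \rho_\alpha Y_s$; each summand is supported in $V_\alpha \subseteq U_{i(\alpha)}$. Integrating these summands iteratively (the standard Thurston-style argument: integrate $\rho_{\alpha_1} Y_s$ first, conjugate the remaining vector field by the resulting diffeomorphism, then integrate the next piece, and so on) produces diffeomorphisms $f_1^{(j)}, \ldots, f_{m_j}^{(j)}$ whose composition is $k_j^1$, each realised as the time-$1$ map of an ambient isotopy beginning at $\id_M$ and supported in some $U_{i_l^{(j)}}$. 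Concatenating these sequences of moves across $j = 1, \ldots, N$ gives a sequence connecting $\id_M$ to $\psi = h^1$.

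For the condition that the sequence is constant on $A$: since $h^{\bullet}$ is an isotopy fixing $A$ pointwise, its generating vector field vanishes on $A$. Multiplying by $\rho_\alpha$ preserves this property, so each fragmented isotopy is stationary on $A$ as well. The main obstacle is the spatial fragmentation step, namely the technical verification that the iterative integration procedure recovers $k_j^1$ on the nose; this is where the classical argument is most delicate. One either invokes local contractibility of $\mathrm{Diff}(M)$ (Palais) or argues inductively on the number of indices $\alpha$ whose support meets the support of the remaining isotopy, using at each step that the previously fragmented piece absorbs the corresponding portion of $Y_s$. Everything can be arranged to commute with the vanishing condition on $A$, yielding the required sequence of moves.
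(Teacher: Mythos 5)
Your approach is genuinely different from the paper's. The paper proves the lemma by induction on the number $n$ of sets in the cover: it scales the generating vector field $X^t$ of $h^{\bullet}$ by a single bump function $f$ supported in $U_n$ with $f\equiv 1$ near the part of $M$ not covered by $U_1,\dots,U_{n-1}$, observes that the resulting flow agrees with $h^t$ for a short time on a suitable compact set, and then applies the inductive hypothesis to the residual isotopy (which is fixed on a neighbourhood of the excess region, so that the smaller cover $\{U_1\cup Z, U_2,\dots,U_{n-1}\}$ suffices); compactness of $[0,1]$ finishes. You instead subdivide in time first and then fragment spatially via a partition of unity, iteratively peeling off pieces $\rho_\alpha Y_s$ and conjugating the residual. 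Both are recognisable fragmentation arguments for diffeomorphism groups, and in outline either can be made to work; the paper's choice avoids the most delicate support bookkeeping.

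The gap in your sketch is in the spatial fragmentation step, and you slightly misidentify where the delicacy lies. The telescoping factorisation automatically recovers $k_j^1$ on the nose; the real issue is verifying that each auxiliary isotopy $F_\alpha^{\bullet}$ is supported in some single $U_i$. After peeling off $\rho_1 Y_s$, the residual generating field is the pullback $(F_1^s)^*\bigl((1-\rho_1)Y_s\bigr)$, and when you iterate, the field to be integrated at step $\alpha$ has support $\bigcup_s (F_1^s\circ\cdots\circ F_{\alpha-1}^s)^{-1}(V_\alpha)$, which drifts under the cumulative compositions. Your time-subdivision condition bounds how far $k_j^{\bullet}$ itself displaces $\overline{V_\alpha}$, but it does not directly bound the displacement of the auxiliary partial flows $F_\alpha^s$. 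To close the gap you would need a quantitative version: choose the time partition so fine that the $C^0$-norm of $Y_s$ is smaller than (Lebesgue number of $\{U_i\}$)/(number of indices), so that \emph{every} flow of a field pointwise dominated by $Y_s$ moves points by less than the slack between $\overline{V_\alpha}$ and $U_{i(\alpha)}$; that makes the iterated preimages stay inside the right $U_i$. Also, local contractibility of $\mathrm{Diff}(M)$ (Palais) does not by itself give fragmentation, so that route would need elaboration too. The observation that the generating field vanishes on $A$ and that multiplication by $\rho_\alpha$ preserves this, so all partial flows fix $A$, is correct and matches the paper's treatment of the constancy condition.
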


In particular, if the open cover consists of balls,
then, since the evaluation $\eval{⋅}$ is isotopy invariant,
it follows that each move yields a null relation.

\begin{proof}
We induct on the size $n = \{U_i\}$ of the open cover.
The base case is trivial, so we proceed with the inductive step.

Let $Z,W,V ⊆ M$ be open sets and
$f$ a smooth function on $M$ such that
\[
\ov{M \backslash (U₁ ∪ \cdots ∪ U_{n-1})}
⊆ Z ⊆ \ov{Z} ⊆ W ⊆ \ov{W} ⊆ V ⊆ \ov{V} ⊆ U_n
\;\; , \;\;
f|_V \cong 1 \;\; , \;\; f|_{M \backslash U_n} \cong 0 \ .
\]
Let $h₁^{\bullet}$ be the ambient isotopy of $M$
generated by the vector field $f ⋅ X^t$,
where $X^t = \frac{dh^t}{dt}$ is the vector field generating $h^t$;
clearly $h₁^{\bullet}$ is supported on $U_n$.
By compactness of $\ov{W}$, there exists $δ > 0$ such that
$\ov{W}$ remains in $V$ up to time $δ$,
i.e.\ $h^t(\ov{W}) ⊆ V$ for $t ∈ [0,δ]$.
In particular, $h^t ∘ (h₁^t)^\inv$ is the identity on $h^t(W)$.
By compactness of $\ov{Z}$, there exists $δ' > 0$ such that
$\ov{Z} ⊆ h^t(W)$ for $t ∈ [0,δ']$, so that
$h^t ∘ (h₁^t)^\inv$ is constant on $\ov{Z}$ for $t ∈ [0,δ']$.
Applying the inductive hypothesis to $h^t ∘ (h₁^t)^\inv$
(restricted to $t ∈ [0,δ']$)
which is constant on $A ∪ \ov{Z}$,
with open cover $\{U₁ ∪ Z, U₂, ..., U_{n-1}\}$,
we obtain moves $h₂^{\bullet},...,h_k^{\bullet}$ supported in
the above open cover connecting $\id_M$ to $h¹ ∘ (h₁¹)^\inv$.
Since these isotopies are fixed on $A ∪ \ov{Z}$,
they may also be considered as supported by the given open cover $\{U_i\}$.
Concatenating with $h₁^{\bullet}$ (restricted to $t ∈ [0,δ']$),
we have moves supported by $\{U_i\}$ from $\id_M$ to $h^{δ'}$
that are constant on $A$.

In short, we have shown that we can always
``make nonzero forward progress by moves''.
This applies to any starting time $t₀ < 1$,
i.e. we apply the above argument to $h^{t₀ + t} ∘ (h^{t₀})^\inv$,
so that we get moves connecting $h^{t₀}$ to $h^{t₀ + δ}$.
Similarly, applying to the reverse isotopy $(h^{1-t} ∘ (h¹)^\inv)$,
we see that we can also always
``make nonzero backward progress by moves''.
In other words, for each $t ∈ [0,1]$,
there is an interval $[t - δ(t), t + δ'(t)]$, open as a subset of $[0,1]$,
such that there are moves supported by $\{U_i\}$
from $h^{t₁}$ to $h^{t₂}$ for any $t₁,t₂ ∈ [t - δ(t), t + δ'(t)]$.
Then by compactness of $[0,1]$, we are done.
\end{proof}

We want to apply Lemma~\ref{l:isotopy-moves} to an isotopy supported on a
compact submanifold $W ⊆ M$, but we cannot directly apply it to $W$,
since the resulting moves $h_j^{\bullet}$ are not guaranteed to be
$C^{∞}$-constant at $∂W$, and hence may not extend to the ambient manifold $M$.
However, we really only need the moves to restrict to
a graph isotopy $h_j^{\bullet}(Γ)$ when applied to a graph $Γ ⊆ M$;
in particular, we only need the moves to be $C^{∞}$-constant at $Γ ∩ ∂W$,
which we generally want to be a transversal intersection,
and hence a finite number of points.
To this end, we give the following refinement to the proposition above:

\begin{lemma}
\label{l:cover-C-infty}
Let $M$ be a compact manifold with smooth boundary,
and let $B ⊆ ∂M$ be a finite set on the boundary.
Let $h^{\bullet}: M \to M$ be an isotopy of $M$
that is $C^{∞}$-constant on $B$,
and let $\{U_i\}$ be a finite open cover of $M$.
Then there exist moves supported in $\{U_i\}$
from $\id_M$ to $h¹$ that are also $C^{∞}$-constant on $B$.
\end{lemma}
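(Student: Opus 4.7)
The strategy is to split $h^{\bullet}$ as $h^{\bullet} = \phi^{\bullet} \circ \psi^{\bullet}$, where $\phi^{\bullet}$ is an auxiliary ambient isotopy supported in tiny disjoint neighbourhoods of the points of $B$ that captures the "near-$B$ behaviour" of $h^{\bullet}$, while $\psi^{\bullet} := (\phi^{\bullet})^{\inv} \circ h^{\bullet}$ is literally the identity on a neighbourhood of $B$. Then Lemma~\ref{l:isotopy-moves} applied to $\psi^{\bullet}$ with $A$ equal to that neighbourhood produces moves that are constant there -- in particular $C^{\infty}$-constant on $B$ -- and the $\phi^{\bullet}$ part splits into a handful of single moves, one per point of $B$, each supported in some chosen element of the cover.

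First I would refine the setup: for each $b \in B$, choose a coordinate ball $V_b$ around $b$ with $\overline{V_b} \subseteq U_{i(b)}$ for some $U_{i(b)}$ in the cover, with the $V_b$ pairwise disjoint, and pick concentric open sub-balls $\overline{V_b''} \subseteq V_b' \subseteq \overline{V_b'} \subseteq V_b$. Since $h^{\bullet}$ fixes each $b$ pointwise, by continuity in $(t,x)$ on the compact set $[0,1] \times \overline{V_b''}$ one can arrange (by shrinking $V_b''$) that $h^t(V_b'') \subseteq V_b'$ for every $t \in [0,1]$. Set $V = \bigsqcup_b V_b$, $V' = \bigsqcup_b V_b'$, $V'' = \bigsqcup_b V_b''$.

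Next I would construct $\phi^{\bullet}$ by cutting off the generating vector field of $h^{\bullet}$. Let $X^t$ be the time-dependent vector field on $M$ defined by $X^t(h^t(x)) = \tfrac{\partial h^t}{\partial t}(x)$; since $h^{\bullet}$ is $C^{\infty}$-constant at $B$, the field $X^t$ vanishes to all orders at every $b \in B$, uniformly in $t$. Let $\rho: M \to [0,1]$ be a smooth bump function equal to $1$ on $V'$ and supported in $V$, and set $Y^t := \rho \cdot X^t$. Let $\phi^{\bullet}$ be the ambient isotopy of $M$ generated by $Y^t$, which exists since $Y^t$ is compactly supported and smooth. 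Then $\phi^{\bullet}$ is the identity outside $V$ (the field vanishes there), and it is $C^{\infty}$-constant on $B$ (the field still vanishes to all orders at $B$). Crucially, for $x \in V_b''$ the trajectory of $Y^t$ starting at $x$ coincides with that of $X^t$ as long as it stays in $V_b'$; by the choice of $V_b''$ the $h^t$-trajectory remains in $V_b'$ for $t \in [0,1]$, so the two trajectories agree throughout, and $\phi^t = h^t$ on $V''$.

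Finally I would assemble the moves. Define $\psi^{\bullet} := (\phi^{\bullet})^{\inv} \circ h^{\bullet}$; this is an ambient isotopy starting at $\id_M$ with $\psi^1 = (\phi^1)^{\inv} \circ h^1$, and $\psi^t = \id$ on $V''$. Apply Lemma~\ref{l:isotopy-moves} to $\psi^{\bullet}$ with $A = V''$ and the given open cover $\{U_i\}$ to produce moves $h_1^{\bullet}, \dots, h_k^{\bullet}$ supported in $\{U_i\}$ and pointwise constant on $V''$, hence $C^{\infty}$-constant on $B$. For $\phi^{\bullet}$, since the $V_b$ are pairwise disjoint, $\phi^{\bullet}$ factors as a commuting product $\prod_b \phi_b^{\bullet}$ where each $\phi_b^{\bullet}$ is an ambient isotopy supported in $V_b \subseteq U_{i(b)}$; each is a single move and is $C^{\infty}$-constant on $B$. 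Concatenating the two blocks gives a sequence of moves whose total composition at time $1$ is $\phi^1 \circ \psi^1 = h^1$, as required. The main subtlety to nail down is the construction of $\phi^{\bullet}$ in the third paragraph -- in particular, that shrinking $V_b''$ makes $h^t(V_b'') \subseteq V_b'$ uniformly in $t$, so that the truncated flow actually agrees with $h^t$ on the inner neighbourhood; everything else is a bookkeeping wrapper around Lemma~\ref{l:isotopy-moves}.
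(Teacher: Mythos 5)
Your proof is correct, and it takes a genuinely different route from the paper's. The paper applies Lemma~\ref{l:isotopy-moves} directly with $A=\{b\}$, obtaining moves that are merely pointwise fixed at $b$, and then repairs each move individually a posteriori: for the moves supported near $b$ it builds a correction $\psi_j^\bullet$ via the isotopy extension theorem (Prop.~\ref{p:isotopy-extension-2}) that cancels the near-$b$ behaviour, and it finishes with one extra compensating move $g_{k+1}^\bullet$ supported near $b$. Your approach instead pre-conditions $h^\bullet$ once and for all: you cut off the generating time-dependent vector field $X^t$ with a bump function $\rho$ supported near $B$, obtaining $\phi^\bullet$ that agrees with $h^\bullet$ on a small inner ball $V''$ and is the identity outside $V$; then $\psi^\bullet = (\phi^\bullet)^{\inv}\circ h^\bullet$ is literally the identity on the open set $V''\supseteq B$, so Lemma~\ref{l:isotopy-moves} with $A=V''$ automatically yields moves that are $C^\infty$-constant at $B$, and $\phi^\bullet$ contributes one additional move per point of $B$. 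Both arguments hinge on the same observation (that $X^t$, hence $\rho X^t$ and hence $\phi^\bullet$, vanishes to all orders at $B$), but your front-loaded factorisation replaces the per-move correction and the isotopy extension theorem by an explicit ODE cutoff; it buys a cleaner structure at the cost of working with the generating vector field directly, whereas the paper's version stays within the toolkit of ambient isotopies and the extension theorem already used elsewhere. One small point you should make explicit when writing this up: the flow of $Y^t=\rho X^t$ stays in $M$ because $X^t$ is tangent to $\partial M$ (it generates diffeomorphisms of $M$) and $\rho$ is a scalar, so $Y^t$ is also tangent to $\partial M$.
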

\begin{proof}
For simplicity, we assume $B = \{b\}$, with the general case
following the same arguments.
Label the open cover as $\{ U_i \}_{i=1,\dots,n}$.
Consider a small open neighbourhood $W_b$ of $b$,
such that $W_b$ is contained in each open set $U_i$ that contains $b$.
We refine the open cover by adding a small open neighbourhood of $b$
and carving out a neighbourhood of $b$ from each $U_i$
so that exactly one open set contains $b$.
More precisely, let $U_i' = U_i \backslash \ov{V_b}$, $i=1,\dots,n$,
where $V_b$ is an open neighbourhood of $b$ such that $\ov{V_b} ⊆ W_b$.
Then we consider the open cover $\{U₀' = W_b, U₁',\dots,U_n' \}$,
which is a refinement of the given open cover.
We apply Lemma~\ref{l:isotopy-moves} 
to get the isotopies $h_j^{\bullet}$,
each supported in $U_{i_j}'$ and fixed (but not necessarily $C^{∞}$-constant) at $b$.

Choose a small compact neighbourhood $C'$ of $b$
such that $C'$ remains in $V_b$ under all possible compositions of the $h_j^t$'s,
i.e.\ $h_k^{t_k} ∘ \cdots ∘ h₁^{t₁}(C') ⊆ V_b$ for all $t_i ∈ [0,1]$,
and choose an even smaller compact neighbourhood $C''$ of $b$
that remains in $C'$ under all compositions of the $h_j^t$'s in the same sense.
For each $j$ for which $U_{i_j}' = U₀'$,
consider the isotopy $ψ_j^{\bullet}$ of the compact set
$C = C'' ∪ \ov{M \backslash C'}$
which restricts to $h_j^{\bullet}$ on $C''$ and $\id$ on $\ov{M \backslash C'}$.
We can extend $ψ_j^{\bullet}$ to a neighbourhood of $C$
using $h_j^{\bullet}$ on a neighbourhood of $\ov{M \backslash C'}$
and $\id$ on a neighbourhood of $C''$,
and it is clear that the image of the track of this extension is open,
so by Proposition~\ref{p:isotopy-extension-2},
$ψ_j^{\bullet}$ extends to $M$.
Set $g_j^t := (ψ_j^t)^\inv ∘ h_j^t$;
for other $j$, i.e.\ $U_{i_j}' ≠ U₀'$, we simply set $g_j^t := h_j^t$.

By construction, $g_j^{\bullet}$ is $C^{∞}$-constant at $b$ for each $j$,
and by choice of $C$, the effects of $ψ_j^{\bullet}$ do not propagate
into other $U_i'$, in particular,
$g_k^t ∘ \cdots ∘ g₁^t$ and $h^t$ agree on each $U_i' ≠ U₀'$.
Thus, we define $g_{k+1}^t = h^t ∘ (g_k^t ∘ \cdots ∘ g₁^t)^\inv$,
which is supported on $U₀'$ and is $C^{∞}$-constant at $b$,
so that $g₁^{\bullet},...,g_{k+1}^{\bullet}$ are the desired moves.
\end{proof}

The following is a useful type of isotopy,
often used in the study of mapping class groups
(see \cite[Sec.\,4.2.1]{FarbMargalit:2011},
originally ``spin maps'' in \cite{Birman:1969}),
although there push maps are only defined for loops.

\begin{definition}
\label{d:push-map}
Let $ρ:[0,1] \to M$ be a regularly parametrised immersed curve in
$M \backslash ∂M$.
A \emph{push-map along $ρ$} is an isotopy $Ψ_{ρ}^t$
such that $Ψ_{ρ}^t(ρ(0)) = ρ(t)$,
and is supported in a small neighbourhood of $ρ$.

\begin{figure}
\center
\includegraphics[width=12cm]{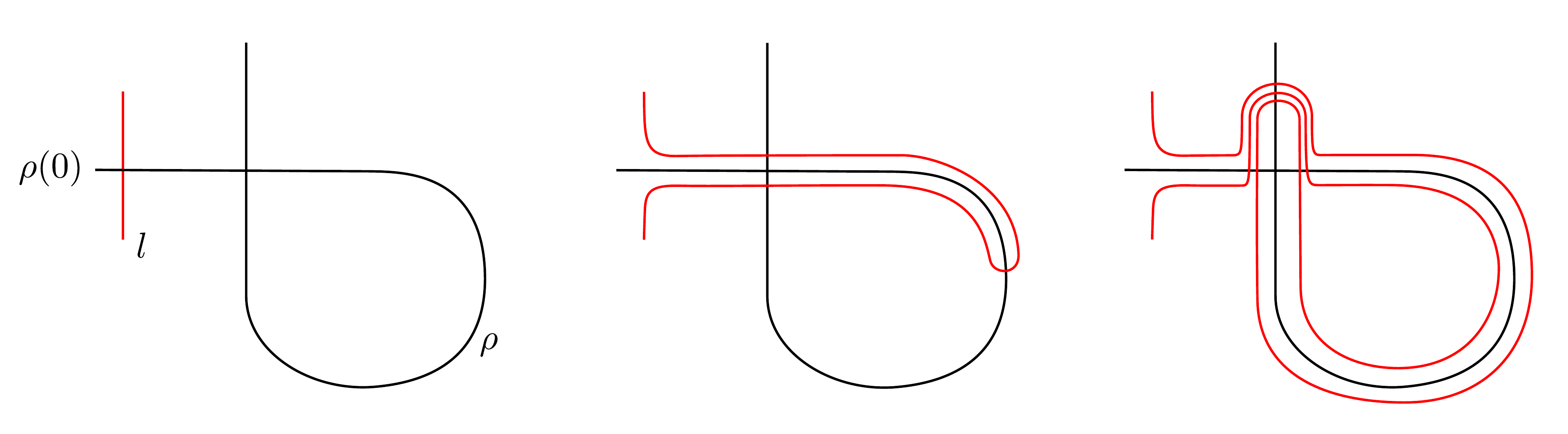}
\caption{Push map $ψ_{ρ}^t$ for immersed curve $ρ$; the segment $l$ is stretched
by $ψ_{ρ}^t$.}
\label{f:push-map}
\end{figure}
\end{definition}

Push-maps can be constructed as follows.
If $ρ$ is even an embedding, simply take a vector field $X$
that extends the tangent vector field $ρ'$ to a neighbourhood of $ρ$,
and let $Ψ_{ρ}^t$ be the flow of $X$;
by construction we have $Ψ_{ρ}^t(ρ(0)) = ρ(t)$.
For an immersed curve $ρ$, just break it into finitely many pieces $ρ_i$,
each an embedded curve, and concatenate the push-maps for each $ρ_i$.
(See Figure \ref{f:push-map}.)
Note that the neighbourhood on which the vector field is supported
can be made arbitrarily small.

Recall that $Ψ_ρ^t$ is defined for all $t$,
and so in particular for small $δt > 0$,
$Ψ_ρ^{1 + δt}$ sends $ρ(0)$ to a point just past $ρ(1)$
in the direction of $ρ'(1)$;
we write $Ψ_ρ^{1+}$ to express that we have implicitly chosen
some small $δt > 0$, which may need to satisfy some additional conditions
based on the context.

The proof of Lemma \ref{l:isotopy-null}
is based on the idea that a graph isotopy can be broken into
an ambient isotopy plus tiny isotopies fixing the edges at vertices:

\begin{lemma}
\label{l:ambient-graph}
Let $Γ,Γ'$ be related by a graph isotopy $Γ^{\bullet}$ in $M$
that is $C^{∞}$-constant on $∂Γ$, so that $Γ = Γ⁰, Γ' = Γ¹$.
For each internal vertex $v ∈ Γ⁰$, let $B_v$ be a closed ball
containing $v$ in its interior and does not meet $∂M$.
Then there exists a graph $\hat{Γ}$ and an ambient isotopy $φ^{\bullet}$ on $M$
such that $Γ$ and $\hat{Γ}$ are related by a graph isotopy that is
constant outside the union $\bigcup B_v$ over all internal vertices,
and $φ^{\bullet}$ takes $\hat{Γ}$ to $Γ'$.
\end{lemma}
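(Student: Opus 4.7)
The plan is to realise the graph isotopy $\Gamma^\bullet$ as the composition of (i) a graph isotopy from $\Gamma$ to an intermediate graph $\hat{\Gamma}$ that is supported entirely inside $\bigcup_v B_v$, followed by (ii) an ambient isotopy $\varphi^\bullet$ of $M$ taking $\hat{\Gamma}$ to $\Gamma'$. The guiding principle is that away from the internal vertices a graph is just a smooth 1-submanifold: the restriction of $\Gamma^\bullet$ to the complement of small balls around the vertices is then an isotopy of a smooth 1-submanifold with boundary, which extends to an ambient isotopy by the isotopy extension theorem (Proposition~\ref{p:isotopy-extension-submfld}). All the ``exotic'' (non-ambient) behaviour of $\Gamma^\bullet$ is concentrated at the vertices and gets absorbed into $\bigcup_v B_v$.

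Concretely, for each internal vertex $v$ choose a closed ball $B_v'$ with $v\in \Int(B_v')$ and $B_v'\subset \Int(B_v)$. After a small preliminary graph isotopy supported in $\bigcup_v B_v$ (which will be absorbed into the graph isotopy from $\Gamma$ to $\hat{\Gamma}$ constructed below) and possibly subdividing $[0,1]$ at the finitely many instants where a tangency to some $\partial B_v'$ occurs, we may assume that $\Sigma^t := \Gamma^t \setminus \bigcup_v \Int(B_v')$ is a smoothly varying compact embedded 1-submanifold of $M$ with boundary lying on $\bigcup_v \partial B_v' \cup \partial\Gamma$. Applying Proposition~\ref{p:isotopy-extension-submfld} to the family $\Sigma^\bullet$, together with Proposition~\ref{p:isotopy-extension-2} localised in a neighbourhood of $\partial M$ (where $\Gamma^\bullet$ is already $C^\infty$-constant), and carrying the normal framing along via a tubular-neighbourhood construction on each edge, we obtain an ambient isotopy $\varphi^\bullet$ of $M$ with $\varphi^0 = \id_M$ and $\varphi^t(x) = \Gamma^t(x)$ for every $x \in \Gamma \setminus \bigcup_v \Int(B_v')$.

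Now set $\hat{\Gamma} := (\varphi^1)^{-1}(\Gamma')$; then $\varphi^\bullet$ is the required ambient isotopy taking $\hat{\Gamma}$ to $\Gamma'$. Consider the family $H^t := (\varphi^t)^{-1}(\Gamma^t)$: this is a graph isotopy from $\Gamma$ at $t=0$ to $\hat{\Gamma}$ at $t=1$, and by the matching condition $\varphi^t(x) = \Gamma^t(x)$ on $\Gamma \setminus \bigcup_v \Int(B_v')$ we have $H^t(x) = x$ there, so $H^\bullet$ is stationary outside $\bigcup_v B_v' \subset \bigcup_v B_v$, as required.

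The main obstacle is the isotopy extension step: one must handle simultaneously the moving boundary of $\Sigma^t$ on $\bigcup_v \partial B_v'$, the fixed boundary on $\partial M$ where $\Gamma^\bullet$ is $C^\infty$-constant, the possible tangencies of $\Gamma^t$ with $\partial B_v'$ at isolated times, and the transport of the normal framings on all edges. These are standard manipulations combining Propositions~\ref{p:isotopy-extension-submfld} and~\ref{p:isotopy-extension-2} with cut-off and tubular-neighbourhood arguments, but they require careful bookkeeping to ensure the resulting $\varphi^\bullet$ really does match $\Gamma^\bullet$ pointwise (and as framings) on the complement of $\bigcup_v \Int(B_v')$.
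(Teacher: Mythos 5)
Your overall scheme — construct an ambient isotopy $\varphi^\bullet$ agreeing with $\Gamma^\bullet$ away from the vertices, set $\hat\Gamma = (\varphi^1)^{-1}(\Gamma')$, and verify that $H^t = (\varphi^t)^{-1}(\Gamma^t)$ is a graph isotopy supported in $\bigcup_v B_v$ — is the same as the paper's, but there is a genuine gap in the step that produces $\varphi^\bullet$.

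The problem is that the internal vertices of $\Gamma^t$ have no reason to stay inside the fixed balls $B_v'$ for $t>0$: the hypothesis only places $v = v^0$ in $\Int(B_v')$, while $B_v$ (and hence $B_v'$) is prescribed in advance and may be small, yet the vertex trajectory $v^t$ can wander anywhere in $\Int(M)$. Once a vertex leaves $B_v'$, the set $\Sigma^t = \Gamma^t \setminus \bigcup_v \Int(B_v')$ contains a vertex and is therefore not an embedded 1-submanifold, so Proposition~\ref{p:isotopy-extension-submfld} does not apply. Neither of your two preparatory devices repairs this: subdividing $[0,1]$ at instants where $\Gamma^t$ is tangent to $\partial B_v'$ only controls how \emph{edges} cross the sphere boundaries, and the preliminary graph isotopy supported in $\bigcup_v B_v$ only alters $\Gamma^0$, not the later behaviour of $\Gamma^\bullet$.

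The paper handles exactly this issue with a first step that you omit: it extends the vertex trajectories $v^\bullet$ to an ambient isotopy $H^\bullet$ (Proposition~\ref{p:isotopy-extension-submfld} applied to the finite compact set of internal vertices), and then replaces $\Gamma^\bullet$ by $\wdtld{\Gamma}^\bullet := (H^t)^{-1}(\Gamma^t)$, which fixes the vertices pointwise. Only after this reduction is the complement of fixed balls around the vertices genuinely vertex-free at all times. Moreover, even then the paper works with the \emph{parametrised} restriction $A^\bullet$ of the graph isotopy to the fixed abstract 1-manifold $A = \Gamma \setminus \bigcup_v D_v$, rather than with the time-varying subset $\Gamma^t \setminus \bigcup_v \Int(B_v')$; this avoids the changes of topological type as edges cross the sphere boundaries and the associated need to subdivide and patch together several applications of isotopy extension. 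You should insert the vertex-stabilising conjugation and switch to the parametrised picture before invoking the extension theorems.
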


\begin{proof}
Let us reduce to the case where $Γ^{\bullet}$ is fixed on internal vertices.
For each internal vertex $v$ of $Γ = Γ⁰$,
let $v^{\bullet}$ denote the restriction of the graph isotopy to $v$.
By Proposition~\ref{p:isotopy-extension-submfld},
there exists an ambient isotopy $H^{\bullet}$ of $M$ that restricts to
$v^{\bullet}$ for each internal vertex $v$.
Thus $\wdtld{Γ}^{\bullet} := ((H^t)^\inv (Γ^t))_{t ∈ [0,1]}$
is a graph isotopy from $Γ$ to $(H¹)^\inv(Γ')$ that is fixed at internal vertices,
and $\wdtld{H}^t = (H^{1-t})^\inv ∘ H¹$
defines an ambient isotopy $\wdtld{H}^{\bullet}$
that takes $(H¹)^\inv(Γ¹)$ to $Γ¹ = Γ'$.
Thus it suffices to work with $\wdtld{Γ}^{\bullet}$.

Next we construct an ambient isotopy $G^{\bullet}$ such that
$(G^t)^\inv(\wdtld{Γ}^t)$ is constant outside $\bigcup B_v$.
Let $A = Γ \backslash \bigcup D_v$ consist of the edges of $Γ$
truncated at the internal vertices, where $\ov{D_v} ⊂ \Int(B_v)$.
We can extend the isotopy $A^{\bullet}$ (the restriction of $Γ^{\bullet}$ to $A$)
to a neighbourhood $U$ of $A$, as follows.
Fix a Riemannian metric on $M \backslash \{\text{corners}\}$,
and consider a smoothly varying orthonormal frame of $TM$ along $A$,
i.e. orthonormal frames $Ξ^t(a^t)$ at each point $a^t ∈ A^t$
that varies smoothly with $a$ and $t$.
Then using geodesics, we get an embedding $ι^t: U \to M$,
where $U$ is a small enough neighbourhood of the 0-section in $TM|_A$,
that depends smoothly on $t$, i.e.\ $ι^{\bullet}$ is an isotopy of $ι(U)$.
Then by Proposition~\ref{p:isotopy-extension-2},
$ι^{\bullet}$ can be extended to an ambient isotopy $G^{\bullet}$.
Thus, $\hat{Γ}^{\bullet} := ((G^t)^\inv(\wdtld{Γ}^t))$
is a graph isotopy from $Γ$ to $\hat{Γ} := (G¹)^\inv(\wdtld{Γ}¹)$
that is fixed outside of $\bigcup B_v$,
and $G^{\bullet}$ is an ambient isotopy that takes $\hat{Γ}$ to $\wdtld{Γ}¹$.
\end{proof}

\begin{lemma}
\label{l:isotopy-null-ambient}
Let $\{U_i\}$ be a finite open cover of $M$.
Let $Γ ∈ \Graph(M;\XX)$ be an $\cA$-coloured graph.
If $h^{\bullet}$ is an ambient isotopy that is $C^{∞}$-constant at $∂Γ$,
then we can write $h¹(Γ) - Γ = \sum_{j=1}^k Γ^{(j)} - Γ^{(j-1)}$,
where $Γ^{(0)} = Γ, Γ^{(k)} = h¹(Γ)$,
and for each $j$, there exists a smoothly embedded closed ball $D_j$
with $D_j ⊆ U_{i_j}$ for some $U_{i_j}$ from the open cover,
such that $Γ^{(j-1)}$ and $Γ^{(j)}$ are transverse to $∂D_j$,
and related by a graph isotopy that is constant outside $D_j$ and
$C^{∞}$-constant at $∂Γ$.
In particular, $h¹(Γ) - Γ ∈ \Null(M;\XX)$.

Moreover, if $Γ ∈ \Graph(M;\XX)$ is an $\cS$-admissible graph,
then we can choose the $Γ^{(j)}$'s and $D_j$'s
so that $Γ^{(j)} \backslash D_j$ is $\cS$-admissible.
In particular, $h¹(Γ) - Γ ∈ \NullX(M;\XX)$.
\end{lemma}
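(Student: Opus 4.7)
The plan is to apply Lemma~\ref{l:cover-C-infty} to $h^\bullet$, choose balls containing the supports of the resulting moves, and handle the $\cS$-admissibility condition by ensuring the cover is sufficiently fine (or by inserting push-maps).

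First, I would reduce to the case where $M$ is compact, since $h^\bullet$ has compact support. Applying Lemma~\ref{l:cover-C-infty} with $B = \partial\Gamma$ (after possibly refining the cover $\{U_i\}$ to one whose elements are smaller than any connected component of $M$), I obtain a sequence of ambient isotopies $h_1^\bullet, \ldots, h_k^\bullet$ supported in open sets of the cover, each $C^\infty$-constant at $\partial\Gamma$, with $h^1 = h_k^1 \circ \cdots \circ h_1^1$. Setting $\Gamma^{(j)} := h_j^1(\Gamma^{(j-1)})$ with $\Gamma^{(0)} = \Gamma$, I choose for each $j$ a smoothly embedded closed ball $D_j \subseteq U_{i_j}$ containing the support of $h_j^\bullet$; by a generic perturbation of $\partial D_j$ (transversality is open and dense), I can arrange that $\Gamma^{(j-1)}$ is transverse to $\partial D_j$ away from corners. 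Since $h_j^\bullet$ is supported in $D_j$, the graphs $\Gamma^{(j-1)}, \Gamma^{(j)}$ agree outside $D_j$, and inside they are related by the graph isotopy $h_j^t(\Gamma^{(j-1)})$, which is $C^\infty$-constant at $\partial\Gamma$. Isotopy invariance of $\eval{\cdot}_{D_j}$ then yields $\Gamma^{(j)} - \Gamma^{(j-1)} \in \PrimNull(M,D_j;\XX)$, which establishes the first claim.

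For the $\cS$-admissibility refinement, note that each $\Gamma^{(j)}$ is automatically $\cS$-admissible since ambient isotopies preserve edge labels. The potential failure is that some component $C$ of $M$ might have all its $\cS$-labelled edges of $\Gamma^{(j-1)}$ contained in $D_j$. To avoid this, I would choose the refined cover fine enough: since each $\cS$-labelled edge of $\Gamma$ has positive intrinsic length and the ambient isotopy $h^\bullet$ is smooth with compact support (hence Lipschitz continuous on compact sets), there is a uniform lower bound on the spatial diameter of any $\cS$-labelled edge throughout the deformation. Taking every open set of the refined cover smaller than this bound and not containing any full component of $M$, each $D_j$ is automatically too small to contain any entire $\cS$-labelled edge, so at least one $\cS$-labelled edge of $\Gamma^{(j-1)}$ in each component $C$ must exit $D_j$, giving $\cS$-admissibility of $\Gamma^{(j-1)}\setminus D_j$ and hence $\Gamma^{(j)} - \Gamma^{(j-1)} \in \NullX(M;\XX)$.

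The main obstacle is making this Lipschitz bound precise: the moves $h_j^\bullet$ from Lemma~\ref{l:cover-C-infty} are constructed from localised versions of the generating vector field of $h^\bullet$, so one has to verify that they inherit a uniform spatial Lipschitz constant independent of the cover. If this direct argument is too delicate, the fallback is to handle $\cS$-admissibility inductively via push-maps: whenever $\Gamma^{(j-1)}\setminus D_j$ fails to be $\cS$-admissible in some component $C$, insert before $h_j^\bullet$ a push-map of an $\cS$-labelled edge along a curve in $C$ that exits $D_j$, and undo it after $h_j^\bullet$ by its inverse. The push-map can itself be decomposed into small moves, each $\cS$-null by the same argument applied at a smaller scale, and the net effect on $\Gamma$ is unchanged. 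This lengthens the decomposition but preserves both the transition from $\Gamma$ to $h^1(\Gamma)$ and the $\cS$-admissibility condition at every intermediate stage.
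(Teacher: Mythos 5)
Your first part (the non-admissible case) matches the paper's strategy: reduce to compact $M$, apply Lemma~\ref{l:cover-C-infty} to obtain moves supported in the cover, choose balls $D_j$ around the supports, perturb to ensure transversality, and conclude $\Gamma^{(j)}-\Gamma^{(j-1)}\in\PrimNull(M,D_j;\XX)$. A few differences in care: the paper builds an explicit family $\{D_{\alpha,\tau}\}$ of nested balls (squashed against $\partial M$ where necessary) so that generic transversality and containment in a cover element come for free, and its reduction from general $M$ to the compact smooth-boundary case at the end is a genuine colimit argument over compact pieces of $M$, rather than relying on $h^\bullet$ having compact support (which the lemma does not assume).

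The gap is in your primary strategy for $\cS$-admissibility. You want a uniform lower bound on the diameter of $\cS$-edges of $\Gamma^{(j-1)}$, extracted from a Lipschitz bound on $h^\bullet$, so that a sufficiently fine cover cannot swallow any whole $\cS$-edge. But $\Gamma^{(j-1)} = h_{j-1}^1\circ\cdots\circ h_1^1(\Gamma)$, and these intermediate composites are \emph{not} of the form $h^t$ for any $t$: Lemma~\ref{l:isotopy-moves}/\ref{l:cover-C-infty} produce new diffeomorphisms by cutting off vector fields and iterating, and their $C^1$ distortion is not bounded uniformly in the cover. Indeed, as you make the cover finer (to shrink the $D_j$'s), the number $k$ of moves grows and there is no quantitative control in the cited lemmas preventing the accumulated distortion $h_j^1\circ\cdots\circ h_1^1$ from collapsing an $\cS$-edge below the scale of the cover at some intermediate stage. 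Repairing this would require reproving Lemmas~\ref{l:isotopy-moves} and~\ref{l:cover-C-infty} in a quantitative form, which is not straightforward and is not what the paper does.

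Your fallback is the paper's actual argument: if $\Gamma^{(j-1)}\backslash D_j$ fails to be $\cS$-admissible, insert a push-map $\Psi_\gamma^\bullet$ (supported in a thin ball $D_\gamma\subseteq U_{i_j}$ chosen so that some $\cS$-edge stays outside $D_\gamma$) that drags an $\cS$-edge out of $D_j$, apply the move, and then undo. Two details to tighten. First, the undo must be the conjugated reverse push-map $(h_j^1\circ\Psi_\gamma^{1-t}\circ(h_j^1)^{-1})_t$, supported in $h_j^1(D_\gamma)\subseteq U_{i_j}$, so that the net composite equals $h_j^1(\Gamma^{(j-1)})=\Gamma^{(j)}$; applying the literal inverse $\Psi_\gamma^{-\bullet}$ would not recover $\Gamma^{(j)}$ unless $\Psi_\gamma$ and $h_j^1$ commuted. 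Second, there is no need (and some danger of circularity) in "decomposing the push-map into small moves each $\cS$-null at a smaller scale": the push-map is already supported in a single ball $D_\gamma\subseteq U_{i_j}$ which by construction leaves an $\cS$-edge outside, so it is a single $\cS$-null relation of the right kind.
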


\begin{proof}
We first assume that $M$ is compact and has smooth boundary (no corners),
and also put aside $\cS$-admissibility.
Consider the collection $\{D_{α}\}$ of
all closed smoothly embedded balls $D_{α}$
such that balls that touch the boundary are ``squashed against the boundary''
appearing in a standard form in some chart (see Figure \ref{f:squashed-ball}).
Importantly, the intersection of such $D_{α}$ with $∂\bH^d$,
the boundary of the half-space,
is also a closed ball $V_{α}$, but of dimension $d-1$;
in particular, $E_{α} := \Int(D_{α}) ∪ \Int(V_{α})$ is an open set,
and $\{E_{α}\}$ is an open cover of $M$.

\begin{figure}
\center
\includegraphics[width=6cm]{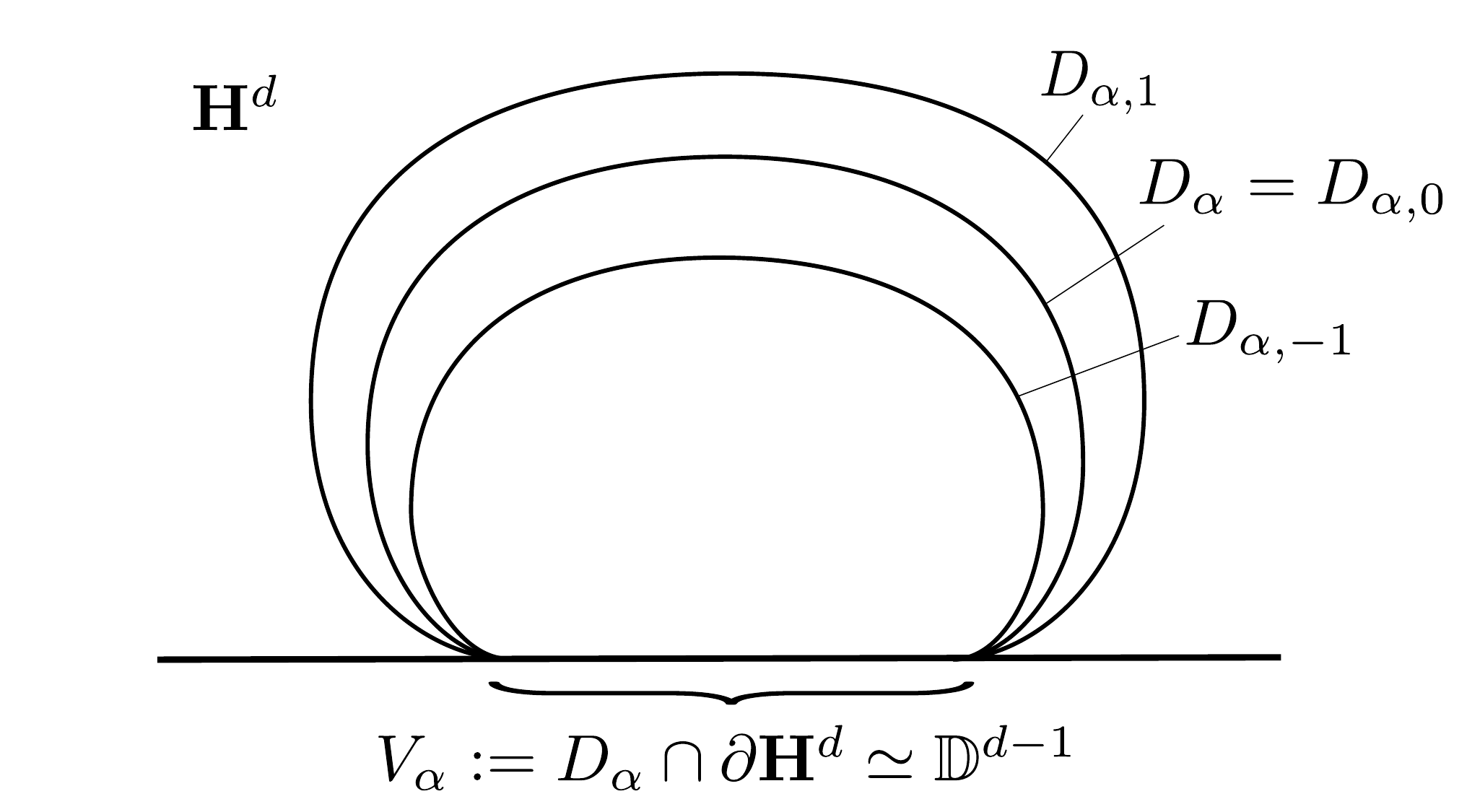}
\caption{Squashed ball $D_{α}$ and family $D_{α,τ}$,
all meeting the boundary in the same $V_{α}$;
we can construct $D_{α,τ}$ by flowing $D_{α}$ under some vector field
that is 0 on $∂M$ and always pointing outwards along $∂D_{α} ∩ \Int(M)$.
}
\label{f:squashed-ball}
\end{figure}

In the following, we have to consider intersections of graphs
with these balls, and we would like for these intersections to be
transversal at the boundary of the balls;
in order to guarantee transversality, we also consider a family of balls
around each ball, $\{D_{α,τ}\}$, $τ ∈ \RR$,
such that $D_{α,0} = D_{α}$ and
each $D_{α,τ}$ has the same (possibly empty) intersection
$D_{α,τ} ∩ ∂\bH^n = V_{α}$ with the boundary of half-space,
with the following properties: (1) any graph is transverse to $∂D_{α,τ}$
for all but countably many $τ$,
(2) $D_{α,τ'}$ contains an open neighbourhood of $D_{α,τ}$ for $τ' > τ$,
and (3) any open neighbourhood $U$ that contains $D_{α,τ}$
also contains $D_{α,τ'}$ for some $τ' > τ$
(see Figure~\ref{f:squashed-ball} for more details).

We can consider a subcollection of the balls $\{D_{α}\}$,
consisting of those $D_{α}$ contained in some $U_i$
in the given finite open cover.
It is easy to see that the subcollection of interiors $E_{α}$
of such balls still covers $M$,
hence there exists a finite subcover $\{E_{i'}\}$ of $M$.
Applying Lemma \ref{l:cover-C-infty}, with cover $\{E_{i'}\}$
and finite set of points $B = ∂Γ$,
we get a sequence of moves $(h_j^{\bullet})_{j=1,...,k}$
supported by $\{E_{i'}\}$ from $\id_M$ to $h¹$
that is $C^{∞}$-constant on $B$.

Let $Γ^{(j)} = h_j¹ ∘ \cdots ∘ h₁¹(Γ)$, so $Γ^{(0)} = Γ$ and $Γ^{(k)} = h¹(Γ)$,
and let $h_j^{\bullet}$ be supported in $D_{i_j'}$,
which is contained in some $U_{i_j}$.
If $Γ^{(j-1)}$ is transverse to $∂D_{i_j}$, take $D_j = D_{i_j}$;
otherwise, if $Γ^{(j-1)}$ is not transverse to $∂D_{i_j}$,
then we choose $τ > 0$ such that
$Γ^{(j-1)}$ is transverse to $∂D_{i_j,τ}$ and $D_{i_j',τ} ⊆ U_{i_j}$,
and we take $D_j = D_{i_j',τ}$.
Then the graph isotopy $h_j^{\bullet}(Γ^{(j-1)})$
which is constant outside $D_j$ and $C^{∞}$-constant at $∂Γ$.

Next we consider $\cS$-admissibility.
Let $Γ ∈ \GraphX(M;\XX)$ be an $\cS$-admissible graph.
Consider the $j$-th move, $h_j^{\bullet}$, and the graph isotopy
$h_j^{\bullet}(Γ^{(j-1)})$ from $Γ^{(j-1)}$ to $Γ^{(j)}$.
If $Γ^{(j-1)}$ has an $\cS$-edge outside $D_j$,
then we do not have to do anything. 
If not, then there must be some $\cS$-edge inside $D_j$.
Drag a bit of the $\cS$-edge out of the ball $D_j$,
that is, apply a push-map $Ψ_{γ}^{\bullet}$ along a simple path $γ$
from the $\cS$-edge to outside $D_j$, always staying inside $U_{i_j}$.
This push-map is taken to have a small enough neighbourhood $D_{γ}$ of $γ$,
that is a ball that admits an $\cS$-edge outside it, and $D_{γ} ⊆ U_{i_j}$.
Now apply $h_j^{\bullet}$ to the new graph $Ψ_{γ}¹(Γ^{(j-1)})$,
giving a graph isotopy in $D_j$, with an $\cS$-edge outside $D_j$.
Finally, we undo the effect of the push-map by applying
$(h_j¹ ∘ Ψ_{γ}^{1-t} ∘ (h_j¹)^\inv)_t$,
which is a push-map for $h_j¹(γ)$ (in reverse),
and has support $h_j¹(D_{γ}) ⊆ U_{i_j}$.

Finally, we drop the assumptions of compactness and smooth boundary on $M$.
Let $W$ be a compact submanifold of $M$
containing an open neighbourhood of $Γ$.
Let $W_t = h^t(W)$.
For each $t₀$, by compactness of $\ov{Γ}$,
there exists $δ > 0$ such that $h^t(Γ)$ remains in $W_{t₀}$ for $|t - t₀| < δ$.
By compactness of $[0,1]$, we only need finitely many such intervals
to cover $[0,1]$.
Applying the above arguments with $M = W_{t₀}$ for each interval, we are done.
\end{proof}

\begin{proof}[\textbf{Proof of Lemma \ref{l:isotopy-null}}]
For each internal vertex of $Γ$, we can choose a small enough $B_v$
such that $B_v$ is contained in an open set $U_i$ from the open cover
and $Γ$ is transverse to $∂B_v$, and in the $\cS$-admissible case,
that $Γ \backslash B_v$ is $\cS$-admissible.
Then by Lemma \ref{l:ambient-graph},
there exists a graph $\hat{Γ}$ and an ambient isotopy $φ^{\bullet}$ on $M$
such that $Γ$ and $\hat{Γ}$ are related by a graph isotopy that is
constant outside the union $\bigcup B_v$ over all internal vertices,
and $φ^{\bullet}$ takes $\hat{Γ}$ to $Γ'$.

The graph isotopy from $Γ$ to $\hat{Γ}$ can be separated into a
sequence of isotopies, each constant outside some $B_v$.
Lemma \ref{l:isotopy-null-ambient} takes care of the $φ^{\bullet}$ part.
\end{proof}

We will also need the following relative version of
Lemma~\ref{l:isotopy-null}:

\begin{lemma}
\label{l:isotopy-null-submfld}
Let $W ⊆ M$ be a smoothly embedded submanifold of $M$,
where $W$ has smooth boundary (i.e.\ no corners).\footnote{One can certainly generalise this to $W$ with corners, but we do not want to get into technicalities with embeddings of manifolds with corners.}
Let $\{U_i\}$ be a finite open cover of $W$,
and let $Γ,Γ'$ be $\cA$-coloured graphs in $M$ with boundary value $\XX$,
and suppose they are related by a graph isotopy $Γ^{\bullet}$,
with $Γ = Γ⁰, Γ' = Γ¹$, that is $C^{∞}$-constant on the boundary $∂Γ$
and constant outside $W$, and $Γ$ is transverse to $∂W$.
Then there exists a sequence of graph isotopies
$Γ_j^{\bullet}$, $j = 1,...,k$,
with $Γ = Γ₁⁰, Γ_j¹ = Γ_{j+1}⁰, Γ_k¹ = Γ'$,
that are $C^{∞}$-constant on the boundary,
each $Γ_j^{\bullet}$ is supported in a ball $D_j$
contained in some $U_{i_j}$,
and $Γ_j⁰$ is transverse to $∂D_j$.

Moreover, suppose that $Γ$ is $\cS$-admissible.
Then each $Γ_j^{\bullet}$ and $D_j$ can be chosen so that
$Γ_j⁰ \backslash D_j$ is $\cS$-admissible.
In particular, $Γ' - Γ ∈ \NullX(M,W;\XX)$.
\end{lemma}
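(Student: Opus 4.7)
The plan is to reduce to Lemma~\ref{l:isotopy-null} applied to $W$. Since $\Gamma^\bullet$ is constant on $M \setminus \Int(W)$ and $\Gamma \trv \partial W$, the restriction $(\Gamma \cap W)^\bullet$ is a well-defined graph isotopy in $W$ whose boundary configuration $\partial(\Gamma \cap W)$ consists of $(\partial \Gamma \cap W) \cup (\Gamma \cap \partial W)$: the first piece inherits $C^{\infty}$-constancy from the hypothesis, and the second piece inherits $C^{\infty}$-constancy from the isotopy being the identity on $\Gamma \setminus \Int W$ (combined with the transversality of $\Gamma$ with $\partial W$, which makes $\Gamma \cap \partial W$ a finite set of marked points with well-defined infinitesimal transverse data).

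Next, I would apply the non-admissible half of Lemma~\ref{l:isotopy-null} to the manifold $W$ with its open cover $\{U_i\}$ and the graph isotopy $(\Gamma \cap W)^\bullet$. This yields a finite sequence of graph isotopies $\gamma_j^\bullet$ in $W$, each supported in a ball $D_j \subseteq U_{i_j}$, that is $C^{\infty}$-constant on $\partial(\Gamma \cap W)$ and that connects $\Gamma \cap W$ to $\Gamma' \cap W$. Because each $\gamma_j^\bullet$ is $C^{\infty}$-constant at $\Gamma \cap \partial W$, it extends by the constant isotopy on $M \setminus \Int W$ to a graph isotopy $\Gamma_j^\bullet$ in $M$, still supported in $D_j \subseteq W$ and $C^{\infty}$-constant at $\partial\Gamma$. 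This already settles the non-admissible version and, since each $D_j$ has $\Int D_j \subseteq W$, places $\Gamma' - \Gamma$ in $\Null(M, W;\XX)$.

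For the $\cS$-admissibility refinement, note that each $\Gamma_j^0$ is obtained from $\Gamma$ by a composition of graph isotopies, hence is $\cS$-admissible in $M$ (graph isotopies preserve edge labels). If $\Gamma_j^0 \setminus D_j$ fails to be $\cS$-admissible, some connected component of $M$ must have lost every remaining $\cS$-coloured edge to $D_j$, so $D_j$ itself contains an $\cS$-edge. I would then copy the push-map trick from the proof of Lemma~\ref{l:isotopy-null-ambient}: within $U_{i_j} \subseteq W$, drag a small segment of this $\cS$-edge out of $D_j$ via a push-map supported in an auxiliary ball inside $U_{i_j}$, perform the extended move $\Gamma_j^\bullet$, then undo the push-map with the corresponding reverse move. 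All three auxiliary moves are supported in balls contained in $W$ and each has $\cS$-admissible complement.

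The main obstacle is precisely this admissibility bookkeeping: the condition is imposed per component of $M$, not per component of $W$, so one cannot invoke the admissible statement of Lemma~\ref{l:isotopy-null} inside $W$ as a black box. The push-map fix resolves this because any failure of $\cS$-admissibility after removing a ball $D_j \subseteq W$ pinpoints an $\cS$-edge inside $D_j$, which can then be relocated within the same open set $U_{i_j}$ without leaving $W$.
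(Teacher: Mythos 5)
Your reduction to Lemma~\ref{l:isotopy-null} applied inside $W$ agrees with the paper, and the care you take with the boundary data of $\Gamma \cap W$ (the two pieces $\partial\Gamma \cap W$ and $\Gamma \cap \partial W$, each $C^{\infty}$-constant, so the isotopies extend by the identity off $W$) is exactly what makes the non-admissible half work. You also correctly flag a subtlety that the paper's own very terse proof only handles for connected $W$: the admissibility conclusion of Lemma~\ref{l:isotopy-null} run inside $W$ is measured per component of $W$, not of $M$, and the paper's dichotomy (``either $\Gamma \cap W$ is $\cS$-admissible in $W$, or else $\Gamma \setminus W$ is $\cS$-admissible in $M$'') is exhaustive only when $W$ is connected --- which does cover every application the paper actually makes (there $W$ is always a ball), but not the stated generality.

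However, the push-map fix you propose has a gap, and it is precisely the ambient-versus-graph-isotopy distinction that the paper is careful about. In Lemma~\ref{l:isotopy-null-ambient} the move $h_j^{\bullet}$ is an \emph{ambient} isotopy supported in $D_j$, so after the push $\Psi_{\gamma}^{\bullet}$ one forms $t \mapsto h_j^t\bigl(\Psi_{\gamma}^1(\Gamma)\bigr)$; this graph isotopy is supported in $D_j$ (where the diffeomorphisms $h_j^t$ act), while the complement $\Psi_{\gamma}^1(\Gamma) \setminus D_j$ now carries the dragged $\cS$-segment. The $\Gamma_j^{\bullet}$ you get from the non-admissible half of Lemma~\ref{l:isotopy-null} are genuine graph isotopies --- in particular they include the non-ambient near-vertex moves coming from Lemma~\ref{l:ambient-graph} --- so there is no $h_j^{\bullet}$ to apply. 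The only candidate for ``perform the extended move after pushing'' is $t \mapsto \Psi_{\gamma}^1(\Gamma_j^t)$, and this is supported in $\Psi_{\gamma}^1(D_j)$, not in $D_j$. Since the $\cS$-edge $e$ you pushed satisfies $e \subseteq D_j$, you also have $\Psi_{\gamma}^1(e) \subseteq \Psi_{\gamma}^1(D_j)$, hence $\Psi_{\gamma}^1(\Gamma_j^0) \setminus \Psi_{\gamma}^1(D_j) = \Psi_{\gamma}^1(\Gamma_j^0 \setminus D_j)$ is missing exactly the same $\cS$-edges as before --- the push bought nothing, and enlarging the ball only shrinks the complement further. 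To close the gap one has to open the black box: re-run the proof of Lemma~\ref{l:isotopy-null} inside $W$ while testing admissibility against components of $M$, choosing the near-vertex balls $B_v$ small enough that no edge is entirely swallowed (always possible since edges have positive length) and applying your push-map trick only to the ambient moves of Lemma~\ref{l:isotopy-null-ambient}, where it genuinely works.
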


\begin{proof}
We simply apply Lemma~\ref{l:isotopy-null} to $M = W$,
and observe that since the resulting isotopies are $C^{∞}$-constant
at the graph boundaries, they extend to outside $W$ by the identity.
One small thing to note is that in the $\cS$-admissible case,
if $Γ ∩ W$ is $\cS$-admissible, then the lemma can be applied directly,
and if $Γ ∩ W$ is not $\cS$-admissible,
then $Γ \backslash W$ must be $\cS$-admissible,
so we would apply the first part of Lemma~\ref{l:isotopy-null}
(non $\cS$-admissible case).
\end{proof}

\subsection{Proof of Lemma \ref{l:cT-s}}
\label{s:appendix-cT-s}

Let us first set up and prove Lemma~\ref{l:cT},
which is a version of Lemma~\ref{l:cT-s} without the $\cT$-admissibility
condition on the boundary value on $L$.
We set up Lemma~\ref{l:cT} in a similar manner to 
Section~\ref{s:excision-proof}. We collect the gluing operations
$π_*: \Graph(M';\XX,\VV,\VV) \to \Graph(M;\XX)$
induced by the gluing map $π: M' \to M$ into a single map
\[
π_*: \bigsqcup_{\VV ∈ \hZ(N)} \GraphX(M';\XX,\VV,\VV) \to \GraphX(M;\XX) \ .
\]
This map is an extension of the map $\pi_*$ given in Section~\ref{s:excision-proof} from $\VV ∈ \hZXp(N)$ to $\VV ∈ \hZ(N)$.
The map $\pi_*$ is almost surjective -
it is easy to see that it misses precisely those graphs in $M$ which are
not transverse to $L$, the image of $N$ (and $N'$) under $π$.
This motivates the following definitions:
\begin{align*}
\GraphXt(M;\XX)
	&:= \{ Γ ∈ \GraphX(M;\XX) \,|\, Γ \text{ is transverse to } L \}
\\
\VGraphX^{\trv}(M;\XX) &:= \Span ( \GraphXt(M;\XX) ) ⊂ \VGraphX(M;\XX)
\\
\PrimNullX^{\trv}(M,U;\XX) &:= \PrimNullX(M,U;\XX) ∩ \VGraphXt(M;\XX)
\\
\NullX^{\trv}(M,U;\XX) &:= \Span ( \PrimNullXt(M,U;\XX) )
	⊆  \NullX(M;\XX) ∩ \VGraphXt(M,U;\XX)
\end{align*}
These differ from the corresponding spaces $\GraphX^{\trvs}(M;\XX)$, etc.,  
in Section~\ref{s:excision-proof} in that here no $\cT$-admissibility is required for the intersection $Γ ∩ L$.
Note that for $Γ ∈ \GraphXt(M;\XX)$,
its intersection with $L$ defines a boundary value on $L$;
clearly, if $Γ = π_*(Γ')$ for some $Γ' ∈ \GraphX(M';\XX,\VV,\VV)$,
then this boundary value is $\VV$,
and vice versa.

We write
\begin{align*}
\GraphX^{\trv}(M;\XX, L = \VV) &= \{ Γ ∈ \GraphX^{\trv}(M;\XX) \,|\,
	Γ ∩ L = \VV \}
\\
\VGraphX^{\trv}(M;\XX, L = \VV) &= \Span(\GraphX^{\trv}(M;\XX,L = \VV))
\end{align*}

Recall from Section~\ref{s:excision-proof} that we defined
neighbourhoods $U_{1m}$ of $L_m$ and ambient isotopies $θ_m^{\bullet}$
supported in $U_{1m}$ that performed ``translations across $L_m$''.
We define $Θ_m^\trv$ to be the subspace of $\VGraphXt(M;\XX)$
spanned by elements of the form $Γ- θ_m^{α}(Γ)$,
where both $Γ,θ_m^{α}(Γ)$ are transverse to $L$,
and let $Θ^\trv = \sum_{L_m ⊆ L} Θ_m^\trv$;
these differ from their counterparts $Θ_m^\trvs, Θ^\trvs$
in Section~\ref{s:excision-proof} in that no $\cT$-admissibility condition
is placed on the intersection of $Γ$ and $θ_m^{α}(Γ)$ with $L_m$.

\begin{lemma}
\label{l:cT}
$\NullXt(M;\XX) = \NullXt(M,U₀;\XX) + Θ^\trv$.
\end{lemma}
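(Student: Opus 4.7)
The inclusion $\NullXt(M, U_0; \XX) + \Theta^\trv \subseteq \NullXt(M;\XX)$ is essentially immediate: the first summand lies in $\NullXt(M;\XX)$ by definition, and for an element $\Gamma - \theta_m^\alpha(\Gamma) \in \Theta_m^\trv$ with both endpoints transverse to $L$, the plan is to apply Lemma~\ref{l:isotopy-null-ambient} to the ambient isotopy $\theta_m^\bullet|_{[0,\alpha]}$ (which is supported in $U_{1m}$, hence $C^\infty$-constant at $\partial\Gamma$), subdividing $[0,\alpha]$ finely enough and perturbing the resulting balls $D_j$ so that every intermediate graph is transverse to $L$. Transversality of $\theta_m^t(\Gamma)$ to $L_m$ is equivalent to transversality of $\Gamma$ to the smoothly varying translate $\theta_m^{-t}(L_m)$, a generic condition on $t$; hence such a subdivision exists, and the resulting primitive nulls lie in $\PrimNullXt$, so their sum is in $\NullXt(M;\XX)$.

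For the harder inclusion $\NullXt(M;\XX) \subseteq \NullXt(M, U_0; \XX) + \Theta^\trv$, it suffices to treat a generating primitive null $\sum a_j \Gamma_j \in \PrimNullXt(M, D; \XX)$ with respect to a ball $D$. If $D \cap L = \emptyset$, then $\Int D \subseteq U_0 = M \setminus L$ and $\sum a_j \Gamma_j$ already lies in $\NullXt(M, U_0; \XX)$. Otherwise $I := \{m : D \cap L_m \neq \emptyset\}$ is non-empty and finite, and the strategy is to use the one-parameter groups $\theta_m^\bullet$ to translate $D$ off each $L_m$ it meets, so that $\sum a_j \theta(\Gamma_j)$ lands in $\NullXt(M, U_0; \XX)$ while the correction $\sum a_j(\Gamma_j - \theta(\Gamma_j))$ telescopes into $\Theta^\trv$.

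Concretely, for each $m \in I$ I would use the displacement property of $\theta_m^\bullet$ applied to the compact set $D \cap U_{1m} \subseteq U_{1m}$ to pick $\alpha_m$ with $|\alpha_m|$ large enough that $\theta_m^{\alpha_m}(D \cap U_{1m})$ is disjoint from $L_m$; combined with $\theta_m^{\alpha_m}$ being the identity outside $U_{1m}$, this gives $\theta_m^{\alpha_m}(D) \cap L_m = \emptyset$. Pairwise disjointness of the tubular neighbourhoods $\{U_{1m}\}$ implies that the $\theta_m^{\alpha_m}$ commute and that each fixes $L_{m'}$ pointwise for $m' \neq m$, so $\theta := \prod_{m \in I} \theta_m^{\alpha_m}$ satisfies $\theta(D) \cap L = \emptyset$. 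Diffeomorphism invariance of the evaluation then makes $\sum a_j \theta(\Gamma_j)$ a primitive $\cS$-null with respect to $\theta(D) \subseteq M \setminus L$ ($\cS$-admissibility of $\theta(\Gamma_j) \setminus \theta(D) = \theta(\Gamma_j \setminus D)$ is automatic since $\theta$ preserves edge labels), placing it in $\NullXt(M, U_0; \XX)$. The correction $\sum a_j (\Gamma_j - \theta(\Gamma_j))$ telescopes along the chain $\id, \eta_1, \eta_1\eta_2, \dots, \theta$ (with $\eta_i := \theta_{m_i}^{\alpha_{m_i}}$) into a sum of terms of the form $\Gamma' - \eta_i(\Gamma')$, each lying in $\Theta_{m_i}^\trv$.

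The main technical obstacle I expect is ensuring that every intermediate graph $\eta_1 \cdots \eta_{i-1}(\Gamma_j)$ appearing in the telescoping is transverse to all of $L$, so that each telescope term genuinely belongs to $\Theta^\trv$. Transversality to $L_{m'}$ for $m' \notin \{m_1,\dots,m_{i-1}\}$ is automatic since those components are untouched by the applied $\eta$'s. Transversality to each previously translated $L_{m_r}$ (with $r < i$) amounts to $\Gamma_j$ being transverse to a fixed translate of $L_{m_r}$, a generic condition; a small perturbation of each $\alpha_m$ (while keeping $|\alpha_m|$ above the displacement threshold) handles all such requirements simultaneously by Sard. With transversality secured, the two inclusions combine to give the claimed equality.
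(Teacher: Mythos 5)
Your proposal has a genuine gap in the harder inclusion $\NullXt(M;\XX) \subseteq \NullXt(M,U_0;\XX) + \Theta^\trv$. You write that you apply the displacement property to ``the compact set $D \cap U_{1m} \subseteq U_{1m}$'', but $D \cap U_{1m}$ is the intersection of a compact set with an \emph{open} set and is in general not compact: compactness fails precisely when $D$ extends past the boundary $\iota(\{\pm 1\} \times L_m)$ of the tubular neighbourhood. In that case the translation $\theta_m^{\alpha}$ cannot displace $D$ off $L_m$ at all, because $\theta_m^{\alpha}$ fixes $\iota(\{\pm 1\} \times L_m)$ pointwise. A concrete one-dimensional caricature: take $L_m = \{0\} \subset \mathbb{R}$, tubular neighbourhood $U_{1m} = (-1,1)$, and $D = [-2,2]$; then $\theta_m^{\alpha}(D) = D$ for every $\alpha$, so $\theta_m^{\alpha}(D) \cap L_m \neq \emptyset$ always. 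Since the ball $D$ in a primitive $\cS$-null relation can be arbitrarily large (it only needs to contain the region where the summands disagree), this is not a corner case; your argument only applies when $D$ happens to be compactly contained in the tubular neighbourhoods it meets.

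The way the paper deals with this is the crux of the proof and is not optional: one first \emph{shrinks} $D$ into $U_0$ by an isotopy $\varphi^\bullet$ supported in a slightly larger ball $D'$ (so $\sum a_j \varphi^1(\Gamma_j) \in \NullXt(M,U_0;\XX)$), and then the real work is to show $\varphi^1(\Gamma_j) - \Gamma_j \in \NullXt(M,U_0;\XX) + \Theta^\trv$. This is done by decomposing the shrinking isotopy, via Lemma~\ref{l:isotopy-null-submfld}, into a finite sequence of moves each supported in a \emph{small} ball $D_a$ contained in $U_0'$ or in one of the $U_{1m}'$. The moves with $D_a \subseteq U_0'$ land in $\NullXt(M,U_0;\XX)$ directly, and the moves with $D_a \subseteq U_{1m}'$ are handled by displacing that small ball (which \emph{is} compactly contained in $U_{1m}$) via $\theta_m^\alpha$, producing two $\Theta_m^\trv$ terms plus one $\NullXt(M,U_0;\XX)$ term. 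In short: $\theta_m^\alpha$ is applied to the small balls coming out of the isotopy decomposition, never to the original $D$. Your proposal skips the shrinking-and-decomposition step entirely and therefore cannot handle general $D$. (Your treatment of the easy inclusion $\NullXt(M,U_0;\XX) + \Theta^\trv \subseteq \NullXt(M;\XX)$, via a transversality-respecting refinement of Lemma~\ref{l:isotopy-null-ambient}, is reasonable — the paper in fact only spells out the other inclusion.)
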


\begin{proof}
Let $\sum a_j Γ_j ∈ \PrimNullXt$ be primitive $\cS$-null
with respect to the closed ball $D$,
with each summand $Γ_j$ transverse to $L$.
Using isotopies supported in balls touching $\XX$,
we can bring each $Γ_j$ to agree near their boundary
(equal on the nose, not just equal $k$-jets for all $k$).
Since $\XX$ does not meet $L$,
the balls can be chosen to avoid $L$,
so by Lemma \ref{l:isotopy-null-submfld},
the resulting graphs differ from their original by
an element of $\NullXt(M,U₀;\XX)$.
Thus, we can safely assume that the graphs $Γ_j$ all
agree in a neighbourhood of $∂M$,
and hence we may choose $D$ so that it is not touching $∂M$.

\begin{figure}
\center
\includegraphics[width=10cm]{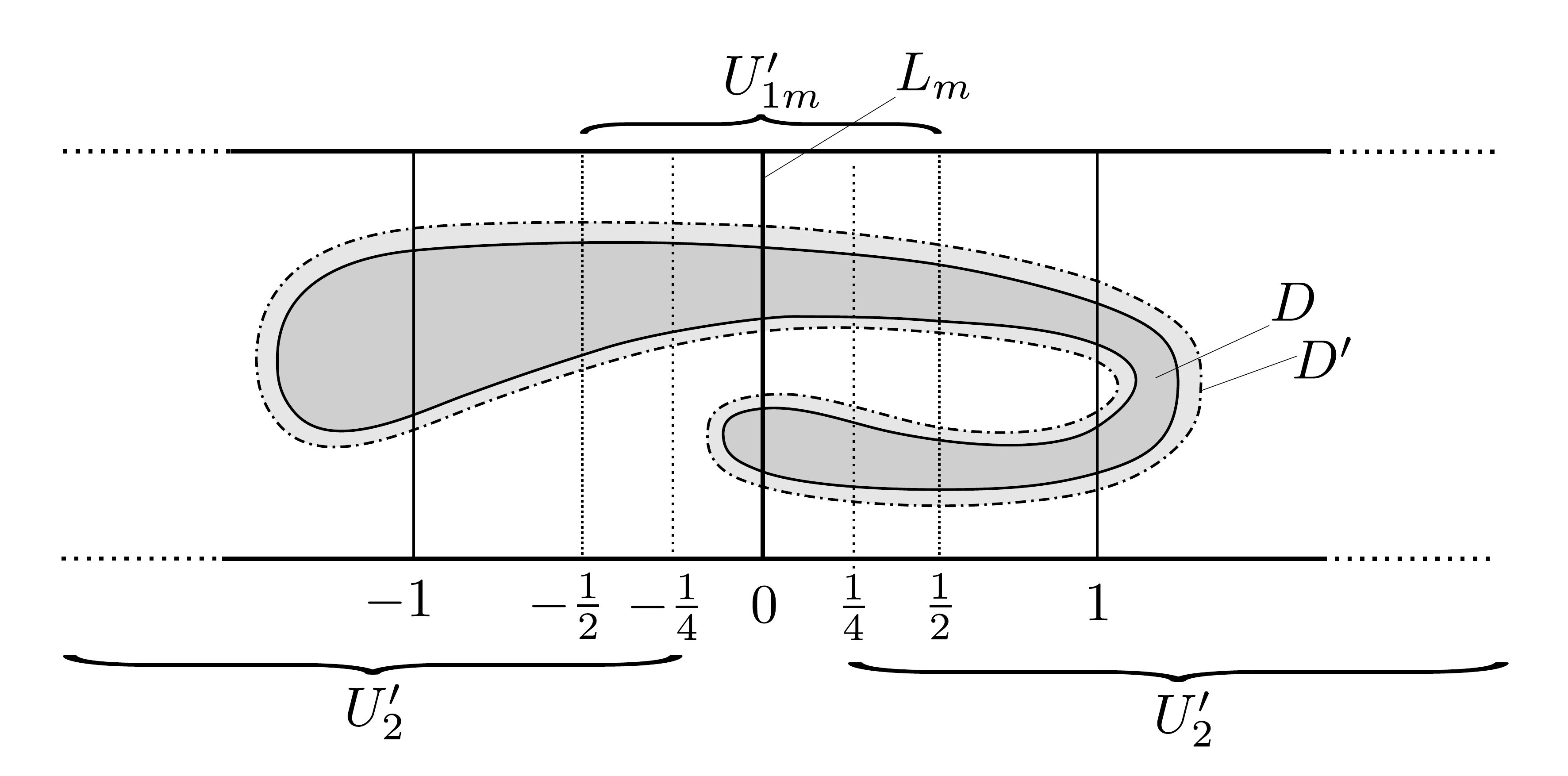}
\caption{$D$ and $D'$ can intersect $L_m$ multiple times}
\label{f:Lm-U1m-Dprime}
\end{figure}

Let $φ^{\bullet}$ be an isotopy shrinking $D$ into itself and into $U₀$,
which is supported in a small neighbourhood $D'$ of $D$,
where $D'$ is also a closed ball disjoint from $∂M$
(see Figure~\ref{f:Lm-U1m-Dprime};
e.g.\ take an isotopy of the standard ball $\DD^d$
that is $C^{∞}$-constant on $∂\DD^d$ and
shrinks $1/2 \DD^d$ radially to almost a point,
and apply that to $D'$ with a embedding sending $\DD^d$ onto $D'$
and $1/2 \DD^d$ onto $D$.)
We may choose $φ^{\bullet}$ so that each $φ¹(Γ_j)$ is also transverse to $L$.
Then $\sum a_j φ¹(Γ_j)$ is primitive $\cS$-null with respect to $φ¹(D) ⊆ U₀$.
It remains to show that $φ¹(Γ_j) - Γ_j ∈ \NullXt(M,U₀;\XX) + Θ^\trv$
for each $j$.
For the rest of the proof, we focus on one summand,
simply rewriting $Γ = Γ_j$.

Consider the open cover $\{U₀', U_{11}',...,U_{1l}'\}$,
where $U_{1m}' = ι((-1/2,1/2) \times L_m)$ are smaller
tubular neighbourhoods of each component $L_m$ of $L$
and $U₀' = M \backslash ι([-1/4,1/4] \times L)$.
Let $\wdtld{Γ} = Γ ∩ D'$ be its restriction to $D'$,
and consider $φ^{\bullet}(\wdtld{Γ})$ as a graph isotopy in $D'$.
Let $V₀ = D' ∩ U₀', V₁ = D' ∩ U_{11}', ..., V_l = D' ∩ U_{1l}'$.
Applying Lemma \ref{l:isotopy-null-submfld} to $D'$ with open cover $\{V₀,...,V_l\}$,
we obtain graph isotopies $\wdtld{Γ}_a^{\bullet}$, $a = 1,...,k$,
with each $\wdtld{Γ}_a^{\bullet}$ supported in a ball $D_a$
which is contained in some $V_{m_a}$,
$\wdtld{Γ}_a^{\bullet}$ is always transverse to $∂D_a$,
and each $\wdtld{Γ}_a^{\bullet}$ is $C^{∞}$-constant at $∂\wdtld{Γ}$,
so we may extend them by the constant isotopy outside $D'$,
so we have graph isotopies $Γ_a^{\bullet}$
that connect $Γ$ to $φ¹(Γ)$.
(See Figure \ref{f:isotopy-cover}.)

\begin{figure}
\center
\begin{tikzpicture}
\node at (0,0) {\includegraphics[width=15cm]{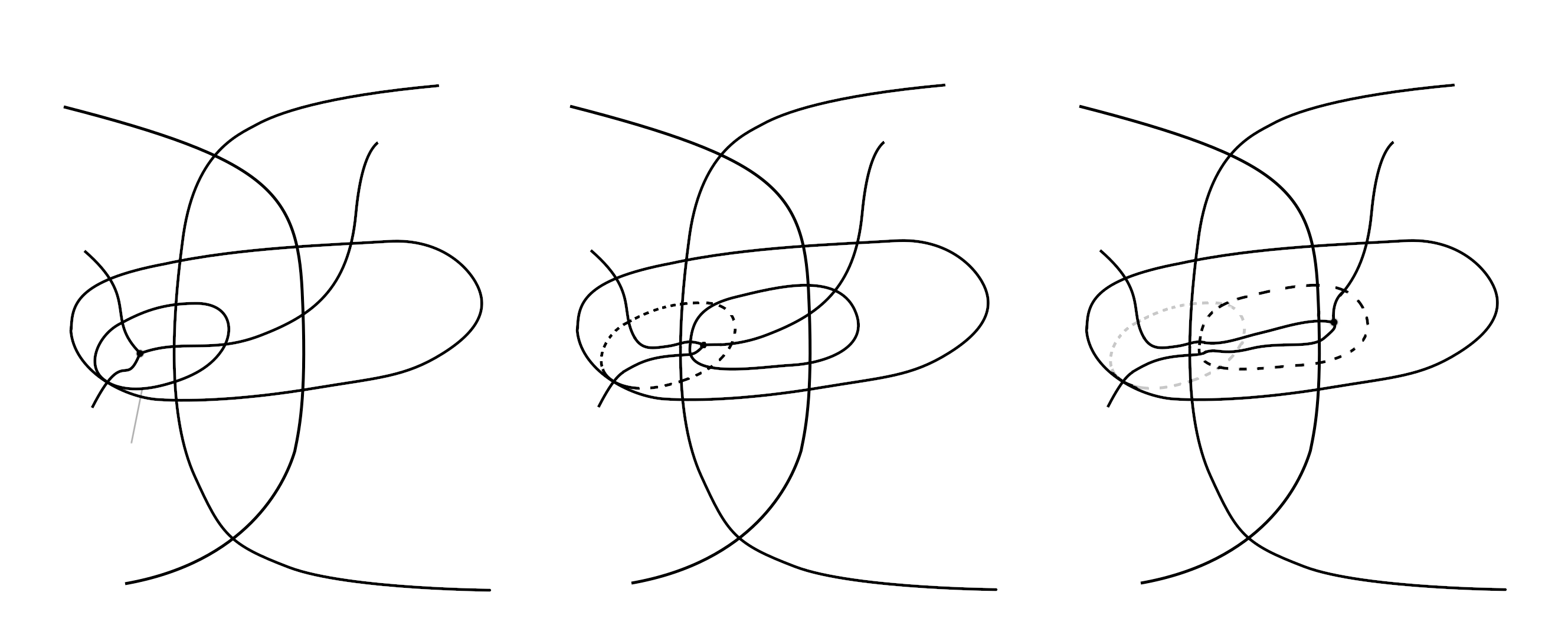}};
\begin{scope}[shift={(0,0)}]
\node at (-1.4,-2.2) {\scriptsize $U_{11}'$};
\node at (1.4,-2.2) {\scriptsize $U_{12}'$};
\node at (1.8,0.8) {\scriptsize $D'$};
\node at (0.9,-0.25) {\scriptsize $D_6$};
\end{scope}
\begin{scope}[shift={(-4.85,0)}]
\node at (-1.4,-2.2) {\scriptsize $U_{11}'$};
\node at (1.4,-2.2) {\scriptsize $U_{12}'$};
\node at (1.8,0.8) {\scriptsize $D'$};
\node at (-1.4,-1.4) {\scriptsize $D_5$};
\end{scope}
\begin{scope}[shift={(4.85,0)}]
\node at (-1.4,-2.2) {\scriptsize $U_{11}'$};
\node at (1.4,-2.2) {\scriptsize $U_{12}'$};
\node at (1.8,0.8) {\scriptsize $D'$};
\end{scope}
\end{tikzpicture}
\caption{A concatenation of graph isotopies supported in balls
(here $D_5$ and $D_6$),
each ball contained in some $U_{1m}'$.
}
\label{f:isotopy-cover}
\end{figure}

We want $Γ_a¹$, for each $a = 1,...,k$, to be transverse to $L$;
then for $a$ such that $D_a ⊆ V₀ ⊆ U₀'$,
we have $Γ_a¹ - Γ_a⁰ ∈ \NullXt(M,U₀;\XX)$
by Lemma~\ref{l:isotopy-null-submfld},
while for other $a$, we will later show that
$Γ_a¹ - Γ_a⁰ ∈ \NullXt(M,U₀;\XX) + Θ^\trv$,
so that it immediately follows that
$φ¹(Γ) - Γ = \sum_{a = 1}^k Γ_a¹ - Γ_a⁰ ∈
\NullXt(M,U₀;\XX) + Θ^\trv$.
We perform the following corrections
to the graph isotopies $Γ_a^{\bullet}$,
applied to each $m$; in the following, fix a component $L_m$.
Let $a$ be the first integer $a$ for which $Γ_a⁰$ is not transverse to $L_m$.
(Note $Γ = Γ₁⁰$ is already assumed to be transverse to $L$).
The support $D_a$ of $Γ_a^{\bullet}$ must be in $V_m$;
otherwise $D_a ⊆ V_n$ for some $n ≠ m$,
then since $V_n ∩ L_m = ∅$, we would have that
$Γ_{a-1}¹ = Γ_a⁰$, and hence $Γ_{a-1}⁰$,
would already be not transverse to $L_m$,
contradicting the minimality of $a$.
The genericity of the transversality means that it is easy to find a
tiny perturbation of $Γ^a$, supported near $L_m$ and in $D_a$
(in particular supported outside the other open sets $V_{≠ m}$),
such that the resulting graph is indeed transverse to $L_m$;
we append this perturbation to the end of $Γ_a^{\bullet}$.
Let $b$ be the next integer such that $Γ_b^{\bullet}$
is supported in $V_m$.
The perturbation above commutes with the isotopies between the
$a$-th and $b$-th isotopy (since they have disjoint supports).
Then we modify the $Γ_b^{\bullet}$ by prepending the inverse of the perturbation.
Repeat this process until all $Γ^c$, $c = 0,1,...,k$ are transverse to $L_m$.

It remains to show that $Γ_a¹ - Γ_a⁰ ∈ \NullXt(M,U₀;\XX) + Θ^\trv$
for $a$ such that $D_a ⊆ V_{m_a} ⊆ U_{1m_a}'$ with $m_a ≠ 0$.
There exists $α$ such that $θ_m^{α}$ displaces $D_a$ away from
$[-1/4,1/4] \times L_m$, so that $θ_m^{α}(D_a) ⊆ U₀'$,
and that $θ_m^{α}(Γ_a⁰)$ (and hence also $θ_m^{α}(Γ_a^t)$ for $t ∈ [0,1]$)
is transverse to $L_m$.
(See Figure \ref{f:theta-displace}.)
So we replace the isotopy by the concatenation of the following three isotopies:
(1) the graph isotopy induced by the ambient isotopy $θ_m^{tα}$,
(2) then the graph isotopy $θ_m^{α}(Γ_a^{\bullet})$,
and (3) the reverse of the first isotopy, $θ_m^{-tα}$.
Then we can write
\[
Γ_a¹ - Γ_a⁰ =
(Γ_a¹ - θ_m^{α}(Γ_a¹)) +
(θ_m^{α}(Γ_a¹)) - θ_m^{α}(Γ_a⁰)) -
(θ_m^{α}(Γ_a⁰) - Γ_a⁰)
\]
so that the first and third term are in $Θ^\trv$,
and the second term is in $\NullXt(M,U₀;\XX)$.

\begin{figure}
\center
\includegraphics[width=15cm]{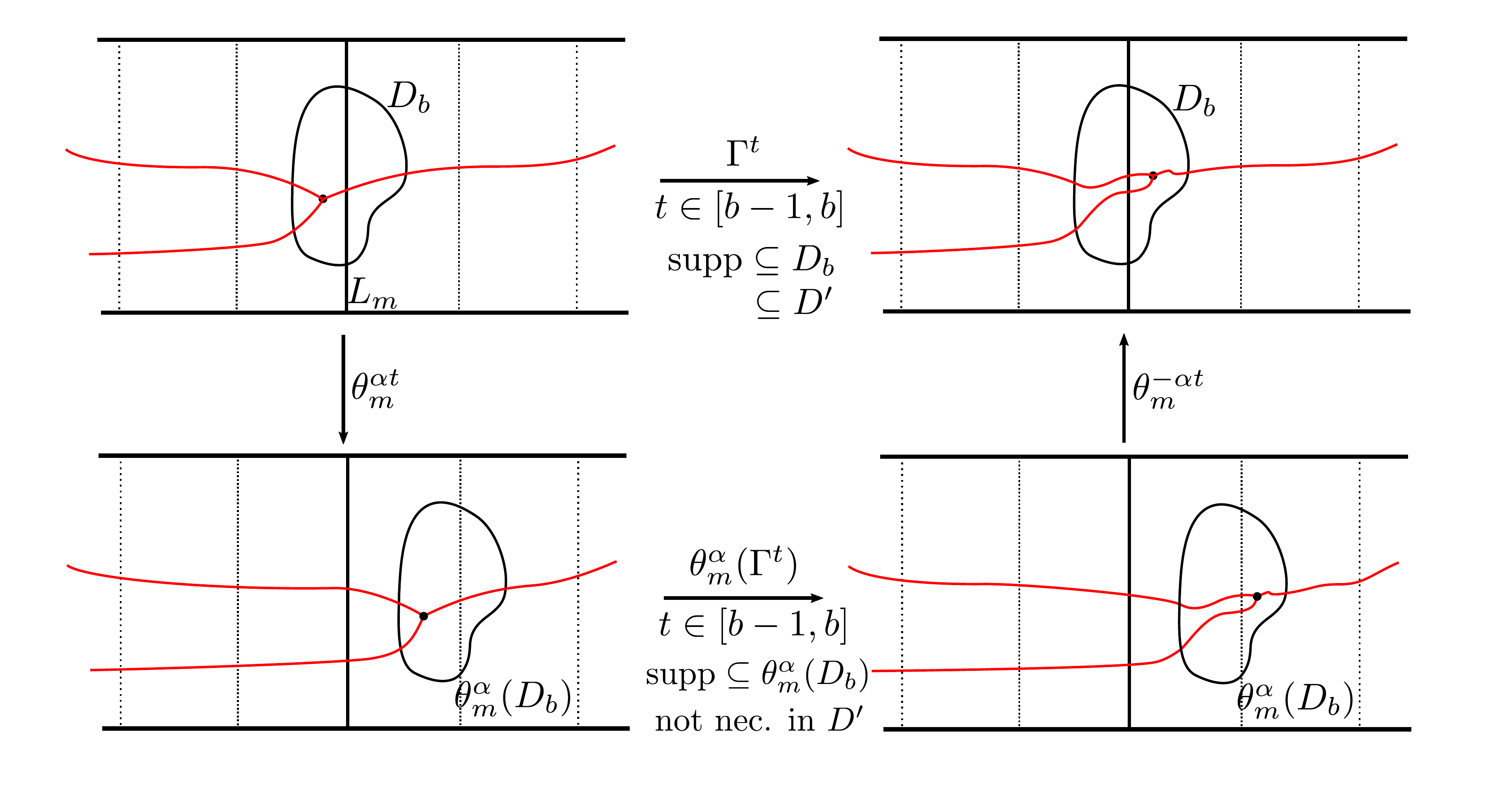}
\caption{Using $θ_m^{α}$ to displace $D_b$}
\label{f:theta-displace}
\end{figure}

This concludes the proof of the lemma, but before we end the proof,
in anticipation of reapplying most of the arguments above
for the proof of Lemma \ref{l:cT-s},
we make the following observation:
suppose there is a region $K ⊆ L$
such that $\wdtld{K} := [-1,1] \times K$ is disjoint from $D'$.
Then clearly none of the balls $D_a$ that supported the graph isotopies
ever intersect $\wdtld{K}$,
and since $\wdtld{K}$ is invariant under translation,
i.e. $θ_m^{α}(\wdtld{K}) = \wdtld{K}$,
their displaced versions $θ_m^{α}(D_b)$ also do not intersect $\wdtld{K}$.
In other words, throughout the isotopies $Γ_a^{\bullet}$,
the only changes that can happen to the graph within $\wdtld{K}$
is translation by some $θ_m^t$.
\end{proof}

Similarly, recall from Section~\ref{s:excision-proof} that
$Θ_m^\trvs$ is the subspace of $\VGraphXt(M;\XX)$
spanned by elements of the form $Γ- θ_m^{α}(Γ)$,
where both $Γ,θ_m^{α}(Γ)$ are transverse to $L$
and their intersection boundary values on $L$ are $\cT$-admissible,
and $Θ^\trvs := \sum_{L_m ⊆ L} Θ_m^\trvs$.

\begin{proof}[Proof of Lemma \ref{l:cT-s}]
Let $\sum a_j Γ_j ∈ \PrimNullXs$
be primitive $\cS$-null with respect to $D$,
with each $Γ_j$ transverse to $L$ and having $\cS$-admissible
boundary value on $L$.
As before, we may assume $D$ does not meet the boundary $∂M$,
and choose a slightly bigger ball $D'$ containing $D$ in its interior,
which is also disjoint from $∂M$,
and consider an isotopy $φ^{\bullet}$ supported in $D'$
that shrinks $D$ into $D ∩ U₀$,
and such that for each $j$, $φ(Γ_j)$ is transverse to and has
$\cT$-admissible boundary value on $L$.
Then $\sum a_j φ¹(Γ_j) ∈ \NullXs(M,U₀;\XX)$,
and it suffices to show that
$φ¹(Γ_j) - Γ_j ∈ \NullXs(M,U₀;\XX) + Θ^\trvs$ for each $j$.
In the following, we describe some modification to the $Γ_j$'s
that is performed by some ambient isotopies,
whose construction is based on choices depending only on
the part of the graph outside $D$,
thus we may ignore the subscript $j$ of $Γ_j$ and simply write $Γ$ as before.

From the proof of Lemma~\ref{l:cT},
there are graph isotopies $Γ_a^{\bullet}$ of two kinds,
(1) supported in $U₀$ and (2) induced by ambient isotopy $θ_m^{\bullet}$,
that connect $Γ$ to $φ¹(Γ)$,
and such that the endpoints of these isotopies, $Γ_a⁰$ and $Γ_a¹$,
are transverse to $L$.
We would like to modify these isotopies so that they maintain
their $\cT$-admissible intersection with $L$ at their endpoints.
We make use of the observation made at the end of the proof
of Lemma \ref{l:cT} as follows.
Let $K ⊆ L$ be a closed region of $L$,
such that $\wdtld{K} := [-1,1] \times K$ is disjoint from $D'$,
and $K_m := K ∩ L_m ≠ ∅ $ for each component $L_m$ of $L$.
Suppose each $K_m$ contains a segment of an $\cS$-edge of $Γ$
such that the projection of the segment to the $[-1,1]$ factor
has no critical values.
Then the argument in the proof of Lemma \ref{l:cT} works verbatim,
as this $\cS$-edge (hence also a $\cT$-edge) will always be transverse to $L$,
so that whenever the graph isotopy $Γ_a^{\bullet}$
is transverse to $L$ (in particular at the endpoints $Γ_a⁰$ and $Γ_a¹$),
its intersection boundary value on $L$ is also $\cT$-admissible.

\begin{figure}
\includegraphics[width=16cm]{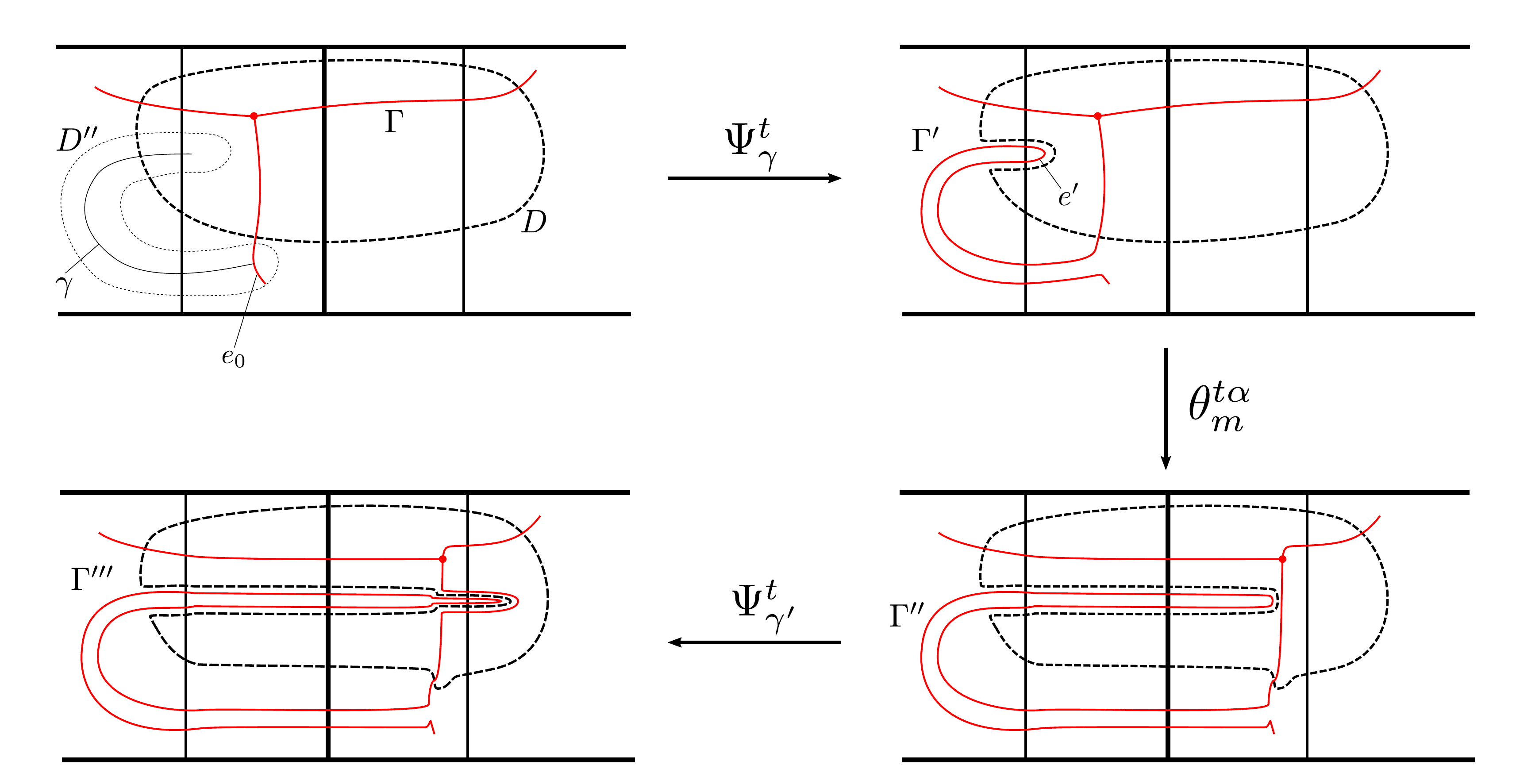}
\caption{Modifying $Γ$ to have a ``straight segment'' in some $\wdtld{K}$.
The idea is to drag an $\cS$-edge outside $D$ through $[-1,1] \times L_m$,
which accomplished in three steps: first with a push-map in $U₀$,
then with a translation to carry it through $L_m$,
then finally another push-map in $U₀$ to drag the edge out the other side.
Using a translation $θ_m^{α}$ with a large $α$ stretches the edge
till it is almost straight, so we can take $\wdtld{K}$ to be a cylinder
neighbourhood around it.
}
\label{f:translation-T-adm}
\end{figure}

\begin{figure}
\includegraphics[width=16cm]{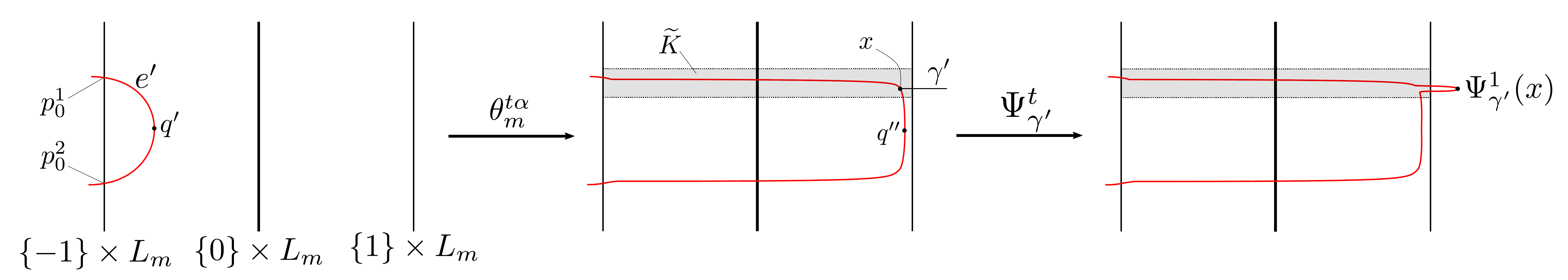}
\caption{Focus around $e'$}
\label{f:translation-T-adm-focus}
\end{figure}

It remains to show that we can perform such a modification
with relations from $\NullXs(U₀) + \cT^\trvs$.
This is shown in Figure \ref{f:translation-T-adm},
with a quick summary in the caption;
what follows is simply excruciating details.

Fix a component $L_m$ of $L$.
Let $p₀ ∈  \Int L_m ⊆ M$ and $p = (-1,p₀) ∈ ∂([-1,1] \times L_m) ⊆ M$.
Since $Γ \backslash D$ is $\cS$-admissible,
there is a point $q$ on an $\cS$-edge $e$ of $Γ$ that lies outside $D$.
Let $γ$ be an embedded regular path from $q$ to $p$
(note $γ$ does not need to be completely disjoint from $D$,
in particular, $p$ may be in $D$),
and such that $γ$ is not tangent to $Γ$ at $q$.

Let us suppose for now that the path $γ$ can be chosen such that
it does not intersect $L$ (every component, not just $L_m$),
and at the end, $γ$ is transverse to $\{-1\} \times L_m$ at $p$
and is heading into $[-1,1] \times L_m$;
extend $γ$ slightly at the end so that it ends at a point inside $[-1,1] \times L_m$.
Choose a ball $D''$ around $γ$ that is small enough
so that it does not intersect $L$,
and narrow enough so that the segment $e₀$ (i.e. component) of $D'' ∩ e$
that contains $p$ also lies outside $D$
(this is where non-tangency of $γ$ with $e$ is used).
We can also choose $D''$ so that $∂D''$ is transverse to $Γ$.

Apply a push-map $Ψ_{γ}^{\bullet}$ along $γ$ with support in $D''$.
Let $Γ' = Ψ_{γ}¹(Γ)$, and $q' = Ψ_{γ}¹(q)$
be the results of applying the push-map to $Γ$ and $q$.
Let $e' \ni q'$ be the tiny segment of the $\cS$-edge of $Γ'$
that is in $[-1,1] \times L_m$ and contains $q'$
(i.e. $e'$ is the connected component of $Γ' ∩ [-1,1] \times L_m$
containing $q'$), in particular $e' ⊆ ψ_{γ}¹(e₀)$;
choosing the push-map and the end of $γ$ more carefully,
we can ensure that $e'$ is transverse to $\{-1\} \times L_m$.
Note that $e'$ is disjoint from $Ψ_{γ}¹(D)$,
since $e' ⊆ Ψ_{γ}¹(e₀)$ and we had $e₀$ disjoint from $D$.
Let $p¹ = (-1,p₀¹),p² = (-1,p₀²) ∈ \{-1\} \times L_m$ be the endpoints of $e'$.

Since the push-map $Ψ_{γ}$ is supported in a ball $D'' ⊆ U₀$,
by Lemma \ref{l:isotopy-null-submfld} applied to $W = D''$,
we have $Γ' - Γ ∈ \NullXt(M,U₀;\XX)$,
and furthermore, since $Γ$ has $\cT$-admissible intersection with $L$,
we also have $Γ' - Γ ∈ \NullXs(M,U₀;\XX)$.

Next consider the effect of $θ_m^{α}$ on the edge $e'$:
as we increase $α$, the edge gets stretched
as its endpoints $p¹,p²$ on $\{-1\} \times L_m$ remain fixed.
For very large $α$, the resulting edge segment $e'' := θ_m^{α}(e')$
appears like two almost straight edges approximating $[-1,1] \times p₀¹$
and $[-1,1] \times p₀²$, the straight edges containing $p¹,p²$ respectively,
plus some tiny arc that goes between them
(see middle diagram in Figure \ref{f:translation-T-adm-focus}).
We choose such large $α$ so that the resulting graphs,
$Γ'' := θ_m^{α}(Γ')$, are transverse to $L$.
Since $e''$ is $\cS$-labelled (hence $\cT$-labelled) and intersects $L_m$,
then $Γ''$ has $\cT$-admissible intersection with $L$,
so $Γ'' - Γ' ∈ Θ_m^\trvs ⊆ Θ^\trvs$.
We take $K$ to be a neighbourhood of $p₀¹$ and $\wdtld{K} = [-1,1] \times K$.

Now we apply another push-map along a path $γ'$
starting from $x$ and leaving $\wdtld{K}$ through $\{1\} \times L_m$
(see rightmost diagram in Figure \ref{f:translation-T-adm-focus});
we choose $γ'$ such that it is straight inside $\wdtld{K}$
(i.e. follows $[-1,1] \times {*}$ for some point $* ∈ K$).
Similar to the first push-map, we have
$Γ''' - Γ'' ∈ \NullXs(M,U₀;\XX)$.

Finally, we need to perform this for every component of $L$,
so we need to consider the situation where $γ$ goes through
finitely many components of $L$,
say $L_{m₁},L_{m₂},...,L_{m_k}$, before reaching $L_m$,
Then we first perform the operation above to $L_{m₁}$,
so that the result graph $Γ'''$ has an $\cS$-edge $e'''$
on the other side of $L_{m₁}$ as well.
Now we repeat the argument for $Γ'''$,
with path $γ'$ obtained by cutting off the beginning of $γ$
and instead starting from this $\cS$-edge $e'''$, and so on,
eventually reaching $L_m$.

To summarise, the argument above gives ambient isotopies
$φ_b^{\bullet}$, $b = 1,...,k$,
that are either (1) supported in a ball contained in $U₀$
or (2) of the form $θ_m^{\bullet}$,
such that when restricted to each $Γ_j$,
connects $Γ_j$ to new graphs $\wdtld{Γ}_j$
with the desired ``no critical value $\cS$-edges'' in $\wdtld{K}$.
Then we apply the argument in the proof of Lemma~\ref{l:cT}
to conclude that:
\[
\sum a_j Γ_j = \sum a_j(Γ_j - \wdtld{Γ}_j) + \sum a_j \wdtld{Γ}_j
∈ \NullXs(M,U₀;\XX) + Θ^\trvs \ .
\]
\end{proof}

\subsection{Proof of Lemma \ref{l:coend-closure}}
\label{s:appendix-closure}

\begin{proof}[Proof of Lemma \ref{l:coend-closure}]
Let us write the action of $F(\id, g : X \to Y)$ on $Φ ∈ F(Z,X)$ by
$g ∘ Φ$, and the action of $F(g, \id)$ on $Φ ∈ F(Y,Z)$ by $Φ ∘ g$.
We denote by $[v]$ the image of an element $v ∈ \bigoplus F(X,X)$
in $\int^{\cC} F$, and similarly for the coend over $G$.

We first assume that $\cC$ is additive.
We prove the lemma for two special cases:
(1) $\cC = \cB^{Σ} := \{\bigoplus_{i=1}^k X_i \;|\; X_i ∈ \cB\}$,
closure by finite direct sums only, and
(2) $\cC = \cB^{rt} := \{Y \;|\; Y \text{ is retract of some } X ∈ \cB\}$,
closure by retracts only.
By composing the two inclusions $\cB ↪ \cB^{Σ} ↪ (\cB^{rt})^{Σ}$
and noting that $\rcls{\cB} = (\cB^{Σ})^{rt}$ (because $\cC$ is additive),
we see that these imply the lemma for $\cC = \rcls{\cB}$.
In both cases, we construct a map $\wdtld{ψ} : \int^{\cC} F \to \int^{\cB} G$
that is inverse to $\xi$.

For case (1), for an object $X' ∈ \cC$ that is a direct sum
of finitely many $X_j ∈ \cB$,
with inclusion/projections given by $ι_j : X_j ⇌  X' : π_j$,
we send $[v] ∈ \int^{\cC} F$, represented by $v ∈ F(X',X')$,
to $\wdtld{ψ}([v]) := \sum_j [π_j ∘ v ∘ ι_j] ∈ \int^{\cB} G$,
where $π_j ∘ v ∘ ι_j ∈ G(X_j,X_j) = F(X_j,X_j)$.
For case (2), for an object $Y ∈ \cC$ that is a retract of $X ∈ \cB$,
with inclusion/projection given by $ι : Y ⇌  X : π$,
we send $[v] ∈ \int^{\cC} F$, represented by $v ∈ F(Y,Y)$,
to $\wdtld{ψ}([v]) := [ι ∘ v ∘ π] ∈ \int^{\cB} G$,
where $ι ∘ v ∘ π ∈ G(X,X) = F(X,X)$.
It is not hard to see that, if an inverse to $\xi$ were to exist,
then it must be of this form.

It remains to prove that $\wdtld{ψ}$ is well-defined;
we show it for case (2), as case (1) is very similar.
There are two potential sources of non-well-definedness of $\wdtld{ψ}$:
the choice of $X$, $ι$, and $π$,
and the choice of representative of the element of $\int^{\cC} F$.
For the former, consider $ι' : Y ⇌ X' : π'$,
so that we have $ι' ∘ v ∘ π' ∈ G(X',X')$.
This element represents the same element as $ι ∘ v ∘ π$ in $\int^{\cB} G$,
as we have the coend relation
\[
ι' ∘ v ∘ π' - ι ∘ v ∘ π
= (ι' ∘ v ∘ π) ∘ ιπ' - ιπ' ∘ (ι' ∘ v ∘ π)
\]
from the element $ι' ∘ v ∘ π ∈ G(X,X')$ and morphism $ιπ' : X' \to X$.
For the latter, suppose we have the coend relation for $\int^{\cC} F$
defined by $Φ ∈ F(Y,Y')$ and $g : Y' \to Y$, namely $Φ ∘g - g ∘ Φ$.
Let $ι : Y ⇌ X : π$ and $ι' : Y' ⇌ X' : π'$ with $X,X' ∈ \cB$.
We have $\wdtld{ψ}([Φ ∘g]) = [ι' ∘ Φ ∘ gπ']$ and
$\wdtld{ψ}([g ∘ Φ]) = [ιg ∘ Φ ∘ π]$,
and these are the same, as we have the coend relation
\[
ι' ∘ Φ ∘ gπ' - ιg ∘ Φ ∘ π
= (ι' ∘ Φ ∘ π) ∘ ιgπ' - ιgπ' ∘ (ι' ∘ Φ ∘ π)
\]
from the element $ι' ∘ Φ ∘ π ∈ G(X,X')$ and morphism $ιgπ' : X' \to X$.

Finally, we address the situation when $\cC$ is not additive.
Consider the additive closure $\ov{\cC}$ of $\cC$,
say realised as the closure of $\cC$ in its Yoneda embedding
under direct sums. 
Clearly, the sum-retract closure of $\cC$ in $\ov{\cC}$ (in the sense of Definition~\ref{d:sum-retract-closure}) is the whole of $\ov{\cC}$.
It is easy to see that $\rcls{\cB} = \cC$
if and only if $\rcls{\cB} = \ov{\cC}$,
where the sum-retract closures are taken in $\cC$ and $\ov{\cC}$ respectively.
The functor $F : \cC^\op \times \cC \to \Rmod$
extends to a functor $\ov{F} : \ov{\cC}^\op \times \ov{\cC} \to \Rmod$.
Thus $\xi : \int^{\cB} G \simeq \int^{\ov{\cC}} \ov{F}$
and $\xi : \int^{\cC} F \simeq \int^{\ov{\cC}} \ov{F}$,
and by the naturalness of $\xi$, we are done.
\end{proof}

\newcommand\arXiv[2]      {\href{https://arXiv.org/abs/#1}{#2}}
\newcommand\doi[2]        {\href{https://dx.doi.org/#1}{#2}}

\end{document}